\documentclass[english]{article}
\usepackage[T1]{fontenc}
\usepackage[latin9]{inputenc}
\usepackage{geometry}
\geometry{verbose,tmargin=2.5cm,bmargin=2.5cm,lmargin=2cm,rmargin=2cm}
\usepackage{color}
\usepackage{babel}
\usepackage{mathrsfs}
\usepackage{amsmath}
\usepackage{amsthm}
\usepackage{amssymb}
\usepackage{stmaryrd}
\usepackage{stackrel}
\usepackage{graphicx}
\usepackage[all]{xy}
\usepackage[unicode=true,pdfusetitle,
 bookmarks=true,bookmarksnumbered=false,bookmarksopen=false,
 breaklinks=false,pdfborder={0 0 1},backref=false,colorlinks=true]
 {hyperref}

\makeatletter
\newcommand\thmsname{\protect\theoremname}
\newcommand\nm@thmtype{theorem}
\theoremstyle{plain}

\newenvironment{namedthm}[1][Undefined Theorem Name]{
  \ifx{#1}{Undefined Theorem Name}\renewcommand\nm@thmtype{theorem*}
  \else\renewcommand\thmsname{#1}\renewcommand\nm@thmtype{namedtheorem}
  \fi
  \begin{\nm@thmtype}}
  {\end{\nm@thmtype}}
\theoremstyle{plain}
\newtheorem{thm}{\protect\theoremname}[section]
\theoremstyle{remark}
\newtheorem{notation}[thm]{\protect\notationname}
\theoremstyle{remark}
\newtheorem*{acknowledgement*}{\protect\acknowledgementname}
\theoremstyle{definition}
\newtheorem{defn}[thm]{\protect\definitionname}
\theoremstyle{remark}
\newtheorem{rem}[thm]{\protect\remarkname}
\theoremstyle{plain}
\newtheorem{prop}[thm]{\protect\propositionname}
\theoremstyle{definition}
\newtheorem{condition}[thm]{\protect\conditionname}
\theoremstyle{plain}
\newtheorem{lem}[thm]{\protect\lemmaname}
\theoremstyle{definition}
\newtheorem{example}[thm]{\protect\examplename}
\theoremstyle{plain}
\newtheorem{cor}[thm]{\protect\corollaryname}
\newcommand{\lyxaddress}[1]{
	\par {\raggedright #1
	\vspace{1.4em}
	\noindent\par}
}

\usepackage[all]{xy}
\usepackage{mathpazo}

\makeatother

\providecommand{\acknowledgementname}{Acknowledgement}
\providecommand{\conditionname}{Condition}
\providecommand{\corollaryname}{Corollary}
\providecommand{\definitionname}{Definition}
\providecommand{\examplename}{Example}
\providecommand{\lemmaname}{Lemma}
\providecommand{\notationname}{Notation}
\providecommand{\propositionname}{Proposition}
\providecommand{\remarkname}{Remark}
\providecommand{\theoremname}{Theorem}

\begin{document}
\title{\textbf{\textsc{The Long-Moody construction and polynomial functors}}}
\author{\textsc{\textcolor{black}{Arthur Soulié}}}
\maketitle
\begin{abstract}
In 1994, Long and Moody gave a construction on representations of
braid groups which associates a representation of $\mathbf{B}_{n}$
with a representation of $\mathbf{B}_{n+1}$. In this paper, we prove
that this construction is functorial and can be extended: it inspires
endofunctors, called Long-Moody functors, between the category of
functors from Quillen's bracket construction associated with the braid
groupoid to a module category. Then we study the effect of Long-Moody
functors on strong polynomial functors: we prove that they increase
by one the degree of very strong polynomiality.
\end{abstract}
{\let\thefootnote\relax\footnotetext{Published in Annales de l'Institut Fourier, Volume 69 (2019) no. 4 p. 1799-1856.\\This work was partially supported by the ANR Project {\em ChroK}, {\tt ANR-16-CE40-0003}.\\2010 \textit{Mathematics Subject Classification}: 
 18D10, 18A25, 20C07, 20C99, 20J99, 20F36, 20F38, 57M07, 57N05.\\Keywords: braid groups, functor categories, Long-Moody construction, polynomial functors.}}

\section*{Introduction}

Linear representations of the Artin braid group on $n$ strands $\mathbf{B}_{n}$
is a rich subject which appears in diverse contexts in mathematics
(see for example \cite{BirmanBrendlesurvey} or \cite{Marin} for
an overview). Even if braid groups are of wild representation type,
any new result which allows us to gain a better understanding of them
is a useful contribution.

In $1994$, as a result of a collaboration with Moody in \cite{Long1},
Long gave a method to construct from a linear representation $\rho:\mathbf{B}_{n+1}\rightarrow GL\left(V\right)$
a new linear representation $\mathcal{LM}\left(\rho\right):\mathbf{B}_{n}\rightarrow GL\left(V^{\oplus n}\right)$
of $\mathbf{B}_{n}$ (see \cite[Theorem 2.1]{Long1}). Moreover, the
construction complicates in a sense the initial representation. For
example, applying it to a one dimensional representation of $\mathbf{B}_{n+1}$,
the construction gives a mild variant of the unreduced Burau representation
of $\mathbf{B}_{n}$. This method was in fact already implicitly present
in two previous articles of Long dated $1989$ (see \cite{Long2,Long3}).
In the article \cite{BigelowTian} dating from $2008$, Bigelow and
Tian consider the Long-Moody construction from a matricial point of
view. They give alternative and purely algebraic proofs of some results
of \cite{Long1}, and they slightly extend some of them. In a survey
on braid groups (see the Open Problem $7$ in \cite{BirmanBrendlesurvey}),
Birman and Brendle underline the fact that the Long-Moody construction
should be studied in greater detail.

Our work focuses on the study of the Long-Moody construction $\mathcal{LM}$
from a functorial point of view. More precisely, we consider the category
\textit{$\mathfrak{U}\boldsymbol{\beta}$} associated with braid groups.
This category is an example of a general construction due to Quillen
(see \cite{graysonQuillen}) on the braid groupoid $\boldsymbol{\beta}$.
In particular, the category \textit{$\mathfrak{U}\boldsymbol{\beta}$}
has natural numbers $\mathbb{N}$ as objects. For each natural number
$n$, the automorphism group $Aut_{\mathfrak{U}\boldsymbol{\beta}}\left(n\right)$
is the braid group $\mathbf{B}_{n}$. Let $\mathbb{K}\textrm{-}\mathfrak{Mod}$
be the category of $\mathbb{K}$-modules, with $\mathbb{K}$ a commutative
ring, and $\mathbf{Fct}\left(\mathfrak{U}\boldsymbol{\beta},\mathbb{K}\textrm{-}\mathfrak{Mod}\right)$
be the category of the functors from $\mathfrak{U}\boldsymbol{\beta}$
to $\mathbb{K}\textrm{-}\mathfrak{Mod}$. An object $M$ of $\mathbf{Fct}\left(\mathfrak{U}\boldsymbol{\beta},\mathbb{K}\textrm{-}\mathfrak{Mod}\right)$
gives by evaluation a family of representations of braid groups $\left\{ M_{n}:\mathbf{B}_{n}\rightarrow GL\left(M\left(n\right)\right)\right\} _{n\in\mathbb{N}}$,
which satisfies some compatibility properties (see Section \ref{subsec:Quillen's-bracket-construction}).
Randal-Williams and Wahl use the category $\mathfrak{U}\boldsymbol{\beta}$
in \cite{WahlRandal-Williams} to construct a general framework to
prove homological stability for braid groups with twisted coefficients.
Namely, they obtain the stability for twisted coefficients given by
objects of $\mathbf{Fct}\left(\mathfrak{U}\boldsymbol{\beta},\mathbb{K}\textrm{-}\mathfrak{Mod}\right)$.

In Proposition \ref{Thm:LMFunctor}, we prove that a version of the
Long-Moody construction is functorial. We fix two families of morphisms
$\left\{ a_{n}:\mathbf{B}_{n}\rightarrow Aut\left(\mathbf{F}_{n}\right)\right\} _{n\in\mathbb{N}}$
and $\left\{ \varsigma_{n}:\mathbf{F}_{n}\rightarrow\mathbf{B}_{n+1}\right\} _{n\in\mathbb{N}}$,
satisfying some coherence properties (see Section \ref{subsec:Braid-groups-and}).
Once this framework set, we show:
\begin{namedthm}[\textit{Theorem A }(Proposition \ref{Thm:LMFunctor}) ]
There is a functor $\mathbf{LM}_{a,\varsigma}:\mathbf{Fct}\left(\mathfrak{U}\boldsymbol{\beta},\mathbb{K}\textrm{-}\mathfrak{Mod}\right)\rightarrow\mathbf{Fct}\left(\mathfrak{U}\boldsymbol{\beta},\mathbb{K}\textrm{-}\mathfrak{Mod}\right)$,
called the Long-Moody functor with respect to coherent families of
morphisms $\left\{ a_{n}\right\} _{n\in\mathbb{N}}$ and $\left\{ \varsigma_{n}\right\} _{n\in\mathbb{N}}$,
which satisfies for $\sigma\in\mathbf{B}_{n}$ and $M\in Obj\left(\mathbf{Fct}\left(\mathfrak{U}\boldsymbol{\beta},\mathbb{K}\textrm{-}\mathfrak{Mod}\right)\right)$
\[
\mathbf{LM}_{a,\varsigma}\left(M\right)\left(\sigma\right)=\mathcal{LM}\left(M_{n}\right)\left(\sigma\right).
\]
\end{namedthm}
Among the objects in the category $\mathbf{Fct}\left(\mathfrak{U}\boldsymbol{\beta},\mathbb{K}\textrm{-}\mathfrak{Mod}\right)$
the strong polynomial functors play a key role. This notion extends
the classical one of polynomial functors, which were first defined
by Eilenberg and Mac Lane in \cite{EilenbergMacLane} for functors
on module categories, using cross effects. This definition can also
be applied to monoidal categories where the monoidal unit is a null
object. Djament and Vespa introduce in \cite{DV3} the definition
of strong polynomial functors for symmetric monoidal categories with
the monoidal unit being an initial object. Here, the category \textit{$\mathfrak{U}\boldsymbol{\beta}$}
is neither symmetric, nor braided, but pre-braided in the sense of
\cite{WahlRandal-Williams}. However, we show that the notion of strong
polynomial functor extends to the wider context of pre-braided monoidal
categories (see Definition \ref{def:defix}). We also introduce the
notion of very strong polynomial functor (see Definition \ref{def:defverystrong}).
Strong polynomial functors turn out inter alia to be very useful for
homological stability problems. For example, in \cite{WahlRandal-Williams},
Randal-Williams and Wahl prove their homological stability results
for twisted coefficients given by a specific kind of strong polynomial
functors, namely coefficient systems of finite degree (see \cite[Section 4.4]{WahlRandal-Williams}).

We investigate the effects of Long-Moody functors on very strong polynomial
functors. We establish the following theorem, under some mild additional
conditions (introduced in Section \ref{subsec:Additional-conditions})
on the families of morphisms \textit{$\left\{ a_{n}\right\} _{n\in\mathbb{N}}$
}and \textit{$\left\{ \varsigma_{n}\right\} _{n\in\mathbb{N}}$},
which are then said to be reliable.
\begin{namedthm}[\textit{Theorem B} (Corollary \ref{thm:Main result2}) ]
Let $M$ be a very strong polynomial functor of $\mathbf{Fct}\left(\mathfrak{U}\boldsymbol{\beta},\mathbb{K}\textrm{-}\mathfrak{Mod}\right)$
of degree $n$ and let $\left\{ a_{n}\right\} _{n\in\mathbb{N}}$
and $\left\{ \varsigma_{n}\right\} _{n\in\mathbb{N}}$ be coherent
reliable families of morphisms. Then, considering the Long-Moody functor
$\mathbf{LM}_{a,\varsigma}$ with respect to the morphisms $\left\{ a_{n}\right\} _{n\in\mathbb{N}}$
and $\left\{ \varsigma_{n}\right\} _{n\in\mathbb{N}}$, $\mathbf{LM}_{a,\varsigma}\left(M\right)$
is a very strong polynomial functor of degree $n+1$.
\end{namedthm}
Thus, iterating the Long-Moody functor on a very strong polynomial
functor of \textit{$\mathbf{Fct}\left(\mathfrak{U}\boldsymbol{\beta},\mathbb{K}\textrm{-}\mathfrak{Mod}\right)$}
of degree $d$, we generate polynomial functors of \textit{$\mathbf{Fct}\left(\mathfrak{U}\boldsymbol{\beta},\mathbb{K}\textrm{-}\mathfrak{Mod}\right)$},
of any degree bigger than $d$. For instance, Randal-Williams and
Wahl define in \cite[Example 4.3]{WahlRandal-Williams} a functor
$\mathfrak{Bur}_{t}:\mathfrak{U}\boldsymbol{\beta}\rightarrow\mathbb{C}\left[t^{\pm1}\right]\textrm{-}\mathfrak{Mod}$
encoding the unreduced Burau representations. Similarly, we introduce
a functor $\mathfrak{TYM}_{t}:\mathfrak{U}\boldsymbol{\beta}\rightarrow\mathbb{C}\left[t^{\pm1}\right]\textrm{-}\mathfrak{Mod}$
corresponding to the representations considered by Tong, Yang and
Ma in \cite{TYM}. These functors $\mathfrak{Bur}_{t}$ and $\mathfrak{TYM}_{t}$
are very strong polynomial of degree one (see Proposition \ref{prop:BurTYMverystrong}),
and moreover, we prove that the functor $\mathfrak{Bur}_{t}$ is equivalent
to a functor obtained by applying the Long-Moody construction. Thus,
the Long-Moody functors will provide new examples of twisted coefficients
corresponding to the framework of \cite{WahlRandal-Williams}.

This construction is extended in the forthcoming work \cite{soulieLMgeneralized}
for other families of groups, such as automorphism groups of free
groups, braid groups of surfaces, mapping class groups of orientable
and non-orientable surfaces or mapping class groups of 3-manifolds.
The results proved here for (very) strong polynomial functors will
also hold in the adapted categorical framework for these different
families of groups.

The paper is organized as follows. Following \cite{WahlRandal-Williams},
Section \ref{sec:Recollections-on-the} introduces the category $\mathfrak{U}\boldsymbol{\beta}$
and gives first examples of objects of \textit{$\mathbf{Fct}\left(\mathfrak{U}\boldsymbol{\beta},\mathbb{K}\textrm{-}\mathfrak{Mod}\right)$}.
Then, in Section \ref{sec:Functoriality-of-the}, we introduce the
Long-Moody functors, prove Theorem $A$ and give some of their properties.
In Section \ref{sec:Strong-polynomial-functors}, we review the notion
of strong polynomial functors and extend the framework of \cite{DV3}
to pre-braided monoidal categories. Finally, Section \ref{sec:The-Long-Moody-functoreffect}
is devoted to the proof of Theorem $B$ and to some other properties
of these functors. In particular, we tackle the Open Problem $7$
of \cite{BirmanBrendlesurvey}.
\begin{notation}
\label{nota:notationsgenerales}We will consider a commutative ring
$\mathbb{K}$ throughout this work. We denote by $\mathbb{K}\textrm{-}\mathfrak{Mod}$
the category of $\mathbb{K}$-modules. We denote by $\mathfrak{Gr}$
the category of groups. We take the convention that the set of natural
numbers $\mathbb{N}$ is the set of nonnegative integers $\left\{ 0,1,2,\ldots\right\} $.

Let $\mathfrak{Cat}$ denote the category of small categories. Let
$\mathfrak{C}$ be an object of $\mathfrak{Cat}$. We use the abbreviation
$Obj\left(\mathfrak{C}\right)$ to denote the objects of $\mathfrak{C}$.
For $\mathfrak{D}$ a category, we denote by $\mathbf{Fct}\left(\mathfrak{C},\mathfrak{D}\right)$
the category of functors from $\mathfrak{C}$ to $\mathfrak{D}$.
If $0$ is initial object in the category $\mathfrak{C}$, then we
denote by $\iota_{A}:0\rightarrow A$ the unique morphism from $0$
to $A$. The maximal subgroupoid $\mathscr{G}\mathfrak{r}\left(\mathfrak{C}\right)$
is the subcategory of $\mathfrak{C}$ which has the same objects as
$\mathfrak{C}$ and of which the morphisms are the isomorphisms of
$\mathfrak{C}$. We denote by $\mathscr{G}\mathfrak{r}:\mathfrak{Cat}\rightarrow\mathfrak{Cat}$
the functor which associates to a category its maximal subgroupoid.
\end{notation}

\begin{acknowledgement*}
The author wishes to thank most sincerely his PhD advisor Christine
Vespa, and Geoffrey Powell, for their careful reading, corrections,
valuable help and expert advice. He would also especially like to
thank Aurélien Djament, Nariya Kawazumi, Martin Palmer, Vladimir Verchinine
and Nathalie Wahl for the attention they have paid to his work, their
comments, suggestions and helpful discussions. Additionally, he would
like to thank the anonymous referee for his reading of this paper.
\end{acknowledgement*}
\tableofcontents{}

\section{The category $\mathfrak{U}\boldsymbol{\beta}$\label{sec:Recollections-on-the}}

The aim of this section is to describe the category $\mathfrak{U}\boldsymbol{\beta}$
associated with braid groups that is central to this paper. On the
one hand, we recall some notions and properties about Quillen's construction
from a monoidal groupoid and pre-braided monoidal categories introduced
by Randal-Williams and Wahl in \cite{WahlRandal-Williams}. On the
other hand, we introduce examples of functors over the category $\mathfrak{U}\boldsymbol{\beta}$.

We recall that the braid group on $n\geq2$ strands denoted by $\mathbf{B}_{n}$
is the group generated by $\sigma_{1}$, ..., $\sigma_{n-1}$ satisfying
the relations:
\begin{itemize}
\item $\forall i\in\left\{ 1,\ldots,n-2\right\} $, $\sigma_{i}\sigma_{i+1}\sigma_{i}=\sigma_{i+1}\sigma_{i}\sigma_{i+1}$;
\item $\forall i,j\in\left\{ 1,\ldots,n-1\right\} $ such that $\mid i-j\mid\geq2$,
$\sigma_{i}\sigma_{j}=\sigma_{j}\sigma_{i}$.
\end{itemize}
$\mathbf{B}_{0}$ and $\mathbf{B}_{1}$ both are the trivial group.
The family of braid groups is associated with the following groupoid.
\begin{defn}
\label{def:defbraidgroupoid}The braid groupoid $\boldsymbol{\beta}$
is the groupoid with objects the natural numbers $n\in\mathbb{N}$
and morphisms (for $n,m\in\mathbb{N}$):
\[
Hom_{\boldsymbol{\beta}}\left(n,m\right)=\begin{cases}
\mathbf{B}_{n} & \textrm{if \ensuremath{n=m}}\\
\emptyset & \textrm{if \ensuremath{n\neq m}}.
\end{cases}
\]
\end{defn}

\begin{rem}
The composition of morphisms $\circ$ in the groupoid $\boldsymbol{\beta}$
corresponds to the group operation of the braid groups. So we will
abuse the notation throughout this work, identifying $\sigma\circ\sigma'=\sigma\sigma'$
for all elements $\sigma$ and $\sigma'$ of $\mathbf{B}_{n}$ with
$n\in\mathbb{N}$ (with the convention that we read from the right
to the left for the group operation).
\end{rem}

\subsection{Quillen's bracket construction associated with the groupoid $\boldsymbol{\beta}$\label{subsec:Quillen's-bracket-construction}}

This section focuses on the presentation and the study of Quillen's
bracket construction $\mathfrak{U}\boldsymbol{\beta}$ (see \cite[p.219]{graysonQuillen})
on the braid groupoid $\boldsymbol{\beta}$. It associates to $\boldsymbol{\beta}$
a monoidal category whose unit is initial. The category $\mathfrak{U}\boldsymbol{\beta}$
has further properties: Quillen's bracket construction on $\boldsymbol{\beta}$
is a pre-braided monoidal category (see Section \ref{subsec:Pre-braided-monoidal-categories})
and $\boldsymbol{\beta}$ is its maximal subgroupoid. For an introduction
to (braided) strict monoidal categories, we refer to \cite[Chapter XI]{MacLane1}.
\begin{notation}
A strict monoidal category will be denoted by $\left(\mathfrak{C},\natural,0\right)$,
where $\mathfrak{C}$ is the category, $\natural$ is the monoidal
product and $0$ is the monoidal unit.
\end{notation}

\subsubsection{Generalities}

In \cite{WahlRandal-Williams}, Randal-Williams and Wahl study a construction
due to Quillen in \cite[p.219]{graysonQuillen}, for a monoidal category
$S$ acting on a category $X$ in the case $S=X=\mathfrak{G}$ where
$\mathfrak{G}$ is a groupoid. It is called Quillen's bracket construction.
Our study here is based on \cite[Section 1]{WahlRandal-Williams}
taking $\mathfrak{G}=\boldsymbol{\beta}$.
\begin{defn}
\cite[Chapter XI, Section 4]{MacLane1} \label{exa:defprodmonUbeta}A
monoidal product $\natural:\boldsymbol{\beta}\times\boldsymbol{\beta}\longrightarrow\boldsymbol{\beta}$
is defined by the usual addition for the objects and laying two braids
side by side for the morphisms. The object $0$ is the unit of this
monoidal product. The strict monoidal groupoid $\left(\boldsymbol{\beta},\natural,0\right)$
is braided, its braiding is denoted by $b_{-,-}^{\boldsymbol{\beta}}$.
Namely, the braiding is defined for all natural numbers $n$ and $m$
such that $n+m\geq2$ by:
\[
b_{n,m}^{\boldsymbol{\beta}}=\left(\sigma_{m}\circ\cdots\circ\sigma_{2}\circ\sigma_{1}\right)\circ\cdots\circ\left(\sigma_{n+m-2}\circ\cdots\circ\sigma_{n}\circ\sigma_{n-1}\right)\circ\left(\sigma_{n+m-1}\circ\cdots\circ\sigma_{n+1}\circ\sigma_{n}\right)
\]
where $\left\{ \sigma_{i}\right\} _{i\in\left\{ 1,\ldots,n+m-1\right\} }$
denote the Artin generators of the braid group $\mathbf{B}_{n+m}$.
\end{defn}

We consider the strict monoidal groupoid $\left(\boldsymbol{\beta},\natural,0\right)$
throughout this section.
\begin{defn}
\label{def:defUB}\cite[Section 1.1]{WahlRandal-Williams} Quillen's
bracket construction on the groupoid $\boldsymbol{\beta}$, denoted
by $\mathfrak{U\boldsymbol{\beta}}$, is the category defined by:

\begin{itemize}
\item Objects: $Obj\left(\mathfrak{U\boldsymbol{\beta}}\right)=Obj\left(\mathfrak{\boldsymbol{\beta}}\right)=\mathbb{N}$;
\item Morphisms: for $n$ and $n'$ two objects of $\boldsymbol{\beta}$,
the morphisms from $n$ to $n'$ in the category $\mathfrak{U\boldsymbol{\beta}}$
are given by:
\[
Hom_{\mathfrak{U\boldsymbol{\beta}}}\left(n,n'\right)=\underset{\mathfrak{\boldsymbol{\beta}}}{colim}\left[Hom_{\mathfrak{\boldsymbol{\beta}}}\left(-\natural n,n'\right)\right].
\]
In other words, a morphism from $n$ to $n'$ in the category $\mathfrak{U\boldsymbol{\beta}}$,
denoted by $\left[n'-n,f\right]:n\rightarrow n'$, is an equivalence
class of pairs $\left(n'-n,f\right)$ where $n'-n$ is an object of
$\mathfrak{\boldsymbol{\beta}}$, $f:\left(n'-n\right)\natural n\rightarrow n'$
is a morphism of $\boldsymbol{\beta}$, in other words an element
of $\mathbf{B}_{n'}$. The equivalence relation $\sim$ is defined
by $\left(n'-n,f\right)\sim\left(n'-n,f'\right)$ if and only if there
exists an automorphism $g\in Aut_{\boldsymbol{\beta}}\left(n'-n\right)$
such that the following diagram commutes.
\[
\xymatrix{\left(n'-n\right)\natural n\ar[d]_{g\natural id_{n}}\ar[r]^{\,\,\,\,\,\,\,\,\,\,\,\,\,\,\,\,f} & n'\\
\left(n'-n\right)\natural n\ar[ur]_{f'}
}
\]
\item For all objects $n$ of $\mathfrak{U\boldsymbol{\beta}}$, the identity
morphism in the category $\mathfrak{U\boldsymbol{\beta}}$ is given
by $\left[0,id_{n}\right]:n\rightarrow n$.
\item Let $\left[n'-n,f\right]:n\rightarrow n'$ and $\left[n''-n',g\right]:n'\rightarrow n''$
be two morphisms in the category $\mathfrak{U\boldsymbol{\beta}}$.
Then, the composition in the category $\mathfrak{U\boldsymbol{\beta}}$
is defined by:
\[
\left[n''-n',g\right]\circ\left[n'-n,f\right]=\left[n''-n,g\circ\left(id_{n'-n}\natural f\right)\right].
\]
\end{itemize}
\end{defn}

The relationship between the automorphisms of the groupoid $\boldsymbol{\beta}$
and those of its associated Quillen's construction $\mathfrak{U}\boldsymbol{\beta}$
is actually clear. First, let us recall the following notion.
\begin{defn}
\label{def:nozerodivisors}Let $\left(\mathfrak{G},\natural,0\right)$
be a strict monoidal category. It has no zero divisors if for all
objects $A$ and $B$ of $\mathfrak{G}$, $A\natural B\cong0$ if
and only if $A\cong B\cong0$.
\end{defn}

The braid groupoid $\left(\boldsymbol{\beta},\natural,0\right)$ has
no zero divisors. Moreover, by Definition \ref{def:defbraidgroupoid},
$Aut_{\boldsymbol{\beta}}(0)=\left\{ id_{0}\right\} $. Hence, we
deduce the following property from \cite[Proposition 1.7]{WahlRandal-Williams}.
\begin{prop}
The groupoid $\boldsymbol{\beta}$ is the maximal subgroupoid of $\mathfrak{U\boldsymbol{\beta}}$.
\end{prop}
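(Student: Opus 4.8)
The plan is to deduce the statement directly from \cite[Proposition 1.7]{WahlRandal-Williams}, which asserts that for a strict monoidal groupoid $(\mathfrak{G},\natural,0)$ having no zero divisors (Definition \ref{def:nozerodivisors}) and satisfying $Aut_{\mathfrak{G}}(0)=\{id_{0}\}$, the canonical functor $\mathfrak{G}\rightarrow\mathfrak{U}\mathfrak{G}$ identifies $\mathfrak{G}$ with the maximal subgroupoid of $\mathfrak{U}\mathfrak{G}$. Both hypotheses are exactly the two facts just recorded for $\boldsymbol{\beta}$: the monoidal product is the addition of natural numbers, which has no zero divisors, and $Aut_{\boldsymbol{\beta}}(0)=\mathbf{B}_{0}$ is trivial by Definition \ref{def:defbraidgroupoid}. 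So formally a single line suffices; the rest of this proposal unwinds the mechanism underlying Proposition \ref{def:nozerodivisors} in this concrete case, should a self-contained argument be wanted.

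I would first record the embedding. The canonical functor sends an automorphism $\sigma\in\mathbf{B}_{n}=Aut_{\boldsymbol{\beta}}(n)$ to the morphism $[0,\sigma]:n\rightarrow n$ of $\mathfrak{U}\boldsymbol{\beta}$, which is invertible with inverse $[0,\sigma^{-1}]$. This assignment is injective because, by the equivalence relation of Definition \ref{def:defUB}, an equality $[0,\sigma]=[0,\sigma']$ forces $\sigma'\circ(h\natural id_{n})=\sigma$ for some $h\in Aut_{\boldsymbol{\beta}}(0)$, and the triviality of $Aut_{\boldsymbol{\beta}}(0)$ leaves only $\sigma=\sigma'$. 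Hence $\boldsymbol{\beta}$ maps isomorphically onto a subgroupoid of the maximal subgroupoid of $\mathfrak{U}\boldsymbol{\beta}$.

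The substance lies in the converse: every isomorphism of $\mathfrak{U}\boldsymbol{\beta}$ already arises this way. Given an isomorphism $[n'-n,f]:n\rightarrow n'$, its inverse is a morphism $n'\rightarrow n$ whose first component is an object $Y$ of $\boldsymbol{\beta}$ with $Y\natural n'\cong n$; applying the composition rule of Definition \ref{def:defUB} to the round-trip $n\rightarrow n'\rightarrow n$ and comparing with the identity $[0,id_{n}]$, whose first component is $0$, shows that $Y\natural(n'-n)\cong 0$. The no-zero-divisors property then forces $n'-n\cong 0$, so $n=n'$ and the isomorphism has the form $[0,f]$ with $f\in\mathbf{B}_{n}$. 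Combined with the embedding, this identifies $Aut_{\mathfrak{U}\boldsymbol{\beta}}(n)$ with $\mathbf{B}_{n}$ and shows the maximal subgroupoid is exactly $\boldsymbol{\beta}$.

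The only genuinely delicate point, and precisely the one that Proposition \ref{def:nozerodivisors} isolates, is establishing that an isomorphism must preserve the object. This is where the absence of zero divisors is indispensable: in a general monoidal groupoid an object-raising morphism could a priori be inverted by an object-lowering one, and it is the no-zero-divisors hypothesis that rules this out. The complementary role of the triviality of $Aut_{\boldsymbol{\beta}}(0)$ is to keep the embedding of each $\mathbf{B}_{n}$ faithful. Everything else is routine bookkeeping with the definitions.
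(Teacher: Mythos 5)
Your proposal is correct and takes essentially the same route as the paper: the paper's entire proof consists of noting that $\left(\boldsymbol{\beta},\natural,0\right)$ has no zero divisors and that $Aut_{\boldsymbol{\beta}}\left(0\right)=\left\{ id_{0}\right\} $, and then citing \cite[Proposition 1.7]{WahlRandal-Williams}, which is exactly your one-line argument. Your additional unwinding of the mechanism is sound but goes beyond what the paper records (one small slip: the label you cite for the no-zero-divisors property, Definition \ref{def:nozerodivisors}, is a definition, not a proposition; the mechanism you unwind is that of the cited result of Randal-Williams and Wahl).
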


In addition, $\mathfrak{U\boldsymbol{\beta}}$ has the additional
useful property.
\begin{prop}
\cite[Proposition 1.8 (i)]{WahlRandal-Williams} The unit $0$ of
the monoidal structure of the groupoid $\left(\mathfrak{\boldsymbol{\beta}},\natural,0\right)$
is an initial object in the category $\mathfrak{U\boldsymbol{\beta}}$.
\end{prop}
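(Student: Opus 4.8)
The plan is to show directly that for every object $n$ of $\mathfrak{U}\boldsymbol{\beta}$ the set $Hom_{\mathfrak{U}\boldsymbol{\beta}}\left(0,n\right)$ is a singleton, which is exactly the assertion that $0$ is initial. By Definition \ref{def:defUB}, a morphism $0\rightarrow n$ is an equivalence class $\left[n,f\right]$ of pairs $\left(n,f\right)$, where the first coordinate is $n-0=n$ and $f:n\natural0\rightarrow n$ is a morphism of $\boldsymbol{\beta}$; since $n\natural0=n$, such an $f$ is simply an element of $\mathbf{B}_{n}=Aut_{\boldsymbol{\beta}}\left(n\right)$. So I would first reduce the problem to understanding the equivalence relation $\sim$ on the set $\mathbf{B}_{n}$ of representatives.

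For existence, the pair $\left(n,id_{n}\right)$ provides a morphism $\left[n,id_{n}\right]:0\rightarrow n$, so the Hom-set is nonempty. For uniqueness, I would take two arbitrary representatives $f,f'\in\mathbf{B}_{n}$ and exhibit the automorphism $g=\left(f'\right)^{-1}\circ f\in Aut_{\boldsymbol{\beta}}\left(n\right)$. Using the strict monoidal unit axiom $g\natural id_{0}=g$, the defining diagram of $\sim$ commutes, since
\[
f'\circ\left(g\natural id_{0}\right)=f'\circ g=f'\circ\left(f'\right)^{-1}\circ f=f.
\]
Hence $\left(n,f\right)\sim\left(n,f'\right)$, so $\left[n,f\right]=\left[n,f'\right]$, and $Hom_{\mathfrak{U}\boldsymbol{\beta}}\left(0,n\right)$ contains exactly one element, proving that $0$ is initial.

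The genuine content of the argument is that the action of $Aut_{\boldsymbol{\beta}}\left(n\right)$ on $Hom_{\boldsymbol{\beta}}\left(n\natural0,n\right)=\mathbf{B}_{n}$ given by $f\mapsto f\circ\left(g\natural id_{0}\right)$ is transitive, so that the colimit defining the Hom-set collapses to a point. The only subtlety I expect is bookkeeping rather than substance: one must check that no object other than $n-0=n$ contributes to the colimit $\underset{\boldsymbol{\beta}}{colim}\left[Hom_{\boldsymbol{\beta}}\left(-\natural0,n\right)\right]$, which holds because $\boldsymbol{\beta}$ is a groupoid with no zero divisors, and that the degenerate case $n=0$ behaves correctly, which uses that $Aut_{\boldsymbol{\beta}}\left(0\right)=\left\{id_{0}\right\}$ is trivial. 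Both facts were recorded just above, so this is a clean instance of the general criterion \cite[Proposition 1.8 (i)]{WahlRandal-Williams}.
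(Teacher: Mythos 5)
Your proof is correct: unfolding Definition \ref{def:defUB}, a morphism $0\rightarrow n$ is a class $\left[n,f\right]$ with $f\in\mathbf{B}_{n}$, existence is given by $\left[n,id_{n}\right]$, and uniqueness follows from the transitivity of the right $Aut_{\boldsymbol{\beta}}\left(n\right)$-action on representatives (take $g=\left(f'\right)^{-1}\circ f$ and use $g\natural id_{0}=g$ by strictness), so $Hom_{\mathfrak{U}\boldsymbol{\beta}}\left(0,n\right)$ is a singleton. The paper gives no proof of its own but cites \cite[Proposition 1.8 (i)]{WahlRandal-Williams}, whose argument for a monoidal groupoid with no zero divisors and trivial $Aut_{\boldsymbol{\beta}}\left(0\right)$ is exactly this computation, so your write-up is a correct, self-contained version of the same approach.
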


\begin{rem}
Let $n$ be a natural number and $\phi\in Aut_{\mathfrak{\boldsymbol{\beta}}}\left(n\right)$.
Then, as an element of $Hom_{\mathfrak{U\boldsymbol{\beta}}}\left(n,n\right)$,
we will abuse the notation $\phi=\left[0,\phi\right]$. This comes
from the canonical functor:
\begin{eqnarray*}
\boldsymbol{\beta} & \rightarrow & \mathfrak{U\boldsymbol{\beta}}\\
\phi\in Aut_{\mathfrak{\boldsymbol{\beta}}}\left(n\right) & \mapsto & \left[0,\phi\right].
\end{eqnarray*}
\end{rem}

Finally, we are interested in a way to extend an object of $\mathbf{Fct}\left(\boldsymbol{\beta},\mathbb{K}\textrm{-}\mathfrak{Mod}\right)$
to an object of $\mathbf{Fct}\left(\mathfrak{U}\boldsymbol{\beta},\mathbb{K}\textrm{-}\mathfrak{Mod}\right)$.
This amounts to studying the image of the restriction $\mathbf{Fct}\left(\mathfrak{U}\boldsymbol{\beta},\mathbb{K}\textrm{-}\mathfrak{Mod}\right)\rightarrow\mathbf{Fct}\left(\boldsymbol{\beta},\mathbb{K}\textrm{-}\mathfrak{Mod}\right)$.
\begin{prop}
\label{prop:criterionfamilymorphismsfunctor}Let $M$ be an object
of $\mathbf{Fct}\left(\boldsymbol{\beta},\mathbb{K}\textrm{-}\mathfrak{Mod}\right)$.
Assume that for all $n,n',n''\in\mathbb{N}$ such that $n''\geq n'\geq n$,
there exists an assignment $M\left(\left[n'-n,id_{n'}\right]\right):M\left(n\right)\rightarrow M\left(n'\right)$
such that:
\begin{equation}
M\left(\left[n''-n',id_{n''}\right]\right)\circ M\left(\left[n'-n,id_{n'}\right]\right)=M\left(\left[n''-n,id_{n''}\right]\right)\label{eq:criterion2}
\end{equation}
Then, we define a functor $M:\mathfrak{U}\boldsymbol{\beta}\rightarrow\mathbb{K}\textrm{-}\mathfrak{Mod}$
(assigning $M\left(\left[n'-n,\sigma\right]\right)=M\left(\sigma\right)\circ M\left(\left[n'-n,id_{n'}\right]\right)$
for all $\left[n'-n,\sigma\right]\in Hom_{\mathfrak{U}\boldsymbol{\beta}}\left(n,n'\right)$)
if and only if for all $n,n'\in\mathbb{N}$ such that $n'\geq n$:
\begin{equation}
M\left(\left[n'-n,id_{n'}\right]\right)\circ M\left(\sigma\right)=M\left(\psi\natural\sigma\right)\circ M\left(\left[n'-n,id_{n'}\right]\right)\label{eq:criterion}
\end{equation}
for all $\sigma\in\mathbf{B}_{n}$ and all $\psi\in\mathbf{B}_{n'-n}$.
\end{prop}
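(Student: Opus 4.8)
The plan is to exploit the fact that, by the composition rule of Definition \ref{def:defUB}, every morphism of $\mathfrak{U}\boldsymbol{\beta}$ factors canonically as an automorphism followed by one of the special morphisms $[n'-n,id_{n'}]$, which I abbreviate $c_{n,n'}:=[n'-n,id_{n'}]:n\rightarrow n'$. A direct computation with the composition formula shows that, for $\sigma\in\mathbf{B}_{n'}=Aut_{\boldsymbol{\beta}}(n')$ viewed as $[0,\sigma]\in Aut_{\mathfrak{U}\boldsymbol{\beta}}(n')$, one has $[n'-n,\sigma]=[0,\sigma]\circ c_{n,n'}$. Hence the prescribed formula $M([n'-n,\sigma])=M(\sigma)\circ M(c_{n,n'})$ is the only assignment compatible with the functor already given on $\boldsymbol{\beta}$, and the standing hypothesis \eqref{eq:criterion2} is precisely the statement that $M$ respects the composition $c_{n',n''}\circ c_{n,n'}=c_{n,n''}$ of these special morphisms. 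So the task reduces to verifying well-definedness on $\sim$-classes, preservation of identities and preservation of composition, together with the converse implication.

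For the ``if'' direction I would first settle well-definedness. Two representatives $(n'-n,\tau)$ and $(n'-n,\tau')$ are $\sim$-equivalent exactly when $\tau=\tau'\circ(g\natural id_n)$ for some $g\in\mathbf{B}_{n'-n}$; since $M$ is a functor on the groupoid $\boldsymbol{\beta}$ and $M(\tau')$ is invertible, independence of the representative is equivalent to $M(g\natural id_n)\circ M(c_{n,n'})=M(c_{n,n'})$ for all $g$, which is exactly \eqref{eq:criterion} in the case $\sigma=id_n$, $\psi=g$. Preservation of identities is immediate once one notes that $c_{n,n}=[0,id_n]$ is the identity morphism of $n$, so compatibility with the given functor forces $M(c_{n,n})=id_{M(n)}$. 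For composition I would expand $M([n''-n',g])\circ M([n'-n,f])$ by the definition, use \eqref{eq:criterion} with $\psi=id_{n''-n'}$ to slide $M(c_{n',n''})$ past $M(f)$ (turning it into $M(id_{n''-n'}\natural f)$), apply \eqref{eq:criterion2} to merge $M(c_{n',n''})\circ M(c_{n,n'})$ into $M(c_{n,n''})$, and finally use functoriality of $M$ on $\boldsymbol{\beta}$ to recombine $M(g)\circ M(id_{n''-n'}\natural f)=M(g\circ(id_{n''-n'}\natural f))$. Comparing with the composition formula of Definition \ref{def:defUB}, this is exactly $M$ of the composite.

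For the converse, assume the assignment is a functor. From the identity $\psi\natural\sigma=(\psi\natural id_n)\circ(id_{n'-n}\natural\sigma)$ in $\mathbf{B}_{n'}$ and the $\sim$-relation (witnessed by $g=\psi$), one obtains the equality of morphisms $[n'-n,\psi\natural\sigma]=[n'-n,id_{n'-n}\natural\sigma]=c_{n,n'}\circ[0,\sigma]$ in $\mathfrak{U}\boldsymbol{\beta}$. Applying the functor $M$ to both ends gives $M(\psi\natural\sigma)\circ M(c_{n,n'})=M(c_{n,n'})\circ M(\sigma)$, which is \eqref{eq:criterion}.

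I expect the genuine subtlety to be monoidal bookkeeping rather than any deep idea: one must track which $n'-n$ strands $\psi$ acts on, keep the order of composition straight in the non-commutative groups $\mathbf{B}_\bullet$, and recognise that the freedom of $\psi$ in \eqref{eq:criterion} is exactly the freedom quotiented out by $\sim$. The cleanest way to organise the verification, and the point I would emphasise, is that \eqref{eq:criterion} splits into two logically independent halves --- the case $\sigma=id_n$ (all $\psi$), which is well-definedness, and the case $\psi=id_{n'-n}$ (all $\sigma$), which drives compatibility with composition --- and that these two special cases recover the full statement through $\psi\natural\sigma=(\psi\natural id_n)\circ(id_{n'-n}\natural\sigma)$. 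Making that reduction explicit is what keeps the argument short.
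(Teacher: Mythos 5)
Your proof is correct and follows essentially the same route as the paper's: well-definedness of the assignment on $\sim$-classes via relation (\ref{eq:criterion}) with $\sigma=id_{n}$, the composition axiom via relation (\ref{eq:criterion}) with $\psi=id_{n''-n'}$ combined with relation (\ref{eq:criterion2}) and functoriality over $\boldsymbol{\beta}$, and the converse by applying the functor to the identity $\left[n'-n,\psi\natural\sigma\right]=\left[n'-n,id_{n'}\right]\circ\left[0,\sigma\right]$ coming from the defining equivalence relation. Your observation that (\ref{eq:criterion}) splits into two logically independent special cases is a nice organizational gloss, but the underlying argument is the same.
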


\begin{rem}
Note that for $n'=n$, $M\left(\left[n'-n,id_{n'}\right]\right)=Id_{M\left(n\right)}$.
\end{rem}

\begin{proof}
[Proof of Proposition 1.10]Let us assume that relation (\ref{eq:criterion})
is satisfied. We have to show that the assignment on morphisms is
well-defined with respect to $\mathfrak{U}\boldsymbol{\beta}$. First,
let us prove that our assignment conforms with the defining equivalence
relation of $\mathfrak{U}\boldsymbol{\beta}$ (see Definition \ref{def:defUB}).
For $n$ and $n'$ natural numbers such that $n'\geq n$, let us consider
$\left[n'-n,\sigma\right]$ and $\left[n'-n,\sigma'\right]$ in $Hom_{\mathfrak{U}\boldsymbol{\beta}}\left(n,n'\right)$
such that there exists $\psi\in\mathbf{B}_{n'-n}$ so that $\sigma'\circ\left(\psi\natural id_{n}\right)=\sigma$.
Since $M$ is a functor over $\boldsymbol{\beta}$, $M\left(\left[n'-n,\sigma\right]\right)=M\left(\sigma'\right)\circ\left(M\left(\psi\natural id_{n}\right)\circ M\left(\left[n'-n,id_{n'}\right]\right)\right)$.
According to the relation (\ref{eq:criterion}) and since $M$ satisfies
the identity axiom, we deduce that $M\left(\left[n'-n,\sigma\right]\right)=M\left(\sigma'\right)\circ M\left(\psi\natural id_{n}\right)\circ M\left(\left[n'-n,id_{n'}\right]\right)=M\left(\left[n'-n,\sigma'\right]\right)$.

Now, we have to check the composition axiom. Let $n$, $n'$ and $n''$
be natural numbers such that $n''\geq n'\geq n$, let $\left(\left[n'-n,\sigma\right]\right)$
and $\left(\left[n''-n',\sigma'\right]\right)$ be morphisms respectively
in $Hom_{\mathfrak{U}\boldsymbol{\beta}}\left(n,n'\right)$ and in
$Hom_{\mathfrak{U}\boldsymbol{\beta}}\left(n',n''\right)$. By our
assignment and composition in $\mathfrak{U}\boldsymbol{\beta}$ (see
Definition \ref{def:defUB}) we have that:
\[
M\left(\left[n''-n',\sigma'\right]\right)\circ M\left(\left[n'-n,\sigma\right]\right)=M\left(\sigma'\right)\circ\left(M\left(\left[n''-n',id_{n''}\right]\right)\circ M\left(\sigma\right)\right)\circ M\left(\left[n'-n,id_{n'}\right]\right).
\]
According to the relation (\ref{eq:criterion}), we deduce that:
\begin{eqnarray*}
M\left(\left[n''-n',\sigma'\right]\right)\circ M\left(\left[n'-n,\sigma\right]\right) & = & M\left(\sigma'\right)\circ\left(M\left(\left[n''-n',id_{n''}\right]\right)\circ M\left(\sigma\right)\right)\circ M\left(\left[n'-n,id_{n'}\right]\right).\\
 & = & M\left(\sigma'\right)\circ\left(M\left(id_{n''-n'}\natural\sigma\right)\circ M\left(\left[n''-n',id_{n''}\right]\right)\right)\circ M\left(\left[n'-n,id_{n'}\right]\right).
\end{eqnarray*}
Hence, it follows from relation (\ref{eq:criterion2}) that:
\[
M\left(\left[n''-n',\sigma'\right]\right)\circ M\left(\left[n'-n,\sigma\right]\right)=M\left(\sigma'\circ\left(id_{n''-n'}\natural\sigma\right)\right)\circ M\left(\left[n''-n,id_{n}\right]\right)=M\left(\left[n''-n',\sigma'\right]\circ\left[n'-n,\sigma\right]\right).
\]

Conversely, assume that the functor $M:\mathfrak{U}\boldsymbol{\beta}\rightarrow\mathbb{K}\textrm{-}\mathfrak{Mod}$
is well-defined. In particular, composition axiom in $\mathfrak{U}\boldsymbol{\beta}$
is satisfied and implies that for all $n,n'\in\mathbb{N}$ such that
$n'\geq n$, for all $\sigma\in\mathbf{B}_{n}$:
\[
M\left(\left[n'-n,id_{n'}\right]\right)\circ M\left(\sigma\right)=M\left(\left[n'-n,id_{n'-n}\natural\sigma\right]\right).
\]
It follows from the defining equivalence relation of $\mathfrak{U}\boldsymbol{\beta}$
(see Definition (\ref{def:defUB})) that for all $\psi\in\mathbf{B}_{n'-n}$:
\[
M\left(\left[n'-n,id_{n'}\right]\right)\circ M\left(\sigma\right)=M\left(\left[n'-n,\psi\natural\sigma\right]\right).
\]
We deduce from the composition axiom that relation (\ref{eq:criterion})
is satisfied.
\end{proof}
\begin{prop}
\label{prop:criterionnaturaltransfo}Let $M$ and $M'$ be objects
of $\mathbf{Fct}\left(\mathfrak{U}\boldsymbol{\beta},\mathbb{K}\textrm{-}\mathfrak{Mod}\right)$
and $\eta:M\rightarrow M'$ a natural transformation in the category
$\mathbf{Fct}\left(\boldsymbol{\beta},\mathbb{K}\textrm{-}\mathfrak{Mod}\right)$.
Then, $\eta$ is a natural transformation in the category $\mathbf{Fct}\left(\mathfrak{U}\boldsymbol{\beta},\mathbb{K}\textrm{-}\mathfrak{Mod}\right)$
if and only if for all $n,n'\in\mathbb{N}$ such that $n'\geq n$:
\begin{equation}
\eta_{n'}\circ M\left(\left[n'-n,id_{n'}\right]\right)=M'\left(\left[n'-n,id_{n'}\right]\right)\circ\eta_{n}.\label{eq:criterion3}
\end{equation}
\end{prop}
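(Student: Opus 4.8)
The plan is to reduce naturality over $\mathfrak{U}\boldsymbol{\beta}$ to the two kinds of morphisms that generate it, namely the automorphisms inherited from $\boldsymbol{\beta}$ and the standard inclusions $[n'-n,id_{n'}]$. The starting observation, already implicit in Proposition \ref{prop:criterionfamilymorphismsfunctor}, is that every morphism factors: applying the composition rule of Definition \ref{def:defUB} to $[0,\sigma]\circ[n'-n,id_{n'}]$ one finds, for $\sigma\in\mathbf{B}_{n'}$,
\[
[n'-n,\sigma]=\sigma\circ[n'-n,id_{n'}],
\]
so that $M\left([n'-n,\sigma]\right)=M(\sigma)\circ M\left([n'-n,id_{n'}]\right)$ and likewise for $M'$.

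Granting this factorization, the forward implication is immediate: if $\eta$ is natural with respect to $\mathfrak{U}\boldsymbol{\beta}$, then writing out the naturality square for the specific morphism $[n'-n,id_{n'}]$ is exactly relation (\ref{eq:criterion3}).

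For the converse I would verify the naturality square for an arbitrary $[n'-n,\sigma]$ by hand. Substituting the factorization gives $\eta_{n'}\circ M\left([n'-n,\sigma]\right)=\eta_{n'}\circ M(\sigma)\circ M\left([n'-n,id_{n'}]\right)$. Since $\eta$ is assumed natural over $\boldsymbol{\beta}$ and $\sigma$ is an automorphism of $n'$, I may rewrite $\eta_{n'}\circ M(\sigma)=M'(\sigma)\circ\eta_{n'}$; relation (\ref{eq:criterion3}) then converts $\eta_{n'}\circ M\left([n'-n,id_{n'}]\right)$ into $M'\left([n'-n,id_{n'}]\right)\circ\eta_{n}$. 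Reassembling via the factorization for $M'$ produces $M'\left([n'-n,\sigma]\right)\circ\eta_{n}$, which is the identity to be established. There is no genuine obstacle here; the only point requiring care is to confirm that the factorization above accounts for \emph{every} morphism of $\mathfrak{U}\boldsymbol{\beta}$, so that the two hypotheses (naturality over $\boldsymbol{\beta}$ together with relation (\ref{eq:criterion3})) jointly suffice to cover all naturality squares.
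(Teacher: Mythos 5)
Your proposal is correct and follows essentially the same route as the paper: both reduce naturality over $\mathfrak{U}\boldsymbol{\beta}$ to the squares for the standard inclusions $\left[n'-n,id_{n'}\right]$, using the factorization $\left[n'-n,\sigma\right]=\sigma\circ\left[n'-n,id_{n'}\right]$ together with naturality over $\boldsymbol{\beta}$ to handle the automorphism part. The only difference is that you spell out explicitly the factorization and the chain of substitutions that the paper's terser proof leaves implicit.
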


\begin{proof}
The natural transformation $\eta$ extends to the category $\mathbf{Fct}\left(\mathfrak{U}\boldsymbol{\beta},\mathbb{K}\textrm{-}\mathfrak{Mod}\right)$
if and only if for all $n,n'\in\mathbb{N}$ such that $n'\geq n$,
for all $\left[n'-n,\sigma\right]\in Hom_{\mathfrak{U}\boldsymbol{\beta}}\left(n,n'\right)$:
\[
M'\left(\left[n'-n,\sigma\right]\right)\circ\eta_{n}=\eta_{n'}\circ M\left(\left[n'-n,\sigma\right]\right).
\]
Since $\eta$ is a natural transformation in the category $\mathbf{Fct}\left(\boldsymbol{\beta},\mathbb{K}\textrm{-}\mathfrak{Mod}\right)$,
we already have $\eta_{n'}\circ M\left(\sigma\right)=M'\left(\sigma\right)\circ\eta_{n'}$.
Hence, this implies that the necessary and sufficient relation to
satisfy is relation (\ref{eq:criterion3}).
\end{proof}

\subsubsection{Pre-braided monoidal category\label{subsec:Pre-braided-monoidal-categories}}

We present the notion of a pre-braided category, introduced by Randal-Williams
and Wahl in \cite{WahlRandal-Williams}. This is a generalization
of that of a braided monoidal category\textit{.}
\begin{defn}
\cite[Definition 1.5]{WahlRandal-Williams}\label{def:defprebraided}
Let $\left(\mathfrak{C},\natural,0\right)$ be a strict monoidal category
such that the unit $0$ is initial. We say that the monoidal category
$\left(\mathfrak{C},\natural,0\right)$ is pre-braided if:

\begin{itemize}
\item The maximal subgroupoid $\mathscr{G}\mathfrak{r}\left(\mathfrak{C},\natural,0\right)$
is a braided monoidal category, where the monoidal structure is induced
by that of $\left(\mathfrak{C},\natural,0\right)$.
\item For all objects $A$ and $B$ of $\mathfrak{C}$, the braiding associated
with the maximal subgroupoid $b_{A,B}^{\mathfrak{C}}:A\natural B\longrightarrow B\natural A$
satisfies:
\[
b_{A,B}^{\mathfrak{C}}\circ\left(id_{A}\natural\iota_{B}\right)=\iota_{B}\natural id_{A}:A\longrightarrow B\natural A.
\]
Recall that the notation $\iota_{B}:0\rightarrow B$ was introduced
in Notation \ref{nota:notationsgenerales}.
\end{itemize}
Since the groupoid $\left(\boldsymbol{\beta},\natural,0\right)$ is
braided monoidal and it has no zero divisors, we deduce from \cite[Proposition 1.8]{WahlRandal-Williams}
the following properties.
\begin{figure}
\textcolor{white}{~~~~~~~~~~~~~~~~~~~~~~~~~~~~~~~~~~~~~~~~~~~~~~~~~~~~}\includegraphics[clip,scale=0.14]{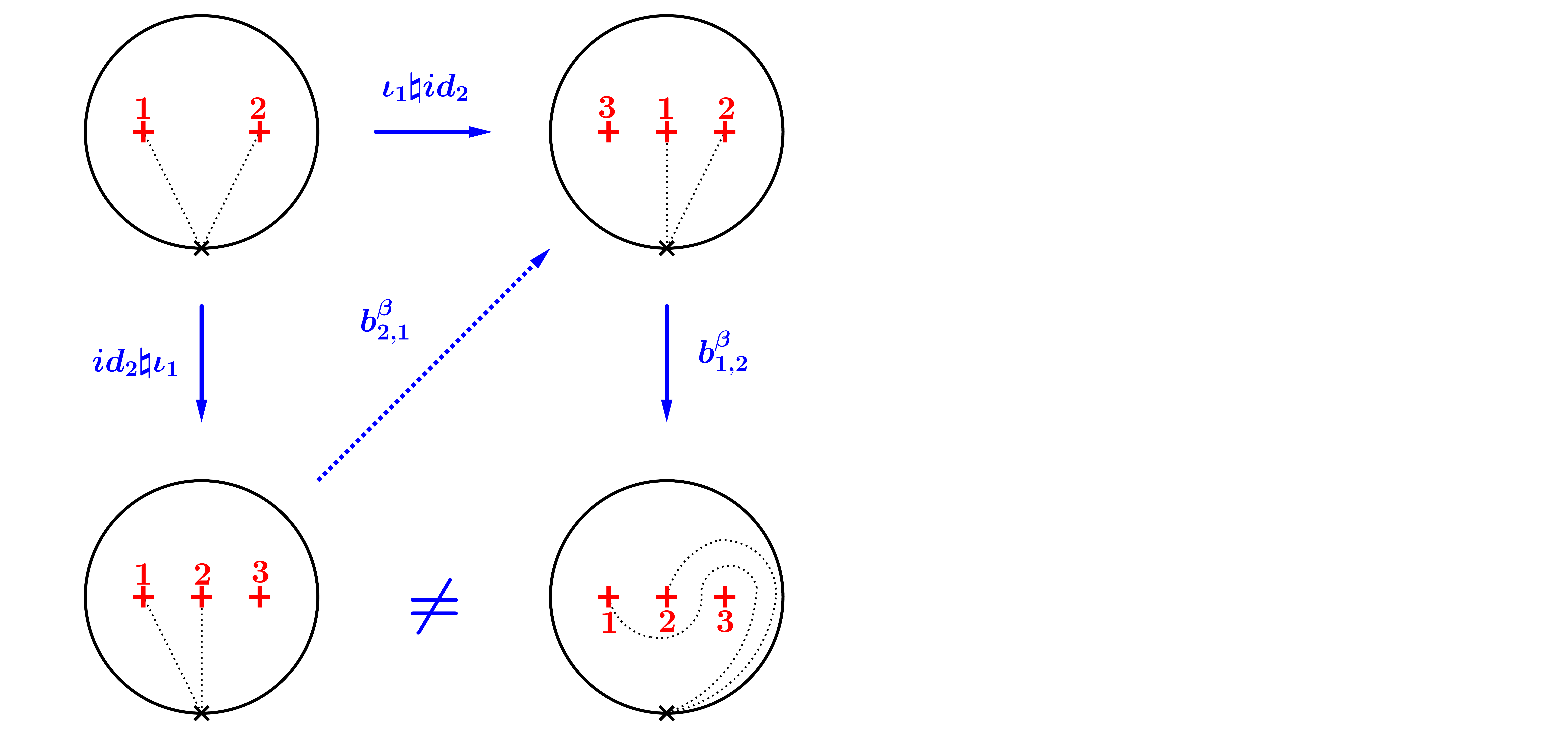}

\caption{Failure of the braiding property}
\end{figure}
\end{defn}

\begin{prop}
\label{prop:homogenousprebraided} The category $\mathfrak{U\boldsymbol{\beta}}$
is pre-braided monoidal. The monoidal structure $\left(\mathfrak{U\boldsymbol{\beta}},\natural,0\right)$
is defined on objects as that of $\left(\boldsymbol{\beta},\natural,0\right)$
and defined on morphisms letting for $\left[n'-n,f\right]\in Hom_{\mathfrak{U\boldsymbol{\beta}}}\left(n,n'\right)$
and $\left[m'-m,g\right]\in Hom_{\mathfrak{U\boldsymbol{\beta}}}\left(m,m'\right)$:
\[
\left[m'-m,g\right]\natural\left[n'-n,f\right]=\left[\left(m'-m\right)\natural\left(n'-n\right),\left(g\natural f\right)\circ\left(id_{m'-m}\natural\left(b_{m,n'-n}^{\mathfrak{\boldsymbol{\beta}}}\right)^{-1}\natural id_{n}\right)\right].
\]
In particular, the canonical functor $\mathfrak{\boldsymbol{\beta}}\rightarrow\mathfrak{U\boldsymbol{\beta}}$
is monoidal.
\end{prop}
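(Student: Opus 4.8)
The plan is to recognise this statement as a direct instance of the general machinery developed in \cite{WahlRandal-Williams}. Their \cite[Proposition 1.8]{WahlRandal-Williams} asserts that for any braided monoidal groupoid $\left(\mathfrak{G},\natural,0\right)$ which has no zero divisors and for which $Aut_{\mathfrak{G}}\left(0\right)$ is trivial, Quillen's bracket construction $\mathfrak{U}\mathfrak{G}$ carries a pre-braided monoidal structure extending that of $\mathfrak{G}$, with respect to which the canonical functor $\mathfrak{G}\rightarrow\mathfrak{U}\mathfrak{G}$ is monoidal. So the first step is simply to check that $\left(\boldsymbol{\beta},\natural,0\right)$ meets these three hypotheses: it is braided monoidal by Definition \ref{exa:defprodmonUbeta}, it has no zero divisors (as $n+m=0$ forces $n=m=0$ in $\mathbb{N}$), and $Aut_{\boldsymbol{\beta}}\left(0\right)=\mathbf{B}_{0}$ is trivial. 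Having verified these, the cited result yields at once that $\mathfrak{U}\boldsymbol{\beta}$ is pre-braided monoidal and that $\boldsymbol{\beta}\rightarrow\mathfrak{U}\boldsymbol{\beta}$ is monoidal.

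It then remains to make the induced monoidal product explicit on morphisms, that is, to justify the displayed formula. I would first take it as the definition of $\natural$ on morphisms and verify that it is well-defined with respect to the equivalence relation of Definition \ref{def:defUB}. Concretely, if $\left[m'-m,g\right]=\left[m'-m,\tilde{g}\right]$ and $\left[n'-n,f\right]=\left[n'-n,\tilde{f}\right]$, witnessed by automorphisms $h\in Aut_{\boldsymbol{\beta}}\left(m'-m\right)$ and $k\in Aut_{\boldsymbol{\beta}}\left(n'-n\right)$, then $h\natural k$ should be the automorphism of $\left(m'-m\right)\natural\left(n'-n\right)$ witnessing that the two products agree; establishing this amounts to commuting $h\natural k$ past the conjugating braiding $\left(b_{m,n'-n}^{\boldsymbol{\beta}}\right)^{-1}$, which is permitted precisely by the naturality of the braiding of $\boldsymbol{\beta}$.

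Next I would confirm that $\natural$ is a bifunctor. Compatibility with identities is immediate since $b_{m,0}^{\boldsymbol{\beta}}=id$, so that the braiding factor disappears; compatibility with composition is the one genuinely computational point, where the composition rule of Definition \ref{def:defUB} must be reconciled with the formula by reorganising the interleaved strands via the naturality and hexagon axioms for $b_{-,-}^{\boldsymbol{\beta}}$. Strict associativity and unitality are then inherited from those of $\left(\boldsymbol{\beta},\natural,0\right)$. The assertion that the canonical functor is monoidal is recovered as the special case $m'-m=n'-n=0$, in which the formula collapses to $g\natural f$.

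Finally, for the pre-braided axioms of Definition \ref{def:defprebraided} I would use that the maximal subgroupoid of $\mathfrak{U}\boldsymbol{\beta}$ is $\boldsymbol{\beta}$, which is braided monoidal, and then check the compatibility condition $b_{A,B}^{\mathfrak{U}\boldsymbol{\beta}}\circ\left(id_{A}\natural\iota_{B}\right)=\iota_{B}\natural id_{A}$ by writing $\iota_{B}=\left[B,id_{B}\right]:0\rightarrow B$ and unwinding both sides with the explicit product formula. I expect the main obstacle to be the bookkeeping in the well-definedness and bifunctoriality checks, where several nested braidings must be rearranged; drawing these as string diagrams, as the figure in Definition \ref{def:defprebraided} suggests, keeps each application of naturality transparent.
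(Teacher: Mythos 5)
Your proposal takes essentially the same route as the paper: the paper gives no proof beyond observing that $\left(\boldsymbol{\beta},\natural,0\right)$ is a braided monoidal groupoid with no zero divisors and trivial $Aut_{\boldsymbol{\beta}}\left(0\right)$, and then invoking \cite[Proposition 1.8]{WahlRandal-Williams} — which is exactly your first paragraph. Your remaining verifications (well-definedness via naturality of $b_{-,-}^{\boldsymbol{\beta}}$, bifunctoriality, and the pre-braiding identity $b_{A,B}^{\mathfrak{U}\boldsymbol{\beta}}\circ\left(id_{A}\natural\iota_{B}\right)=\iota_{B}\natural id_{A}$) are correct but simply unwind what that citation already packages.
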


\begin{rem}
The category $\left(\mathfrak{U\boldsymbol{\beta}},\natural,0\right)$
is pre-braided monoidal, but not braided. Indeed, as Figure $1$ shows,
the pre-braiding defined on $\mathfrak{U}\boldsymbol{\beta}$ is not
a braiding: Figure $1$ shows that $b_{1,2}^{\boldsymbol{\beta}}\circ\left(\iota_{1}\natural id{}_{2}\right)\neq id_{2}\natural\iota_{1}$
whereas these two morphisms should be equal if $b_{-,-}^{\boldsymbol{\beta}}$
were a braiding.
\end{rem}

\subsection{Examples of functors associated with braid representations\label{subsec:Examples-of-functors}}

Different families of representations of braid groups can be used
to form functors over the pre-braided category $\mathfrak{U}\boldsymbol{\beta}$
to the category $\mathbb{K}\textrm{-}\mathfrak{Mod}$. Namely, considering
$\left\{ M_{n}:\mathbf{B}_{n}\rightarrow\mathbb{K}\textrm{-}\mathfrak{Mod}\right\} _{n\in\mathbb{N}}$
representations of braid groups, or equivalently an object $M$ of
$\mathbf{Fct}\left(\boldsymbol{\beta},\mathbb{K}\textrm{-}\mathfrak{Mod}\right)$,
we are interested in the situations where Proposition \ref{prop:criterionfamilymorphismsfunctor}
applies so as to define an object of $\mathbf{Fct}\left(\mathfrak{U}\boldsymbol{\beta},\mathbb{K}\textrm{-}\mathfrak{Mod}\right)$.

\paragraph{Tong-Yang-Ma results}

In $1996$, in the article \cite{TYM}, Tong, Yang and Ma investigated
the representations of $\mathbf{B}{}_{n}$ where the $i$-th generator
is sent to a matrix of the form $Id_{i-1}\oplus T\oplus Id_{n-i-1}$,
with $T$ a $m\times m$ non-singular matrix and $m\geq2$. In particular,
for $m=2$, they prove that there exist up to equivalence only two
non trivial representations of this type. We give here their result
and an interpretation of their work from a functorial point of view,
considering the representations over the ring of Laurent polynomials
in one variable $\mathbb{C}\left[t^{\pm1}\right]$.
\begin{notation}
\label{nota:grandN}Let $\mathfrak{gr}$ denote the full subcategory
of $\mathfrak{Gr}$ of finitely generated free groups. The free product
$*:\mathfrak{gr}\times\mathfrak{gr}\rightarrow\mathfrak{gr}$ defines
a monoidal structure over $\mathfrak{gr}$, with $0$ the unit, denoted
by $\left(\mathfrak{gr},*,0\right)$.

Let $\left(\mathbb{N},\leq\right)$ denote the category of natural
numbers (natural means non-negative) considered as a poset. For all
natural numbers $n$, we denote by $\gamma_{n}$ the unique element
of $Hom_{\left(\mathbb{N},\leq\right)}\left(n,n+1\right)$. For all
natural numbers $n$ and $n'$ such that $n'\geq n$, we denote by
$\gamma{}_{n,n'}:n\rightarrow n'$ the unique element of $Hom_{\left(\mathbb{N},\leq\right)}\left(n,n'\right)$,
composition of the morphisms $\gamma{}_{n'-1}\circ\gamma{}_{n'-2}\circ\cdots\circ\gamma{}_{n+1}\circ\gamma{}_{n}$.
The addition defines a strict monoidal structure on $\left(\mathbb{N},\leq\right)$,
denoted by $\left(\left(\mathbb{N},\leq\right),+,0\right)$.
\end{notation}

\begin{defn}
\label{def:functorBandGL}Let $\mathbf{B}_{-}:\left(\mathbb{N},\leq\right)\rightarrow\mathfrak{Gr}$
and $GL_{-}:\left(\mathbb{N},\leq\right)\rightarrow\mathfrak{Gr}$
be the functors defined by:

\begin{itemize}
\item Objects: for all natural numbers $n$, $\mathbf{B}_{-}\left(n\right)=\mathbf{B}_{n}$
the braid group on $n$ strands and $GL_{-}\left(n\right)=GL_{n}\left(\mathbb{C}\left[t^{\pm1}\right]\right)$
the general linear group of degree $n$.
\item Morphisms: let $n$ be a natural number. We define $\mathbf{B}_{-}\left(\gamma{}_{n}\right)=id_{1}\natural-:\mathbf{B}_{n}\hookrightarrow\mathbf{B}_{n+1}$
(where $\natural$ is the monoidal product introduced in Example \ref{exa:defprodmonUbeta}).
We define $GL_{-}\left(\gamma{}_{n}\right):GL_{n}\left(\mathbb{C}\left[t^{\pm1}\right]\right)\hookrightarrow GL_{n+1}\left(\mathbb{C}\left[t^{\pm1}\right]\right)$
assigning $GL_{-}\left(\gamma{}_{n}\right)\left(\varphi\right)=id_{1}\oplus\varphi$
for all $\varphi\in GL_{n}\left(\mathbb{C}\left[t^{\pm1}\right]\right)$. 
\end{itemize}
\end{defn}

\begin{notation}
\label{nota:defincl}For all natural numbers $n\geq2$, for all $i\in\left\{ 1,\ldots,n-1\right\} $,
we denote by $incl_{i}^{n}:\mathbf{B}_{2}\cong\mathbb{Z}\hookrightarrow\mathbf{B}_{n}$
the inclusion morphism induced by:
\[
incl_{i}^{n}\left(\sigma_{1}\right)=\sigma_{i}.
\]
\end{notation}

\begin{thm}
\label{thm:resulttym}\cite[Part II]{TYM} Let $\eta:\mathbf{B}_{-}\longrightarrow GL_{-}$
be a natural transformation. Assume that for all natural numbers $n\geq2$,
for all $i\in\left\{ 1,\ldots,n-1\right\} $, the following diagram
is commutative:
\[
\xymatrix{\mathbf{B}_{n}\ar@{->}[rr]^{\eta_{n}} &  & GL_{n}\left(\mathbb{C}\left[t^{\pm1}\right]\right)\\
\mathbf{B}_{2}\ar@{->}[rr]_{\eta_{2}}\ar@{->}[u]^{incl_{i}^{n}} &  & GL_{2}\left(\mathbb{C}\left[t^{\pm1}\right]\right).\ar@{->}[u]_{id_{i-1}\oplus-\oplus id_{n-i-1}}
}
\]
Here, two such natural transformations $\eta$ and $\eta'$ are said
to be equivalent if there exists a natural equivalence $\mu:GL_{-}\longrightarrow GL_{-}$
such that $\mu\circ\eta=\eta'$ or if $\eta'=\eta^{*}$ where $-^{\text{*}}$
denotes the dual representation. Then, $\eta$ is equivalent to one
of the following natural transformations.

\begin{enumerate}
\item The trivial natural transformation, denoted by $\mathfrak{id}$: for
every generator $\sigma_{i}$ of $\mathbf{B}_{n}$, $\mathfrak{id}_{n}\left(\sigma_{i}\right)=Id_{GL_{n}\left(\mathbb{C}\left[t^{\pm1}\right]\right)}.$
\item The unreduced Burau natural transformation, denoted by $\mathfrak{bur}$:
for all generators $\sigma_{i}$ of $\mathbf{B}_{n}$,
\[
\mathfrak{bur}_{n,t}\left(\sigma_{i}\right)=Id_{i-1}\oplus B\left(t\right)\oplus Id_{n-i-1},
\]
with $B\left(t\right)=\left[\begin{array}{cc}
0 & t\\
1 & 1-t
\end{array}\right]$.
\item The natural transformation denoted by $\mathfrak{tym}$: for every
generator $\sigma_{i}$ of $\mathbf{B}_{n}$ if $n\geq2$,
\[
\mathfrak{tym}_{n,t}\left(\sigma_{i}\right)=Id_{i-1}\oplus TYM\left(t\right)\oplus Id_{n-i-1},
\]
with $TYM\left(t\right)=\left[\begin{array}{cc}
0 & t\\
1 & 0
\end{array}\right]$. We call it the Tong-Yang-Ma representation.
\end{enumerate}
\end{thm}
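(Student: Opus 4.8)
The plan is to reduce the classification to a finite computation with a single $2\times2$ matrix and then normalize. First I would observe that the whole datum of $\eta$ is encoded by the single matrix $T:=\eta_{2}\left(\sigma_{1}\right)\in GL_{2}\left(\mathbb{C}\left[t^{\pm1}\right]\right)$. Indeed, since $incl_{i}^{n}\left(\sigma_{1}\right)=\sigma_{i}$, the commutativity hypothesis forces $\eta_{n}\left(\sigma_{i}\right)=id_{i-1}\oplus T\oplus id_{n-i-1}$ for every generator, and as the $\sigma_{i}$ generate $\mathbf{B}_{n}$ this determines $\eta_{n}$ for all $n$; naturality with respect to the stabilizations $\mathbf{B}_{-}\left(\gamma_{n}\right)=id_{1}\natural-$ is then automatic from the block shape. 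Conversely, such a prescription defines a natural transformation if and only if each $\eta_{n}$ respects the braid relations.

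Second, I would cut the relations down to a single identity. The far-commutativity relations $\sigma_{i}\sigma_{j}=\sigma_{j}\sigma_{i}$ (for $\left|i-j\right|\geq2$) hold automatically because the blocks act on disjoint coordinates; and every braid relation $\sigma_{i}\sigma_{i+1}\sigma_{i}=\sigma_{i+1}\sigma_{i}\sigma_{i+1}$ is, after restricting to the three coordinates it involves, the single relation in $\mathbf{B}_{3}$
\[
\left(T\oplus id_{1}\right)\left(id_{1}\oplus T\right)\left(T\oplus id_{1}\right)=\left(id_{1}\oplus T\right)\left(T\oplus id_{1}\right)\left(id_{1}\oplus T\right).
\]
So the problem becomes: classify, up to the stated equivalence, the non-singular $T=\left(\begin{smallmatrix}a&b\\c&d\end{smallmatrix}\right)$ over $\mathbb{C}\left[t^{\pm1}\right]$ satisfying this $3\times3$ identity.

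Third, I would expand both sides and read off the polynomial system. Comparing entries and using that $\mathbb{C}\left[t^{\pm1}\right]$ is an integral domain yields the relations $a\left(a+bc-1\right)=0$, $d\left(d+bc-1\right)=0$, $abd=acd=0$ and $ad\left(d-a\right)=0$, together with the constraint that $\det T=ad-bc$ be a unit, hence nonzero. A case analysis on the vanishing of the diagonal entries $a$ and $d$ then yields exactly three shapes: if $a,d\neq0$ one is forced into $b=c=0$ and $a=d=1$, i.e. $T=id$; if precisely one diagonal entry vanishes then $bc$ equals the unit $1-d$ (resp. $1-a$), and $T$ has eigenvalues $\left\{1,\det T\right\}$; if $a=d=0$ then $T$ has trace zero, with $bc$ a unit and eigenvalues $\pm\sqrt{bc}$. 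These correspond respectively to the trivial, Burau and Tong-Yang-Ma shapes.

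Finally, I would normalize each shape up to the equivalence. A natural equivalence $\mu\colon GL_{-}\rightarrow GL_{-}$ is, level by level, an automorphism of $GL_{n}\left(\mathbb{C}\left[t^{\pm1}\right]\right)$ compatible with the stabilizations $id_{1}\oplus-$; the geometrically relevant ones are conjugations by a compatible family of invertible matrices, acting on the generating datum $T$ by conjugation. Using conjugation by diagonal matrices (to rescale the off-diagonal entries, each of which is forced to be a unit) and by the transposition matrix (to interchange the two single-vanishing cases), I would bring the three shapes to the listed representatives $id$, $B\left(t\right)$ and $TYM\left(t\right)$, the free unit playing the role of the variable $t$. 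I expect the main obstacle to be precisely this last step: pinning down exactly which automorphisms arise as components of a natural equivalence of $GL_{-}$, and checking that the available conjugations (together with the determinant/scaling freedom) suffice to collapse each family to a single listed matrix, reconciling the fact that the non-trivial eigenvalue is an arbitrary unit with representatives in which it is exactly $t$ (resp. $\pm\sqrt{t}$). The braid computation itself is a determinate finite calculation; the conceptual work lies in controlling the equivalence.
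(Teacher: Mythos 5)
The first thing to note is that the paper does not prove Theorem \ref{thm:resulttym} at all: it is quoted from \cite[Part II]{TYM}, so there is no internal proof to measure your proposal against, and your strategy (encode $\eta$ by the single block $T=\eta_{2}\left(\sigma_{1}\right)$, reduce the braid relations to one $3\times3$ identity, solve, then normalize) is essentially the strategy of the cited source. Your reduction and computation are correct: the commutativity hypothesis forces $\eta_{n}\left(\sigma_{i}\right)=Id_{i-1}\oplus T\oplus Id_{n-i-1}$, naturality with respect to $id_{1}\natural-$ and far commutativity are automatic from the block shape, and expanding the $3\times3$ identity for $T=\left[\begin{smallmatrix}a&b\\c&d\end{smallmatrix}\right]$ gives exactly the system $a\left(a+bc-1\right)=0$, $d\left(d+bc-1\right)=0$, $abd=acd=0$, $ad\left(d-a\right)=0$, whose analysis over the domain $\mathbb{C}\left[t^{\pm1}\right]$ yields precisely the three shapes you list.

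The genuine gap is the step you defer to the end, and it is not merely technical: it cannot be closed in the form you hope. Diagonal conjugation (which does assemble into a natural equivalence of $GL_{-}$, via the compatible family $C_{n}=\mathrm{diag}\left(1,u,\dots,u^{n-1}\right)$) normalizes the two nontrivial shapes to $B\left(u\right)=\left[\begin{smallmatrix}1-u&u\\1&0\end{smallmatrix}\right]$ and $TYM\left(u\right)=\left[\begin{smallmatrix}0&u\\1&0\end{smallmatrix}\right]$ for some unit $u=\lambda t^{k}$, but no further: inner natural equivalences preserve the characteristic polynomial of the block; radial twists $A\mapsto\chi\left(\det A\right)A$ are ruled out by naturality with respect to $id_{1}\oplus-$ (compare the $\left(1,1\right)$ entries of $\mu_{n+1}\left(id_{1}\oplus A\right)$ and $id_{1}\oplus\mu_{n}\left(A\right)$); and the remaining automorphisms of $GL_{n}\left(\mathbb{C}\left[t^{\pm1}\right]\right)$ in the standard description (entrywise ring automorphisms $t\mapsto\lambda t^{\pm1}$ and transpose-inverse) only move $u$ within $\left\{ \lambda'u^{\pm1}\right\}$ after such a substitution. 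Consequently a Burau-type $\eta$ with $u=t^{2}$ --- that is, the functor $\mathfrak{Bur}_{t^{2}}$, which the paper itself treats as distinct from $\mathfrak{Bur}_{t}$ in Proposition \ref{prop:recoveringunredBurau} --- or with constant $u=2$, satisfies all the hypotheses yet is equivalent to none of the three listed representatives. What your computation honestly establishes is: every such $\eta$ is equivalent to the trivial transformation, to $B\left(u\right)$, or to $TYM\left(u\right)$ for \emph{some} unit $u$. That is the actual content of the Tong-Yang-Ma classification (in the original, $t$ is a free parameter), and it is the only tenable reading of the statement; pinning $u$ down to exactly $t$ is impossible with the equivalence as literally defined, so the final step of your plan should be replaced by this reformulation rather than pursued as an automorphism computation.
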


The unreduced Burau representation (see \cite[Section 3.1]{kasselturaev}
or \cite[Section 4.2]{BirmanBrendlesurvey} for more details about
this family of representations) is reducible but indecomposable, whereas
the Tong-Yang-Ma representation is irreducible (see \cite[Part II]{TYM}).
We can also consider a natural transformation using the family of
reduced Burau representations (see \cite[Section 3.3]{kasselturaev}
for more details about the associated family of representations):
these are irreducible subrepresentations of the unreduced Burau representations.
\begin{defn}
Let $\ensuremath{GL_{-\,\text{-}1}}:\left(\mathbb{N},\leq\right)\rightarrow\mathfrak{Gr}$
be the functor defined by:

\begin{itemize}
\item Objects: for all natural numbers $n$, $\ensuremath{GL_{-\,\text{-}1}}\left(n\right)=GL_{n-1}\left(\mathbb{C}\left[t^{\pm1}\right]\right)$
the general linear group of degree $n-1$.
\item Morphisms: let $n$ be a natural number. We define $\ensuremath{GL_{-\,\text{-}1}}\left(\gamma{}_{n}\right):GL_{n-1}\left(\mathbb{C}\left[t^{\pm1}\right]\right)\hookrightarrow GL_{n}\left(\mathbb{C}\left[t^{\pm1}\right]\right)$
assigning $GL_{-}\left(\gamma{}_{n}\right)\left(\varphi\right)=id_{1}\oplus\varphi$
for all $\varphi\in GL_{n-1}\left(\mathbb{C}\left[t^{\pm1}\right]\right)$.
\end{itemize}
\end{defn}

\begin{defn}
\label{def:defburred}The reduced Burau natural transformation, denoted
by $\mathfrak{\overline{bur}}:\mathbf{B}_{-}\rightarrow\ensuremath{GL_{-\,\text{-}1}}$
is defined by:

\begin{itemize}
\item For $n=2$, one assigns $\mathfrak{\overline{\mathfrak{bur}}}\left(\sigma_{1}\right)$
to be the automorphism of $\mathbb{C}\left[t^{\pm1}\right]$ defined
by the multiplication by $-t$.
\item For all natural numbers $n\geq3$, we define for every Artin generator
$\sigma_{i}$ of $\mathbf{B}_{n}$ with $i\in\left\{ 2,\ldots,n-2\right\} $:
\[
\mathfrak{\overline{\mathfrak{bur}}}_{n,t}\left(\sigma_{i}\right)=Id_{i-2}\oplus\overline{B}\left(t\right)\oplus Id_{n-i-2}
\]
with
\[
\overline{B}\left(t\right)=\left[\begin{array}{ccc}
1 & 0 & 0\\
1 & -t & t\\
0 & 0 & 1
\end{array}\right]
\]
and
\[
\mathfrak{\overline{\mathfrak{bur}}}_{n,t}\left(\sigma_{1}\right)=\left[\begin{array}{cc}
-t & t\\
0 & 1
\end{array}\right]\oplus Id_{n-3}\,\,\,\,\,\,\,\,\,\,;\,\,\,\,\,\,\,\,\mathfrak{\overline{\mathfrak{bur}}}_{n,t}\left(\sigma_{n-1}\right)=Id_{n-3}\oplus\left[\begin{array}{cc}
1 & 0\\
1 & -t
\end{array}\right].
\]
\end{itemize}
\end{defn}

Let us use these natural transformations to form functors over the
category $\mathfrak{U}\boldsymbol{\beta}$. Indeed, a natural transformation
$\eta:\mathbf{B}_{-}\rightarrow GL_{-}$ (or $\ensuremath{GL_{-\,\text{-}1}}$)
provides in particular:

\begin{itemize}
\item a functor $\mathfrak{N}:\boldsymbol{\beta}\longrightarrow\mathbb{C}\left[t^{\pm1}\right]\textrm{-}\mathfrak{Mod}$;
\item morphisms $\mathfrak{N}\left(\left[n'-n,id_{n'}\right]\right):\mathfrak{N}\left(n\right)\rightarrow\mathfrak{N}\left(n'\right)$
for all natural numbers $n'\geq n$, such that the relation (\ref{eq:criterion2})
of Proposition \ref{prop:criterionfamilymorphismsfunctor} is satisfied.
\end{itemize}
Therefore, according to Proposition \ref{prop:criterionfamilymorphismsfunctor},
it suffices to show that the relation (\ref{eq:criterion}) is satisfied
to prove that $\mathfrak{N}$ is an object of $\mathbf{Fct}\left(\mathfrak{U}\boldsymbol{\beta},\mathbb{C}\left[t^{\pm1}\right]\textrm{-}\mathfrak{Mod}\right)$.
\begin{notation}
\label{nota:defembeedingtymbu}Recall that $0$ is a null object in
the category of $R$-modules, and that the notation $\iota_{G}:0\rightarrow G$
was introduced in Notation \ref{nota:notationsgenerales}. Let $n\in\mathbb{N}$.
For all natural numbers $n$ and $n'$ such that $n'\geq n$, we define
$\iota_{\mathbb{C}\left[t^{\pm1}\right]^{\oplus n'-n}}\oplus id_{\mathbb{C}\left[t^{\pm1}\right]^{\oplus n}}:\mathbb{C}\left[t^{\pm1}\right]^{\oplus n}\hookrightarrow\mathbb{C}\left[t^{\pm1}\right]^{\oplus n'}$
the embedding of $\mathbb{C}\left[t^{\pm1}\right]^{\oplus n}$ as
the submodule of $\mathbb{C}\left[t^{\pm1}\right]^{\oplus n'}$ given
by the $n$ last copies of $\mathbb{C}\left[t^{\pm1}\right]$.
\end{notation}

\paragraph{Tong-Yang-Ma functor:\label{exa:deftym}}

This example is based on the family introduced by Tong, Yang and Ma
(see Theorem \ref{thm:resulttym}). Let $\mathfrak{TYM}_{t}:\boldsymbol{\beta}\rightarrow\mathbb{C}\left[t^{\pm1}\right]\textrm{-}\mathfrak{Mod}$
be the functor defined on objects by $\mathfrak{TYM}_{t}\left(n\right)=\mathbb{C}\left[t^{\pm1}\right]^{\oplus n}$
for all natural numbers $n$, and for all numbers $n\geq2$, for every
Artin generator $\sigma_{i}$ of $\mathbf{B}_{n}$, by $\mathfrak{TYM}_{t}\left(\sigma_{i}\right)=\mathfrak{tym}_{n,t}\left(\sigma_{i}\right)$
for morphisms. For all natural numbers $n$ and $n'$ such that $n'\geq n$,
we assign $\mathfrak{TYM}_{t}\left(\left[n'-n,id_{n'}\right]\right):\mathbb{C}\left[t^{\pm1}\right]^{\oplus n}\hookrightarrow\mathbb{C}\left[t^{\pm1}\right]^{\oplus n'}$
to be the embedding $\iota_{\mathbb{C}\left[t^{\pm1}\right]^{\oplus n'-n}}\oplus id_{\mathbb{C}\left[t^{\pm1}\right]^{\oplus n}}$
(where these morphisms are introduced in Notation \ref{nota:defembeedingtymbu}).
For all natural numbers $n''\geq n'\geq n$, for all Artin generators
$\sigma_{i}\in\mathbf{B}_{n}$ and all $\psi_{j}\in\mathbf{B}_{n'-n}$,
our assignments give:
\begin{eqnarray*}
\mathfrak{TYM}_{t}\left(\psi\natural\sigma\right)\circ\mathfrak{TYM}_{t}\left(\left[n'-n,id_{n'}\right]\right) & = & \left(Id_{j-1}\oplus TYM\left(t\right)\oplus Id_{\left(n'-n\right)-j-1}\oplus Id_{n'-n+i-1}\oplus TYM\left(t\right)\oplus Id_{n'-i-1}\right)\\
 &  & \circ\left(\iota_{\mathbb{C}\left[t^{\pm1}\right]^{\oplus n'-n}}\oplus id_{\mathbb{C}\left[t^{\pm1}\right]^{\oplus n}}\right).
\end{eqnarray*}
Remark that $\left(Id_{j-1}\oplus TYM\left(t\right)\oplus Id_{\left(n'-n\right)-j-1}\right)\circ\iota_{\mathbb{C}\left[t^{\pm1}\right]^{\oplus\left(n'-n\right)}}=\iota_{\mathbb{C}\left[t^{\pm1}\right]^{\oplus\left(n'-n\right)}}$.
Hence we deduce that
\[
\mathfrak{TYM}_{t}\left(\psi\natural\sigma\right)\circ\mathfrak{TYM}_{t}\left(\left[n'-n,id_{n'}\right]\right)=\mathfrak{TYM}_{t}\left(\left[n'-n,id_{n'}\right]\right)\circ\mathfrak{TYM}_{t}\left(\sigma\right)
\]
for all $\sigma\in\mathbf{B}_{n}$ and all $\psi\in\mathbf{B}_{n'-n}$.
According to Proposition \ref{prop:criterionfamilymorphismsfunctor},
our assignment defines a functor $\mathfrak{TYM}_{t}:\mathfrak{U}\boldsymbol{\beta}\rightarrow\mathbb{C}\left[t^{\pm1}\right]\textrm{-}\mathfrak{Mod}$,
called the Tong-Yang-Ma functor.

\paragraph{Burau functors:\label{exa:defunredbur}}

Other examples naturally arise from the Burau representations.

Let $\mathfrak{Bur}_{t}:\boldsymbol{\beta}\longrightarrow\mathbb{C}\left[t^{\pm1}\right]\textrm{-}\mathfrak{Mod}$
be the functor defined on objects by $\mathfrak{Bur}_{t}\left(n\right)=\mathbb{C}\left[t^{\pm1}\right]^{\oplus n}$
for all natural numbers $n$, and for all numbers $n\geq2$, for every
Artin generator $\sigma_{i}$ of $\mathbf{B}_{n}$, by $\mathfrak{Bur}_{t}\left(\sigma_{i}\right)=\mathfrak{bur}_{n,t}\left(\sigma_{i}\right)$
for morphisms. For all natural numbers $n$ and $n'$ such that $n'\geq n$,
we assign $\mathfrak{Bur}_{t}\left(\left[n'-n,id_{n'}\right]\right):\mathbb{C}\left[t^{\pm1}\right]^{\oplus n}\hookrightarrow\mathbb{C}\left[t^{\pm1}\right]^{\oplus n'}$
to be the embedding $\iota_{\mathbb{C}\left[t^{\pm1}\right]^{\oplus n'-n}}\oplus id_{\mathbb{C}\left[t^{\pm1}\right]^{\oplus n}}$
(where these morphisms are introduced in Notation \ref{nota:defembeedingtymbu}). 

As for the functor $\mathfrak{TYM}$, the assignment for $\mathfrak{Bur}$
implies that for all natural numbers $n''\geq n'\geq n$, for all
$\sigma\in\mathbf{B}_{n}$ and all $\psi\in\mathbf{B}_{n'-n}$, $\mathfrak{Bur}_{t}\left(\left[n'-n,id_{n'}\right]\right)\circ\mathfrak{Bur}_{t}\left(\sigma\right)=\mathfrak{Bur}_{t}\left(\psi\natural\sigma\right)\circ\mathfrak{Bur}_{t}\left(\left[n'-n,id_{n'}\right]\right)$.
According to Proposition \ref{prop:criterionfamilymorphismsfunctor},
our assignment defines a functor $\mathfrak{Bur}_{t}:\mathfrak{U}\boldsymbol{\beta}\longrightarrow\mathbb{C}\left[t^{\pm1}\right]\textrm{-}\mathfrak{Mod}$,
called the unreduced Burau functor. The dual version of the functor
$\mathfrak{Bur}_{t}$ was already considered by Randal-Williams and
Wahl in \cite[Example 4.3]{WahlRandal-Williams}.

Analogously, we can form a functor from the reduced Burau representations.
Let $\overline{\mathfrak{Bur}}_{t}:\boldsymbol{\beta}\longrightarrow\mathbb{C}\left[t^{\pm1}\right]\textrm{-}\mathfrak{Mod}$
be the functor defined on objects by $\overline{\mathfrak{Bur}}_{t}\left(0\right)=0$
and $\overline{\mathfrak{Bur}}_{t}\left(n\right)=\mathbb{C}\left[t^{\pm1}\right]^{\oplus n-1}$
for all nonzero natural numbers $n$, and by $\mathfrak{\overline{\mathfrak{Bur}}}_{t}\left(\sigma_{i}\right)=\mathfrak{\overline{\mathfrak{bur}}}_{n,t}\left(\sigma_{i}\right)$
for morphisms for every Artin generator $\sigma_{i}$ of $\mathbf{B}_{n}$
for all numbers $n\geq2$.

For all natural numbers $n$ and $n'$ such that $n'\geq n$, we assign
$\overline{\mathfrak{Bur}}_{t}\left(\left[n'-n,id_{n'}\right]\right):\mathbb{C}\left[t^{\pm1}\right]^{\oplus n-1}\hookrightarrow\mathbb{C}\left[t^{\pm1}\right]^{\oplus n'-1}$
to be the embedding $\iota_{\mathbb{C}\left[t^{\pm1}\right]^{\oplus n'-n}}\oplus id_{\mathbb{C}\left[t^{\pm1}\right]^{\oplus n-1}}$
(where these morphisms are introduced in Notation \ref{nota:defembeedingtymbu}).
Repeating mutadis mutandis the work done for the functor $\mathfrak{TYM}$,
the assignment for $\overline{\mathfrak{Bur}}_{t}$ implies that for
all natural numbers $n''\geq n'\geq n$, for all $\sigma\in\mathbf{B}_{n}$
and all $\psi\in\mathbf{B}_{n'-n}$, $\overline{\mathfrak{Bur}}_{t}\left(\left[n'-n,id_{n'}\right]\right)\circ\overline{\mathfrak{Bur}}_{t}\left(\sigma\right)=\overline{\mathfrak{Bur}}_{t}\left(\psi\natural\sigma\right)\circ\overline{\mathfrak{Bur}}_{t}\left(\left[n'-n,id_{n'}\right]\right)$.
According to Proposition \ref{prop:criterionfamilymorphismsfunctor},
our assignment defines a functor $\overline{\mathfrak{Bur}}_{t}:\mathfrak{U}\boldsymbol{\beta}\longrightarrow\mathbb{C}\left[t^{\pm1}\right]\textrm{-}\mathfrak{Mod}$,
called the reduced Burau functor.

\paragraph{Lawrence-Krammer functor:\label{exa:lkfunctor}}

The family of Lawrence-Krammer representations was notably used to
prove that braid groups are linear (see \cite{Bigelow2,kohno2012homological,KrammerLK}).
For this paragraph, we assign $\mathbb{K}=\mathbb{C}\left[t^{\pm1}\right]\left[q^{\pm1}\right]$
the ring of Laurent polynomials in two variables and consider the
functor $GL_{-}$ of Definition \ref{def:functorBandGL} with this
assignment. Let $\mathfrak{LK}:\mathfrak{U}\boldsymbol{\beta}\rightarrow\mathbb{C}\left[t^{\pm1}\right]\left[q^{\pm1}\right]\textrm{-}\mathfrak{Mod}$
be the assignment:

\begin{itemize}
\item Objects: for all natural numbers $n\geq2$, $\mathfrak{LK}\left(n\right)=\underset{1\leq j<k\leq n}{\bigoplus}V_{j,k}$,
with for all $1\leq j<k\leq n$, $V_{j,k}$ is a free $\mathbb{C}\left[t^{\pm1}\right]\left[q^{\pm1}\right]$-module
of rank one. Hence, $\mathfrak{LK}\left(n\right)\cong\left(\mathbb{C}\left[t^{\pm1}\right]\left[q^{\pm1}\right]\right)^{\oplus n\left(n-1\right)/2}$
as $\mathbb{C}\left[t^{\pm1}\right]\left[q^{\pm1}\right]$-modules.
Moreover, one assigns $\mathfrak{LK}\left(1\right)=0$ and $\mathfrak{LK}\left(0\right)=0$.
\item Morphisms:

\begin{itemize}
\item Automorphisms: for all natural numbers $n,$ for every Artin generator
$\sigma_{i}$ of $\mathbf{B}_{n}$ (with $i\in\left\{ 1,\ldots,n-1\right\} $),
for all $v_{j,k}\in V_{j,k}$ (with $1\leq j<k\leq n$),
\[
\mathfrak{LK}\left(\sigma_{i}\right)v_{j,k}=\begin{cases}
v_{j,k} & \textrm{if \ensuremath{i\notin\left\{ j-1,j,k-1,k\right\} },}\\
tv_{i,k}+\left(t^{2}-t\right)v_{i,i+1}+\left(1-t\right)v_{i+1,k} & \textrm{if \ensuremath{i=j-1}},\\
v_{i+1,k} & \textrm{if \ensuremath{i=j\neq k-1}},\\
tv_{j,i}+\left(1-t\right)v_{j,i+1}-\left(t^{2}-t\right)qv_{i,i+1} & \textrm{if \ensuremath{i=k-1\neq j}},\\
v_{j,i+1} & \textrm{if \ensuremath{i=k}},\\
-qt^{2}v_{i,i+1} & \textrm{if \ensuremath{i=j=k-1}}.
\end{cases}
\]
\item General morphisms: let $n,n'\in\mathbb{N}$, such that $n'\geq n$.
For all natural numbers $j$ and $k$ such that $1\leq j<k\leq n$,
we define the embedding $\mathfrak{V}_{j,k}^{n,n'}:V_{j,k}\overset{\sim}{\longrightarrow}V_{j+\left(n'-n\right),k+\left(n'-n\right)}\hookrightarrow\underset{1\leq j<k\leq n'}{\bigoplus}V_{j,k}$
of free $\mathbb{C}\left[t^{\pm1}\right]\left[q^{\pm1}\right]$-modules.
Then we define $\mathfrak{LK}\left(\left[n'-n,id_{n'}\right]\right):\underset{1\leq j<k\leq n}{\bigoplus}V_{j,k}\rightarrow\underset{1\leq j<k\leq n'}{\bigoplus}V_{j,k}$
to be the embedding $\underset{1\leq j<k\leq n}{\bigoplus}\mathfrak{V}_{j,k}^{n,n'}$.
\end{itemize}
\end{itemize}
Since we consider a family of representations of $\mathbf{B}_{n}$
(see \cite{KrammerLK}), the assignment $\mathfrak{LK}$ defines an
object of $\mathbf{Fct}\left(\boldsymbol{\beta},\mathbb{C}\left[t^{\pm1}\right]\textrm{-}\mathfrak{Mod}\right)$.

Let $n$, $n'$ and $n''$ be natural numbers such that $n''\geq n'\geq n$.
It follows directly from our definitions of $\mathfrak{LK}\left(\left[n'-n,id_{n'}\right]\right)$,
$\mathfrak{LK}\left(\left[n''-n',id_{n''}\right]\right)$ and $\mathfrak{LK}\left(\left[n''-n,id_{n''}\right]\right)$
that relation (\ref{eq:criterion2}) of Proposition \ref{prop:criterionfamilymorphismsfunctor}
is satisfied.

According to the definition of $\mathfrak{LK}\left(\sigma_{l}\right)$
with $\sigma_{l}$ an Artin generator of $\mathbf{B}_{n'-n}$, for
all $v_{j,k}\in V_{j,k}$ with $1+\left(n'-n\right)\leq j<k\leq n'$,
$\mathfrak{LK}\left(\sigma_{l}\right)v_{j,k}=v_{j,k}$. Hence for
all $\psi\in\mathbf{B}_{n'-n}$:
\[
\mathfrak{LK}\left(\psi\natural id_{n}\right)\circ\mathfrak{LK}\left(\left[n'-n,id_{n'}\right]\right)=\mathfrak{LK}\left(\left[n'-n,id_{n'}\right]\right).
\]
Note also that for all $l\in\left\{ 1,\ldots,n-1\right\} $, for all
$v_{j,k}\in V_{j,k}$ with $1+\left(n'-n\right)\leq j<k\leq n'$,
it follows from the assignment of $\mathfrak{LK}$ that:
\[
\mathfrak{LK}\left(id_{n'-n}\natural\sigma_{l}\right)\left(v_{\left(n'-n\right)+j,\left(n'-n\right)+k}\right)=\mathfrak{LK}\left(\sigma_{n'-n+l}\right)\left(v_{\left(n'-n\right)+j,\left(n'-n\right)+k}\right)=\mathfrak{LK}\left(\left[n'-n,id_{n'}\right]\right)\left(\mathfrak{LK}\left(\sigma_{l}\right)\left(v_{j,k}\right)\right).
\]
Therefore, this implies that for all $\sigma\in\mathbf{B}_{n}$, $\mathfrak{LK}\left(\left[n'-n,id_{n'}\right]\right)\circ\mathfrak{LK}\left(\sigma\right)=\mathfrak{LK}\left(id_{n'-n}\natural\sigma\right)\circ\mathfrak{LK}\left(\left[n'-n,id_{n'}\right]\right)$.
Hence, $\mathfrak{\mathfrak{LK}}$ satisfies the relation (\ref{eq:criterion})
of Proposition \ref{prop:criterionfamilymorphismsfunctor}. Hence,
the assignment defines a functor $\mathfrak{LK}:\mathfrak{U}\boldsymbol{\beta}\rightarrow\mathbb{C}\left[t^{\pm1}\right]\left[q^{\pm1}\right]\textrm{-}\mathfrak{Mod}$,
called the Lawrence-Krammer functor.

\section{Functoriality of the Long-Moody construction\label{sec:Functoriality-of-the}}

The principle of the Long-Moody construction, corresponding to Theorem
$2.1$ of \cite{Long1}, is to build a linear representation of the
braid group $\mathbf{B}_{n}$ starting from a representation $\mathbf{B}_{n+1}$.
We develop a functorial version of this construction, which leads
to the notion of Long-Moody functors (see Section \ref{subsec:The-Long-Moody-functors}).
Beforehand, we need to introduce various tools, which are consequences
of the relationships between braid groups and free groups (see Section
\ref{subsec:Braid-groups-and}). Finally, in Section \ref{subsec:Evaluation-of-the},
we investigate examples of functors which are recovered by Long-Moody
functors.

\subsection{Braid groups and free groups\label{subsec:Braid-groups-and}}

This section recalls some relationships between braid groups and free
groups. We also develop tools which will be used throughout our work
of Sections \ref{subsec:The-Long-Moody-functors} and \ref{sec:The-Long-Moody-functoreffect}.

We consider the free group on $n$ generators, which we denote by
$\mathbf{F}_{n}=\left\langle g_{1},\ldots,g_{n}\right\rangle $.
\begin{notation}
We denote by $e_{\mathbf{F}_{n}}$ the unit element of the free group
on $n$ generators $\mathbf{F}_{n}$, for all natural numbers $n$.
\end{notation}

Recall that the category of finitely generated free groups is monoidal
using free product of groups (see Notation \ref{nota:grandN}). The
object $0$ being null in the category $\mathfrak{gr}$, recall that
$\iota_{\mathbf{F}_{n}}:0\rightarrow\mathbf{F}_{n}$ denotes the unique
morphism from $0$ to $\mathbf{F}_{n}$ as in Notation \ref{nota:notationsgenerales}.
\begin{defn}
\label{def:functorF}Let $n$ be a natural number. We consider $\iota_{\mathbf{F}_{1}}*id_{\mathbf{F}_{n}}:\mathbf{F}_{n}\hookrightarrow\mathbf{F}_{n+1}$.
This corresponds to the identification of $\mathbf{F}_{n}$ as the
subgroup of $\mathbf{F}_{n+1}$ generated by the $n$ last copies
of $\mathbf{F}_{1}$ in $\mathbf{F}_{n+1}$. Iterating this morphism,
we obtain for all natural numbers $n'\geq n$ the morphism $\iota_{\mathbf{F}_{n'-n}}*id_{\mathbf{F}_{n}}:\mathbf{F}_{n}\hookrightarrow\mathbf{F}_{n'}$.
\end{defn}

Let $\left\{ \varsigma_{n}:\mathbf{F}_{n}\rightarrow\mathbf{B}_{n+1}\right\} _{n\in\mathbb{N}}$
be a family of group morphisms from the free group $\mathbf{F}_{n}$
to the braid group $\mathbf{B}_{n+1}$, for all natural numbers $n$.
We require these morphisms to satisfy the following crucial property.
\begin{condition}
\label{cond:conditionstability}For all elements $g\in\mathbf{F}_{n}$,
for all natural numbers $n'\geq n$, the following diagram is commutative
in the category $\mathfrak{U}\boldsymbol{\beta}$:

\[
\xymatrix{1\natural n\ar@{->}[rrr]^{\varsigma_{n}\left(g\right)}\ar@{->}[d]_{id_{1}\natural\left[n'-n,id_{n'}\right]} &  &  & 1\natural n\ar@{->}[d]^{id_{1}\natural\left[n'-n,id_{n'}\right]}\\
1\natural n'\ar@{->}[rrr]_{\varsigma_{n'}\left(e_{\mathbf{F}_{n'-n}}*g\right)} &  &  & 1\natural n'.
}
\]
\end{condition}

\begin{rem}
Condition \ref{cond:conditionstability} will be used to prove that
the Long-Moody functor is well defined on morphisms with respect to
the tensor product structure in Theorem \ref{Thm:LMFunctor}. Moreover,
it will also be used in the proof of Propositions \ref{prop:defvarepsi}
and \ref{prop:subfunclmtm}.
\end{rem}

\begin{lem}
\label{rem:noteworthyconseq} Condition \ref{cond:conditionstability}
is equivalent to assume that for all natural numbers $n$, for all
elements $g\in\mathbf{F}_{n}$, the morphisms $\left\{ \varsigma_{n}\right\} _{n\in\mathbb{N}}$
satisfy the following equality in $\mathbf{B}_{n+2}$:
\begin{align}
\left(\left(b_{1,1}^{\boldsymbol{\beta}}\right)^{-1}\natural id_{n}\right)\circ\left(id_{1}\natural\varsigma_{n}\left(g\right)\right)=\varsigma_{n+1}\left(e_{\mathbf{F}_{1}}*g\right)\circ\left(\left(b_{1,1}^{\boldsymbol{\beta}}\right)^{-1}\natural id_{n}\right).\label{eq:equiva}
\end{align}
\end{lem}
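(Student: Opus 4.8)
The plan is to translate the commutativity of the square in Condition \ref{cond:conditionstability} into an equation between representatives in a braid group, using the morphism calculus of $\mathfrak{U}\boldsymbol{\beta}$. First I would compute, from the monoidal product of Proposition \ref{prop:homogenousprebraided} applied with the trivial left factor $id_1=\left[0,id_1\right]$, that
\[
id_1\natural\left[n'-n, id_{n'}\right] = \left[n'-n, \left(b^{\boldsymbol{\beta}}_{1,n'-n}\right)^{-1}\natural id_n\right]\colon 1\natural n\longrightarrow 1\natural n'.
\]
Combining this with the composition rule of Definition \ref{def:defUB} and writing the two horizontal arrows of the square as the automorphisms $\left[0,\varsigma_n(g)\right]$ and $\left[0,\varsigma_{n'}(e_{\mathbf{F}_{n'-n}}*g)\right]$, the commutativity of the square becomes equivalent to the equality in $\mathfrak{U}\boldsymbol{\beta}$ of the two morphisms
\[
\left[n'-n, \left(\left(b^{\boldsymbol{\beta}}_{1,n'-n}\right)^{-1}\natural id_n\right)\circ\left(id_{n'-n}\natural\varsigma_n(g)\right)\right]\quad\text{and}\quad \left[n'-n, \varsigma_{n'}\left(e_{\mathbf{F}_{n'-n}}*g\right)\circ\left(\left(b^{\boldsymbol{\beta}}_{1,n'-n}\right)^{-1}\natural id_n\right)\right].
\]

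For the implication Condition \ref{cond:conditionstability} $\Rightarrow$ \eqref{eq:equiva}, I would specialize to $n'=n+1$. Then the index $n'-n$ equals $1$, and since $Aut_{\boldsymbol{\beta}}(1)=\mathbf{B}_1$ is trivial, the defining equivalence relation of $\mathfrak{U}\boldsymbol{\beta}$ (Definition \ref{def:defUB}) collapses: two classes $\left[1,f\right]$ and $\left[1,f'\right]$ agree if and only if $f=f'$ in $\mathbf{B}_{n+2}$. The resulting equality of representatives is exactly \eqref{eq:equiva}, so this direction is immediate once the translation above is in place.

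For the converse, I would establish the square of Condition \ref{cond:conditionstability} for every $n'\geq n$ by induction on $n'$. The base case $n'=n$ is trivial, the vertical maps being identities and $e_{\mathbf{F}_0}*g=g$. For the inductive step, functoriality of $id_1\natural(-)$ together with the composition rule of Definition \ref{def:defUB} give
\[
id_1\natural\left[(n'+1)-n, id_{n'+1}\right] = \left(id_1\natural\left[1, id_{n'+1}\right]\right)\circ\left(id_1\natural\left[n'-n, id_{n'}\right]\right),
\]
so that the square for $n'+1$ is the vertical concatenation of the inductively known square for $n'$ with a one-step square from $n'$ to $n'+1$. Applying \eqref{eq:equiva} with $n$ replaced by $n'$ and $g$ replaced by $e_{\mathbf{F}_{n'-n}}*g$, and invoking the associativity identity $e_{\mathbf{F}_{n'+1-n}}*g = e_{\mathbf{F}_1}*\left(e_{\mathbf{F}_{n'-n}}*g\right)$ of the free product, shows that this one-step square commutes with bottom arrow $\varsigma_{n'+1}(e_{\mathbf{F}_{n'+1-n}}*g)$. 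Its top arrow $\varsigma_{n'}(e_{\mathbf{F}_{n'-n}}*g)$ coincides with the bottom arrow of the inductive square, so the two stack and the outer rectangle is precisely the square of Condition \ref{cond:conditionstability} for $n'+1$.

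The step demanding the most care is the first one: computing the monoidal product and the composite in $\mathfrak{U}\boldsymbol{\beta}$ while correctly tracking the residual braiding factor $\left(b^{\boldsymbol{\beta}}_{1,n'-n}\right)^{-1}\natural id_n$, so as to identify the commutativity of the categorical square with a genuine equation in $\mathbf{B}_{n'+1}$. Once this dictionary is established, the forward direction reduces to the triviality of $\mathbf{B}_1$, and the converse is a routine stacking-of-squares induction whose only bookkeeping is the associativity of the free product.
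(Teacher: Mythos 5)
Your proposal is correct and takes essentially the same route as the paper: the direction (Condition \ref{cond:conditionstability} $\Rightarrow$ (\ref{eq:equiva})) by specializing to $n'=n+1$ and using the triviality of $\mathbf{B}_{1}$ with the defining equivalence relation of $\mathfrak{U}\boldsymbol{\beta}$, and the converse by breaking the stabilization $id_{1}\natural\left[n'-n,id_{n'}\right]$ into one-step pieces each governed by (\ref{eq:equiva}). The only difference is organizational: you run the converse as a vertical stacking induction inside $\mathfrak{U}\boldsymbol{\beta}$, whereas the paper composes the corresponding squares in $\boldsymbol{\beta}$ horizontally (with the elementary factors of $\left(b_{1,n'-n}^{\boldsymbol{\beta}}\right)^{-1}\natural id_{n}$ as arrows) and translates once at the end via the pre-braided formula $id_{1}\natural\left[n'-n,\sigma\right]=\left[n'-n,\left(id_{1}\natural\sigma\right)\circ\left(\left(b_{1,n'-n}^{\boldsymbol{\beta}}\right)^{-1}\natural id_{n}\right)\right]$ -- the mathematical content is identical.
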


\begin{proof}
Let $n$ and $n'$ be natural numbers such that $n'\geq n$. The equality
(\ref{eq:equiva}) implies that for all $1\leq k\leq n'-n$, the following
diagram in the category $\boldsymbol{\beta}$ is commutative :
\[
\xymatrix{1\natural n'\ar@{->}[rrrr]^{id_{n'-\left(n+k\right)}\natural\varsigma_{n+k-1}\left(e_{\mathbf{F}_{k-1}}*g\right)}\ar@{->}[d]_{id_{n'-(n+k)}\natural\left(b_{1,1}^{\boldsymbol{\beta}}\right)^{-1}\natural id_{\left(k-1\right)+n}} &  &  &  & 1\natural n'\ar@{->}[d]^{id_{n'-(n+k)}\natural\left(b_{1,1}^{\boldsymbol{\beta}}\right)^{-1}\natural id_{\left(k-1\right)+n}}\\
1\natural n'\ar@{->}[rrrr]_{id_{n'-\left(n+k\right)}\natural\varsigma_{n+k}\left(e_{\mathbf{F}_{k}}*g\right)} &  &  &  & 1\natural n'.
}
\]
Hence composing squares, we obtain that the following diagram is commutative
in the category $\boldsymbol{\beta}$:

\[
\xymatrix{1\natural\cdots\natural\left(1\natural1\right)\natural n\ar@{->}[rrr]^{id_{n'-n-1}\natural\left(b_{1,1}^{\boldsymbol{\beta}}\right)^{-1}\natural id_{n}}\ar@{->}[d]_{id_{n'}\natural\varsigma_{n}\left(g\right)} &  &  & 1\natural\cdots\natural1\natural\left(1\natural n\right)\ar@{->}[d]_{id_{n'-1}\natural\varsigma_{n+1}\left(e_{\mathbf{F}_{1}}*g\right)}\ar@{->}[rrr]^{\,\,\,\,\,\,\,\,\,\,\,\,\,\,\,id_{n'-n-2}\natural\left(b_{1,1}^{\boldsymbol{\beta}}\right)^{-1}\natural id_{1+n}} &  &  & \cdots\ar@{->}[rr]^{\left(b_{1,1}^{\boldsymbol{\beta}}\right)^{-1}\natural id_{n'-1}} &  & 1\natural n'\ar@{->}[d]^{\varsigma_{n'}\left(e_{\mathbf{F}_{1}}*g\right)}\\
1\natural\cdots\natural1\natural n\ar@{->}[rrr]_{id_{n'-n-1}\natural\left(b_{1,1}^{\boldsymbol{\beta}}\right)^{-1}\natural id_{n}} &  &  & 1\natural\cdots\natural1\natural\left(1\natural n\right)\ar@{->}[rrr]_{\,\,\,\,\,\,\,\,\,\,\,\,\,\,\,id_{n'-n-2}\natural\left(b_{1,1}^{\boldsymbol{\beta}}\right)^{-1}\natural id_{1+n}} &  &  & \cdots\ar@{->}[rr]_{\left(b_{1,1}^{\boldsymbol{\beta}}\right)^{-1}\natural id_{n'-1}} &  & 1\natural n'.
}
\]
By definition of the braiding (see Definition \ref{def:defbraidgroupoid}),
we deduce that the composition of horizontal arrows is the morphism
$\left(b_{1,n'-n}^{\boldsymbol{\beta}}\right)^{-1}\natural id_{n}$
in $\boldsymbol{\beta}$. Recall from Proposition \ref{prop:homogenousprebraided}
that $id_{1}\natural\left[n'-n,\sigma\right]=\left[n'-n,\left(id_{1}\natural\sigma\right)\circ\left(\left(b_{1,n'-n}^{\boldsymbol{\beta}}\right)^{-1}\natural id_{n}\right)\right]$.
Hence Condition \ref{cond:conditionstability} is satisfied if we
assume that the equality (\ref{eq:equiva}) is satisfied for all natural
numbers $n$.

Conversely, assume that Condition \ref{cond:conditionstability} is
satisfied. Condition \ref{cond:conditionstability} with $n'=n+1$
ensures that:
\[
\left[1,\left(\left(b_{1,1}^{\boldsymbol{\beta}}\right)^{-1}\natural id_{n}\right)\circ\left(id_{1}\natural\varsigma_{n}\left(g\right)\right)\right]=\left[1,\varsigma_{n'}\left(e_{\mathbf{F}_{1}}*g\right)\circ\left(\left(b_{1,1}^{\boldsymbol{\beta}}\right)^{-1}\natural id_{n}\right)\right].
\]
Since $Aut_{\mathfrak{U}\boldsymbol{\beta}}\left(1\right)=\mathbf{B}_{1}$
is the trivial group, we deduce from the defining equivalence relation
of $\mathfrak{U}\boldsymbol{\beta}$ (see Definition \ref{def:defUB})
the equality in $\mathbf{B}_{n+2}$:
\[
\left(\left(b_{1,1}^{\boldsymbol{\beta}}\right)^{-1}\natural id_{n}\right)\circ\left(id_{1}\natural\varsigma_{n}\left(g\right)\right)=\varsigma_{1+n}\left(e_{\mathbf{F}_{1}}*g\right)\circ\left(\left(b_{1,1}^{\boldsymbol{\beta}}\right)^{-1}\natural id_{n}\right).
\]
\end{proof}
\begin{rem}
It follows from Lemma \ref{rem:noteworthyconseq} that, for $i\geq2$,
$\varsigma_{n}(g_{i})$ is determined by $\varsigma_{k}(g_{1})$ for
$k\leq n$ by the equalities (\ref{eq:equiva}).
\end{rem}

\begin{example}
\label{ex:ex1defmapfnbn}The family $\varsigma_{n,1}$, based on what
is called the pure braid local system in the literature (see \cite[Remark p.223]{Long1}),
is defined by the following inductive assignment for all natural numbers
$n\geq1$.
\begin{eqnarray*}
\varsigma_{n,1}:\mathbf{F}_{n} & \longrightarrow & \mathbf{B}_{n+1}\\
g_{i} & \longmapsto & \begin{cases}
\sigma_{1}^{2} & \textrm{if \ensuremath{i=1}}\\
\sigma_{1}^{-1}\circ\sigma_{2}^{-1}\circ\cdots\circ\sigma_{i-1}^{-1}\circ\sigma_{i}^{2}\circ\sigma_{i-1}\circ\cdots\circ\sigma_{2}\circ\sigma_{1} & \textrm{if \ensuremath{i\in\left\{ 2,\ldots,n\right\} .}}
\end{cases}
\end{eqnarray*}
We assign $\varsigma_{0,1}$ to be the trivial morphism.
\end{example}

\begin{prop}
The family of morphisms $\left\{ \varsigma_{n,1}\right\} _{n\in\mathbb{N}}$
satisfies Condition \ref{cond:conditionstability}.
\end{prop}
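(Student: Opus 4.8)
The plan is to apply Lemma \ref{rem:noteworthyconseq}, which reduces Condition \ref{cond:conditionstability} for $\{\varsigma_{n,1}\}_{n\in\mathbb{N}}$ to the verification of equality \eqref{eq:equiva} for all $n$ and all $g\in\mathbf{F}_n$. Since $b_{1,1}^{\boldsymbol{\beta}}=\sigma_1$ by Definition \ref{exa:defprodmonUbeta}, the morphism $\bigl(b_{1,1}^{\boldsymbol{\beta}}\bigr)^{-1}\natural id_n$ is simply $\sigma_1^{-1}\in\mathbf{B}_{n+2}$ acting on the first two strands. Multiplying \eqref{eq:equiva} on the right by $\sigma_1$, I would rewrite it as the conjugation identity
\[
\varsigma_{n+1,1}\bigl(e_{\mathbf{F}_1}*g\bigr)=\sigma_1^{-1}\bigl(id_1\natural\varsigma_{n,1}(g)\bigr)\sigma_1 \quad\text{in }\mathbf{B}_{n+2}.
\]
Both sides of this identity are group homomorphisms $\mathbf{F}_n\to\mathbf{B}_{n+2}$ in the variable $g$: the left-hand side is the composite $\mathbf{F}_n\xrightarrow{\iota_{\mathbf{F}_1}*id_{\mathbf{F}_n}}\mathbf{F}_{n+1}\xrightarrow{\varsigma_{n+1,1}}\mathbf{B}_{n+2}$, and the right-hand side is $\varsigma_{n,1}$ followed by the homomorphism $id_1\natural-$ and then conjugation by $\sigma_1$. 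Hence it suffices to check the identity on the free generators $g_1,\ldots,g_n$ of $\mathbf{F}_n$.

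Next I would make the two elementary translations explicit. The inclusion $\iota_{\mathbf{F}_1}*id_{\mathbf{F}_n}$ of Definition \ref{def:functorF} sends $g_i$ to $g_{i+1}$, so the left-hand side at $g=g_i$ is $\varsigma_{n+1,1}(g_{i+1})$. The monoidal map $id_1\natural-$ is the standard inclusion $\mathbf{B}_{n+1}\hookrightarrow\mathbf{B}_{n+2}$ raising every Artin index by one, $\sigma_j\mapsto\sigma_{j+1}$. Thus evaluating the right-hand side at $g=g_i$ amounts to taking the braid word defining $\varsigma_{n,1}(g_i)$ in Example \ref{ex:ex1defmapfnbn}, shifting all its indices up by one, and conjugating by $\sigma_1$.

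The verification on generators is then immediate from the recursive shape of the defining formula. For $i=1$ one gets $\sigma_1^{-1}(id_1\natural\sigma_1^2)\sigma_1=\sigma_1^{-1}\sigma_2^2\sigma_1$, which is exactly $\varsigma_{n+1,1}(g_2)$. For $i\geq2$, shifting the word $\sigma_1^{-1}\cdots\sigma_{i-1}^{-1}\sigma_i^2\sigma_{i-1}\cdots\sigma_1$ gives $\sigma_2^{-1}\cdots\sigma_i^{-1}\sigma_{i+1}^2\sigma_i\cdots\sigma_2$, and conjugating by $\sigma_1$ yields $\sigma_1^{-1}\sigma_2^{-1}\cdots\sigma_i^{-1}\sigma_{i+1}^2\sigma_i\cdots\sigma_2\sigma_1$, which is precisely the defining expression for $\varsigma_{n+1,1}(g_{i+1})$. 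In both cases the branch $i+1\geq2$ of Example \ref{ex:ex1defmapfnbn} is used, and the degenerate case $n=0$ is trivial since $\mathbf{F}_0$ is trivial.

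The main obstacle is not any genuine computation but the careful bookkeeping of the monoidal conventions: one must correctly identify $b_{1,1}^{\boldsymbol{\beta}}$ with $\sigma_1$, recognise $\bigl(b_{1,1}^{\boldsymbol{\beta}}\bigr)^{-1}\natural id_n$ and $id_1\natural-$ as the expected elementary braid-word operations, and track the generator shift $g_i\mapsto g_{i+1}$. Once these identifications are in place, the conjugation identity is built directly into the inductive definition of $\varsigma_{n,1}$, and no braid relations beyond free cancellation are needed.
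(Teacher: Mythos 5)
Your proposal is correct and follows essentially the same route as the paper: reduce Condition \ref{cond:conditionstability} to equality \eqref{eq:equiva} via Lemma \ref{rem:noteworthyconseq}, identify $b_{1,1}^{\boldsymbol{\beta}}=\sigma_1$, and verify the resulting conjugation identity on the free generators using the index shifts of $\iota_{\mathbf{F}_1}*id_{\mathbf{F}_n}$ and $id_1\natural-$ together with the recursive formula for $\varsigma_{n,1}$. If anything, your write-up is slightly more careful than the paper's, since you explicitly invoke that both sides are homomorphisms in $g$ (so generators suffice) and treat the $i=1$ case separately.
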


\begin{proof}
Relation (\ref{eq:equiva}) is trivially satisfied for $n=0$. Let
$n\geq1$ be a fixed natural number. By definition \ref{exa:defprodmonUbeta},
we have $\left(b_{1,1}^{\boldsymbol{\beta}}\right)^{-1}=\sigma_{1}^{-1}$.
Moreover, for all $i\in\left\{ 2,\ldots,n\right\} $, we have  $\varsigma_{n+1}\left(e_{\mathbf{F}_{1}}*g_{i-1}\right)=\varsigma_{n+1}\left(g_{i}\right)$
and
\[
id_{1}\natural\varsigma_{n,1}\left(g_{i-1}\right)=\sigma_{2}^{-1}\circ\cdots\circ\sigma_{i-1}^{-1}\circ\sigma_{i}^{2}\circ\sigma_{i-1}\circ\cdots\circ\sigma_{2}.
\]
We deduce that:
\[
\left(\left(b_{1,1}^{\boldsymbol{\beta}}\right)^{-1}\natural id_{n}\right)\circ\left(id_{1}\natural\varsigma_{n,1}\left(g_{i-1}\right)\right)\circ\left(b_{1,1}^{\boldsymbol{\beta}}\natural id_{n}\right)=\varsigma_{n,1}\left(g_{i}\right).
\]
Hence Relation (\ref{eq:equiva}) of Lemma \ref{rem:noteworthyconseq}
is satisfied for all natural numbers.
\end{proof}
\begin{example}
\label{exa:trivialsigma}Let us consider the trivial morphisms $\varsigma_{n,*}:\mathbf{F}_{n}\rightarrow0_{\mathfrak{Gr}}\rightarrow\mathbf{B}_{n+1}$
for all natural numbers $n$. The relation of Lemma \ref{rem:noteworthyconseq}
being easily checked, this family of morphisms $\left\{ \varsigma_{n,*}:\mathbf{F}_{n}\rightarrow\mathbf{B}_{n+1}\right\} _{n\in\mathbb{N}}$
satisfies Condition \ref{cond:conditionstability}.
\end{example}

\paragraph{Action of braid groups on automorphism groups of free groups:}

There are several ways to consider the group\textbf{ $\mathbf{B}_{n}$}
as a subgroup of $Aut\left(\mathbf{F}_{n}\right)$. For instance,
the geometric point of view of topology gives us an action of \textbf{$\mathbf{B}_{n}$}
on the free group $\mathbf{F}_{n}$ (see for example \cite{BirmanbraidslinksMCG}
or \cite{kasselturaev}) identifying $\mathbf{B}_{n}$ as the mapping
class group of a $n$-punctured disc $\Sigma_{0,1}^{n}$: fixing a
point $y$ on the boundary of the disc $\Sigma_{0,1}^{n}$, each free
generator $g_{i}$ can be taken as a loop of the disc based $y$ turning
around punctures. Each element $\sigma$ of $\mathbf{B}_{n}$, as
an automorphism up to isotopy of the disc $\Sigma_{0,1}^{n}$, induces
a well-defined action on the fundamental group $\pi_{1}\left(\Sigma_{0,1}^{n}\right)\cong\mathbf{F}_{n}$
called Artin representation (see Example \ref{exan1} for more details).\\

In the sequel, we fix a family of group actions of \textbf{$\mathbf{B}_{n}$}
on the free group $\mathbf{F}_{n}$: let $\left\{ a_{n}:\mathbf{B}_{n}\rightarrow Aut\left(\mathbf{F}_{n}\right)\right\} _{n\in\mathbb{N}}$
be a family of group morphisms from the braid group $\mathbf{B}_{n}$
to the automorphism group $Aut\left(\mathbf{F}_{n}\right)$. For the
work of Sections \ref{subsec:The-Long-Moody-functors} and \ref{sec:The-Long-Moody-functoreffect},
we need the morphisms $a_{n}:\mathbf{B}_{n}\rightarrow Aut\left(\mathbf{F}_{n}\right)$
to satisfy more properties.
\begin{condition}
\label{cond:coherenceconditionbnautfn1}Let $n$ and $n'$ be natural
numbers such that $n'\geq n$. We require $\left(\iota_{\mathbf{F}_{n'-n}}*id_{\mathbf{F}_{n}}\right)\circ\left(a_{n}\left(\sigma\right)\right)=\left(a_{n'}\left(\sigma'\natural\sigma\right)\right)\circ\left(\iota_{\mathbf{F}_{n'-n}}*id_{\mathbf{F}_{n}}\right)$
as morphisms $\mathbf{F}_{n}\rightarrow\mathbf{F}_{n'}$ for all elements
$\sigma$ of $\mathbf{B}_{n}$ and $\sigma'$ of $\mathbf{B}_{n'-n}$,
ie the following diagrams are commutative:

\[
\xymatrix{\mathbf{F}_{n}\ar@{->}[rr]^{a_{n}\left(\sigma\right)}\ar@{->}[d]_{\iota_{\mathbf{F}_{n'-n}}*id_{\mathbf{F}_{n}}} &  & \mathbf{F}_{n}\ar@{->}[d]^{\iota_{\mathbf{F}_{n'-n}}*id_{\mathbf{F}_{n}}} &  & \mathbf{F}_{n}\ar@{->}[rr]^{\iota_{\mathbf{F}_{n'-n}}*id_{\mathbf{F}_{n}}}\ar@{->}[dr]_{\iota_{\mathbf{F}_{n'-n}}*id_{\mathbf{F}_{n}}} &  & \mathbf{F}_{n'}\\
\mathbf{F}_{n'}\ar@{->}[rr]_{a_{n'}\left(id_{n'-n}\natural\sigma\right)} &  & \mathbf{F}_{n'} &  &  & \mathbf{F}_{n'}.\ar@{->}[ur]_{a_{n'}\left(\sigma'\natural id_{n}\right)}
}
\]
\end{condition}

\begin{rem}
Condition \ref{cond:coherenceconditionbnautfn1} will be used to define
the Long-Moody functor on morphisms in Theorem \ref{Thm:LMFunctor}.
Moreover, it will also be used for the proof of Propositions \ref{prop:defvarepsi}
and \ref{prop:subfunclmtm}.
\end{rem}

We will also require the families of morphisms $\left\{ \varsigma_{n}:\mathbf{F}_{n}\rightarrow\mathbf{B}_{n+1}\right\} _{n\in\mathbb{N}}$
and $\left\{ a_{n}:\mathbf{B}_{n}\rightarrow Aut\left(\mathbf{F}_{n}\right)\right\} _{n\in\mathbb{N}}$
to satisfy the following compatibility relations.
\begin{condition}
\label{cond:coherenceconditionsigmanan}Let $n$ be a natural number.
We assume that the morphism given by the coproduct $\varsigma_{n}*\left(id_{1}\natural-\right):\mathbf{F}_{n}*\mathbf{B}_{n}\rightarrow\mathbf{B}_{n+1}$
factors across the canonical surjection to $\mathbf{F}_{n}\underset{a_{n}}{\rtimes}\mathbf{B}_{n}$.
In other words, the following diagram is commutative:
\[
\xymatrix{\mathbf{F}_{n}\ar@{^{(}->}[r]\ar@{->}[dr]_{\varsigma_{n}} & \mathbf{F}_{n}\underset{a_{n}}{\rtimes}\mathbf{B}_{n}\ar@{->}[d] & \mathbf{B}_{n}\ar@{_{(}->}[l]\ar@{->}[dl]^{id_{1}\natural-}\\
 & \mathbf{B}_{n+1}.
}
\]
where the morphism $\mathbf{F}_{n}\underset{a_{n}}{\rtimes}\mathbf{B}_{n}\rightarrow\mathbf{B}_{n+1}$
is induced by the morphism $\mathbf{F}_{n}*\mathbf{B}_{n}\rightarrow\mathbf{B}_{n+1}$
and the group morphism $id_{1}\natural-:\mathbf{B}_{n}\rightarrow\mathbf{B}_{n+1}$
is induced by the monoidal structure. This is equivalent to requiring
that, for all elements $\sigma\in\mathbf{B}_{n}$ and $g\in\mathbf{F}_{n}$,
the following equality holds in $\mathbf{B}_{n+1}$:
\begin{equation}
\left(id_{1}\natural\sigma\right)\circ\varsigma_{n}\left(g\right)=\varsigma_{n}\left(a_{n}\left(\sigma\right)\left(g\right)\right)\circ\left(id_{1}\natural\sigma\right).\label{eq:equalitycoherenceconditionsigmanan}
\end{equation}
\end{condition}

\begin{rem}
Condition \ref{cond:coherenceconditionsigmanan} is essential in the
definition of the Long-Moody functor on objects in Theorem \ref{Thm:LMFunctor}.
\end{rem}

We fix a choice for these families of morphisms $\left\{ \varsigma_{n}:\mathbf{F}_{n}\rightarrow\mathbf{B}_{n+1}\right\} _{n\in\mathbb{N}}$
and $\left\{ a_{n}:\mathbf{B}_{n}\rightarrow Aut\left(\mathbf{F}_{n}\right)\right\} _{n\in\mathbb{N}}$.
\begin{defn}
\label{def:coherentmor}The families $\left\{ \varsigma_{n}:\mathbf{F}_{n}\rightarrow\mathbf{B}_{n+1}\right\} _{n\in\mathbb{N}}$
and $\left\{ a_{n}:\mathbf{B}_{n}\rightarrow Aut\left(\mathbf{F}_{n}\right)\right\} _{n\in\mathbb{N}}$
are said to be coherent if they satisfy conditions \ref{cond:conditionstability},
\ref{cond:coherenceconditionbnautfn1} and \ref{cond:coherenceconditionsigmanan}.
\end{defn}

\begin{example}
\label{exan1}A classical family is provided by the Artin representations
(see for example \cite[Section 1]{BirmanbraidslinksMCG}). For $n\in\mathbb{N}$,
$a_{n,1}:\mathbf{B}_{n}\rightarrow Aut\left(\mathbf{F}_{n}\right)$
is defined for all elementary braids $\sigma_{i}$ where $i\in\left\{ 1,\ldots,n-1\right\} $
by:
\begin{eqnarray*}
a_{n,1}\left(\sigma_{i}\right):\mathbf{F}_{n} & \longrightarrow & \mathbf{F}_{n}\\
g_{j} & \longmapsto & \begin{cases}
g_{i+1} & \textrm{if \ensuremath{j=i}}\\
g_{i+1}^{-1}g_{i}g_{i+1} & \textrm{if \ensuremath{j=i+1}}\\
g_{j} & \textrm{if \ensuremath{j\notin\left\{ i,i+1\right\} }.}
\end{cases}
\end{eqnarray*}
It clearly follows from their definitions that the morphisms $\left\{ a_{n,1}:\mathbf{B}_{n}\rightarrow Aut\left(\mathbf{F}_{n}\right)\right\} _{n\in\mathbb{N}}$
satisfy Condition \ref{cond:coherenceconditionbnautfn1}.
\end{example}

\begin{prop}
The morphisms $\left\{ a_{n,1}:\mathbf{B}_{n}\rightarrow Aut\left(\mathbf{F}_{n}\right)\right\} _{n\in\mathbb{N}}$
together with the morphisms $\left\{ \varsigma_{n,1}:\mathbf{F}_{n}\hookrightarrow\mathbf{B}_{n+1}\right\} _{n\in\mathbb{N}}$
of Example \ref{ex:ex1defmapfnbn} satisfy Condition \ref{cond:coherenceconditionsigmanan}.
\end{prop}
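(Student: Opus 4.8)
The plan is to verify the equality \eqref{eq:equalitycoherenceconditionsigmanan} of Condition \ref{cond:coherenceconditionsigmanan}, namely
\[
\left(id_{1}\natural\sigma\right)\circ\varsigma_{n,1}\left(g\right)=\varsigma_{n,1}\left(a_{n,1}\left(\sigma\right)\left(g\right)\right)\circ\left(id_{1}\natural\sigma\right)
\]
for all $\sigma\in\mathbf{B}_{n}$ and $g\in\mathbf{F}_{n}$, by two successive reductions to generators. First, fixing $\sigma$, both assignments $g\mapsto\left(id_{1}\natural\sigma\right)\circ\varsigma_{n,1}\left(g\right)\circ\left(id_{1}\natural\sigma\right)^{-1}$ and $g\mapsto\varsigma_{n,1}\left(a_{n,1}\left(\sigma\right)\left(g\right)\right)$ are group morphisms $\mathbf{F}_{n}\rightarrow\mathbf{B}_{n+1}$, so the set of $g$ on which they agree is a subgroup; hence it suffices to check the equality on the free generators $g_{j}$. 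Second, the set of $\sigma\in\mathbf{B}_{n}$ satisfying the equality for all $g$ is itself a subgroup of $\mathbf{B}_{n}$: closure under products uses that $id_{1}\natural-$ and $a_{n,1}$ are group morphisms, and closure under inverses follows by replacing $g$ with $a_{n,1}\left(\sigma^{-1}\right)\left(g\right)$. Thus it suffices to treat $\sigma=\sigma_{i}$ for each Artin generator. Writing $A_{j}:=\varsigma_{n,1}\left(g_{j}\right)$ and noting that $id_{1}\natural\sigma_{i}=\sigma_{i+1}$ in $\mathbf{B}_{n+1}$, the goal becomes the conjugation identity $\sigma_{i+1}A_{j}\sigma_{i+1}^{-1}=\varsigma_{n,1}\left(a_{n,1}\left(\sigma_{i}\right)\left(g_{j}\right)\right)$ for all $i,j$.

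Next I would split according to the value of $j$ relative to $i$, matching the three cases defining $a_{n,1}$ in Example \ref{exan1}. Setting $w_{j}:=\sigma_{j-1}\cdots\sigma_{1}$ (with $w_{1}=e$), Example \ref{ex:ex1defmapfnbn} gives $A_{j}=w_{j}^{-1}\sigma_{j}^{2}w_{j}$. For $j=i$ the target is $A_{i+1}$: the braid relation yields $\sigma_{i+1}\sigma_{i}^{2}\sigma_{i+1}^{-1}=\sigma_{i}^{-1}\sigma_{i+1}^{2}\sigma_{i}$, and since $\sigma_{i+1}$ commutes with $w_{i}$ (which involves only $\sigma_{1},\dots,\sigma_{i-1}$), conjugating gives $\sigma_{i+1}A_{i}\sigma_{i+1}^{-1}=w_{i+1}^{-1}\sigma_{i+1}^{2}w_{i+1}=A_{i+1}$; I would record the resulting recursion $A_{i+1}=\sigma_{i+1}A_{i}\sigma_{i+1}^{-1}$. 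For $j=i+1$ the target is $A_{i+1}^{-1}A_{i}A_{i+1}$, and after conjugating away the common factor $w_{i}$ this reduces to the purely two-generator identity $ba^{-1}b^{2}ab^{-1}=a^{-1}b^{-2}a^{2}b^{2}a$ in $\left\langle a,b\mid aba=bab\right\rangle$ (with $a=\sigma_{i}$, $b=\sigma_{i+1}$), which I would verify using the conjugation formulas $bab^{-1}=a^{-1}ba$ and $aba^{-1}=b^{-1}ab$. For $j\leq i-1$ the element $A_{j}$ involves only $\sigma_{1},\dots,\sigma_{j}$, all of which commute with $\sigma_{i+1}$, so $\sigma_{i+1}$ commutes with $A_{j}$ and the target $A_{j}$ is reached immediately.

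The remaining case $j\geq i+2$ is the main obstacle, since then $A_{j}$ genuinely involves $\sigma_{i}$ and $\sigma_{i+1}$, so commutativity is not visible from disjoint supports. I would prove $\sigma_{i+1}A_{j}\sigma_{i+1}^{-1}=A_{j}$ by induction on $j$, using the recursion $A_{j}=\sigma_{j}A_{j-1}\sigma_{j}^{-1}$ established above. In the inductive step $j\geq i+3$, the generator $\sigma_{i+1}$ commutes both with $\sigma_{j}$ and, by the inductive hypothesis, with $A_{j-1}$, hence with $A_{j}$. The base case $j=i+2$ is where the braiding is essential: writing $A_{i+2}=\sigma_{i+2}\sigma_{i+1}A_{i}\sigma_{i+1}^{-1}\sigma_{i+2}^{-1}$, I would conjugate by $\sigma_{i+1}$, apply the braid relation $\sigma_{i+1}\sigma_{i+2}\sigma_{i+1}=\sigma_{i+2}\sigma_{i+1}\sigma_{i+2}$ together with its inverse, and then use that $\sigma_{i+2}$ commutes with $A_{i}$ (whose support is $\sigma_{1},\dots,\sigma_{i}$) to collapse the expression back to $A_{i+2}$. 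Assembling the four cases establishes the conjugation identity on every generator $g_{j}$, and the two reductions of the first paragraph then upgrade it to the full Condition \ref{cond:coherenceconditionsigmanan}.
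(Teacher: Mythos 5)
Your proof is correct and takes essentially the same approach as the paper's: reduce to the Artin generators $\sigma_{i}$ and the free generators $g_{j}$, then verify the conjugation identity case by case using the braid and commutation relations. You are in fact more thorough than the paper, which neither spells out the reduction to generators nor details the case $j\geq i+2$ (it asserts that the commutation and braid relations ``give directly'' the result, whereas your induction via the recursion $A_{j}=\sigma_{j}A_{j-1}\sigma_{j}^{-1}$, with the essential base case $j=i+2$, supplies the actual computation).
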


\begin{proof}
Let $i$ be a fixed natural number in $\left\{ 1,\ldots,n-1\right\} $.
We prove that the equality (\ref{eq:equalitycoherenceconditionsigmanan})
of Condition \ref{cond:coherenceconditionsigmanan} is satisfied for
all Artin generator $\sigma_{i}$ and all generator $g_{j}$ of the
free group (with $j\in\left\{ 1,\ldots,n\right\} $). First, it follows
from the braid relation $\sigma_{i}\sigma_{i+1}\sigma_{i}=\sigma_{i+1}\sigma_{i}\sigma_{i+1}$
that:
\[
\sigma_{1+i}^{-1}\circ\sigma_{i}^{-1}\circ\sigma_{1+i}^{-2}\circ\sigma_{i}^{2}\circ\sigma_{1+i}^{2}\circ\sigma_{i}\circ\sigma_{1+i}=\sigma_{i}^{-1}\circ\sigma_{1+i}^{2}\circ\sigma_{i},
\]
and we deduce that:
\[
\sigma_{1+i}^{-1}\circ\varsigma_{n,1}\left(a_{n,1}\left(\sigma_{i}\right)\left(g_{1+i}\right)\right)\circ\sigma_{1+i}=\varsigma_{n,1}\left(g_{1+i}\right).
\]
Also, the braid relation $\sigma_{i+1}\circ\sigma_{i}\circ\sigma_{i+1}=\sigma_{i}\circ\sigma_{i+1}\circ\sigma_{i}$
implies that $\sigma_{i+1}^{-1}\circ\sigma_{i}^{-1}\circ\sigma_{i+1}^{2}\circ\sigma_{i}\circ\sigma_{i+1}=\sigma_{i}^{2}$
and a fortiori:
\[
\sigma_{1+i}^{-1}\circ\varsigma_{n,1}\left(a_{n,1}\left(\sigma_{i}\right)\left(g_{i}\right)\right)\circ\sigma_{1+i}=\varsigma_{n,1}\left(g_{i}\right).
\]
Finally, for a fixed $\ensuremath{j\notin\left\{ i,i+1\right\} }$,
the commutation relation $\sigma_{i}\sigma_{j}=\sigma_{j}\sigma_{i}$
and the braid relation $\sigma_{i}\sigma_{i+1}\sigma_{i}=\sigma_{i+1}\sigma_{i}\sigma_{i+1}$
give directly:
\[
\varsigma_{n,1}\left(g_{j}\right)=\sigma_{1+i}^{-1}\circ\varsigma_{n,1}\left(a_{n,1}\left(\sigma_{i}\right)\left(g_{j}\right)\right)\circ\sigma_{1+i}.
\]
\end{proof}
\begin{cor}
The families of morphisms $\left\{ a_{n,1}:\mathbf{B}_{n}\rightarrow Aut\left(\mathbf{F}_{n}\right)\right\} _{n\in\mathbb{N}}$
and $\left\{ \varsigma_{n,1}:\mathbf{F}_{n}\rightarrow\mathbf{B}_{n+1}\right\} _{n\in\mathbb{N}}$
are coherent.
\end{cor}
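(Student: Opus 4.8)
The plan is to recognize that this corollary is nothing more than the assembly of the three coherence requirements that have each already been checked in the preceding material. By Definition \ref{def:coherentmor}, asserting that the pair $\left\{a_{n,1}\right\}_{n\in\mathbb{N}}$ and $\left\{\varsigma_{n,1}\right\}_{n\in\mathbb{N}}$ is coherent amounts \emph{precisely} to verifying Conditions \ref{cond:conditionstability}, \ref{cond:coherenceconditionbnautfn1}, and \ref{cond:coherenceconditionsigmanan}. So I would simply collect the three verifications already carried out, with no further computation.

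For Condition \ref{cond:conditionstability}, I would invoke the proposition established just after Example \ref{ex:ex1defmapfnbn}, which shows that the family $\left\{\varsigma_{n,1}\right\}_{n\in\mathbb{N}}$ satisfies it via the equivalent relation (\ref{eq:equiva}) of Lemma \ref{rem:noteworthyconseq}. For Condition \ref{cond:coherenceconditionbnautfn1}, I would cite Example \ref{exan1}, where the Artin representations $\left\{a_{n,1}\right\}_{n\in\mathbb{N}}$ are seen directly to satisfy the required compatibility with the stabilization maps $\iota_{\mathbf{F}_{n'-n}}*id_{\mathbf{F}_{n}}$. Finally, for Condition \ref{cond:coherenceconditionsigmanan}, I would appeal to the proposition immediately preceding this corollary, which establishes the compatibility relation (\ref{eq:equalitycoherenceconditionsigmanan}) between $a_{n,1}$ and $\varsigma_{n,1}$ through the braid relations. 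Having the three conditions in hand, coherence follows at once from Definition \ref{def:coherentmor}.

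There is no genuine obstacle here: the corollary is purely a packaging statement, and all the mathematical content, in particular the braid-relation computations underlying Conditions \ref{cond:conditionstability} and \ref{cond:coherenceconditionsigmanan}, has already been discharged in the earlier propositions. The only point requiring attention is that all three conditions must hold \emph{simultaneously} for the \emph{same} pair of families, which is exactly what the two propositions and Example \ref{exan1} jointly provide; the corollary then records that the inductively-defined families $\left\{\varsigma_{n,1}\right\}_{n\in\mathbb{N}}$ and $\left\{a_{n,1}\right\}_{n\in\mathbb{N}}$ form a legitimate input for the Long-Moody functor of Theorem $A$.
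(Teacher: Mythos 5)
Your proposal is correct and matches the paper's (implicit) argument exactly: the paper states this corollary without further proof precisely because coherence, per Definition \ref{def:coherentmor}, is the conjunction of Conditions \ref{cond:conditionstability}, \ref{cond:coherenceconditionbnautfn1} and \ref{cond:coherenceconditionsigmanan}, each of which was verified for these families in the preceding proposition, Example \ref{exan1}, and the proposition immediately before the corollary, respectively. Nothing further is needed.
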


\begin{example}
\label{exa:exant}Consider the family of morphisms $\left\{ \varsigma_{n,*}:\mathbf{F}_{n}\rightarrow\mathbf{B}_{n+1}\right\} _{n\in\mathbb{N}}$
of Example \ref{exa:trivialsigma} and any family of morphisms $\left\{ a_{n}:\mathbf{B}_{n}\rightarrow Aut\left(\mathbf{F}_{n}\right)\right\} _{n\in\mathbb{N}}$.
Then Condition \ref{cond:coherenceconditionsigmanan} is always satisfied.
As a consequence, these families of morphisms $\left\{ \varsigma_{n,*}:\mathbf{F}_{n}\rightarrow\mathbf{B}_{n+1}\right\} _{n\in\mathbb{N}}$
and $\left\{ a_{n}:\mathbf{B}_{n}\rightarrow Aut\left(\mathbf{F}_{n}\right)\right\} _{n\in\mathbb{N}}$
are coherent if and only if the family of morphisms $\left\{ a_{n}:\mathbf{B}_{n}\rightarrow Aut\left(\mathbf{F}_{n}\right)\right\} _{n\in\mathbb{N}}$
satisfies Condition \ref{cond:coherenceconditionbnautfn1}.
\end{example}

\subsection{The Long-Moody functors\label{subsec:The-Long-Moody-functors}}

In this section, we prove that the Long-Moody construction of \cite[Theorem 2.1 ]{Long1}
induces a functor \textit{
\[
\mathbf{LM}:\mathbf{Fct}\left(\mathfrak{U}\boldsymbol{\beta},\mathbb{K}\textrm{-}\mathfrak{Mod}\right)\rightarrow\mathbf{Fct}\left(\mathfrak{U}\boldsymbol{\beta},\mathbb{K}\textrm{-}\mathfrak{Mod}\right).
\]
} We fix families of morphisms $\left\{ \varsigma_{n}:\mathbf{F}_{n}\rightarrow\mathbf{B}_{n+1}\right\} _{n\in\mathbb{N}}$
and $\left\{ a_{n}:\mathbf{B}_{n}\rightarrow Aut\left(\mathbf{F}_{n}\right)\right\} _{n\in\mathbb{N}}$,
which are assumed to be coherent (see Definition \ref{def:coherentmor}).

We first need to make some observations and introduce some tools.
Let $F$ be an object of $\mathbf{Fct}\left(\mathfrak{U}\boldsymbol{\beta},\mathbb{K}\textrm{-}\mathfrak{Mod}\right)$
and $n$ be a natural number. A fortiori, the $\mathbb{K}$-module
$F\left(n+1\right)$ is endowed with a left $\mathbb{K}\left[\mathbf{B}_{n+1}\right]$-module
structure. Using the morphism $\varsigma_{n}:\mathbf{F}_{n}\rightarrow\mathbf{B}_{n+1}$,
$F\left(n+1\right)$ is a $\mathbb{K}\left[\mathbf{F}_{n}\right]$-module
by restriction.

Let us consider the augmentation ideal of the free group $\mathbf{F}_{n}$,
denoted by $\mathcal{I}_{\mathbb{K}\left[\mathbf{F}_{n}\right]}$.
Since it is a (right) $\mathbb{K}\left[\mathbf{F}_{n}\right]$-module,
one can form the tensor product $\mathcal{I}_{\mathbb{K}\left[\mathbf{F}_{n}\right]}\underset{\mathbb{K}\left[\mathbf{F}_{n}\right]}{\varotimes}F\left(n+1\right)$.
Also, for all natural numbers $n$ and $n'$ such that $n'\geq n$,
the morphism $\iota_{\mathbf{F}_{n'-n}}*id_{\mathbf{F}_{n}}:\mathbf{F}_{n}\hookrightarrow\mathbf{F}_{n'}$
canonically induces a morphism $\iota_{\mathcal{I}_{\mathbb{K}\left[\mathbf{F}_{n'-n}\right]}}*id_{\mathcal{I}_{\mathbb{K}\left[\mathbf{F}_{n}\right]}}:\mathcal{I}_{\mathbb{K}\left[\mathbf{F}_{n}\right]}\hookrightarrow\mathcal{I}_{\mathbb{K}\left[\mathbf{F}_{n'}\right]}$.
In addition, the augmentation ideal $\mathcal{I}_{\mathbb{K}\left[\mathbf{F}_{n}\right]}$
is a $\mathbb{K}\left[\mathbf{B}_{n}\right]$-module too:
\begin{lem}
The action $a_{n}:\mathbf{B}_{n}\rightarrow Aut\left(\mathbf{F}_{n}\right)$
canonically induces an action of $\mathbf{B}_{n}$ on $\mathcal{I}_{\mathbb{K}\left[\mathbf{F}_{n}\right]}$
denoted by $a_{n}:\mathbf{B}_{n}\rightarrow Aut\left(\mathcal{I}_{\mathbb{K}\left[\mathbf{F}_{n}\right]}\right)$
(abusing the notation).
\end{lem}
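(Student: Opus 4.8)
The plan is to pass from the group automorphism $a_{n}\left(\sigma\right)$ of $\mathbf{F}_{n}$ to the induced $\mathbb{K}$-algebra automorphism of the group ring $\mathbb{K}\left[\mathbf{F}_{n}\right]$, and then to observe that this extension preserves the augmentation ideal, so that it restricts to an automorphism of $\mathcal{I}_{\mathbb{K}\left[\mathbf{F}_{n}\right]}$. Recall first that $\mathcal{I}_{\mathbb{K}\left[\mathbf{F}_{n}\right]}$ is by definition the kernel of the augmentation morphism $\varepsilon:\mathbb{K}\left[\mathbf{F}_{n}\right]\rightarrow\mathbb{K}$ sending each group element $g$ to $1$. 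Given $\sigma\in\mathbf{B}_{n}$, the group automorphism $a_{n}\left(\sigma\right)$ of $\mathbf{F}_{n}$ extends uniquely by $\mathbb{K}$-linearity to a $\mathbb{K}$-algebra automorphism of $\mathbb{K}\left[\mathbf{F}_{n}\right]$, which (abusing the notation) we still denote by $a_{n}\left(\sigma\right)$; its inverse is the $\mathbb{K}$-linear extension of $a_{n}\left(\sigma\right)^{-1}=a_{n}\left(\sigma^{-1}\right)$.

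The key observation is that this extension commutes with the augmentation, that is $\varepsilon\circ a_{n}\left(\sigma\right)=\varepsilon$. Indeed, for every element $g$ of $\mathbf{F}_{n}$, the image $a_{n}\left(\sigma\right)\left(g\right)$ is again an element of $\mathbf{F}_{n}$, whence $\varepsilon\left(a_{n}\left(\sigma\right)\left(g\right)\right)=1=\varepsilon\left(g\right)$; since the group elements form a $\mathbb{K}$-basis of $\mathbb{K}\left[\mathbf{F}_{n}\right]$ and both sides are $\mathbb{K}$-linear, the equality holds on all of $\mathbb{K}\left[\mathbf{F}_{n}\right]$. Consequently $a_{n}\left(\sigma\right)$ maps $\ker\varepsilon=\mathcal{I}_{\mathbb{K}\left[\mathbf{F}_{n}\right]}$ into itself, and the same argument applied to $a_{n}\left(\sigma\right)^{-1}$ shows that it restricts to a $\mathbb{K}$-linear automorphism of $\mathcal{I}_{\mathbb{K}\left[\mathbf{F}_{n}\right]}$. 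Concretely, on the $\mathbb{K}$-generators $g-e_{\mathbf{F}_{n}}$ it acts by $g-e_{\mathbf{F}_{n}}\mapsto a_{n}\left(\sigma\right)\left(g\right)-e_{\mathbf{F}_{n}}$, which visibly lands in the augmentation ideal.

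Finally, I would check that the assignment $\sigma\mapsto a_{n}\left(\sigma\right)|_{\mathcal{I}_{\mathbb{K}\left[\mathbf{F}_{n}\right]}}$ is a group homomorphism $\mathbf{B}_{n}\rightarrow Aut\left(\mathcal{I}_{\mathbb{K}\left[\mathbf{F}_{n}\right]}\right)$. This is immediate from the functoriality of the extension to the group ring, which turns composition of automorphisms of $\mathbf{F}_{n}$ into composition of the associated algebra automorphisms and the identity into the identity, combined with the fact that $a_{n}$ is itself a group morphism; restriction to the invariant submodule $\mathcal{I}_{\mathbb{K}\left[\mathbf{F}_{n}\right]}$ preserves these relations. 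One also notes in passing that, being the restriction of a ring automorphism, each $a_{n}\left(\sigma\right)$ is compatible (semilinearly) with the right $\mathbb{K}\left[\mathbf{F}_{n}\right]$-module structure, which is what will be used when forming the tensor products in Section \ref{subsec:The-Long-Moody-functors}. There is no genuine obstacle in this lemma; the only point that requires care is the identity $\varepsilon\circ a_{n}\left(\sigma\right)=\varepsilon$, which rests precisely on $a_{n}\left(\sigma\right)$ being an honest group automorphism of $\mathbf{F}_{n}$ rather than an arbitrary ring endomorphism of $\mathbb{K}\left[\mathbf{F}_{n}\right]$.
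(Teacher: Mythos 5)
Your proof is correct and is essentially the paper's own argument, just written out in full: the paper simply notes that for any group morphism $H\rightarrow Aut\left(G\right)$ the group ring $\mathbb{K}\left[G\right]$ is canonically an $H$-module and hence so is $\mathcal{I}_{\mathbb{K}\left[G\right]}$ as a submodule, which is exactly your extension-to-the-group-ring argument together with the observation $\varepsilon\circ a_{n}\left(\sigma\right)=\varepsilon$ made explicit.
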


\begin{proof}
For any group morphism $H\rightarrow Aut\left(G\right)$, the group
ring $\mathbb{K}\left[G\right]$ is canonically an $H$-module and
so is the augmentation ideal $\mathcal{I}_{G}$, as a submodule of
$\mathbb{K}\left[G\right]$.
\end{proof}
\begin{rem}
If the family of morphisms $\left\{ a_{n}:\mathbf{B}_{n}\rightarrow Aut\left(\mathbf{F}_{n}\right)\right\} _{n\in\mathbb{N}}$
is coherent with respect to the family of morphisms $\left\{ \varsigma_{n}:\mathbf{F}_{n}\rightarrow\mathbf{B}_{n+1}\right\} _{n\in\mathbb{N}}$,
the relation of Condition \ref{cond:coherenceconditionbnautfn1} remains
true mutatis mutandis, for all natural numbers $n$ and $n'$, considering
the induced morphisms $a_{n}:\mathbf{B}_{n}\rightarrow Aut\left(\mathcal{I}_{\mathbb{K}\left[\mathbf{F}_{n}\right]}\right)$
and $\iota_{\mathcal{I}_{\mathbb{K}\left[\mathbf{F}_{n'-n}\right]}}*id_{\mathcal{I}_{\mathbb{K}\left[\mathbf{F}_{n}\right]}}:\mathcal{I}_{\mathbb{K}\left[\mathbf{F}_{n}\right]}\rightarrow\mathcal{I}_{\mathbb{K}\left[\mathbf{F}_{n'}\right]}$.
\end{rem}

In the following theorem, we define an endofunctor of $\mathbf{Fct}\left(\mathfrak{U}\boldsymbol{\beta},\mathbb{K}\textrm{-}\mathfrak{Mod}\right)$
corresponding to the Long-Moody construction. It will be called the
Long-Moody functor with respect to $\left\{ \varsigma_{n}:\mathbf{F}_{n}\rightarrow\mathbf{B}_{n+1}\right\} _{n\in\mathbb{N}}$
and $\left\{ a_{n}:\mathbf{B}_{n}\rightarrow Aut\left(\mathbf{F}_{n}\right)\right\} _{n\in\mathbb{N}}$.
\begin{thm}
\label{Thm:LMFunctor}Recall that we have fixed coherent families
of morphisms $\left\{ \varsigma_{n}:\mathbf{F}_{n}\rightarrow\mathbf{B}_{n+1}\right\} _{n\in\mathbb{N}}$
and $\left\{ a_{n}:\mathbf{B}_{n}\rightarrow Aut\left(\mathbf{F}_{n}\right)\right\} _{n\in\mathbb{N}}$.
The following assignment defines a functor $\mathbf{LM}_{a,\varsigma}:\mathbf{Fct}\left(\mathfrak{U}\boldsymbol{\beta},\mathbb{K}\textrm{-}\mathfrak{Mod}\right)\rightarrow\mathbf{Fct}\left(\mathfrak{U}\boldsymbol{\beta},\mathbb{K}\textrm{-}\mathfrak{Mod}\right)$.

\begin{itemize}
\item Objects: for $F\in Obj\left(\mathbf{Fct}\left(\mathfrak{U}\boldsymbol{\beta},\mathbb{K}\textrm{-}\mathfrak{Mod}\right)\right)$,
$\mathbf{LM}_{a,\varsigma}\left(F\right):\mathfrak{U}\boldsymbol{\beta}\rightarrow\mathbb{K}\textrm{-}\mathfrak{Mod}$
is defined by:

\begin{itemize}
\item Objects: $\forall n\in\mathbb{N}$, $\mathbf{LM}_{a,\varsigma}\left(F\right)\left(n\right)=\mathcal{I}_{\mathbb{K}\left[\mathbf{F}_{n}\right]}\underset{\mathbb{K}\left[\mathbf{F}_{n}\right]}{\varotimes}F\left(n+1\right)$.
\item Morphisms: for $n,n'\in\mathbb{N}$, such that $n'\geq n$, and $\left[n'-n,\sigma\right]\in Hom_{\mathfrak{U}\boldsymbol{\beta}}\left(n,n'\right)$,
assign:
\[
\mathbf{LM}_{a,\varsigma}\left(F\right)\left(\left[n'-n,\sigma\right]\right)\left(i\underset{\mathbb{K}\left[\mathbf{F}_{n}\right]}{\varotimes}v\right)=a_{n'}\left(\sigma\right)\left(\iota_{\mathcal{I}_{\mathbb{K}\left[\mathbf{F}_{n'-n}\right]}}*id_{\mathcal{I}_{\mathbb{K}\left[\mathbf{F}_{n}\right]}}\right)\left(i\right)\underset{\mathbb{K}\left[\mathbf{F}_{n'}\right]}{\varotimes}F\left(id_{1}\natural\left[n'-n,\sigma\right]\right)\left(v\right),
\]
for all $i\in\mathcal{I}_{\mathbb{K}\left[\mathbf{F}_{n}\right]}$
and $v\in F\left(n+1\right)$.
\end{itemize}
\item Morphisms: let $F$ and $G$ be two objects of $\mathbf{Fct}\left(\mathfrak{U}\boldsymbol{\beta},\mathbb{K}\textrm{-}\mathfrak{Mod}\right)$,
and $\eta:F\rightarrow G$ be a natural transformation. We define
$\mathbf{LM}_{a,\varsigma}\left(\eta\right):\mathbf{LM}_{a,\varsigma}\left(F\right)\rightarrow\mathbf{LM}_{a,\varsigma}\left(G\right)$
for all natural numbers $n$ by:
\[
\left(\mathbf{LM}_{a,\varsigma}\left(\eta\right)\right)_{n}=id_{\mathcal{I}_{\mathbb{K}\left[\mathbf{F}_{n}\right]}}\underset{\mathbb{K}\left[\mathbf{F}_{n}\right]}{\varotimes}\eta_{n+1}.
\]
\end{itemize}
In particular, the Long-Moody functor $\mathbf{LM}_{a,\varsigma}$
induces an endofunctor of the category $\mathbf{Fct}\left(\boldsymbol{\beta},\mathbb{K}\textrm{-}\mathfrak{Mod}\right)$.
\end{thm}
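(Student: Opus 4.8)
The plan is to construct the functor $\mathbf{LM}_{a,\varsigma}(F)$ on $\mathfrak{U}\boldsymbol{\beta}$ by appealing to the criterion of Proposition~\ref{prop:criterionfamilymorphismsfunctor} instead of checking the functor axioms directly, and then to handle natural transformations via Proposition~\ref{prop:criterionnaturaltransfo}. First I would isolate the two pieces of data required by the criterion, reading them off the morphism formula of the statement: the $\boldsymbol{\beta}$-action is the case $n'=n$, namely $\mathbf{LM}_{a,\varsigma}(F)(\sigma)(i\otimes v)=a_n(\sigma)(i)\otimes F(id_1\natural\sigma)(v)$ for $\sigma\in\mathbf{B}_n$, and the stabilization maps $\mathbf{LM}_{a,\varsigma}(F)([n'-n,id_{n'}])$ are the case $\sigma=id_{n'}$. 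Using the composition rule of $\mathfrak{U}\boldsymbol{\beta}$ (Definition~\ref{def:defUB}) and the monoidal structure of Proposition~\ref{prop:homogenousprebraided}, one checks that the general formula equals $\mathbf{LM}_{a,\varsigma}(F)(\sigma)\circ\mathbf{LM}_{a,\varsigma}(F)([n'-n,id_{n'}])$, so it is precisely the assignment produced by the criterion.

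Next I would establish that these two maps descend to the relative tensor products. For the $\boldsymbol{\beta}$-action, expanding $a_n(\sigma)(i\cdot g)=a_n(\sigma)(i)\cdot a_n(\sigma)(g)$ and transporting the group element across the balancing over $\mathbb{K}[\mathbf{F}_n]$ reduces well-definedness to the equality $(id_1\natural\sigma)\circ\varsigma_n(g)=\varsigma_n(a_n(\sigma)(g))\circ(id_1\natural\sigma)$ in $\mathbf{B}_{n+1}$, which is exactly equation~(\ref{eq:equalitycoherenceconditionsigmanan}) of Condition~\ref{cond:coherenceconditionsigmanan}; that it is a homomorphism follows because $a_n$, $F$, and $id_1\natural(-)$ are. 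The same computation for the stabilization maps reduces well-definedness to the commuting square of Condition~\ref{cond:conditionstability}. The transitivity relation~(\ref{eq:criterion2}) is then straightforward: the augmentation-ideal factors compose as the iterated inclusions of Definition~\ref{def:functorF}, and the $F$-factors compose by functoriality of $F$ since $id_1\natural(-)$ is monoidal.

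The crux is relation~(\ref{eq:criterion}), namely $\mathbf{LM}_{a,\varsigma}(F)([n'-n,id_{n'}])\circ\mathbf{LM}_{a,\varsigma}(F)(\sigma)=\mathbf{LM}_{a,\varsigma}(F)(\psi\natural\sigma)\circ\mathbf{LM}_{a,\varsigma}(F)([n'-n,id_{n'}])$ for $\sigma\in\mathbf{B}_n$ and $\psi\in\mathbf{B}_{n'-n}$. Evaluating each side on a generator $i\otimes v$ splits the check into the two tensor factors. On the augmentation-ideal factor I need $(\iota_{\mathcal{I}_{\mathbb{K}[\mathbf{F}_{n'-n}]}}*id)(a_n(\sigma)(i))=a_{n'}(\psi\natural\sigma)((\iota_{\mathcal{I}_{\mathbb{K}[\mathbf{F}_{n'-n}]}}*id)(i))$, which is the version on augmentation ideals of Condition~\ref{cond:coherenceconditionbnautfn1} with $\sigma'=\psi$. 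On the $F$-factor I am led to compare $F(id_1\natural[n'-n,id_{n'-n}\natural\sigma])$ with $F(id_1\natural[n'-n,\psi\natural\sigma])$; the step I expect to be the main obstacle is to recognise that these two morphisms of $\mathfrak{U}\boldsymbol{\beta}$ are equal, since the defining equivalence relation of $\mathfrak{U}\boldsymbol{\beta}$ (Definition~\ref{def:defUB}) absorbs the automorphism $\psi\in\mathbf{B}_{n'-n}=Aut_{\boldsymbol{\beta}}(n'-n)$: the choice $g=\psi^{-1}$ yields $(\psi\natural\sigma)\circ(\psi^{-1}\natural id_n)=id_{n'-n}\natural\sigma$. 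With both factors matched, Proposition~\ref{prop:criterionfamilymorphismsfunctor} yields that $\mathbf{LM}_{a,\varsigma}(F)$ is a functor on $\mathfrak{U}\boldsymbol{\beta}$.

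Finally, for a natural transformation $\eta:F\to G$ I would set $(\mathbf{LM}_{a,\varsigma}(\eta))_n=id_{\mathcal{I}_{\mathbb{K}[\mathbf{F}_n]}}\otimes\eta_{n+1}$; this descends to the tensor product because $\eta_{n+1}$ is $\mathbb{K}[\mathbf{B}_{n+1}]$-linear, hence $\mathbb{K}[\mathbf{F}_n]$-linear through $\varsigma_n$. Naturality with respect to the automorphisms is immediate from the naturality of $\eta$, and naturality over $\mathfrak{U}\boldsymbol{\beta}$ reduces, by Proposition~\ref{prop:criterionnaturaltransfo}, to relation~(\ref{eq:criterion3}) for the stabilization maps, which again follows at once from the naturality of $\eta$ applied to $id_1\natural[n'-n,id_{n'}]$. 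Preservation of identities and of composition of natural transformations is clear from the formula $id\otimes(-)$, so $\mathbf{LM}_{a,\varsigma}$ is an endofunctor; the asserted restriction to $\mathbf{Fct}(\boldsymbol{\beta},\mathbb{K}\textrm{-}\mathfrak{Mod})$ is the object-and-automorphism part of the same construction, which uses only Condition~\ref{cond:coherenceconditionsigmanan}.
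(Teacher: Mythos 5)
Your proposal is correct and follows essentially the same route as the paper's own proof: reduce the construction on $\mathfrak{U}\boldsymbol{\beta}$ to Propositions \ref{prop:criterionfamilymorphismsfunctor} and \ref{prop:criterionnaturaltransfo}, with Condition \ref{cond:coherenceconditionsigmanan} giving well-definedness of the $\boldsymbol{\beta}$-action on the balanced tensor product, Condition \ref{cond:conditionstability} giving it for the stabilization maps, and Condition \ref{cond:coherenceconditionbnautfn1} (together with the equivalence relation of $\mathfrak{U}\boldsymbol{\beta}$ absorbing $\psi$) yielding relation (\ref{eq:criterion}). Your handling of the $F$-factor in relation (\ref{eq:criterion}) is in fact slightly more explicit than the paper's, but it is the same argument.
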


\begin{notation}
When there is no ambiguity, once the morphisms $\left\{ \varsigma_{n}:\mathbf{F}_{n}\rightarrow\mathbf{B}_{n+1}\right\} _{n\in\mathbb{N}}$
and $\left\{ a_{n}:\mathbf{B}_{n}\rightarrow Aut\left(\mathbf{F}_{n}\right)\right\} _{n\in\mathbb{N}}$
are fixed, we omit them from the notation $\mathbf{LM}_{a,\varsigma}$
for convenience (especially for proofs).
\end{notation}

\begin{proof}
For this proof, $n$, $n'$ and $n''$ are natural numbers such that
$n''\geq n'\geq n$.
\begin{enumerate}
\item First let us show that the assignment of $\mathbf{LM}$ defines an
endofunctor of $\mathbf{Fct}\left(\boldsymbol{\beta},\mathbb{K}\textrm{-}\mathfrak{Mod}\right)$.
The two first points generalize the proof of \cite[Theorem 2.1]{Long1}.
Let $F$, $G$ and $H$ be objects of $\mathbf{Fct}\left(\boldsymbol{\beta},\mathbb{K}\textrm{-}\mathfrak{Mod}\right)$.

\begin{enumerate}
\item \label{enu:We-first-check}We first check the compatibility of the
assignment $\mathbf{LM}\left(F\right)$ with respect to the tensor
product. Consider $\sigma\in\mathbf{B}_{n}$ $g\in\mathbf{F}_{n}$,
$i\in\mathcal{I}_{\mathbb{K}\left[\mathbf{F}_{n}\right]}$ and $v\in F\left(n+1\right)$.
Since $\left(id_{1}\natural\sigma\right)\circ\varsigma_{n}\left(g\right)=\varsigma_{n}\left(a_{n}\left(\sigma\right)\left(g\right)\right)\circ\left(id_{1}\natural\sigma\right)$
by Condition \ref{cond:coherenceconditionsigmanan}, we deduce that:
\begin{eqnarray*}
\mathbf{LM}\left(F\right)\left(\sigma\right)\left(i\underset{\mathbb{K}\left[\mathbf{F}_{n}\right]}{\varotimes}F\left(\varsigma_{n}\left(g\right)\right)\left(v\right)\right) & = & a_{n}\left(\sigma\right)\left(i\right)\underset{\mathbb{K}\left[\mathbf{F}_{n}\right]}{\varotimes}F\left(id_{1}\natural\sigma\right)\left(F\left(\varsigma_{n}\left(g\right)\right)\left(v\right)\right)\\
 & = & a_{n}\left(\sigma\right)\left(i\right)\underset{\mathbb{K}\left[\mathbf{F}_{n}\right]}{\varotimes}\left(F\left(\varsigma_{n}\left(a_{n}\left(\sigma\right)\left(g\right)\right)\right)\circ F\left(id_{1}\natural\sigma\right)\right)\left(v\right)\\
 & = & a_{n}\left(\sigma\right)\left(i\cdot g\right)\underset{\mathbb{K}\left[\mathbf{F}_{n}\right]}{\varotimes}F\left(id_{1}\natural\sigma\right)\left(v\right)\\
 & = & \mathbf{LM}\left(F\right)\left(\sigma\right)\left(i\cdot g\underset{\mathbb{K}\left[\mathbf{F}_{n}\right]}{\varotimes}\left(v\right)\right).
\end{eqnarray*}
\item \label{enu:Let-us-prove}Let us prove that the assignment $\mathbf{LM}\left(F\right)$
defines an object of $\mathbf{Fct}\left(\boldsymbol{\beta},\mathbb{K}\textrm{-}\mathfrak{Mod}\right)$.
According to our assignment and since $a_{n}$ and $id_{1}\natural-$
are group morphisms, it follows from the definition that $\mathbf{LM}\left(F\right)\left(id_{\mathbf{B}_{n}}\right)=id_{\mathbf{LM}\left(F\right)\left(n\right)}$.
Hence, it remains to prove that the composition axiom is satisfied.
Let $\sigma$ and $\sigma'$ be two elements of $\mathbf{B}_{n}$,
$i\in\mathcal{I}_{\mathbb{K}\left[\mathbf{F}_{n}\right]}$ and $v\in F\left(n+1\right)$.
From the functoriality of $F$ over $\boldsymbol{\beta}$ and the
compatibility of the monoidal structure $\natural$ with composition,
we deduce that $F\left(id_{1}\natural\left(\sigma'\right)\right)\circ F\left(id_{1}\natural\left(\sigma\right)\right)=F\left(id_{1}\natural\left(\sigma'\circ\sigma\right)\right)$.
Since $a_{n}$ is a group morphism, we have:
\[
\left(a_{n}\left(\sigma'\circ\sigma\right)\right)\left(i\right)=a_{n}\left(\sigma'\right)\left(a_{n}\left(\sigma\right)\left(i\right)\right).
\]
 Hence, it follows from the assignment of $\mathbf{LM}$ that:
\begin{eqnarray*}
\mathbf{LM}\left(F\right)\left(\sigma'\circ\sigma\right)\left(i\underset{\mathbb{K}\left[\mathbf{F}_{n}\right]}{\varotimes}v\right) & = & \left(a_{n}\left(\sigma'\circ\sigma\right)\right)\left(i\right)\underset{\mathbb{K}\left[\mathbf{F}_{n}\right]}{\varotimes}F\left(id_{1}\natural\left(\sigma'\circ\sigma\right)\right)\left(v\right)\\
 & = & a_{n}\left(\sigma'\right)\left(a_{n}\left(\sigma\right)\left(i\right)\right)\underset{\mathbb{K}\left[\mathbf{F}_{n}\right]}{\varotimes}\left(F\left(id_{1}\natural\left(\sigma'\right)\right)\circ F\left(id_{1}\natural\left(\sigma\right)\right)\right)\left(v\right)\\
 & = & \mathbf{LM}\left(F\right)\left(\sigma'\right)\circ\mathbf{LM}\left(F\right)\left(\sigma\right)\left(i\underset{\mathbb{K}\left[\mathbf{F}_{n}\right]}{\varotimes}v\right).
\end{eqnarray*}
\item \label{enu:It-remains-to}It remains to check the consistency of our
definition of $\mathbf{LM}$ on morphisms of $\mathbf{Fct}\left(\boldsymbol{\beta},\mathbb{K}\textrm{-}\mathfrak{Mod}\right)$.
Let $\eta:F\rightarrow G$ be a natural transformation. Hence, we
have that:
\[
G\left(id_{1}\natural\sigma\right)\circ\eta_{n+1}=\eta_{n'+1}\circ F\left(id_{1}\natural\sigma\right).
\]
Hence, it follows from the assignment of $\mathbf{LM}$ that:
\[
\mathbf{LM}\left(G\right)\left(\sigma\right)\circ\mathbf{LM}\left(\eta\right)_{n}=\mathbf{LM}\left(\eta\right)_{n'}\circ\mathbf{LM}\left(F\right)\left(\sigma\right)
\]
Therefore $\mathbf{LM}\left(\eta\right)$ is a morphism in the category
$\mathbf{Fct}\left(\boldsymbol{\beta},\mathbb{K}\textrm{-}\mathfrak{Mod}\right)$.
Denoting by $id_{F}:F\rightarrow F$ the identity natural transformation,
it is clear that $\mathbf{LM}\left(id_{F}\right)=id_{\mathbf{LM}\left(F\right)}.$
Finally, let us check the composition axiom. Let $\eta:F\rightarrow G$
and $\mu:G\rightarrow H$ be natural transformations. Let $n$ be
a natural number, $i\in\mathcal{I}_{\mathbb{K}\left[\mathbf{F}_{n}\right]}$
and $v\in F\left(n\right)$. Now, since $\mu$ and $\eta$ are morphisms
in the category $\mathbf{Fct}\left(\boldsymbol{\beta},\mathbb{K}\textrm{-}\mathfrak{Mod}\right)$:
\begin{eqnarray*}
\mathbf{LM}\left(\mu\circ\eta\right)_{n}\left(i\underset{\mathbb{K}\left[\mathbf{F}_{n}\right]}{\varotimes}v\right) & = & i\underset{\mathbb{K}\left[\mathbf{F}_{n}\right]}{\varotimes}\left(\mu_{n+1}\circ\eta_{n+1}\right)\left(v\right)=\mathbf{LM}\left(\mu\right)_{n}\circ\mathbf{LM}\left(\eta\right)_{n}\left(i\underset{\mathbb{K}\left[\mathbf{F}_{n}\right]}{\varotimes}v\right).
\end{eqnarray*}
\end{enumerate}
\item Let us prove that the assignment $\mathbf{LM}$ lifts to define an
endofunctor of $\mathbf{Fct}\left(\mathfrak{U}\boldsymbol{\beta},\mathbb{K}\textrm{-}\mathfrak{Mod}\right)$.
Let $F$, $G$ and $H$ be objects of $\mathbf{Fct}\left(\mathfrak{U}\boldsymbol{\beta},\mathbb{K}\textrm{-}\mathfrak{Mod}\right)$.

\begin{enumerate}
\item First, let us check the compatibility of the assignment $\mathbf{LM}\left(F\right)$
with respect to the tensor product. In fact, this compatibility being
already done for automorphisms (see \ref{enu:We-first-check}), the
remaining point to prove is the compatibility of $\mathbf{LM}\left(F\right)\left(\left[n'-n,id_{n'}\right]\right)$.
Let $g\in\mathbf{F}_{n}$, $i\in\mathcal{I}_{\mathbb{K}\left[\mathbf{F}_{n}\right]}$
and $v\in F\left(n+1\right)$. It follows from Condition \ref{cond:conditionstability}
that in $\mathbf{B}_{n+1}$:
\[
id_{1}\natural\left[n'-n,id_{n'-n}\natural\varsigma_{n}\left(g\right)\right]=\varsigma_{n'}\left(e_{\mathbf{F}_{n'-n}}*g\right)\circ\left(id_{1}\natural\left[n'-n,id_{n'}\right]\right).
\]
Since $\left(\iota_{\mathcal{I}_{\mathbb{K}\left[\mathbf{F}_{n'-n}\right]}}*id_{\mathcal{I}_{\mathbb{K}\left[\mathbf{F}_{n}\right]}}\right)\left(i\cdot g\right)=\left(e_{\mathcal{I}_{\mathbb{K}\left[\mathbf{F}_{n'-n}\right]}}*i\right)\cdot\left(e_{\mathbf{F}_{n'-n}}*g\right)$,
we deduce that:
\begin{eqnarray*}
 &  & \mathbf{LM}\left(F\right)\left(\left[n'-n,id_{n'}\right]\right)\left(i\underset{\mathbb{K}\left[\mathbf{F}_{n}\right]}{\varotimes}F\left(\varsigma_{n}\left(g\right)\right)\left(v\right)\right)\\
 & = & \left(\iota_{\mathcal{I}_{\mathbb{K}\left[\mathbf{F}_{n'-n}\right]}}*id_{\mathcal{I}_{\mathbb{K}\left[\mathbf{F}_{n}\right]}}\right)\left(i\right)\underset{\mathbb{K}\left[\mathbf{F}_{n'}\right]}{\varotimes}F\left(id_{1}\natural\left[n'-n,id_{n'}\right]\right)\left(F\left(\varsigma_{n}\left(g\right)\right)\left(v\right)\right)\\
 & = & \left(\iota_{\mathcal{I}_{\mathbb{K}\left[\mathbf{F}_{n'-n}\right]}}*id_{\mathcal{I}_{\mathbb{K}\left[\mathbf{F}_{n}\right]}}\right)\left(i\cdot g\right)\underset{\mathbb{K}\left[\mathbf{F}_{n'}\right]}{\varotimes}F\left(id_{1}\natural\left[n'-n,id_{n'}\right]\right)\left(v\right)\\
 & = & \mathbf{LM}\left(F\right)\left(\left[n'-n,id_{n'}\right]\right)\left(i\cdot g\underset{\mathbb{K}\left[\mathbf{F}_{n}\right]}{\varotimes}v\right).
\end{eqnarray*}
\item Let us prove that the assignment $\mathbf{LM}\left(F\right)$ defines
an object of $\mathbf{Fct}\left(\mathfrak{U}\boldsymbol{\beta},\mathbb{K}\textrm{-}\mathfrak{Mod}\right)$
using Proposition \ref{prop:criterionfamilymorphismsfunctor}. Recall
the compatibility of the monoidal structure $\natural$ with respect
to composition and that $F$ is an object of $\mathbf{Fct}\left(\mathfrak{U}\boldsymbol{\beta},\mathbb{K}\textrm{-}\mathfrak{Mod}\right)$.
Consider $\left[n'-n,\sigma\right]\in Hom_{\mathfrak{U}\boldsymbol{\beta}}\left(n,n'\right)$.
It follows from our assignment, that:\textit{
\[
\mathbf{LM}\left(F\right)\left(\left[n'-n,\sigma\right]\right)=\mathbf{LM}\left(F\right)\left(\sigma\right)\circ\mathbf{LM}\left(F\right)\left(\left[n'-n,id_{n'}\right]\right).
\]
}Moreover, the composition of morphisms introduced in Definition \ref{def:functorF}
implies that:\textit{
\begin{eqnarray*}
\mathbf{LM}\left(F\right)\left(\left[n''-n,id_{n''}\right]\right) & = & \mathbf{LM}\left(F\right)\left(\left[n''-n',id_{n''}\right]\right)\circ\mathbf{LM}\left(F\right)\left(\left[n'-n,id_{n'}\right]\right).
\end{eqnarray*}
}Hence, the relation (\ref{eq:criterion2}) of Proposition \ref{prop:criterionfamilymorphismsfunctor}
is satisfied. Let $\sigma\in\mathbf{B}_{n}$ and $\psi\in\mathbf{B}_{n'-n}$.
Since $\left(\iota_{n'-n}*id_{n}\right)\circ\left(a_{n}\left(\sigma\right)\right)=\left(a_{n'}\left(\psi\natural\sigma\right)\right)\circ\left(\iota_{n'-n}*id_{n}\right)$
by Condition \ref{cond:coherenceconditionbnautfn1}, we deduce that:
\begin{eqnarray*}
\mathbf{LM}\left(F\right)\left(\psi\natural\sigma\right)\circ\mathbf{LM}\left(F\right)\left(\left[n'-n,id_{n'}\right]\right) & = & \mathbf{LM}\left(F\right)\left(\left[n'-n,id_{n'}\right]\right)\circ\mathbf{LM}\left(F\right)\left(\sigma\right).
\end{eqnarray*}
Hence the relation (\ref{eq:criterion}) of Proposition \ref{prop:criterionfamilymorphismsfunctor}
is also satisfied. Therefore, according to Proposition \ref{prop:criterionfamilymorphismsfunctor},
since $\mathbf{LM}\left(F\right)$ is an object of $\mathbf{Fct}\left(\boldsymbol{\beta},\mathbb{K}\textrm{-}\mathfrak{Mod}\right)$,
the assignment $\mathbf{LM}\left(F\right)$ defines an object of $\mathbf{Fct}\left(\mathfrak{U}\boldsymbol{\beta},\mathbb{K}\textrm{-}\mathfrak{Mod}\right)$.
\item Finally, let us check the consistency of our assignment for $\mathbf{LM}$
on morphisms. Let $\eta:F\rightarrow G$ be a natural transformation.
We already proved in \ref{enu:It-remains-to} that $\mathbf{LM}\left(\eta\right)$
is a morphism in the category $\mathbf{Fct}\left(\boldsymbol{\beta},\mathbb{K}\textrm{-}\mathfrak{Mod}\right)$.
Since $\eta$ is a natural transformation between objects of $\mathbf{Fct}\left(\mathfrak{U}\boldsymbol{\beta},\mathbb{K}\textrm{-}\mathfrak{Mod}\right)$,
we have that:
\[
G\left(id_{1}\natural\left[n'-n,id_{n'}\right]\right)\circ\eta_{n+1}=\eta_{n'+1}\circ F\left(id_{1}\natural\left[n'-n,id_{n'}\right]\right).
\]
Hence, it follows from the assignment of $\mathbf{LM}$ that:
\[
\mathbf{LM}\left(G\right)\left(\left[n'-n,id_{n'}\right]\right)\circ\mathbf{LM}\left(\eta\right)_{n}=\mathbf{LM}\left(\eta\right)_{n'}\circ\mathbf{LM}\left(F\right)\left(\left[n'-n,id_{n'}\right]\right).
\]
Hence the relation (\ref{eq:criterion3}) of Proposition \ref{prop:criterionnaturaltransfo}
is satisfied, and we deduce from this last proposition that $\mathbf{LM}\left(\eta\right)$
is a morphism in the category $\mathbf{Fct}\left(\mathfrak{U}\boldsymbol{\beta},\mathbb{K}\textrm{-}\mathfrak{Mod}\right)$.
The verification of the composition axiom repeats mutatis mutandis
the one of \ref{enu:It-remains-to}.
\end{enumerate}
\end{enumerate}
\end{proof}
Recall the following fact on the augmentation ideal of the free group
$\mathbf{F}_{n}$ where $n\in\mathbb{N}$.
\begin{prop}
\cite[Chapter 6, Proposition 6.2.6]{Weibel1}\label{prop:-augmentationidealfreemodule}
The augmentation ideal $\mathcal{I}_{\mathbb{K}\left[\mathbf{F}_{n}\right]}$
is a free $\mathbb{K}\left[\mathbf{F}_{n}\right]$-module with basis
the set $\left\{ \left(g_{i}-1\right)\mid i\in\left\{ 1,\ldots,n\right\} \right\} $.
\end{prop}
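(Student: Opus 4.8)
The plan is to prove generation and linear independence separately, using only the universal property of the free group $\mathbf{F}_{n}$. Write $\varepsilon:\mathbb{K}\left[\mathbf{F}_{n}\right]\rightarrow\mathbb{K}$ for the augmentation sending every group element to $1$, so that $\mathcal{I}_{\mathbb{K}\left[\mathbf{F}_{n}\right]}=\ker\varepsilon$ is spanned over $\mathbb{K}$ by the elements $w-1$ for $w\in\mathbf{F}_{n}$. First I would establish that the $g_{i}-1$ generate $\mathcal{I}_{\mathbb{K}\left[\mathbf{F}_{n}\right]}$ as a $\mathbb{K}\left[\mathbf{F}_{n}\right]$-module. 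This follows by induction on word length from the two identities
\[
uv-1=\left(u-1\right)+u\left(v-1\right)\qquad\textrm{and}\qquad u^{-1}-1=-u^{-1}\left(u-1\right),
\]
valid for $u,v\in\mathbf{F}_{n}$: any reduced word $w$ is built from the generators and their inverses, and these identities rewrite $w-1$ as a $\mathbb{K}\left[\mathbf{F}_{n}\right]$-linear combination of the $g_{i}-1$. Since the $w-1$ span $\mathcal{I}_{\mathbb{K}\left[\mathbf{F}_{n}\right]}$ over $\mathbb{K}$, generation follows.

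The crux is freeness, for which I would introduce the Fox derivatives. For each $k\in\left\{1,\ldots,n\right\}$ I want a $\mathbb{K}$-linear map $\partial_{k}:\mathbb{K}\left[\mathbf{F}_{n}\right]\rightarrow\mathbb{K}\left[\mathbf{F}_{n}\right]$ satisfying the twisted Leibniz rule $\partial_{k}\left(ab\right)=\partial_{k}\left(a\right)\varepsilon\left(b\right)+a\,\partial_{k}\left(b\right)$ and normalised by $\partial_{k}\left(g_{i}\right)=\delta_{ik}$. The existence and uniqueness of such a map is precisely where freeness of the group is used: a map $D:\mathbf{F}_{n}\rightarrow\mathbb{K}\left[\mathbf{F}_{n}\right]$ satisfying $D\left(uv\right)=D\left(u\right)+u\,D\left(v\right)$ is the same datum as a group homomorphism $\mathbf{F}_{n}\rightarrow\mathbb{K}\left[\mathbf{F}_{n}\right]\rtimes\mathbf{F}_{n}$ splitting the projection, namely $u\mapsto\left(D\left(u\right),u\right)$; by the universal property of $\mathbf{F}_{n}$ such a homomorphism is determined by, and may be prescribed arbitrarily on, the generators $g_{i}$. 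Taking the values $\delta_{ik}$ produces $\partial_{k}$, which I then extend $\mathbb{K}$-linearly to $\mathbb{K}\left[\mathbf{F}_{n}\right]$.

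With the $\partial_{k}$ in hand, the conclusion is a short computation. From the Leibniz rule and $\varepsilon\left(g_{i}-1\right)=0$ one gets, for $a\in\mathbb{K}\left[\mathbf{F}_{n}\right]$,
\[
\partial_{k}\left(a\left(g_{i}-1\right)\right)=\partial_{k}\left(a\right)\varepsilon\left(g_{i}-1\right)+a\,\partial_{k}\left(g_{i}-1\right)=\delta_{ik}\,a.
\]
Hence if $\sum_{i=1}^{n}\alpha_{i}\left(g_{i}-1\right)=0$ with $\alpha_{i}\in\mathbb{K}\left[\mathbf{F}_{n}\right]$, applying $\partial_{k}$ yields $\alpha_{k}=0$ for every $k$, so the $g_{i}-1$ are $\mathbb{K}\left[\mathbf{F}_{n}\right]$-linearly independent. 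Together with generation this shows that $\left\{g_{i}-1\mid i\in\left\{1,\ldots,n\right\}\right\}$ is a basis. The main obstacle is the well-definedness of the Fox derivatives; the semidirect-product reformulation above is what makes it clean, since it reduces the consistency check entirely to the universal property and leaves no group relations to verify. As an alternative route, one could instead exhibit the free resolution $0\rightarrow\mathbb{K}\left[\mathbf{F}_{n}\right]^{n}\overset{\partial}{\rightarrow}\mathbb{K}\left[\mathbf{F}_{n}\right]\overset{\varepsilon}{\rightarrow}\mathbb{K}\rightarrow0$ arising from the contractible universal cover of a wedge of $n$ circles, where $\partial$ sends the $i$-th basis vector to $g_{i}-1$; exactness then identifies $\mathcal{I}_{\mathbb{K}\left[\mathbf{F}_{n}\right]}$ with the free module $\mathbb{K}\left[\mathbf{F}_{n}\right]^{n}$ on the $g_{i}-1$.
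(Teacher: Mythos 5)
The paper gives no proof of this proposition at all: it is quoted directly from Weibel \cite{Weibel1}, so there is no internal argument to compare against, and your write-up supplies a genuine, correct, self-contained proof. Your generation step (induction on word length via $uv-1=(u-1)+u\left(v-1\right)$ and $u^{-1}-1=-u^{-1}\left(u-1\right)$) is sound, your construction of the Fox derivatives through splittings $u\mapsto\left(D\left(u\right),u\right)$ of the projection $\mathbb{K}\left[\mathbf{F}_{n}\right]\rtimes\mathbf{F}_{n}\rightarrow\mathbf{F}_{n}$ is exactly the clean way to obtain well-definedness from the universal property of $\mathbf{F}_{n}$, and the computation $\partial_{k}\left(a\left(g_{i}-1\right)\right)=\delta_{ik}a$ correctly kills any relation. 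This purely algebraic route is different from the proof in Weibel's book, which is essentially your alternative: there one uses that the Cayley graph of $\mathbf{F}_{n}$ is a tree (equivalently, that the universal cover of a wedge of $n$ circles is contractible), whose augmented cellular chain complex yields the exact sequence $0\rightarrow\mathbb{K}\left[\mathbf{F}_{n}\right]^{\oplus n}\rightarrow\mathbb{K}\left[\mathbf{F}_{n}\right]\rightarrow\mathbb{K}\rightarrow0$ identifying $\mathcal{I}_{\mathbb{K}\left[\mathbf{F}_{n}\right]}$ with the free module on the $g_{i}-1$. The Fox-derivative argument buys independence from topology and stays entirely inside the group ring; the resolution argument buys more (it exhibits the full standard resolution, hence that free groups have cohomological dimension one), which is why Weibel phrases it that way. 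One small point to adjust: your identities and Leibniz rule are left-sided, so you prove that $\mathcal{I}_{\mathbb{K}\left[\mathbf{F}_{n}\right]}$ is free as a \emph{left} module, whereas the paper uses $\mathcal{I}_{\mathbb{K}\left[\mathbf{F}_{n}\right]}$ as a \emph{right} $\mathbb{K}\left[\mathbf{F}_{n}\right]$-module when forming $\mathcal{I}_{\mathbb{K}\left[\mathbf{F}_{n}\right]}\underset{\mathbb{K}\left[\mathbf{F}_{n}\right]}{\varotimes}F\left(n+1\right)$. This is harmless and not a gap: either run the mirror-image argument with right derivations satisfying $\partial_{k}\left(ab\right)=\partial_{k}\left(a\right)b+\varepsilon\left(a\right)\partial_{k}\left(b\right)$, or transport your statement through the anti-automorphism $g\mapsto g^{-1}$, noting that $g_{i}^{-1}-1=\left(g_{i}-1\right)\left(-g_{i}^{-1}\right)$, so the same set $\left\{ g_{i}-1\right\} $ is also a right basis.
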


This result allows us to prove the following properties.
\begin{prop}
\label{prop:exactnessLM}The functor $\mathbf{LM}_{a,\varsigma}:\mathbf{Fct}\left(\mathfrak{U}\boldsymbol{\beta},\mathbb{K}\textrm{-}\mathfrak{Mod}\right)\rightarrow\mathbf{Fct}\left(\mathfrak{U}\boldsymbol{\beta},\mathbb{K}\textrm{-}\mathfrak{Mod}\right)$
is reduced and exact. Moreover, it commutes with all colimits and
all finite limits.
\end{prop}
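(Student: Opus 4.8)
The plan is to reduce each of the four assertions to a pointwise statement in $\mathbb{K}\textrm{-}\mathfrak{Mod}$ and then to exploit the freeness of the augmentation ideal supplied by Proposition \ref{prop:-augmentationidealfreemodule}. Since $\mathbb{K}\textrm{-}\mathfrak{Mod}$ is a complete and cocomplete abelian category, the functor category $\mathbf{Fct}\left(\mathfrak{U}\boldsymbol{\beta},\mathbb{K}\textrm{-}\mathfrak{Mod}\right)$ is again abelian, and all (co)limits, kernels and cokernels are computed objectwise. Consequently a sequence is exact, and a cone is a (co)limit cone, in $\mathbf{Fct}\left(\mathfrak{U}\boldsymbol{\beta},\mathbb{K}\textrm{-}\mathfrak{Mod}\right)$ exactly when this holds after evaluation at each object $n$ of $\mathfrak{U}\boldsymbol{\beta}$. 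It therefore suffices to prove, for every fixed $n\in\mathbb{N}$, that the functor
\[
F\longmapsto\mathbf{LM}_{a,\varsigma}\left(F\right)\left(n\right)=\mathcal{I}_{\mathbb{K}\left[\mathbf{F}_{n}\right]}\underset{\mathbb{K}\left[\mathbf{F}_{n}\right]}{\varotimes}F\left(n+1\right)
\]
with values in $\mathbb{K}\textrm{-}\mathfrak{Mod}$ is exact and commutes with all colimits and all finite limits.

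Next I would factor this functor as a composite and treat each factor separately. Evaluation at $n+1$, namely $F\mapsto F\left(n+1\right)$, is exact and commutes with all limits and colimits, since these are computed objectwise in the source category; regarding $F\left(n+1\right)$ as a $\mathbb{K}\left[\mathbf{F}_{n}\right]$-module by restriction along $\varsigma_{n}$ leaves the underlying $\mathbb{K}$-module untouched and hence preserves exactness and all (co)limits. It then remains to analyse $\mathcal{I}_{\mathbb{K}\left[\mathbf{F}_{n}\right]}\varotimes_{\mathbb{K}\left[\mathbf{F}_{n}\right]}-$. By Proposition \ref{prop:-augmentationidealfreemodule}, $\mathcal{I}_{\mathbb{K}\left[\mathbf{F}_{n}\right]}$ is a free $\mathbb{K}\left[\mathbf{F}_{n}\right]$-module (of rank $n$); in particular it is flat, so tensoring with it is exact, and being a tensor product it is a left adjoint, whence it commutes with all colimits. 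Exactness together with additivity yields the preservation of kernels and of finite products, that is, of all finite limits. The composite of these functors therefore inherits exactness and the commutation with all colimits and all finite limits, which settles these three claims.

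For the reducedness I would simply evaluate at the initial object $0$. The group $\mathbf{F}_{0}$ is trivial, so $\mathbb{K}\left[\mathbf{F}_{0}\right]=\mathbb{K}$ and, by Proposition \ref{prop:-augmentationidealfreemodule}, $\mathcal{I}_{\mathbb{K}\left[\mathbf{F}_{0}\right]}$ is free on the empty basis, hence is the zero module. Thus $\mathbf{LM}_{a,\varsigma}\left(F\right)\left(0\right)=\mathcal{I}_{\mathbb{K}\left[\mathbf{F}_{0}\right]}\varotimes_{\mathbb{K}}F\left(1\right)=0$ for every $F$, so $\mathbf{LM}_{a,\varsigma}\left(F\right)$ is reduced. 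The only point requiring some care, rather than a genuine obstacle, is the bookkeeping of module structures: one must remember that $\mathcal{I}_{\mathbb{K}\left[\mathbf{F}_{n}\right]}$ carries both the right $\mathbb{K}\left[\mathbf{F}_{n}\right]$-module structure used to form the tensor product and the $\mathbf{B}_{n}$-action, and check that the pointwise identifications above are natural in $F$ so that they assemble into statements in the functor category. Since natural transformations and (co)limits in $\mathbf{Fct}\left(\mathfrak{U}\boldsymbol{\beta},\mathbb{K}\textrm{-}\mathfrak{Mod}\right)$ are themselves objectwise, this naturality is automatic, and nothing beyond the freeness input of Proposition \ref{prop:-augmentationidealfreemodule} is needed.
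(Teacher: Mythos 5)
Your treatment of exactness, commutation with colimits and commutation with finite limits is correct and is essentially the paper's own argument: everything in $\mathbf{Fct}\left(\mathfrak{U}\boldsymbol{\beta},\mathbb{K}\textrm{-}\mathfrak{Mod}\right)$ is computed objectwise, the augmentation ideal $\mathcal{I}_{\mathbb{K}\left[\mathbf{F}_{n}\right]}$ is free hence flat by Proposition \ref{prop:-augmentationidealfreemodule}, so each functor $\mathcal{I}_{\mathbb{K}\left[\mathbf{F}_{n}\right]}\underset{\mathbb{K}\left[\mathbf{F}_{n}\right]}{\varotimes}-$ is exact and, being given by a tensor product, commutes with all colimits; preservation of finite limits then follows since exact additive functors preserve kernels and biproducts. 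The naturality bookkeeping you mention at the end is exactly the point the paper also appeals to, and it is indeed automatic from the objectwise nature of (co)limits together with the formula $\mathbf{LM}_{a,\varsigma}\left(\eta\right)_{n}=id_{\mathcal{I}_{\mathbb{K}\left[\mathbf{F}_{n}\right]}}\underset{\mathbb{K}\left[\mathbf{F}_{n}\right]}{\varotimes}\eta_{n+1}$.

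The gap is in the reducedness claim: you proved a different statement from the one asserted. In this paper, ``reduced'' is a property of the endofunctor $\mathbf{LM}_{a,\varsigma}$ itself, namely that it sends the null functor $0_{\mathbf{Fct}\left(\mathfrak{U}\boldsymbol{\beta},\mathbb{K}\textrm{-}\mathfrak{Mod}\right)}$ to the null functor; this is how reducedness is used later (in the proof of Theorem \ref{thm:Main result2}, one needs $\tau_{2}\left(\kappa_{1}F\right)=\tau_{2}\left(0\right)=0$, i.e.\ that $\tau_{2}$ is reduced in this sense). What you showed is that for every $F$ the functor $\mathbf{LM}_{a,\varsigma}\left(F\right)$ vanishes at the object $0$ of $\mathfrak{U}\boldsymbol{\beta}$ --- true, but a statement about the values of $\mathbf{LM}_{a,\varsigma}$, not about $\mathbf{LM}_{a,\varsigma}$ applied to the zero object of the functor category. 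The intended claim is immediate from the definition: $\mathbf{LM}_{a,\varsigma}\left(0_{\mathbf{Fct}\left(\mathfrak{U}\boldsymbol{\beta},\mathbb{K}\textrm{-}\mathfrak{Mod}\right)}\right)\left(n\right)=\mathcal{I}_{\mathbb{K}\left[\mathbf{F}_{n}\right]}\underset{\mathbb{K}\left[\mathbf{F}_{n}\right]}{\varotimes}0=0$ for all $n$; alternatively it follows from the exactness you already established, since an exact functor between abelian categories is additive and additive functors preserve zero objects. So the repair is a one-line substitution, but as written your reducedness paragraph does not prove the proposition's assertion.
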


\begin{proof}
Let $0_{\mathbf{Fct}\left(\mathfrak{U}\boldsymbol{\beta},\mathbb{K}\textrm{-}\mathfrak{Mod}\right)}:\mathfrak{U}\boldsymbol{\beta}\rightarrow\mathbb{K}\textrm{-}\mathfrak{Mod}$
denote the null functor. It follows from the definition of the Long-Moody
functor that $\mathbf{LM}\left(0_{\mathbf{Fct}\left(\mathfrak{U}\boldsymbol{\beta},\mathbb{K}\textrm{-}\mathfrak{Mod}\right)}\right)=0_{\mathbf{Fct}\left(\mathfrak{U}\boldsymbol{\beta},\mathbb{K}\textrm{-}\mathfrak{Mod}\right)}$.\\
Let $n$ be a natural number. Since the augmentation ideal $\mathcal{I}_{\mathbb{K}\left[\mathbf{F}_{n}\right]}$
is a free $\mathbb{K}\left[\mathbf{F}_{n}\right]$-module (as stated
in Proposition \ref{prop:-augmentationidealfreemodule}), it is therefore
a flat $\mathbb{K}\left[\mathbf{F}_{n}\right]$-module. Then, the
result follows from the fact that the functor $\mathcal{I}_{\mathbb{K}\left[\mathbf{F}_{n}\right]}\underset{\mathbb{K}\left[\mathbf{F}_{n}\right]}{\varotimes}-:\mathbb{K}\textrm{-}\mathfrak{Mod}\rightarrow\mathbb{K}\textrm{-}\mathfrak{Mod}$
is an exact functor, the naturality for morphisms following from the
definition of the Long-Moody functor (see Theorem \ref{Thm:LMFunctor}).

Similarly, the fact that the functor \textit{$\mathbf{LM}_{a,\varsigma}$}
commutes with all colimits is a formal consequence of the commutation
with all colimits of the tensor products $\mathcal{I}_{\mathbb{K}\left[\mathbf{F}_{n}\right]}\underset{\mathbb{K}\left[\mathbf{F}_{n}\right]}{\varotimes}-$
for all natural numbers $n$. The commutation result for finite limits
is a property of exact functors (see for example \cite[Chapter 8, section 3]{MacLane1}).
\end{proof}
\begin{rem}
\label{rem:basisfunctor}Let $F$ be an object of $\mathbf{Fct}\left(\mathfrak{U}\boldsymbol{\beta},\mathbb{K}\textrm{-}\mathfrak{Mod}\right)$
and $n$ a natural number. For all $k\in\left\{ 1,\ldots,n\right\} $,
we denote $F\left(n+1\right)_{k}=\mathbb{K}\left[\left(g_{k}-1\right)\right]\underset{\mathbb{K}\left[\mathbf{F}_{n}\right]}{\varotimes}F\left(n+1\right)$
with $g_{k}$ a generator of $\mathbf{F}_{n}$. We define an isomorphism
\begin{eqnarray*}
\varLambda_{n,F}:\mathcal{I}_{\mathbb{K}\left[\mathbf{F}_{n}\right]}\underset{\mathbb{K}\left[\mathbf{F}_{n}\right]}{\varotimes}F\left(n+1\right) & \longrightarrow & \stackrel[k=1]{n}{\bigoplus}F\left(n+1\right)_{k}\cong\left(F\left(n+1\right)\right)^{\oplus n}\\
\left(g_{k}-1\right)\underset{\mathbb{K}\left[\mathbf{F}_{n}\right]}{\varotimes}v & \longmapsto & \left(0,\ldots,0,\overset{\overset{k\textrm{-}th}{\overbrace{}}}{v},0,\ldots,0\right).
\end{eqnarray*}
Thus, for \textit{$\eta:F\rightarrow G$} a natural transformation,
with $\varLambda$:
\[
\forall n\in\mathbb{N},\,\varLambda_{n}\left(\left(\mathbf{LM}\left(\eta\right)\right)_{n}\right)=\eta_{n+1}^{\oplus n}.
\]
Hence, we can have a matricial point of view on this construction
(see \cite[Theorem 2.2]{Long1}). Similarly, the study of Bigelow
and Tian in \cite{BigelowTian} is performed from a purely matricial
point of view.
\end{rem}

\paragraph{Case of trivial $\varsigma$:}

Finally, let us consider the family of morphisms $\left\{ \varsigma_{n,*}:\mathbf{F}_{n}\rightarrow\mathbf{B}_{n+1}\right\} _{n\in\mathbb{N}}$
of Example \ref{exa:trivialsigma}.
\begin{rem}
As stated in Example \ref{exa:exant}, we only need to consider a
family of morphisms $\left\{ a_{n}:\mathbf{B}_{n}\rightarrow Aut\left(\mathbf{F}_{n}\right)\right\} _{n\in\mathbb{N}}$
which satisfies Condition \ref{cond:coherenceconditionbnautfn1} so
that the families $\left\{ \varsigma_{n,*}:\mathbf{F}_{n}\rightarrow\mathbf{B}_{n+1}\right\} _{n\in\mathbb{N}}$
and $\left\{ a_{n}:\mathbf{B}_{n}\rightarrow Aut\left(\mathbf{F}_{n}\right)\right\} _{n\in\mathbb{N}}$
are coherent.
\end{rem}

\begin{notation}
\label{exa:xdeg0}We denote by $\mathfrak{X}:\mathfrak{U}\boldsymbol{\beta}\rightarrow\mathbb{K}\textrm{-}\mathfrak{Mod}$
the constant functor such that $\mathfrak{X}\left(n\right)=\mathbb{K}$
for all natural numbers $n$.
\end{notation}

We have the following remarkable property.
\begin{prop}
\label{prop:casesigmatrivial}Let $F$ be an object of $\mathbf{Fct}\left(\mathfrak{U}\boldsymbol{\beta},\mathbb{K}\textrm{-}\mathfrak{Mod}\right)$
and $\left\{ a_{n}:\mathbf{B}_{n}\rightarrow Aut\left(\mathbf{F}_{n}\right)\right\} _{n\in\mathbb{N}}$
a family of morphisms which satisfies Condition \ref{cond:coherenceconditionbnautfn1}.
Then, as objects of $\mathbf{Fct}\left(\mathfrak{U}\boldsymbol{\beta},\mathbb{K}\textrm{-}\mathfrak{Mod}\right)$,
$\mathbf{LM}_{a,\varsigma_{*}}\left(F\right)\cong\mathbf{LM}_{a,\varsigma_{*}}\left(\mathfrak{X}\right)\underset{\mathbb{K}}{\otimes}F\left(1\natural-\right)$.
\end{prop}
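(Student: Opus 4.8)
The plan is to reduce everything to the single observation that the trivial family $\varsigma_{*}$ makes the free-group action inert, and then to apply a projection-formula isomorphism. First I would note that since $\varsigma_{n,*}$ sends every $g\in\mathbf{F}_{n}$ to the unit of $\mathbf{B}_{n+1}$, the left $\mathbb{K}[\mathbf{F}_{n}]$-module structure on $F(n+1)$ obtained by restriction along $\varsigma_{n,*}$ is the trivial one: each $g$ acts as $F(id_{n+1})=id$. Equivalently, this module is the restriction of scalars $\epsilon^{*}F(n+1)$ of the underlying $\mathbb{K}$-module along the augmentation $\epsilon:\mathbb{K}[\mathbf{F}_{n}]\rightarrow\mathbb{K}$. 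The same holds for $\mathfrak{X}(n+1)=\mathbb{K}$, whose action is trivial for the further reason that $\mathfrak{X}$ is constant.

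Next I would set up, for each $n$, an isomorphism of $\mathbb{K}$-modules
\[
\Phi_{n}:\mathcal{I}_{\mathbb{K}[\mathbf{F}_{n}]}\underset{\mathbb{K}[\mathbf{F}_{n}]}{\varotimes}F(n+1)\overset{\sim}{\longrightarrow}\Big(\mathcal{I}_{\mathbb{K}[\mathbf{F}_{n}]}\underset{\mathbb{K}[\mathbf{F}_{n}]}{\varotimes}\mathbb{K}\Big)\underset{\mathbb{K}}{\otimes}F(n+1),\qquad i\underset{\mathbb{K}[\mathbf{F}_{n}]}{\varotimes}v\longmapsto\big(i\underset{\mathbb{K}[\mathbf{F}_{n}]}{\varotimes}1\big)\underset{\mathbb{K}}{\otimes}v,
\]
which is the standard projection formula $M\varotimes_{R}\epsilon^{*}V\cong(M\varotimes_{R}\mathbb{K})\otimes_{\mathbb{K}}V$ applied to $M=\mathcal{I}_{\mathbb{K}[\mathbf{F}_{n}]}$ and $V=F(n+1)$. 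Its well-definedness is exactly where triviality of the action is used: the balancing relation $(i\cdot g)\varotimes v=i\varotimes v$ on the source must be carried to $\big((i\cdot g)\varotimes1\big)\otimes v=(i\varotimes1)\otimes v$ on the target, which holds because $(i\cdot g)\varotimes1=i\varotimes(g\cdot1)=i\varotimes1$ in $\mathcal{I}_{\mathbb{K}[\mathbf{F}_{n}]}\varotimes_{\mathbb{K}[\mathbf{F}_{n}]}\mathbb{K}$, $g$ acting trivially on $\mathbb{K}$. The rule $\big(i\varotimes\lambda\big)\otimes v\mapsto i\varotimes\lambda v$ provides an inverse. The target is by definition $\mathbf{LM}_{a,\varsigma_{*}}(\mathfrak{X})(n)\underset{\mathbb{K}}{\otimes}F(1\natural-)(n)$, since $F(1\natural-)(n)=F(n+1)$; under the identification $\varLambda$ of Remark \ref{rem:basisfunctor} this recovers the expected $F(n+1)^{\oplus n}$.

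The remaining, and main, point is naturality: I would show that $\{\Phi_{n}\}$ is a morphism in $\mathbf{Fct}\left(\mathfrak{U}\boldsymbol{\beta},\mathbb{K}\textrm{-}\mathfrak{Mod}\right)$. Using the explicit formula of Theorem \ref{Thm:LMFunctor} for $\mathbf{LM}(F)\left(\left[n'-n,\sigma\right]\right)$ and $\mathbf{LM}(\mathfrak{X})\left(\left[n'-n,\sigma\right]\right)$, the key observation is that the augmentation-ideal factor transforms by the \emph{same} operator $a_{n'}(\sigma)\circ\big(\iota_{\mathcal{I}_{\mathbb{K}[\mathbf{F}_{n'-n}]}}*id_{\mathcal{I}_{\mathbb{K}[\mathbf{F}_{n}]}}\big)$ in both functors (the value $\mathfrak{X}(id_{1}\natural\left[n'-n,\sigma\right])$ being $id_{\mathbb{K}}$ because $\mathfrak{X}$ is constant), while the $F$-factor transforms by $F(id_{1}\natural\left[n'-n,\sigma\right])$ in both. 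A direct substitution then yields $\Phi_{n'}\circ\mathbf{LM}(F)\left(\left[n'-n,\sigma\right]\right)=\big(\mathbf{LM}(\mathfrak{X})\left(\left[n'-n,\sigma\right]\right)\otimes F(id_{1}\natural\left[n'-n,\sigma\right])\big)\circ\Phi_{n}$. To organise the bookkeeping I could instead invoke Proposition \ref{prop:criterionnaturaltransfo} and check compatibility only for the stabilisations $\left[n'-n,id_{n'}\right]$ and for the automorphisms $\sigma\in\mathbf{B}_{n}$ separately. I expect the only real friction to be the careful tracking of the maps $\iota_{\mathcal{I}_{\mathbb{K}[\mathbf{F}_{n'-n}]}}*id$ and of the $a_{n'}$-action through $\Phi_{n}$, all of which is routine once the triviality observation is in place; Condition \ref{cond:coherenceconditionbnautfn1} guarantees these assignments are consistent, exactly as in the proof of Theorem \ref{Thm:LMFunctor}.
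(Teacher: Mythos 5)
Your proposal is correct and takes essentially the same route as the paper: your projection-formula map $\Phi_{n}$ is exactly the composite $\left(\varLambda_{n,\mathfrak{X}}\underset{\mathbb{K}}{\otimes}id_{F\left(1\natural n\right)}\right)^{-1}\circ\varLambda_{n,F}$ of Remark \ref{rem:basisfunctor} that the paper uses, both resting on the freeness of $\mathcal{I}_{\mathbb{K}\left[\mathbf{F}_{n}\right]}$ as a $\mathbb{K}\left[\mathbf{F}_{n}\right]$-module and on the triviality of the $\mathbf{F}_{n}$-action on $F\left(n+1\right)$ induced by $\varsigma_{*}$. Your naturality verification (same operator $a_{n'}\left(\sigma\right)\circ\left(\iota_{\mathcal{I}_{\mathbb{K}\left[\mathbf{F}_{n'-n}\right]}}*id_{\mathcal{I}_{\mathbb{K}\left[\mathbf{F}_{n}\right]}}\right)$ on the ideal factor, same $F\left(id_{1}\natural\left[n'-n,\sigma\right]\right)$ on the module factor, with $\mathfrak{X}$ contributing only identities) is precisely the step the paper dismisses as "straightforward to check," and it is carried out correctly.
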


\begin{proof}
Remark \ref{rem:basisfunctor} shows that there is an isomorphism
of $\mathbb{K}$-modules of the form:
\[
\xymatrix{\mathbf{LM}_{a,\varsigma_{*}}\left(F\right)\left(n\right)\ar@{->}[rr]^{\varLambda_{n,F}} &  & \left(F\left(n+1\right)\right)^{\oplus n}\ar@{->}[rr]^{\left(\varLambda_{n,\mathfrak{X}}\underset{\mathbb{K}}{\otimes}id_{F\left(1\natural n\right)}\right)^{-1}\,\,\,\,\,\,\,\,\,\,\,\,\,\,\,\,} &  & \mathbf{LM}_{a,\varsigma_{*}}\left(\mathfrak{X}\right)\left(n\right)\underset{\mathbb{K}}{\otimes}F\left(1\natural n\right)}
.
\]
It is straightforward to check that this isomorphism is natural if
$\varsigma$ is trivial.
\end{proof}

\subsection{Evaluation of the Long-Moody functor\label{subsec:Evaluation-of-the}}

A first step to understand the behaviour of a Long-Moody endofunctor
is to investigate its effect on the constant functor $\mathfrak{X}$.
This is indeed the most basic functor to study. Moreover, as Proposition
\ref{prop:casesigmatrivial} shows, the evaluation on this functor
is the fundamental information to understand a given Long-Moody endofunctor
when we consider the family of morphisms $\left\{ \varsigma_{n,*}:\mathbf{F}_{n}\rightarrow\mathbf{B}_{n+1}\right\} _{n\in\mathbb{N}}$
of Example \ref{exa:trivialsigma}.

Fixing coherent families of morphisms $\left\{ \varsigma_{n}:\mathbf{F}_{n}\rightarrow\mathbf{B}_{n+1}\right\} _{n\in\mathbb{N}}$
and $\left\{ a_{n}:\mathbf{B}_{n}\rightarrow Aut\left(\mathbf{F}_{n}\right)\right\} _{n\in\mathbb{N}}$,
we consider the Long-Moody functor 
\[
\mathbf{LM}_{a,\varsigma}:\mathbf{Fct}\left(\boldsymbol{\beta},\mathbb{K}\textrm{-}\mathfrak{Mod}\right)\rightarrow\mathbf{Fct}\left(\boldsymbol{\beta},\mathbb{K}\textrm{-}\mathfrak{Mod}\right).
\]
For a fixed natural number $n$, using the isomorphism $\varLambda_{n}$
of Remark \ref{rem:basisfunctor}, we observe that $\mathbf{LM}_{a,\varsigma}\left(\mathfrak{X}\right)\left(n\right)\cong\mathbb{K}^{\oplus n}$.
\begin{notation}
\label{nota:yX}Let $y$ be an invertible element of $\mathbb{K}$.
Let $y\mathfrak{X}:\boldsymbol{\beta}\rightarrow\mathbb{K}\textrm{-}\mathfrak{Mod}$
be the functor defined for all natural numbers $n$ by $y\mathfrak{X}\left(n\right)=\mathbb{K}$
and such that:

\begin{itemize}
\item if $n=0$ or $n=1$, then $y\mathfrak{X}\left(id\right)=id_{\mathbb{K}}$;
\item if $n\geq2$, for every Artin generator $\sigma_{i}$ of $\mathbf{B}_{n}$,
$\left(y\mathfrak{X}\right)\left(\sigma_{i}\right):\mathbb{K}\rightarrow\mathbb{K}$
is the multiplication by $y$.
\end{itemize}
For an object $F$ of $\mathbf{Fct}\left(\boldsymbol{\beta},\mathbb{K}\textrm{-}\mathfrak{Mod}\right)$,
we denote the functor $y\mathfrak{X}\underset{\mathbb{K}}{\otimes}F:\boldsymbol{\beta}\rightarrow\textrm{\ensuremath{\mathbb{K}}-}\mathfrak{Mod}$
by $yF$. 
\end{notation}

\subsubsection{\label{subsec:casbur1}Computations for $\mathbf{LM}_{1}$}

Let us assume that $\mathbb{K}=\mathbb{C}\left[t^{\pm1}\right]$.
Let us consider the coherent families of morphisms $\left\{ \varsigma_{n,1}:\mathbf{F}_{n}\hookrightarrow\mathbf{B}_{n+1}\right\} _{n\in\mathbb{N}}$
(introduced in Example \ref{ex:ex1defmapfnbn}) and $\left\{ a_{n,1}:\mathbf{B}_{n}\rightarrow Aut\left(\mathbf{F}_{n}\right)\right\} _{n\in\mathbb{N}}$
(introduced in Example \ref{exan1}). We denote by $\mathbf{LM}_{1}$
the associated Long-Moody functor. We are interested in the behaviour
of the functor $t^{-1}\mathbf{LM}_{1}\left(t\mathfrak{\mathfrak{X}}\right):\boldsymbol{\beta}\longrightarrow\mathbb{C}\left[t^{\pm1}\right]\textrm{-}\mathfrak{Mod}$
on automorphisms of the category $\mathfrak{U}\boldsymbol{\beta}$.
Indeed, adding a parameter $t$ is necessary to recover functors specifically
associated with the category $\mathfrak{U}\boldsymbol{\beta}$, such
as $\mathfrak{Bur}_{t}$ (see Section \ref{subsec:Examples-of-functors}).
Let us fix $n$ a natural number and $\sigma_{i}$ an Artin generator
of $\mathbf{B}_{n}$.

Beforehand, let us understand the action $a_{n,1}:\mathbf{B}_{n}\longrightarrow Aut\left(\mathcal{I}_{\mathbb{K}\left[\mathbf{F}_{n}\right]}\right)$
induced by $a_{n,1}:\mathbf{B}_{n}\rightarrow Aut\left(\mathbf{F}_{n}\right)$.
We compute:
\begin{eqnarray*}
a_{n,1}\left(\sigma_{i}\right):\mathcal{I}_{\mathbb{K}\left[\mathbf{F}_{n}\right]} & \longrightarrow & \mathcal{I}_{\mathbb{K}\left[\mathbf{F}_{n}\right]}\\
g_{j}-1 & \longmapsto & \begin{cases}
g_{i+1}-1 & \textrm{if \ensuremath{j=i}}\\
g_{i+1}^{-1}g_{i}g_{i+1}-1=\left[g_{i}-1\right]g_{i+1}+\left[g_{i+1}-1\right]\left(1-g_{i+1}^{-1}g_{i}g_{i+1}\right) & \textrm{if \ensuremath{j=i+1}}\\
g_{j}-1 & \textrm{if \ensuremath{j\notin\left\{ i,i+1\right\} \textrm{.}}}
\end{cases}
\end{eqnarray*}

Hence, we have the following result.
\begin{prop}
\label{prop:recoveringunredBurau} As objects of $\mathbf{Fct}\left(\boldsymbol{\beta},\mathbb{K}\textrm{-}\mathfrak{Mod}\right)$,
$t^{-1}\mathbf{LM}_{1}\left(t\mathfrak{X}\right)=\mathfrak{Bur}_{t^{2}}$.
\end{prop}

\begin{proof}
Using the isomorphism $\varLambda_{n}$ of Remark \ref{rem:basisfunctor},
we obtain that for $\sigma_{i}$ an Artin generator of $\mathbf{B}_{n}$:
\[
t^{-1}\mathbf{LM}_{1}\left(t\mathfrak{\mathfrak{X}}\right)\left(\sigma_{i}\right)=Id_{i-1}\oplus\left[\begin{array}{cc}
0 & t^{2}\\
1 & 1-t^{2}
\end{array}\right]\oplus Id_{n-i-1}=\mathfrak{Bur}_{t^{2}}\left(\sigma_{i}\right).
\]
\end{proof}

\paragraph{Recovering of the Lawrence-Krammer functor:}

Let us first introduce the following result due to Long in \cite{Long1}.
We assume that $\mathbb{K}=\mathbb{C}\left[t^{\pm1}\right]\left[q^{\pm1}\right]$.
For this paragraph, we assume that $1+qt=0$, $q$ has a square root,
$q^{2}\neq1$ and $q^{3}\neq1$.
\begin{notation}
We denote by $\mathfrak{X}':\boldsymbol{\beta}\longrightarrow\mathbb{C}\left[t^{\pm1}\right]\left[q^{\pm1}\right]\textrm{-}\mathfrak{Mod}$
the constant functor such that $\mathfrak{X}'\left(n\right)=\mathbb{C}\left[t^{\pm1}\right]\left[q^{\pm1}\right]$
for all natural numbers $n$. Generally speaking, for $F$ an object
of $\mathbf{Fct}\left(\boldsymbol{\beta},\mathbb{K}\textrm{-}\mathfrak{Mod}\right)$
the representation of $\mathbf{B}_{n}$ induced by $F$ will be denoted
by $F_{\mid\mathbf{B}_{n}}$.
\end{notation}

\begin{prop}
\cite[ special case of Corollary 2.10]{Long1}\label{prop:lksubrep}
Let $n$ be a natural number such that $n\geq4$. Then, the Lawrence-Krammer
representation $\mathfrak{LK}{}_{\mid\mathbf{B}_{n}}$ is a subrepresentation
of $q^{-1}\left(\mathbf{LM}_{1}\left(q\left(t^{-1}\mathbf{LM}_{1}\left(t\mathfrak{X}\right)\right)\right)\right){}_{\mid\mathbf{B}_{n}}$.
\end{prop}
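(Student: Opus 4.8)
The plan is to reduce the statement to an explicit matricial computation and then exhibit the Lawrence--Krammer representation as an invariant subspace. First I would use Proposition \ref{prop:recoveringunredBurau} to replace the inner construction: since $t^{-1}\mathbf{LM}_{1}(t\mathfrak{X})\cong\mathfrak{Bur}_{t^{2}}$ as objects of $\mathbf{Fct}(\boldsymbol{\beta},\mathbb{K}\textrm{-}\mathfrak{Mod})$, the functor whose restriction must be analysed is $q^{-1}\mathbf{LM}_{1}(q\mathfrak{Bur}_{t^{2}})$ (using Notation \ref{nota:yX} for the scalings $t\mathfrak{X}$ and $q(-)$). Evaluating at a natural number $n$ and applying the isomorphism $\varLambda_{n}$ of Remark \ref{rem:basisfunctor} gives a $\mathbb{K}$-linear identification $\mathbf{LM}_{1}(q\mathfrak{Bur}_{t^{2}})(n)\cong\bigl(\mathfrak{Bur}_{t^{2}}(n+1)\bigr)^{\oplus n}$ of rank $n(n+1)$, with basis the tensors $(g_{j}-1)\otimes e_{k}$ for $1\le j\le n$ and $1\le k\le n+1$, where $\{e_{k}\}$ is the standard basis of $\mathfrak{Bur}_{t^{2}}(n+1)$.

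Second, I would write out the matrix of $q^{-1}\mathbf{LM}_{1}(q\mathfrak{Bur}_{t^{2}})(\sigma_{i})$ on this basis. By the definition of the Long--Moody functor on morphisms (Theorem \ref{Thm:LMFunctor}), this matrix is assembled from three ingredients: the action $a_{n,1}(\sigma_{i})$ on the generators $g_{j}-1$ of the augmentation ideal, computed in Section \ref{subsec:casbur1} (note in particular the quadratic correction term appearing for $j=i+1$); the $\mathbb{K}[\mathbf{F}_{n}]$-module structure on $\mathfrak{Bur}_{t^{2}}(n+1)$ obtained by restriction along $\varsigma_{n,1}$, which requires evaluating the twisted Burau representation on the specific braids $\varsigma_{n,1}(g_{i})=\sigma_{1}^{-1}\circ\cdots\circ\sigma_{i}^{2}\circ\cdots\circ\sigma_{1}$; and the action of $F(id_{1}\natural\sigma_{i})=F(\sigma_{i+1})$ on each copy of $\mathfrak{Bur}_{t^{2}}(n+1)$. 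Balancing the tensor product via Condition \ref{cond:coherenceconditionsigmanan}, exactly as in the computation preceding Proposition \ref{prop:recoveringunredBurau}, yields explicit block matrices indexed by the pairs $(j,k)$.

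Finally, I would identify the Lawrence--Krammer representation inside these matrices. The target $\mathfrak{LK}(n)=\bigoplus_{1\le j<k\le n}V_{j,k}$ has rank $n(n-1)/2$, so I would show that the subspace spanned by the tensors $(g_{j}-1)\otimes e_{k}$ with $1\le j<k\le n$ is stable under every $q^{-1}\mathbf{LM}_{1}(q\mathfrak{Bur}_{t^{2}})(\sigma_{i})$ and that, after imposing the stated specialization of parameters ($1+qt=0$, $q$ admitting a square root, and the prescribed relations on the powers of $q$), the induced action coincides with the six-case formula defining $\mathfrak{LK}(\sigma_{i})$ through the correspondence $(g_{j}-1)\otimes e_{k}\mapsto v_{j,k}$. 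This matching --- reconciling the entries produced by the doubled augmentation-ideal/Burau computation with Krammer's explicit matrices --- is the main obstacle: it is a finite but delicate verification, carried out case by case according to the position of $i$ relative to $j$ and $k$, and it is precisely where the parameter substitutions are forced. The hypothesis $n\ge4$ ensures that enough generators $\sigma_{i}$ are available to pin down all the off-diagonal entries, matching the range of validity of Long's Corollary 2.10.
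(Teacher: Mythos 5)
Your reduction of the inner construction to $\mathfrak{Bur}_{t^{2}}$ and the rank count are fine, but the proposal founders on its central claim: the span of the tensors $(g_{j}-1)\otimes e_{k}$ with $1\leq j<k\leq n$ is \emph{not} an invariant subspace, so the ``delicate verification'' you defer to cannot succeed as stated. Concretely, unwinding Theorem \ref{Thm:LMFunctor} for the outer functor $q^{-1}\mathbf{LM}_{1}\left(q\mathfrak{Bur}_{t^{2}}\right)$, the generator $\sigma_{1}$ sends the basis vector $(g_{1}-1)\otimes e_{2}$ (which has $j=1<k=2$) to $a_{n,1}\left(\sigma_{1}\right)\left(g_{1}-1\right)\otimes\mathfrak{Bur}_{t^{2}}\left(\sigma_{2}\right)\left(e_{2}\right)=\left(1-t^{2}\right)\,(g_{2}-1)\otimes e_{2}+(g_{2}-1)\otimes e_{3}$, whose first summand has $j=k=2$ and hence lies outside your span; for generic $t$ the coefficient $1-t^{2}$ does not vanish. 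Matters get worse when the $\mathbb{K}\left[\mathbf{F}_{n}\right]$-module corrections enter (e.g. $\sigma_{i}$ acting on $(g_{i+1}-1)\otimes e_{k}$): the braids $\varsigma_{n,1}\left(g_{j}\right)$ involve the generators $\sigma_{1},\ldots,\sigma_{j}$, so their Burau matrices scatter components onto the $e_{m}$ with $m\leq j$. Nothing in your plan produces the correct copy of $\mathfrak{LK}$, and there is no reason it should be a coordinate subspace in the $\varLambda_{n}$-basis at all; locating it is precisely the mathematical content of the proposition, not a finite case-check. Your explanation of the hypothesis $n\geq4$ (``enough generators to pin down off-diagonal entries'') is also not where that hypothesis comes from.

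The paper's proof takes an entirely different, cohomological route following Long, and the external inputs it cites are exactly what your matricial plan would have to reprove. By Proposition \ref{prop:firstprooflksubrep}, the construction $q^{-1}\mathcal{LM}\left(q\rho\right)$ is realized on the twisted cohomology $H^{1}\left(Y_{\underline{w}_{n},1},E_{\rho}\right)$ of a configuration space, and by Lemma \ref{lem:secondprooflksubrep} the iterated construction $q^{-1}\mathcal{LM}\left(q\left(t^{-1}\mathcal{LM}\left(t\mathfrak{X}\right)\right)\right)$ therefore acts on $H^{2}\left(Y_{\underline{w}_{n},2},E_{\mathfrak{X}}\right)$. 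Lawrence's theorem \cite[Theorem 5.1]{Lawrence} then provides, for $n\geq4$, the Iwahori--Hecke algebra subrepresentation attached to the Young diagram $\left(n-2,2\right)$ inside this cohomology --- this is where $n\geq4$ enters --- and Bigelow's identification \cite[Section 5]{biglowlk} equates that subrepresentation with the Lawrence--Krammer representation; the proof closes by observing that the cohomological construction agrees with $q^{-1}\left(\tau_{1}\mathbf{LM}_{1}\right)\left(q\left(t^{-1}\mathbf{LM}_{1}\left(t\mathfrak{X}\right)\right)\right)_{\mid\mathbf{B}_{n}}$. If you insist on a computational route, you would at a minimum need to exhibit explicitly the image of the $\left(n-2,2\right)$-isotypic embedding inside the rank-$n(n+1)$ module, together with the parameter conditions under which it splits off; your proposal does not attempt this, and the naive candidate subspace it offers is already ruled out by the computation above.
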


We first need to introduce new tools. Let $n$ and $m$ be two natural
numbers. Let $\underline{w}_{n}=\left(w_{1},\ldots,w_{n}\right)\in\mathbb{C}^{n}$
such that $w_{i}\neq w_{j}$ if $\ensuremath{i\neq j}$. We consider
the configuration space: 
\[
Y_{\underline{w}_{n},m}=\left\{ \left(z_{1},\ldots,z_{m}\right)\mid z_{i}\in\mathbb{C},\,z_{i}\neq w_{k}\,\textrm{for \ensuremath{1\leq k\leq n}},\,z_{i}\neq z_{j}\,\textrm{if \ensuremath{i\neq j}}\right\} .
\]
The two following results due to Long will be crucial to prove Proposition
\ref{prop:lksubrep}.
\begin{prop}
\cite[Corollary 2.7]{Long1}\label{prop:firstprooflksubrep} Let $n$
be a natural number and $\rho:\mathbf{B}_{n+1}\rightarrow GL\left(V\right)$
be a representation of $\mathbf{B}_{n}$ with $V$ a $\mathbb{C}\left[t^{\pm1}\right]\left[q^{\pm1}\right]$-module.
Then, the representation defined by Long in \cite[Theorem 2.1]{Long1},
which we denote by $\mathcal{LM}$, is a group morphism:
\[
q^{-1}\mathcal{LM}\left(q\rho\right):\mathbf{B}_{n}\rightarrow GL\left(H^{1}\left(Y_{\underline{w}_{n},1},E_{\rho}\right)\right)
\]
for $E_{\rho}$ a flat vector bundle associated with $\rho$ (see
\cite[p. 225-226]{Long1}).
\end{prop}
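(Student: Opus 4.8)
The plan is to identify Long's representation with the natural action of the quotient braid group on the cohomology of the fibre in the Birman exact sequence, realized geometrically as the cohomology of the flat bundle $E_\rho$.

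First I would fix the topological model. For $m=1$ the configuration space $Y_{\underline{w}_n,1}$ is the $n$-punctured plane $\mathbb{C}\setminus\{w_1,\dots,w_n\}$, which deformation retracts onto a wedge of $n$ circles; in particular it is aspherical, a $K(\mathbf{F}_n,1)$, with $\pi_1\bigl(Y_{\underline{w}_n,1}\bigr)\cong\mathbf{F}_n$. The flat bundle $E_\rho$ is the one associated with the monodromy homomorphism $\rho\circ\varsigma_{n,1}\colon\mathbf{F}_n\to\mathbf{B}_{n+1}\to GL(V)$, where $\varsigma_{n,1}$ is the pure braid local system of Example \ref{ex:ex1defmapfnbn}. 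Asphericity then gives a canonical isomorphism $H^1\bigl(Y_{\underline{w}_n,1},E_\rho\bigr)\cong H^1(\mathbf{F}_n,V)$, reducing the statement to group cohomology of the free group.

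Next I would compute $H^1(\mathbf{F}_n,V)$ through the length-one free resolution $0\to\mathcal{I}_{\mathbb{K}[\mathbf{F}_n]}\to\mathbb{K}[\mathbf{F}_n]\to\mathbb{K}\to0$ of the trivial module. Since $\mathcal{I}_{\mathbb{K}[\mathbf{F}_n]}$ is free of rank $n$ on $\{g_i-1\}$ by Proposition \ref{prop:-augmentationidealfreemodule}, this exhibits $H^1(\mathbf{F}_n,V)$ as the cohomology of a two-term complex whose degree-one term is controlled by $\mathcal{I}_{\mathbb{K}[\mathbf{F}_n]}\otimes_{\mathbb{K}[\mathbf{F}_n]}V$; the basis $\{g_i-1\}$ together with the isomorphism $\varLambda_{n}$ of Remark \ref{rem:basisfunctor} identifies the underlying $\mathbb{K}$-module with the $n$-fold sum $V^{\oplus n}$ underlying Long's representation $\mathcal{LM}(\rho)$. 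To put a $\mathbf{B}_n$-action on this cohomology I would invoke the Birman exact sequence $1\to\mathbf{F}_n\xrightarrow{\varsigma_{n,1}}\mathbf{B}_{n+1}\to\mathbf{B}_n\to1$: the induced outer action of $\mathbf{B}_n$ on the kernel is exactly the Artin action $a_{n,1}$ of Example \ref{exan1}, and the compatibility of $a_{n,1}$ with $\varsigma_{n,1}$ encoded in Condition \ref{cond:coherenceconditionsigmanan} is precisely the identity ensuring that, for any lift $\tilde\sigma\in\mathbf{B}_{n+1}$ of $\sigma\in\mathbf{B}_n$, the diagonal operator (conjugation by $\tilde\sigma$ on $\mathbf{F}_n$ together with $\rho(\tilde\sigma)$ on $V$) descends to a well-defined automorphism of $H^1(\mathbf{F}_n,V)$ independent of the lift. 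Transported through $\varLambda_n$, this operator is, on the generating classes, the Long-Moody formula of Theorem \ref{Thm:LMFunctor}, so the extension action coincides with $\mathcal{LM}(\rho)$.

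Finally, the normalization $q^{-1}\mathcal{LM}(q\rho)$ rescales the scalar by which $\rho$ records the monodromy around the individual punctures, so that the resulting operators are the genuine monodromy automorphisms of the flat bundle $E_\rho$ and the cohomology attains the expected rank; this is what places the representation in $GL\bigl(H^1(Y_{\underline{w}_n,1},E_\rho)\bigr)$. The main obstacle is this last matching together with the second half of the previous step: one must verify that the abstract homological action of the quotient on $H^1$ of the kernel agrees on the nose with Long's geometric monodromy, and in particular that the rank of $H^1$ and the twist by $q$ are reconciled, so that the chain-level module $\mathcal{I}_{\mathbb{K}[\mathbf{F}_n]}\otimes_{\mathbb{K}[\mathbf{F}_n]}V$ and the cohomology $H^1(Y_{\underline{w}_n,1},E_\rho)$ are genuinely the same $\mathbf{B}_n$-representation. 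Making explicit the passage between the topological braid action on the punctured plane and the Fox-derivative description of $\mathcal{I}_{\mathbb{K}[\mathbf{F}_n]}$ is the technical heart of the argument.
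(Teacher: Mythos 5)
The first thing to note is that the paper contains no proof of this proposition: it is imported verbatim from Long's article (cited as \cite[Corollary 2.7]{Long1}) and used as a black box in the proof of Proposition \ref{prop:lksubrep}, so there is no argument of the author's to compare yours against. Judged on its own, your sketch has a genuine gap, and it sits exactly where you yourself place ``the technical heart''. The Long--Moody module $\mathcal{I}_{\mathbb{K}[\mathbf{F}_{n}]}\otimes_{\mathbb{K}[\mathbf{F}_{n}]}V\cong V^{\oplus n}$ is a \emph{chain-level} object, and it is never isomorphic to the absolute cohomology $H^{1}\left(Y_{\underline{w}_{n},1},E_{\rho}\right)$ of a rank-$\dim V$ local system unless the monodromy is trivial: since $Y_{\underline{w}_{n},1}$ is homotopy equivalent to a wedge of $n$ circles, one has $\dim H^{0}-\dim H^{1}=\left(1-n\right)\dim V$ and $H^{i}=0$ for $i\geq2$, hence $\dim H^{1}=\left(n-1\right)\dim V+\dim H^{0}\leq n\dim V$, with equality if and only if $\mathbf{F}_{n}$ acts trivially on $V$ (in group-cohomological terms, $H^{1}\left(\mathbf{F}_{n},V\right)$ is the quotient of the derivation module $V^{\oplus n}$ by the principal derivations). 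Your closing claim that the normalization $q^{-1}\mathcal{LM}\left(q\rho\right)$ makes ``the cohomology attain the expected rank'' is exactly backwards: generically the $q$-twist kills $H^{0}$, so $\dim H^{1}$ becomes $\left(n-1\right)\dim V$, even further from $n\dim V$. Whatever Long's Corollary 2.7 precisely means by $H^{1}\left(Y_{\underline{w}_{n},1},E_{\rho}\right)$, the naive reading your argument aims at is false; one needs a relative or otherwise modified theory (for instance cohomology of the pair $\left(Y_{\underline{w}_{n},1},y_{0}\right)$, which is the full derivation module $\mathrm{Hom}_{\mathbb{K}[\mathbf{F}_{n}]}\left(\mathcal{I}_{\mathbb{K}[\mathbf{F}_{n}]},V\right)\cong V^{\oplus n}$), and pinning down which one, together with equivariance, is the whole content of the statement. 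Note also that even at chain level you silently pass between $\mathrm{Hom}_{\mathbb{K}[\mathbf{F}_{n}]}\left(\mathcal{I}_{\mathbb{K}[\mathbf{F}_{n}]},V\right)$, which computes cohomology, and the tensor product $\mathcal{I}_{\mathbb{K}[\mathbf{F}_{n}]}\otimes_{\mathbb{K}[\mathbf{F}_{n}]}V$, which is the Long--Moody module.

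A second, independent error: the ``Birman exact sequence'' you invoke, $1\rightarrow\mathbf{F}_{n}\overset{\varsigma_{n,1}}{\rightarrow}\mathbf{B}_{n+1}\rightarrow\mathbf{B}_{n}\rightarrow1$, does not exist. For $n\geq2$ the image of each $\varsigma_{n,1}\left(g_{i}\right)$ in $\mathbf{B}_{n+1}^{ab}\cong\mathbb{Z}$ equals $2$, so any quotient of $\mathbf{B}_{n+1}$ by a normal subgroup containing $\varsigma_{n,1}\left(\mathbf{F}_{n}\right)$ has abelianization a quotient of $\mathbb{Z}/2$, whereas $\mathbf{B}_{n}^{ab}\cong\mathbb{Z}$; hence $\mathbf{B}_{n}$ is not such a quotient. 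The correct statement --- and the one the paper actually encodes in Condition \ref{cond:coherenceconditionsigmanan} --- is that $\varsigma_{n,1}$ and $id_{1}\natural-$ assemble into a morphism from the semidirect product $\mathbf{F}_{n}\underset{a_{n,1}}{\rtimes}\mathbf{B}_{n}$, which maps to the stabilizer (mixed braid) subgroup of $\mathbf{B}_{n+1}$, not onto a quotient defined on all of $\mathbf{B}_{n+1}$; the $\mathbf{B}_{n}$-action on chains then comes from this splitting. This part of your argument is repairable, but together with the rank obstruction above, the route you describe does not prove the proposition.
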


\begin{lem}
\cite[Lemma 2.9]{Long1}\label{lem:secondprooflksubrep} For all natural
numbers $m$, there is an isomorphism of abelian groups:
\[
H^{m+1}\left(Y_{\underline{w}_{n},m+1},E_{\mathfrak{X}_{\mid\mathbf{B}_{n}}}\right)\cong H^{1}\left(Y_{\underline{w}_{n},1},H^{m}\left(Y_{\underline{w}_{n+1},m},E_{\mathfrak{X}_{\mid\mathbf{B}_{n}}}\right)\right).
\]
In particular, for $m=1$, $H^{2}\left(Y_{\underline{w}_{n},2},E_{\mathfrak{X}_{\mid\mathbf{B}_{n}}}\right)\cong H^{1}\left(Y_{\underline{w}_{n},1},H^{1}\left(Y_{\underline{w}_{n+1},2},E_{\mathfrak{X}_{\mid\mathbf{B}_{n}}}\right)\right)$.
\begin{proof}
[Proof of Proposition 2.33]By Proposition \ref{prop:firstprooflksubrep},
we can write as a representation:
\[
q^{-1}\mathcal{LM}\left(q\left(t^{-1}\mathcal{LM}\left(t\mathfrak{X}\right)\right)\right):\mathbf{B}_{n}\rightarrow GL\left(H^{1}\left(Y_{\underline{w}_{n},1},E_{t^{-1}\mathcal{LM}\left(t\mathfrak{X}\right)}\right)\right).
\]
A fortiori by Lemma \ref{lem:secondprooflksubrep}, $q^{-1}\mathcal{LM}\left(q\left(t^{-1}\mathcal{LM}\left(t\mathfrak{X}_{\mid\mathbf{B}_{n}}\right)\right)\right)$
is an action of $\mathbf{B}_{n}$ on $H^{2}\left(Y_{\underline{w}_{n},2},E_{\mathfrak{X}_{\mid\mathbf{B}_{n}}}\right)$.
In particular, for $m=2$ and $n\geq4$, according to \cite[Theorem 5.1]{Lawrence},
the representation of $\mathbf{B}_{n}$ factoring through the Iwahori\textendash Hecke
algebra $H_{n}\left(t\right)$ corresponding to the Young diagram
$\left(n-2,2\right)$ is a subrepresentation of $q^{-1}\mathcal{LM}\left(q\left(t^{-1}\mathcal{LM}\left(t\mathfrak{X}_{\mid\mathbf{B}_{n}}\right)\right)\right)$.
Moreover, this representation is equivalent to the Lawrence-Krammer
representation by \cite[Section 5]{biglowlk}. By the definition of
the Long-Moody construction (see \cite[Theorem 2.1]{Long1}), $q^{-1}\mathcal{LM}\left(q\left(t^{-1}\mathcal{LM}\left(t\mathfrak{X}_{\mid\mathbf{B}_{n}}\right)\right)\right)$
is the representation $q^{-1}\left(\tau_{1}\mathbf{LM}_{1}\right)\left(q\left(t^{-1}\mathbf{LM}_{1}\left(t\mathfrak{X}\right)\right)\right){}_{\mid\mathbf{B}_{n}}$.
\end{proof}
\end{lem}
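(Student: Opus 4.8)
The plan is to realize the claimed isomorphism as an edge homomorphism in the Leray--Serre spectral sequence of a Fadell--Neuwirth fibration, the point being that both the base and the fibre are complements of affine hyperplane arrangements whose cohomological dimensions are tightly controlled.

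First I would set up the fibration. Projecting $Y_{\underline{w}_{n},m+1}$ onto its first coordinate $z_{1}$ gives a map to $Y_{\underline{w}_{n},1}=\mathbb{C}\setminus\{w_{1},\ldots,w_{n}\}$; once $z_{1}=\zeta$ is fixed, the fibre consists of the configurations of $m$ points in $\mathbb{C}$ avoiding the $n+1$ points $w_{1},\ldots,w_{n},\zeta$, that is a copy of $Y_{\underline{w}_{n+1},m}$. By the Fadell--Neuwirth theorem this projection is a locally trivial fibration, so I obtain
\[
Y_{\underline{w}_{n+1},m}\hookrightarrow Y_{\underline{w}_{n},m+1}\longrightarrow Y_{\underline{w}_{n},1}.
\]
I would then run the associated Leray--Serre spectral sequence with coefficients in the flat bundle $E_{\mathfrak{X}_{\mid\mathbf{B}_{n}}}$,
\[
E_{2}^{p,q}=H^{p}\bigl(Y_{\underline{w}_{n},1},\mathcal{H}^{q}\bigr)\Longrightarrow H^{p+q}\bigl(Y_{\underline{w}_{n},m+1},E_{\mathfrak{X}_{\mid\mathbf{B}_{n}}}\bigr),
\]
where $\mathcal{H}^{q}$ is the local system on the base whose stalk is the fibrewise cohomology group $H^{q}(Y_{\underline{w}_{n+1},m},E_{\mathfrak{X}_{\mid\mathbf{B}_{n}}})$.

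The two decisive inputs are dimensional vanishing statements. The base $Y_{\underline{w}_{n},1}$ is the complement of $n$ points in $\mathbb{C}$, hence homotopy equivalent to a wedge of $n$ circles, so $H^{p}(Y_{\underline{w}_{n},1},-)=0$ for all $p\geq2$. The fibre $Y_{\underline{w}_{n+1},m}$ is the complement in $\mathbb{C}^{m}$ of the affine hyperplane arrangement $\{z_{i}=z_{j}\}\cup\{z_{i}=w_{k}\}$, hence a smooth affine variety, equivalently a Stein manifold of complex dimension $m$; by the Andreotti--Frankel theorem it has the homotopy type of a CW complex of dimension at most $m$, so $H^{q}(Y_{\underline{w}_{n+1},m},-)=0$ for $q>m$, and in particular $\mathcal{H}^{m+1}=0$. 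Consequently the spectral sequence is concentrated in the two columns $p\in\{0,1\}$, every differential $d_{r}$ with $r\geq2$ vanishes for target-degree reasons, and $E_{2}=E_{\infty}$. The resulting two-step filtration of the total cohomology in degree $m+1$ yields the short exact sequence
\[
0\to H^{1}\bigl(Y_{\underline{w}_{n},1},\mathcal{H}^{m}\bigr)\to H^{m+1}\bigl(Y_{\underline{w}_{n},m+1},E_{\mathfrak{X}_{\mid\mathbf{B}_{n}}}\bigr)\to H^{0}\bigl(Y_{\underline{w}_{n},1},\mathcal{H}^{m+1}\bigr)\to0,
\]
whose right-hand term vanishes because $\mathcal{H}^{m+1}=0$. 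Identifying $\mathcal{H}^{m}$ with $H^{m}(Y_{\underline{w}_{n+1},m},E_{\mathfrak{X}_{\mid\mathbf{B}_{n}}})$ through the fibre over any chosen point of the base gives exactly the asserted isomorphism; the case $m=1$ is the stated specialization.

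The step I expect to be the main obstacle is the bookkeeping of coefficients: checking that the projection is a fibration compatible with the flat bundle, and that the local system $\mathcal{H}^{q}$ on the base really does have the advertised fibre cohomology as its stalk. Since the statement only claims an \emph{isomorphism of abelian groups}, I can afford to forget the monodromy action of $\pi_{1}(Y_{\underline{w}_{n},1})$ and argue throughout at the level of underlying groups, which neutralizes the most delicate part of the argument and lets the dimension count do the essential work.
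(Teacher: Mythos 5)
Your argument is essentially correct, but note first what the paper actually does with this statement: it does not prove it. The lemma is imported verbatim from Long's work (\cite[Lemma 2.9]{Long1}), and the proof environment attached to it in the source is not a proof of the lemma at all --- it is the proof of Proposition \ref{prop:lksubrep}, which \emph{uses} the lemma (together with Proposition \ref{prop:firstprooflksubrep} and the results of Lawrence and Bigelow) to exhibit the Lawrence--Krammer representation as a subrepresentation of the iterated Long--Moody construction. So there is no proof in the paper to compare yours against; what you have written supplies the argument hidden behind the citation, and it is the standard one: the Fadell--Neuwirth projection $Y_{\underline{w}_{n},m+1}\rightarrow Y_{\underline{w}_{n},1}$ with fibre $Y_{\underline{w}_{n+1},m}$, the Leray--Serre spectral sequence with coefficients in the flat bundle, the vanishing $H^{p}=0$ for $p\geq2$ on the base (a wedge of $n$ circles) and $H^{q}=0$ for $q>m$ on the fibre (a smooth affine variety of complex dimension $m$, via Andreotti--Frankel), hence degeneration at $E_{2}$ and collapse of the degree-$(m+1)$ filtration onto $E_{\infty}^{1,m}$. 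That skeleton is sound and does prove the claimed isomorphism.

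One caution about your closing remark. You may indeed identify the stalk of $\mathcal{H}^{m}$ with $H^{m}\left(Y_{\underline{w}_{n+1},m},E_{\mathfrak{X}_{\mid\mathbf{B}_{n}}}\right)$ non-equivariantly, since the lemma only asserts an isomorphism of abelian groups; but you cannot literally ``argue throughout at the level of underlying groups'': the group $H^{1}\left(Y_{\underline{w}_{n},1},\mathcal{H}^{m}\right)$ itself depends on the monodromy of $\pi_{1}\left(Y_{\underline{w}_{n},1}\right)$ on the fibre cohomology, and the right-hand side of the lemma is by definition cohomology with coefficients in that local system (Long's flat bundle $E_{\rho}$) --- with constant coefficients it would be a different group. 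So the monodromy can only be discarded in the final stalk identification, not in forming either side. Bear in mind also that in the application made by the paper (the proof of Proposition \ref{prop:lksubrep}), the isomorphism is used to transport a $\mathbf{B}_{n}$-action onto $H^{2}\left(Y_{\underline{w}_{n},2},E_{\mathfrak{X}_{\mid\mathbf{B}_{n}}}\right)$, so the naturality that your spectral-sequence construction in fact provides is genuinely needed there, even though the lemma as stated allows you to forget it.
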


We denote by $\mathfrak{LK}^{\geq4}:\boldsymbol{\beta}\longrightarrow\left(\mathbb{C}\left[t^{\pm1}\right]\right)\left[q^{\pm1}\right]\textrm{-}\mathfrak{Mod}$
the subfunctor of the Lawrence-Krammer defined in Example \ref{exa:lkfunctor}
which is null on the objects such that $n<4$. The result of Proposition
\ref{prop:lksubrep} implies that:
\begin{prop}
The functor $\mathfrak{LK}^{\geq4}$ is a subfunctor of $q^{-1}\left(\tau_{1}\mathbf{LM}_{1}\right)\left(q\left(t^{-1}\mathbf{LM}_{1}\left(t\mathfrak{X}\right)\right)\right){}^{\geq4}$.
\end{prop}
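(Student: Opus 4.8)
The plan is to reduce the statement to the level-wise result already established in Proposition \ref{prop:lksubrep}, exploiting the fact that the ambient category here is $\mathbf{Fct}\left(\boldsymbol{\beta},\mathbb{C}\left[t^{\pm1}\right]\left[q^{\pm1}\right]\textrm{-}\mathfrak{Mod}\right)$. The key structural observation is that a subfunctor over $\boldsymbol{\beta}$ is nothing more than a level-wise choice of sub-$\mathbb{K}\left[\mathbf{B}_{n}\right]$-modules: since $Hom_{\boldsymbol{\beta}}\left(n,m\right)=\emptyset$ whenever $n\neq m$, the only naturality constraint on a candidate inclusion $\mathfrak{LK}^{\geq4}\hookrightarrow q^{-1}\left(\tau_{1}\mathbf{LM}_{1}\right)\left(q\left(t^{-1}\mathbf{LM}_{1}\left(t\mathfrak{X}\right)\right)\right)^{\geq4}$ is $\mathbf{B}_{n}$-equivariance at each object $n$, with no compatibility whatsoever required between distinct levels.

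First I would record that, at each level $n$, the restriction to $\mathbf{B}_{n}$ of the target functor $q^{-1}\left(\tau_{1}\mathbf{LM}_{1}\right)\left(q\left(t^{-1}\mathbf{LM}_{1}\left(t\mathfrak{X}\right)\right)\right)$ is exactly the iterated representation-theoretic Long-Moody representation $q^{-1}\mathcal{LM}\left(q\left(t^{-1}\mathcal{LM}\left(t\mathfrak{X}_{\mid\mathbf{B}_{n}}\right)\right)\right)$. This identification, including the role of the shift $\tau_{1}$ in aligning the indices of the functorial construction $\mathbf{LM}_{1}$ with those of the representation-theoretic $\mathcal{LM}$, is precisely what was verified at the end of the proof of Proposition \ref{prop:lksubrep}. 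I would then split into the two relevant ranges of $n$. For $n<4$, the truncation gives $\mathfrak{LK}^{\geq4}\left(n\right)=0$, so the zero map is a $\mathbf{B}_{n}$-equivariant injection into the target, trivially. For $n\geq4$, Proposition \ref{prop:lksubrep} furnishes a $\mathbf{B}_{n}$-equivariant injection exhibiting $\mathfrak{LK}_{\mid\mathbf{B}_{n}}=\mathfrak{LK}^{\geq4}\left(n\right)$ as a subrepresentation of the target evaluated at $n$.

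Finally, since these inclusions are $\mathbf{B}_{n}$-equivariant for every $n$, and since over the groupoid $\boldsymbol{\beta}$ that equivariance is the entirety of the naturality condition, the family assembles into a morphism of functors in $\mathbf{Fct}\left(\boldsymbol{\beta},\mathbb{C}\left[t^{\pm1}\right]\left[q^{\pm1}\right]\textrm{-}\mathfrak{Mod}\right)$ which is objectwise injective, exactly displaying $\mathfrak{LK}^{\geq4}$ as a subfunctor. I do not expect any genuine obstacle: all of the geometric content, namely the cohomology-of-configuration-space computation behind Proposition \ref{prop:firstprooflksubrep} and Lemma \ref{lem:secondprooflksubrep} together with Lawrence's identification of the Iwahori--Hecke summand, is already packaged inside Proposition \ref{prop:lksubrep}. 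The only point demanding care is the bookkeeping of the second paragraph, namely checking that the superscript $\geq4$ truncation of the target matches the domain of definition of $\mathfrak{LK}^{\geq4}$ and that the $\tau_{1}$-shift indeed makes the level-$n$ restriction coincide with the representation used in Proposition \ref{prop:lksubrep}; once this is confirmed, the absence of non-identity cross-level morphisms in $\boldsymbol{\beta}$ renders the passage from a level-wise subrepresentation to a global subfunctor automatic.
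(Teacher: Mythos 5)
Your proof is correct and follows essentially the same route as the paper: the paper deduces this proposition directly from Proposition \ref{prop:lksubrep}, treating the passage from the level-wise subrepresentation statement to the subfunctor statement as immediate. You have merely made explicit the routine justification — that over the groupoid $\boldsymbol{\beta}$ naturality reduces to $\mathbf{B}_{n}$-equivariance at each level, with the $n<4$ levels handled by the zero map — which is exactly the content the paper leaves implicit.
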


\subsubsection{Computations for other cases}

Let us introduce examples of Long-Moody functors which arise using
other actions $a_{n}:\mathbf{B}_{n}\rightarrow Aut\left(\mathbf{F}_{n}\right)$.

\paragraph{Wada representations}

In $1992$, Wada introduced in \cite{wada1992group} a certain type
of family of representations of braid groups. We give here a functorial
approach to this work.
\begin{defn}
\label{def:functorAutF}Let $Aut_{-}:\left(\mathbb{N},\leq\right)\rightarrow\mathfrak{Gr}$
be the functor defined by:

\begin{itemize}
\item Objects: for all natural numbers $n$, $Aut_{-}\left(n\right)=Aut\left(\mathbf{F}_{n}\right)$
the automorphism group of the free group on $n$ generators;
\item Morphisms: let $n$ be a natural number. We define $Aut_{-}\left(\gamma{}_{n}\right):Aut\left(\mathbf{F}_{n}\right)\hookrightarrow Aut\left(\mathbf{F}_{n+1}\right)$
assigning $Aut_{-}\left(\gamma{}_{n}\right)\left(\varphi\right)=id_{1}*\varphi$
for all $\varphi\in Aut\left(\mathbf{F}_{n}\right)$, using the monoidal
category $\left(\mathfrak{gr},*,0\right)$ recalled in Notation \ref{nota:grandN}.
\end{itemize}
\end{defn}

\begin{defn}
\label{def:outilwada}Let us consider two different non-trivial reduced
words $W\left(g_{1},g_{2}\right)$ and $V\left(g_{1},g_{2}\right)$
on $\mathbf{F}_{2}$, such that:

\begin{itemize}
\item the assignments $g_{1}\mapsto W\left(g_{1},g_{2}\right)$ and $g_{2}\mapsto V\left(g_{1},g_{2}\right)$
define a automorphism of $\mathbf{F}_{2}$;
\item the assignment $\left(W,V\right):\mathbf{B}_{2}\longrightarrow Aut\left(\mathbf{F}_{2}\right)$:
\[
\left[\left(W,V\right)\left(\sigma_{1}\right)\right]\left(g_{j}\right)=\begin{cases}
W\left(g_{1},g_{2}\right) & \textrm{if \ensuremath{j=1}}\\
V\left(g_{1},g_{2}\right) & \textrm{if \ensuremath{j=2}}
\end{cases}
\]
is a morphism.
\end{itemize}
Two morphisms $\left(W,V\right):\mathbf{B}_{2}\longrightarrow Aut\left(\mathbf{F}_{2}\right)$
and $\left(W',V'\right):\mathbf{B}_{2}\rightarrow Aut\left(\mathbf{F}_{2}\right)$
are said to be swap-dual if $W'\left(g_{1},g_{2}\right)=V\left(g_{2},g_{1}\right)$
and $V'\left(g_{1},g_{2}\right)=W\left(g_{2},g_{1}\right)$, backward-dual
if $W'\left(g_{1},g_{2}\right)=\left(W\left(g_{1}^{-1},g_{2}^{-1}\right)\right)^{-1}$
and $V'\left(g_{1},g_{2}\right)=\left(V\left(g_{1}^{-1},g_{2}^{-1}\right)\right)^{-1}$,
inverse if $\left(W',V'\right)=\left(W,V\right)^{-1}$.
\end{defn}

\begin{defn}
\cite{wada1992group} Let $W\left(g_{1},g_{2}\right)$ and $V\left(g_{1},g_{2}\right)$
be two words on $\mathbf{F}_{2}$. A natural transformation $\mathcal{W}:\mathbf{B}_{-}\rightarrow Aut_{-}$
is said to be of Wada-type if for all natural numbers $n$, for all
$i\in\left\{ 1,\ldots,n-1\right\} $, the following diagram is commutative
(we recall that $incl_{i}^{n}$ was introduced in Notation \ref{nota:defincl}
and $Aut_{-}\left(\gamma{}_{2,i}\right)$ in Definition \ref{def:functorAutF}):
\[
\xymatrix{\mathbf{B}_{n}\ar@{->}[rr]^{\mathcal{W}_{n}} &  & Aut\left(\mathbf{F}_{n}\right)\\
\mathbf{B}_{2}\ar@{->}[rr]_{\left(W,V\right)}\ar@{->}[u]^{incl_{i}^{n}} &  & Aut\left(\mathbf{F}_{2}\right).\ar@{->}[u]_{Aut_{-}\left(\gamma{}_{2,i}\right)\ast id_{\mathbf{F}_{n-i-1}}}
}
\]
\end{defn}

\begin{rem}
Note that therefore a Wada-type natural transformation is entirely
determined by the choice of $\left(W,V\right)$.
\end{rem}

Wada conjectured a classification of these type of representations.
This conjecture was proved by Ito in \cite{Ito}.
\begin{thm}
\cite{Ito}\label{thm:Repwada} There are seven classes of Wada-type
natural transformation $\mathcal{W}$ up to the swap-dual, backward-dual
and inverse equivalences, listed below.

\begin{enumerate}
\item $\left(W,V\right)_{1,m}\left(g_{1},g_{2}\right)=\left(g_{2},g_{2}^{m}g_{1}g_{2}^{-m}\right)$
where $m\in\mathbb{Z}$;
\item $\left(W,V\right)_{2}\left(g_{1},g_{2}\right)=\left(g_{1},g_{2}\right)$;
\item $\left(W,V\right)_{3}\left(g_{1},g_{2}\right)=\left(g_{2},g_{1}^{-1}\right)$;
\item $\left(W,V\right)_{4}\left(g_{1},g_{2}\right)=\left(g_{2},g_{2}^{-1}g_{1}^{-1}g_{2}\right)$;
\item $\left(W,V\right)_{5}\left(g_{1},g_{2}\right)=\left(g_{2}^{-1},g_{1}^{-1}\right)$;
\item $\left(W,V\right)_{6}\left(g_{1},g_{2}\right)=\left(g_{2}^{-1},g_{2}g_{1}g_{2}\right)$;
\item $\left(W,V\right)_{7}\left(g_{1},g_{2}\right)=\left(g_{1}g_{2}^{-1}g_{1}^{-1},g_{1}g_{2}^{2}\right)$.
\end{enumerate}
\end{thm}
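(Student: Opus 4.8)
The plan is to reduce the classification to a combinatorial problem about a single pair of words and then solve that problem in two stages, abelian and then non-abelian. First I would observe that, by the defining commutative square, a Wada-type natural transformation $\mathcal{W}$ is entirely determined by the pair $(W,V)$, and that conversely such a pair extends to a natural transformation $\mathbf{B}_{-}\to Aut_{-}$ precisely when two conditions hold: the assignment $g_1\mapsto W,\ g_2\mapsto V$ is an automorphism of $\mathbf{F}_2$, and the two automorphisms $\mathcal{W}_3(\sigma_1)\mathcal{W}_3(\sigma_2)\mathcal{W}_3(\sigma_1)$ and $\mathcal{W}_3(\sigma_2)\mathcal{W}_3(\sigma_1)\mathcal{W}_3(\sigma_2)$ of $\mathbf{F}_3$ agree. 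The distant-commutation relations $\sigma_i\sigma_j=\sigma_j\sigma_i$ for $\mid i-j\mid\geq2$ are automatic, since the corresponding automorphisms alter disjoint sets of free generators, and naturality together with locality propagates the $n=2,3$ data to all $n$. Thus the whole problem becomes: classify pairs $(W,V)$ in $\mathbf{F}_2$ satisfying these two constraints, up to the swap-dual, backward-dual and inverse equivalences.

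Second, I would abelianize. Passing to $\mathbf{F}_2^{\mathrm{ab}}\cong\mathbb{Z}^2$, the pair $(W,V)$ induces a matrix $M\in M_2(\mathbb{Z})$ of exponent sums, and the automorphism requirement forces $M\in GL_2(\mathbb{Z})$, that is $\det M=\pm1$. The braid relation abelianizes to the matrix identity $(M\oplus\mathrm{id})(\mathrm{id}\oplus M)(M\oplus\mathrm{id})=(\mathrm{id}\oplus M)(M\oplus\mathrm{id})(\mathrm{id}\oplus M)$ among $3\times3$ integer matrices. Expanding this into polynomial equations in the entries of $M$ and combining it with $\det M=\pm1$ is a finite computation of exactly the same flavour as the matrix analysis underlying Theorem \ref{thm:resulttym}; it cuts the possibilities for $M$ down to a short explicit list. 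This step is essentially mechanical and provides the skeleton on which the word-level classification is hung.

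For the third and decisive stage I would work non-abelianly. For each admissible matrix $M$, I would parametrize the reduced words $W,V$ whose abelianization is $M$ and then impose the full braid relation in $\mathbf{F}_3$, which unwinds into three word equations obtained by tracking the images of $g_1,g_2,g_3$ under the two triple composites. Solving these is the genuine obstacle: it requires controlling the cancellation that occurs when words are substituted into words, and the natural tools are length and cancellation estimates in the free group together with Nielsen transformations, organised by an induction on word length. The expected outcome is that each admissible $M$ admits only finitely many solutions; in particular this is where one separates the non-abelian refinements sitting over a single matrix, such as $(W,V)_3$ from $(W,V)_4$ and $(W,V)_6$ from $(W,V)_7$. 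I expect the bookkeeping here to be the most delicate part, precisely because abelianization is blind to exactly these refinements.

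Finally, I would record the action of the three equivalences (swap-dual, backward-dual and inverse) on the solution set, compute the orbits, and verify that they number seven, exhibiting the representatives listed in the statement. The easy direction, that each listed $(W,V)_k$ is indeed an automorphism satisfying the braid relation, is a direct verification; the real content is that stages two and three leave no further solutions, so that the given list is both exhaustive and, modulo the equivalences, irredundant.
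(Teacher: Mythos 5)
First, a point of context: the paper does not prove this statement at all; it is quoted verbatim from Ito's work (the paper explicitly says ``Wada conjectured a classification of these type of representations. This conjecture was proved by Ito in \cite{Ito}''), so there is no in-paper proof to compare your proposal against, and your attempt must be judged on its own merits as a proof of Wada's conjecture.

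Judged that way, there is a genuine gap, and it sits exactly where you place your hopes. Your stages one, two and four are sound but routine: the reduction to a pair $(W,V)$ with the braid relation in $Aut\left(\mathbf{F}_{3}\right)$ (distant commutations being automatic), the abelianized matrix constraint, and the final orbit count under the three dualities are all standard bookkeeping. The entire content of the theorem is your stage three, for which you offer no argument beyond ``length and cancellation estimates \ldots\ organised by an induction on word length'' and the statement ``the expected outcome is that each admissible $M$ admits only finitely many solutions.'' That expectation is in fact \emph{false}, and the statement you are trying to prove says so: the first class is an infinite family $\left(W,V\right)_{1,m}=\left(g_{2},g_{2}^{m}g_{1}g_{2}^{-m}\right)$, $m\in\mathbb{Z}$, all of whose members have the same exponent-sum matrix $\left(\begin{smallmatrix}0 & 1\\ 1 & 0\end{smallmatrix}\right)$, and distinct values of $\left|m\right|$ are not identified by the swap-dual, backward-dual or inverse equivalences (backward-duality merely sends $m$ to $-m$). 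So a single admissible matrix carries infinitely many pairwise inequivalent solutions, and any induction whose target is finiteness per matrix cannot close. What is actually needed is a structural analysis of the cancellation pattern forced by the braid relation that pins down the precise \emph{shapes} of $W$ and $V$ (e.g.\ that over the swap matrix every solution is a conjugation-twist of the Artin pair), not a finiteness count; this is the hard core that kept Wada's conjecture open for some twenty years and that Ito's proof supplies. As written, your proposal is a plausible reduction plus a declaration that the difficult part will work out, so it does not constitute a proof.
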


\begin{rem}
Note that the action given by the first Wada representation with $m=1$
is a generalization of the Artin representation.
\end{rem}

\begin{notation}
\label{nota:notationwada}The actions given by the $k$-th Wada-type
natural transformation will be denoted by $a_{n,k}:\mathbf{B}_{n}\hookrightarrow Aut\left(\mathbf{F}_{n}\right)$.
In particular, for $k=1$ with $m=1$, we recover the Artin representation
(see Example \ref{exan1}).\\

For all $1\leq k\leq8$, it clearly follows from their definitions
that the families of morphisms $\left\{ a_{n,k}:\mathbf{B}_{n}\rightarrow Aut\left(\mathbf{F}_{n}\right)\right\} _{n\in\mathbb{N}}$
satisfy Condition \ref{cond:coherenceconditionbnautfn1}. Hence, for
$1\leq k\leq8$, we consider a family of morphisms $\left\{ \varsigma_{n,k}:\mathbf{F}_{n}\rightarrow\mathbf{B}_{n+1}\right\} $
assumed to be coherent with respect to the morphisms $\left\{ a_{n,k}:\mathbf{B}_{n}\hookrightarrow Aut\left(\mathbf{F}_{n}\right)\right\} _{n\in\mathbb{N}}$
(in the sense of Definition \ref{def:coherentmor}). Such morphisms
$\varsigma_{n,k}$ always exist because we could at least take the
family of morphisms $\left\{ \varsigma_{n,*}:\mathbf{F}_{n}\rightarrow\mathbf{B}_{n+1}\right\} $
(see Example \ref{exa:exant}). We denote by $\mathbf{LM}_{k}:\mathbf{Fct}\left(\boldsymbol{\beta},\mathbb{K}\textrm{-}\mathfrak{Mod}\right)\rightarrow\mathbf{Fct}\left(\boldsymbol{\beta},\mathbb{K}\textrm{-}\mathfrak{Mod}\right)$
the corresponding Long-Moody functor defined in Theorem \ref{Thm:LMFunctor}
for $k\in\left\{ 1,\ldots,8\right\} $.
\end{notation}

Let us imitate the procedure of Section \ref{subsec:casbur1}. We
assume that $\mathbb{K}=\mathbb{C}\left[t^{\pm1}\right]$. Let $n$
be a fixed natural number. Let us consider the case of $k=2$. Using
the isomorphism $\varLambda_{n}$ of Remark \ref{rem:basisfunctor},
we obtain the functor $\mathbf{LM}_{2}\left(\mathfrak{\mathfrak{X}}\right):\boldsymbol{\beta}\rightarrow\mathbb{C}\left[t^{\pm1}\right]\textrm{-}\mathfrak{Mod}$,
defined for $\sigma_{i}\in\mathbf{B}_{n}$ by:\textit{
\[
\mathbf{LM}_{2}\left(F\right)\left(\sigma_{i}\right)=\left(F\left(\sigma_{i}\right)\right)^{\oplus n}.
\]
}

For $k=3$, using $\varLambda_{n}$, we compute that the functor $t^{-1}\mathbf{LM}_{3}\left(t\mathfrak{\mathfrak{X}}\right):\boldsymbol{\beta}\rightarrow\mathbb{C}\left[t^{\pm1}\right]\textrm{-}\mathfrak{Mod}$
is defined for $\sigma_{i}\in\mathbf{B}_{n}$ by:
\begin{eqnarray*}
t^{-1}\mathbf{LM}_{3}\left(t\mathfrak{\mathfrak{X}}\right)\left(\sigma_{i}\right) & = & Id_{i-1}\oplus\left[\begin{array}{cc}
0 & -\varsigma_{n,3}\left(g_{i}\right)\\
1 & 0
\end{array}\right]\oplus Id_{n-i-1}.
\end{eqnarray*}
Hence, the functor $t^{-1}\mathbf{LM}_{3}\left(t\mathfrak{X}\right)$
is very similar to the one associated with the Tong-Yang-Ma representations
(recall Definition \ref{exa:deftym}). We deduce that the identity
natural equivalence gives $t^{-1}\mathbf{LM}_{3}\left(t\mathfrak{X}\right)\cong\mathfrak{TYM}_{-\varsigma_{n,3}\left(g_{i}\right)}$
as objects of $\mathbf{Fct}\left(\boldsymbol{\beta},\mathbb{K}\textrm{-}\mathfrak{Mod}\right)$.

For the actions given by the Wada-type natural transformation $4$,
$5$, $6$ and $7$ in Theorem \ref{thm:Repwada}, the produced functors
$t^{-1}\mathbf{LM}_{i}\left(t\mathfrak{X}\right):\boldsymbol{\beta}\longrightarrow\mathbb{C}\left[t^{\pm1}\right]\textrm{-}\mathfrak{Mod}$
are mild variants of what is given by the case $i=1$.

\section{Strong polynomial functors\label{sec:Strong-polynomial-functors}}

We deal here with the concept of a strong polynomial functor. This
type of functor will be the core of our work in Section \ref{sec:The-Long-Moody-functoreffect}.
We review (and actually extend) the definition and properties of a
strong polynomial functor due to Djament and Vespa in \cite{DV3}
and also a particular case of coefficient systems of finite degree
used by Randal-Williams and Wahl in \cite{WahlRandal-Williams}.

In \cite[Section 1]{DV3}, Djament and Vespa construct a framework
to define strong polynomial functors in the category $\mathbf{Fct}\left(\mathfrak{M},\mathcal{A}\right)$,
where $\mathfrak{M}$ is a symmetric monoidal category, the unit is
an initial object and $\mathcal{A}$ is an abelian category. Here,
we generalize this definition for functors from pre-braided monoidal
categories having the same additional property. In particular, the
notion of strong polynomial functor will be defined for the category
$\mathbf{Fct}\left(\mathfrak{U}\boldsymbol{\beta},\mathbb{K}\textrm{-}\mathfrak{Mod}\right)$.
The keypoint of this section is Proposition \ref{prop:extprebraided},
in so far as it constitutes the crucial property necessary and sufficient
to extend the definition of strong polynomial functor to the pre-braided
case.

\subsection{Strong polynomiality}

We first introduce the translation functor, which plays the central
role in the definition of strong polynomiality.
\begin{defn}
\label{def:deftaux}Let $\left(\mathfrak{M},\natural,0\right)$ be
a strict monoidal small category, let $\mathfrak{D}$ be a category
and let $x$ be an object of $\mathfrak{M}$. The monoidal structure
defines the endofunctor $x\natural-:\mathfrak{M}\longrightarrow\mathfrak{M}$.
We define the translation by $x$ functor $\tau_{x}:\mathbf{Fct}\left(\mathfrak{M},\mathfrak{D}\right)\rightarrow\mathbf{Fct}\left(\mathfrak{M},\mathfrak{D}\right)$
to be the endofunctor obtained by precomposition by the functor $x\natural-$.
\end{defn}

The following proposition establishes the commutation of two translation
functors associated with two objects of $\mathfrak{M}$. It is the
keystone property to define strong polynomial functors.
\begin{prop}
\label{prop:extprebraided}Let $\left(\mathfrak{M},\natural,0\right)$
be a pre-braided strict monoidal small category and $\mathfrak{D}$
be a category. Let $x$ and $y$ be two objects of $\mathfrak{M}$.
Then, there exists a natural isomorphism between functors from $\mathbf{Fct}\left(\mathfrak{M},\mathfrak{\mathfrak{D}}\right)$
to $\mathbf{Fct}\left(\mathfrak{M},\mathfrak{\mathfrak{D}}\right)$:
\[
\tau_{x}\circ\tau_{y}\cong\tau_{y}\circ\tau_{x}.
\]
\end{prop}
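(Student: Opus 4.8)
The plan is to unwind the definitions and exhibit the desired natural isomorphism $\tau_x \circ \tau_y \cong \tau_y \circ \tau_x$ directly from the given commutativity isomorphisms $x \natural y \cong y \natural x$. Recall from Definition \ref{def:deftaux} that $\tau_x$ is precomposition with the endofunctor $x \natural - : \mathfrak{M} \to \mathfrak{M}$. Therefore, for any functor $F \in \mathbf{Fct}(\mathfrak{M}, \mathfrak{D})$, the composite $\tau_x \circ \tau_y$ sends $F$ to the functor $z \mapsto F(x \natural (y \natural z))$, while $\tau_y \circ \tau_x$ sends $F$ to $z \mapsto F(y \natural (x \natural z))$. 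Since the monoidal structure is strict, I may omit associators and read these as $F((x \natural y) \natural z)$ and $F((y \natural x) \natural z)$ respectively.

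**First I would** invoke the hypothesised natural isomorphism $c_{x,y} : x \natural y \overset{\cong}{\longrightarrow} y \natural x$. Using the (strict) monoidal product of morphisms, for each object $z$ of $\mathfrak{M}$ I form the isomorphism $c_{x,y} \natural \mathrm{id}_z : (x \natural y) \natural z \to (y \natural x) \natural z$ in $\mathfrak{M}$, and then apply $F$ to obtain $F(c_{x,y} \natural \mathrm{id}_z)$. I claim this family, indexed by $z$, is a natural transformation $(\tau_x \circ \tau_y)(F) \to (\tau_y \circ \tau_x)(F)$. Naturality in $z$ amounts to checking that for any morphism $\phi : z \to z'$ in $\mathfrak{M}$, the square comparing $F((x\natural y)\natural \phi)$ with $F((y\natural x)\natural \phi)$ commutes; this follows because $(c_{x,y}\natural \mathrm{id}_{z'})\circ(\mathrm{id}_{x\natural y}\natural \phi) = (\mathrm{id}_{y\natural x}\natural \phi)\circ(c_{x,y}\natural \mathrm{id}_z)$ in $\mathfrak{M}$ by the interchange law for the monoidal product, and then applying the functor $F$ preserves this commuting square.

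**Next I would** verify that this transformation is itself natural in $F$, so that it defines a natural isomorphism of endofunctors and not merely a pointwise isomorphism. Given a natural transformation $\alpha : F \to G$ in $\mathbf{Fct}(\mathfrak{M}, \mathfrak{D})$, I must check that the square relating $\alpha$ precomposed with the two reorderings commutes at each object $z$; this reduces to the naturality square of $\alpha$ evaluated at the morphism $c_{x,y} \natural \mathrm{id}_z$, which holds precisely because $\alpha$ is a natural transformation. Since $c_{x,y}$ is an isomorphism and $F$ is a functor, each component $F(c_{x,y} \natural \mathrm{id}_z)$ is invertible with inverse $F(c_{x,y}^{-1} \natural \mathrm{id}_z)$, so the transformation is an isomorphism. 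Assembling these observations yields the claimed natural isomorphism $\tau_x \circ \tau_y \cong \tau_y \circ \tau_x$.

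**The main point to be careful about** is bookkeeping rather than any deep obstruction: the statement is essentially formal, requiring only that precomposition is functorial and that the commutativity isomorphisms are natural. The one genuinely substantive ingredient is the naturality of $c_{x,y}$ in \emph{both} variables $x$ and $y$, which the hypothesis supplies and which is exactly what guarantees the construction is compatible with the monoidal product of morphisms. In our intended application $\mathfrak{M} = \mathfrak{U}\boldsymbol{\beta}$, the isomorphisms $c_{x,y}$ are furnished by the pre-braiding $b^{\boldsymbol{\beta}}_{-,-}$ of Proposition \ref{prop:homogenousprebraided}; the pre-braided (rather than fully braided) nature of $\mathfrak{U}\boldsymbol{\beta}$ poses no difficulty here, since the argument uses only that the braiding morphisms on the maximal subgroupoid are natural isomorphisms, not any hexagon or symmetry axiom.
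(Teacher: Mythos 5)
Your proof is correct and follows essentially the same route as the paper's: both identify $\tau_x\circ\tau_y=\tau_{x\natural y}$ and $\tau_y\circ\tau_x=\tau_{y\natural x}$ via strictness, then whisker the commutativity isomorphism $x\natural y\cong y\natural x$ with $id$ to obtain the natural isomorphism $\tau_{x\natural y}\cong\tau_{y\natural x}$, inverting it componentwise. You merely spell out the naturality checks (in the object variable via the interchange law, and in $F$) that the paper leaves implicit.
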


\begin{proof}
First, because of the associativity of the monoidal product $\natural$
and the strictness of $\mathfrak{M}$, we have that $\tau_{x}\circ\tau_{y}=\tau_{x\natural y}$
and $\tau_{y}\circ\tau_{x}=\tau_{y\natural x}$. We denote by $b_{-,-}^{\mathfrak{M}}$
the pre-braiding of $\mathfrak{M}$. The key point is the fact that
as $b_{-,-}^{\mathfrak{M}}$ is a braiding on the maximal subgroupoid
of $\mathfrak{M}$ (see Definition \ref{def:defprebraided}), $b_{x,y}^{\mathfrak{M}}:x\natural y\overset{\cong}{\longrightarrow}y\mathcal{\natural}x$
defines an isomorphism. Hence, precomposition by $b_{x,y}^{\mathfrak{M}}\natural id_{\mathfrak{M}}$
defines a natural transformation $\left(b_{x,y}^{\mathfrak{M}}\natural id_{\mathfrak{M}}\right)^{*}:\tau_{x\natural y}\rightarrow\tau_{y\natural x}$.
It is an isomorphism since we analogously construct an inverse natural
transformation $\left(\left(b_{x,y}^{\mathfrak{M}}\right)^{-1}\natural id_{\mathfrak{M}}\right)^{*}:\tau_{y\natural x}\rightarrow\tau_{x\natural y}$.
\end{proof}
\begin{rem}
In Proposition \ref{prop:extprebraided}, the natural isomorphism
is not unique: as the proof shows, we could have used the morphism
$\left(b_{y,x}^{\mathfrak{M}}\right)^{-1}\natural id_{\mathfrak{M}}$
instead to define an isomorphism between $\tau_{x\natural y}\left(F\right)$
and $\tau_{y\natural x}\left(F\right)$. In fact, a category only
needs to be equipped with natural (in $x$ and $y$) isomorphisms
$x\natural y\cong y\natural x$ to satisfy the conclusion of Proposition
\ref{prop:extprebraided}.
\end{rem}

Let us move on to the introduction of the evanescence and difference
functors, which will characterize the (very) strong polynomiality
of a functor in $\mathbf{Fct}\left(\mathfrak{M},\mathcal{A}\right)$.
Recall that, if $\mathfrak{M}$ is a small category and $\mathcal{A}$
is an abelian category, then the functor category $\mathbf{Fct}\left(\mathfrak{M},\mathcal{A}\right)$
is an abelian category (see \cite[Chapter VIII]{MacLane1}).

\textbf{From now until the end of Section \ref{sec:Strong-polynomial-functors},
we fix $\left(\mathfrak{M},\natural,0\right)$ a pre-braided strict
monoidal category such that the monoidal unit $0$ is an initial object,
$\mathcal{A}$ an abelian category and $x$ denotes an object of $\mathfrak{M}$.}
\begin{defn}
\label{def:defix}For all objects $F$ of $\mathbf{Fct}\left(\mathfrak{M},\mathcal{A}\right)$,
we denote by $i_{x}\left(F\right):\tau_{0}\left(F\right)\rightarrow\tau_{x}\left(F\right)$
the natural transformation induced by the unique morphism $\iota_{x}:0\rightarrow x$
of $\mathfrak{M}$. This induces $i_{x}:Id_{\mathbf{Fct}\left(\mathfrak{M},\mathcal{A}\right)}\rightarrow\tau_{x}$
a natural transformation of $\mathbf{Fct}\left(\mathfrak{M},\mathcal{A}\right)$.
Since the category $\mathbf{Fct}\left(\mathfrak{M},\mathcal{A}\right)$
is abelian, the kernel and cokernel of the natural transformation
$i_{x}$ exist. We define the functors $\kappa_{x}=\ker\left(i_{x}\right)$
and $\delta_{x}=\textrm{coker}\left(i_{x}\right)$. The endofunctors
$\kappa_{x}$ and $\delta_{x}$ of $\mathbf{Fct}\left(\mathfrak{M},\mathcal{A}\right)$
are called respectively evanescence and difference functor associated
with $x$. 
\end{defn}

The following proposition presents elementary properties of the translation,
evanescence and difference functors. They are either consequences
of the definitions, or direct generalizations of the framework considered
in \cite{DV3} where $\mathfrak{M}$ is symmetric monoidal.
\begin{prop}
\label{prop:lemmecaract}Let $y$ be an object of $\mathfrak{M}$.
Then the translation functor $\tau_{x}$ is exact and we have the
following exact sequence in the category of endofunctors of $\mathbf{Fct}\left(\mathfrak{M},\mathcal{A}\right)$:
\begin{equation}
0\longrightarrow\kappa_{x}\overset{\Omega_{x}}{\longrightarrow}Id\overset{i_{x}}{\longrightarrow}\tau_{x}\overset{\varDelta_{x}}{\longrightarrow}\delta_{x}\longrightarrow0.\label{eq:ESCaract}
\end{equation}
Moreover, for a short exact sequence $0\longrightarrow F\longrightarrow G\longrightarrow H\longrightarrow0$
in the category $\mathbf{Fct}\left(\mathfrak{M},\mathcal{A}\right)$,
there is a natural exact sequence in the category $\mathbf{Fct}\left(\mathfrak{M},\mathcal{A}\right)$:
\begin{equation}
0\longrightarrow\kappa_{x}\left(F\right)\longrightarrow\kappa_{x}\left(G\right)\longrightarrow\kappa_{x}\left(H\right)\longrightarrow\delta_{x}\left(F\right)\longrightarrow\delta_{x}\left(G\right)\longrightarrow\delta_{x}\left(H\right)\longrightarrow0.\label{eq:LESkappadelta}
\end{equation}
In addition:

\begin{enumerate}
\item The translation endofunctor $\tau_{x}$ of $\mathbf{Fct}\left(\mathfrak{M},\mathcal{A}\right)$
commutes with limits and colimits.
\item The difference endofunctors $\delta_{x}$ and $\delta_{y}$ of $\mathbf{Fct}\left(\mathfrak{M},\mathcal{A}\right)$
commute up to natural isomorphism. They commute with colimits.
\item The endofunctors $\kappa_{x}$ and $\kappa_{y}$ of $\mathbf{Fct}\left(\mathfrak{M},\mathcal{A}\right)$
commute up to natural isomorphism. They commute with limits.
\item The natural inclusion $\kappa_{x}\circ\kappa_{x}\hookrightarrow\kappa_{x}$
is an isomorphism.
\item The translation endofunctor $\tau_{x}$ and the difference endofunctor
$\delta_{y}$ commute up to natural isomorphism.
\item The translation endofunctor $\tau_{x}$ and the endofunctor $\kappa_{y}$
commute up to natural isomorphism.
\item We have the following natural exact sequence in the category of endofunctors
of $\mathbf{Fct}\left(\mathfrak{M},\mathcal{A}\right)$:
\begin{equation}
0\longrightarrow\kappa_{y}\longrightarrow\kappa_{x\natural y}\longrightarrow\tau_{x}\kappa_{y}\longrightarrow\delta_{y}\longrightarrow\delta_{x\natural y}\longrightarrow\tau_{y}\delta_{x}\longrightarrow0.\label{eq:LESsumobject}
\end{equation}
\end{enumerate}
\end{prop}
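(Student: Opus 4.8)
The plan is to handle the statements in order of increasing difficulty, using throughout that limits and colimits in $\mathbf{Fct}\left(\mathfrak{M},\mathcal{A}\right)$ are computed objectwise in the abelian category $\mathcal{A}$. First I would record that $\tau_{x}$, being precomposition with the endofunctor $x\natural-$ of $\mathfrak{M}$, preserves objectwise kernels, cokernels and all (co)limits; this gives at once the exactness of $\tau_{x}$ and item~(1). The four-term sequence \eqref{eq:ESCaract} is then purely formal: since $\kappa_{x}=\ker\left(i_{x}\right)$ and $\delta_{x}=\mathrm{coker}\left(i_{x}\right)$, exactness at each spot is the defining universal property of kernel and cokernel in the abelian category of endofunctors. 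For \eqref{eq:LESkappadelta} I would apply the snake lemma to the commutative diagram with rows $0\rightarrow F\rightarrow G\rightarrow H\rightarrow0$ and $0\rightarrow\tau_{x}F\rightarrow\tau_{x}G\rightarrow\tau_{x}H\rightarrow0$ (the second row short exact precisely because $\tau_{x}$ is exact) and vertical maps $i_{x}\left(F\right),i_{x}\left(G\right),i_{x}\left(H\right)$; the resulting six-term connecting sequence is exactly \eqref{eq:LESkappadelta}.

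The commutation statements (2),(3),(5) and (6) are where the pre-braided hypothesis does the real work. Since $\delta_{x}=\mathrm{coker}\left(i_{x}\right)$ is built from a colimit and both $\tau_{x}$ and $Id$ commute with colimits, $\delta_{x}$ commutes with colimits; dually $\kappa_{x}=\ker\left(i_{x}\right)$ commutes with limits. For the mutual commutations I would invoke the isomorphism $\tau_{x}\tau_{y}=\tau_{x\natural y}\cong\tau_{y\natural x}=\tau_{y}\tau_{x}$ furnished by Proposition \ref{prop:extprebraided} through $b_{x,y}^{\mathfrak{M}}$. The essential point is that the pre-braided axiom of Definition \ref{def:defprebraided}, namely $b_{x,y}^{\mathfrak{M}}\circ\left(id_{x}\natural\iota_{y}\right)=\iota_{y}\natural id_{x}$, guarantees that under this identification the whiskered natural transformations built from $i_{x}$ and $i_{y}$ correspond. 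Hence $\tau_{x}\delta_{y}\cong\delta_{y}\tau_{x}$ and $\tau_{x}\kappa_{y}\cong\kappa_{y}\tau_{x}$, which are items~(5) and~(6); and organising $i_{x}$ and $i_{y}$ into a single commutative square with corners $F,\tau_{x}F,\tau_{y}F,\tau_{x}\tau_{y}F$ (made to commute, and its far corner symmetric, by the braiding), the iterated cokernel of the square computed in either order yields $\delta_{x}\delta_{y}\cong\delta_{y}\delta_{x}$ (item~(2)) and the iterated kernel yields $\kappa_{x}\kappa_{y}\cong\kappa_{y}\kappa_{x}$ (item~(3)).

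For the idempotence in item~(4) I would give a one-line diagram chase. Consider the commutative square whose horizontal arrows are the monomorphisms $\kappa_{x}F\hookrightarrow F$ and $\tau_{x}\kappa_{x}F\hookrightarrow\tau_{x}F$ (the latter monic because $\tau_{x}$ is exact) and whose vertical arrows are $i_{x}\left(\kappa_{x}F\right)$ and $i_{x}\left(F\right)$. The composite $\kappa_{x}F\rightarrow\tau_{x}\kappa_{x}F\hookrightarrow\tau_{x}F$ equals $i_{x}\left(F\right)$ restricted to $\kappa_{x}F$, which vanishes by definition of $\kappa_{x}F=\ker i_{x}\left(F\right)$; as $\tau_{x}\kappa_{x}F\hookrightarrow\tau_{x}F$ is monic, this forces $i_{x}\left(\kappa_{x}F\right)=0$, whence $\kappa_{x}\kappa_{x}F=\ker i_{x}\left(\kappa_{x}F\right)=\kappa_{x}F$ and the natural inclusion is an isomorphism.

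Finally, for the long exact sequence \eqref{eq:LESsumobject} I would apply the standard kernel--cokernel exact sequence $0\rightarrow\ker f\rightarrow\ker gf\rightarrow\ker g\rightarrow\mathrm{coker}f\rightarrow\mathrm{coker}gf\rightarrow\mathrm{coker}g\rightarrow0$ of a composite to the factorisation $i_{x\natural y}=\left(i_{x}\tau_{y}\right)\circ i_{y}$ coming from $\iota_{x\natural y}=\left(\iota_{x}\natural id_{y}\right)\circ\iota_{y}$ in $\mathfrak{M}$, so $f=i_{y}$ and $g=i_{x}\tau_{y}$. Here $\ker f,\mathrm{coker}f$ give $\kappa_{y},\delta_{y}$ and $\ker gf,\mathrm{coker}gf$ give $\kappa_{x\natural y},\delta_{x\natural y}$, while exactness of $\tau$ identifies $\ker g=\kappa_{x}\tau_{y}$ and $\mathrm{coker}g=\delta_{x}\tau_{y}$, which items~(5) and~(6) rewrite with the translation functor moved to the outside to produce the remaining two terms of \eqref{eq:LESsumobject}. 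I expect the main obstacle to lie exactly in these last identifications: one must verify that all the commutation isomorphisms are natural and mutually compatible and, above all, that the pre-braided axiom genuinely makes the relevant squares of natural transformations commute, since without a true symmetry the identification $\tau_{x}\tau_{y}\cong\tau_{y}\tau_{x}$ is not canonical and one must keep careful track of the order in which $\tau$ and $\kappa,\delta$ are composed (indeed the outer terms emerge naturally in translation-first form $\tau_{y}\kappa_{x},\tau_{y}\delta_{x}$).
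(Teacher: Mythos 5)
Your overall route is the same as the paper's intended one: the paper's own proof is nothing but a citation of \cite[Proposition 1.4]{DV3} together with the remark that Proposition \ref{prop:extprebraided} is what lets the symmetric-monoidal arguments carry over, and what you write out (objectwise (co)limits for the exactness of $\tau_{x}$ and item (1), the definitional four-term sequence, the snake lemma for \eqref{eq:LESkappadelta}, the kernel--cokernel sequence of a composite for \eqref{eq:LESsumobject}, the pre-braided axiom for the commutations) is precisely the content behind that citation. Items (1), (4), (5), (6) and the exact sequences \eqref{eq:ESCaract} and \eqref{eq:LESkappadelta} are handled correctly; in particular your key observation for (5)/(6) is exactly right: the axiom $b_{x,y}^{\mathfrak{M}}\circ\left(id_{x}\natural\iota_{y}\right)=\iota_{y}\natural id_{x}$ gives $F\left(b_{x,y}\natural id\right)\circ i_{y}\left(\tau_{x}F\right)=\tau_{x}\left(i_{y}\left(F\right)\right)$, which identifies the two kernel (resp.\ cokernel) presentations. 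There is, however, a genuine gap in your treatment of item (2). Computing the iterated cokernel of your square in the two orders gives $\delta_{x}\delta_{y}F$ on one hand and $\mathrm{coker}\left(\delta_{x}\left(i_{y}\left(F\right)\right):\delta_{x}F\rightarrow\delta_{x}\tau_{y}F\right)$ on the other; to identify the latter with $\delta_{y}\delta_{x}F$ you must know that the item-(5) isomorphism $\delta_{x}\tau_{y}\cong\tau_{y}\delta_{x}$ carries $\delta_{x}\left(i_{y}\left(F\right)\right)$ to $i_{y}\left(\delta_{x}F\right)$. Unwinding the cokernel presentations, this compatibility requires the \emph{other} half of braiding naturality, $b_{x,y}\circ\left(\iota_{x}\natural id_{y}\right)=id_{y}\natural\iota_{x}$ (at least up to the relevant images) --- and that is precisely the relation a pre-braided category does not provide, and which genuinely fails in $\mathfrak{U}\boldsymbol{\beta}$ (this is the paper's Figure 1). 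So for the difference functors --- unlike the evanescence functors, where your iterated-kernel argument is braiding-free because $\kappa_{x}\kappa_{y}F=\kappa_{x}F\cap\kappa_{y}F$ inside $F$ --- the symmetric-case proof does not carry over verbatim; your closing caveat points at this hole but does not close it. (To be fair, the paper's ``mutatis mutandis'' glosses over the same point; but a written-out proof has to address it.)

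Concerning \eqref{eq:LESsumobject}, your derivation is correct and the displayed sequence in the statement is not. From the factorization of $i_{x\natural y}$ through $i_{y}$, the third and sixth terms are the kernel and the cokernel of the \emph{same} whiskered transformation, hence must carry the same indices; your computation yields $\tau_{y}\kappa_{x}$ and $\tau_{y}\delta_{x}$, exactly as in your final parenthesis. No appeal to items (5)/(6) can turn $\tau_{y}\kappa_{x}$ into the printed $\tau_{x}\kappa_{y}$ (those isomorphisms move $\tau$ past $\kappa$ but never exchange $x$ and $y$), so you should drop the claim that you can ``rewrite with the translation functor moved to the outside to produce the remaining terms'' and instead note that the printed third term is a typographical slip: setting $y=0$ in the printed sequence gives $0\rightarrow0\rightarrow\kappa_{x}\rightarrow0$, forcing $\kappa_{x}=0$, which fails already for the atomic functors of Example \ref{exa:atomic functor}. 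With $\tau_{y}\kappa_{x}$ in the third spot the sequence is consistent, and it is what your kernel--cokernel argument actually proves.
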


\begin{proof}
In the symmetric monoidal case, this is \cite[Proposition 1.4]{DV3}:
the numbered properties are formal consequences of the commutation
property of the translation endofunctors given by Proposition \ref{prop:extprebraided}.
Hence, the proofs carry over mutatis mutandis to the pre-braided setting.
\end{proof}
Using Proposition \ref{prop:lemmecaract}, we can define strong polynomial
functors.
\begin{defn}
We recursively define on $n\in\mathbb{N}$ the category $\mathcal{P}ol_{n}^{strong}\left(\mathfrak{M},\mathcal{A}\right)$
of strong polynomial functors of degree less than or equal to $n$
to be the full subcategory of $\mathbf{Fct}\left(\mathfrak{M},\mathcal{A}\right)$
as follows:

\begin{enumerate}
\item If $n<0$, $\mathcal{P}ol_{n}^{strong}\left(\mathfrak{M},\mathcal{A}\right)=\left\{ 0\right\} $;
\item if $n\geq0$, the objects of $\mathcal{P}ol_{n}^{strong}\left(\mathfrak{M},\mathcal{A}\right)$
are the functors $F$ such that for all objects $x$ of $\mathfrak{M}$,
the functor $\delta_{x}\left(F\right)$ is an object of $\mathcal{P}ol_{n-1}^{strong}\left(\mathfrak{M},\mathcal{A}\right)$.
\end{enumerate}
For an object $F$ of $\mathbf{Fct}\left(\mathfrak{M},\mathcal{A}\right)$
which is strong polynomial of degree less than or equal to $n\in\mathbb{N}$,
the smallest $d\in\mathbb{N}$ ($d\leq n$) for which $F$ is an object
of $\mathcal{P}ol_{d}^{strong}\left(\mathfrak{M},\mathcal{A}\right)$
is called the strong degree of $F$.
\end{defn}

\begin{rem}
By Proposition \ref{prop:homogenousprebraided}, the category $\left(\mathfrak{U}\boldsymbol{\beta},\natural,0\right)$
is a pre-braided monoidal category such that $0$ is initial object.
This example is the first one which led us to extend the definition
of \cite{DV3}. Thus, we have a well-defined notion of strong polynomial
functor for the category $\mathfrak{U}\boldsymbol{\beta}$.
\end{rem}

The following three propositions are important properties of the framework
in \cite{DV3} adapted to the pre-braided case. Their proofs follow
directly from those of their analogues in \cite[Propositions 1.7, 1.8 and 1.9]{DV3}.
\begin{prop}
\cite[Proposition 1.7]{DV3} Let $\mathfrak{M}'$ be another pre-braided
strict monoidal category such that such that its monoidal unit is
an initial object and $\alpha:\mathfrak{M}\rightarrow\mathfrak{M}'$
be a strong monoidal functor. Then, the precomposition by $\alpha$
provides a functor  $\mathcal{P}ol_{n}^{strong}\left(\mathfrak{M},\mathcal{A}\right)\rightarrow\mathcal{P}ol_{n}^{strong}\left(\mathfrak{M}',\mathcal{A}\right)$.
\end{prop}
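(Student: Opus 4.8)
The plan is to prove the statement by induction on the degree $n$, mirroring \cite[Proposition 1.7]{DV3}, the only new ingredient being that the commutation results behind strong polynomiality are now available in the pre-braided setting through Proposition \ref{prop:extprebraided} and Proposition \ref{prop:lemmecaract}. Write $\alpha^{*}$ for precomposition by $\alpha$, sending a functor $G$ on $\mathfrak{M}'$ to $G\circ\alpha$ on $\mathfrak{M}$. The entire argument reduces to understanding how $\alpha^{*}$ interacts with the difference functors $\delta_{x}$ that govern the recursive definition of $\mathcal{P}ol_{n}^{strong}$.

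First I would isolate two structural facts. \emph{(i) Exactness of $\alpha^{*}$:} since kernels and cokernels in the functor categories $\mathbf{Fct}\left(-,\mathcal{A}\right)$ are computed objectwise and precomposition leaves the objectwise values unchanged, $\alpha^{*}$ is an exact functor. \emph{(ii) A translation-commutation isomorphism:} for every object $x$, strong monoidality of $\alpha$ supplies a natural isomorphism $\alpha\left(x\natural y\right)\cong\alpha\left(x\right)\natural\alpha\left(y\right)$ (writing $\natural$ for both monoidal products), whence a natural isomorphism $\tau_{x}\circ\alpha^{*}\cong\alpha^{*}\circ\tau_{\alpha\left(x\right)}$. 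Because $\alpha$ preserves the monoidal unit and therefore sends the canonical morphism $\iota_{x}:0\rightarrow x$ to $\iota_{\alpha\left(x\right)}$ (uniqueness of morphisms out of the initial object), this isomorphism intertwines the two natural transformations $i_{x}$; passing to cokernels and invoking \emph{(i)} yields the key commutation
\[
\delta_{x}\circ\alpha^{*}\cong\alpha^{*}\circ\delta_{\alpha\left(x\right)}.
\]

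With this lemma in hand the induction is formal. The base case is clear, since $\alpha^{*}$ preserves the zero functor, so it respects $\mathcal{P}ol_{n}^{strong}$ for $n<0$. For the inductive step, assume precomposition by $\alpha$ preserves strong polynomiality of degree at most $n-1$, and let $G$ be strong polynomial of degree at most $n$. For each object $x$ we have $\delta_{x}\left(\alpha^{*}G\right)\cong\alpha^{*}\left(\delta_{\alpha\left(x\right)}G\right)$; the functor $\delta_{\alpha\left(x\right)}G$ is strong polynomial of degree at most $n-1$ by hypothesis, so the induction hypothesis places $\alpha^{*}\left(\delta_{\alpha\left(x\right)}G\right)$, and hence $\delta_{x}\left(\alpha^{*}G\right)$, in $\mathcal{P}ol_{n-1}^{strong}$. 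As this holds for every $x$, the functor $\alpha^{*}G$ lies in $\mathcal{P}ol_{n}^{strong}$, which is exactly the required conclusion.

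The one point demanding genuine care is the compatibility claim in \emph{(ii)}: one must verify that the structural isomorphism of the strong monoidal functor $\alpha$ identifies $i_{x}$ applied to $\alpha^{*}G$ with $\alpha^{*}$ applied to $i_{\alpha\left(x\right)}$, coherently and naturally in both $G$ and $x$. This is precisely where strong (rather than merely lax) monoidality and the initiality of the unit enter, and it is the step I would write out in full. Everything else is a formal manipulation of the exact sequence (\ref{eq:ESCaract}) of Proposition \ref{prop:lemmecaract}, which holds in the pre-braided case by the same argument as in \cite{DV3}.
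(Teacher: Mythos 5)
Your proposal is correct, and it is precisely the argument the paper leaves implicit: the paper offers no proof of its own, deferring to \cite[Proposition 1.7]{DV3} with the remark that the argument carries over to the pre-braided case, and what carries over is exactly your scheme — exactness of precomposition, the isomorphism $\tau_{x}\circ\alpha^{*}\cong\alpha^{*}\circ\tau_{\alpha\left(x\right)}$ from strong monoidality, its compatibility with $i_{x}$ via initiality of the unit, hence $\delta_{x}\circ\alpha^{*}\cong\alpha^{*}\circ\delta_{\alpha\left(x\right)}$, and then induction on the degree. One remark: what you prove is that precomposition by $\alpha$ restricts to a functor $\mathcal{P}ol_{n}^{strong}\left(\mathfrak{M}',\mathcal{A}\right)\rightarrow\mathcal{P}ol_{n}^{strong}\left(\mathfrak{M},\mathcal{A}\right)$, which is the only direction precomposition can go; the statement as printed transposes the source and target categories (a typo inherited from adapting \cite{DV3}), so your tacit correction is the right reading.
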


\begin{prop}
\cite[Proposition 1.8]{DV3}\label{prop:proppoln} The category $\mathcal{P}ol_{n}^{strong}\left(\mathfrak{M},\mathcal{A}\right)$
is closed under the translation endofunctor $\tau_{x}$, under quotient,
under extension and under colimits. Moreover, assuming that there
exists a set $\mathfrak{E}$ of objects of $\mathfrak{M}$ such that:
\[
\forall m\in Obj\left(\mathfrak{M}\right),\,\,\exists\left\{ e_{i}\right\} _{i\in I}\in Obj\left(\mathfrak{E}\right)\textrm{ where \ensuremath{I} is finite},\,\,m\cong\underset{i\in I}{\natural}e_{i},
\]
then, an object $F$ of $\mathbf{Fct}\left(\mathfrak{M},\mathcal{A}\right)$
belongs to $\mathcal{P}ol_{n}^{strong}\left(\mathfrak{M},\mathcal{A}\right)$
if and only if $\delta_{e}\left(F\right)$ is an object of $\mathcal{P}ol_{n-1}^{strong}\left(\mathfrak{M},\mathcal{A}\right)$
for all objects $e$ of $\mathfrak{E}$.
\end{prop}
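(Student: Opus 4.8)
The plan is to deduce every assertion from the structural data collected in Proposition \ref{prop:lemmecaract}, using induction on the degree $n$ for the four closure statements and a separate induction on the number of monoidal factors for the reduction criterion. The base case $n<0$ is trivial, since $\mathcal{P}ol_{n}^{strong}\left(\mathfrak{M},\mathcal{A}\right)=\left\{0\right\}$, and in each inductive step the defining condition ``$\delta_{y}(-)\in\mathcal{P}ol_{n-1}^{strong}$ for all $y$'' is what must be verified.

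First I would treat closure under $\tau_{x}$: for $F\in\mathcal{P}ol_{n}^{strong}$ and any object $y$, the commutation $\delta_{y}\circ\tau_{x}\cong\tau_{x}\circ\delta_{y}$ (item $(5)$ of Proposition \ref{prop:lemmecaract}) gives $\delta_{y}(\tau_{x}F)\cong\tau_{x}(\delta_{y}F)$, and since $\delta_{y}F\in\mathcal{P}ol_{n-1}^{strong}$ the induction hypothesis (closure under $\tau_{x}$ at degree $n-1$) finishes it. The closure under quotient, extension and colimits all proceed by applying $\delta_{y}$ to the relevant diagram and reading off the six-term sequence $(\ref{eq:LESkappadelta})$ together with the fact that $\delta_{y}$ commutes with colimits (item $(2)$): for an epimorphism $F\twoheadrightarrow G$ the sequence $(\ref{eq:LESkappadelta})$ exhibits $\delta_{y}G$ as a quotient of $\delta_{y}F$; for a short exact sequence $0\rightarrow F\rightarrow G\rightarrow H\rightarrow0$ its tail presents $\delta_{y}G$ as an extension of $\delta_{y}H$ by a quotient of $\delta_{y}F$; and for a colimit $\delta_{y}$ simply passes inside. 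The induction hypothesis at degree $n-1$ then closes each case. I would establish closure under quotient before closure under extension, since the extension argument invokes it on the sub-object appearing in the tail of $(\ref{eq:LESkappadelta})$.

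For the reduction criterion the \emph{only if} direction is immediate from the definition, as $\mathfrak{E}\subseteq Obj\left(\mathfrak{M}\right)$. For the \emph{if} direction I would fix $F$ with $\delta_{e}(F)\in\mathcal{P}ol_{n-1}^{strong}$ for all $e\in\mathfrak{E}$ and prove $\delta_{x}(F)\in\mathcal{P}ol_{n-1}^{strong}$ for every $x$, by induction on the number $\left|I\right|$ of factors in a decomposition $x\cong\underset{i\in I}{\natural}e_{i}$. The cases $\left|I\right|\leq1$ reduce to the hypothesis, using $\delta_{0}=0$ and the isomorphism $\delta_{x}\cong\delta_{x'}$ whenever $x\cong x'$. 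For the inductive step, write $x\cong e\natural x'$ with $e\in\mathfrak{E}$ and apply $(\ref{eq:LESsumobject})$ with first object $e$ and second object $x'$; its tail yields a short exact sequence $0\rightarrow Q\rightarrow\delta_{e\natural x'}(F)\rightarrow\tau_{x'}\delta_{e}(F)\rightarrow0$ in which $Q$ is a quotient of $\delta_{x'}(F)$. By the induction hypothesis $\delta_{x'}(F)\in\mathcal{P}ol_{n-1}^{strong}$, hence $Q$ lies there by closure under quotient, while $\tau_{x'}\delta_{e}(F)\in\mathcal{P}ol_{n-1}^{strong}$ by closure under $\tau_{x'}$; closure under extension then places $\delta_{x}(F)\cong\delta_{e\natural x'}(F)$ in $\mathcal{P}ol_{n-1}^{strong}$.

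The main obstacle will be the careful bookkeeping of the six-term sequence $(\ref{eq:LESsumobject})$: one must identify exactly which subquotients of $\delta_{x'}(F)$ and $\delta_{e}(F)$ occur so as to realise $\delta_{e\natural x'}(F)$ as an extension to which the already-proved closure properties apply. Everything else is a faithful transcription of the symmetric-monoidal arguments of \cite{DV3}; the only structural input beyond the abelianness of $\mathbf{Fct}\left(\mathfrak{M},\mathcal{A}\right)$ is the commutation $\tau_{x}\circ\tau_{y}\cong\tau_{y}\circ\tau_{x}$ of Proposition \ref{prop:extprebraided}, which is precisely what makes Proposition \ref{prop:lemmecaract} valid in the pre-braided setting and thus lets the whole induction run.
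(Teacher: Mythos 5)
Your proof is correct and takes essentially the route the paper intends: the paper gives no separate argument here, stating only that the proof of \cite[Proposition 1.8]{DV3} carries over to the pre-braided setting once the commutation of translation functors (Proposition \ref{prop:extprebraided}) and the exact sequences of Proposition \ref{prop:lemmecaract} are in place. Your write-up — induction on $n$ using the sequences (\ref{eq:LESkappadelta}) and (\ref{eq:LESsumobject}), with quotient-closure established before extension-closure, and a second induction on the number of monoidal factors for the criterion over $\mathfrak{E}$ — is a faithful reconstruction of exactly that argument.
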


\begin{cor}
\label{Corlem:directsummand} Let $n$ be a natural number. Let $F$
be a strong polynomial functor of degree $n$ in the category $\mathbf{Fct}\left(\mathfrak{M},\mathcal{A}\right)$.
Then a direct summand of $F$ is necessarily an object of the category
$\mathcal{P}ol_{n}^{strong}\left(\mathfrak{M},\mathcal{A}\right)$.
\end{cor}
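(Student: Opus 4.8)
The plan is to deduce this statement directly from the closure of $\mathcal{P}ol_{n}^{strong}\left(\mathfrak{M},\mathcal{A}\right)$ under quotients, which is part of Proposition \ref{prop:proppoln}. Recall that saying that $G$ is a direct summand of $F$ means that there is a decomposition $F\cong G\oplus H$ in the abelian category $\mathbf{Fct}\left(\mathfrak{M},\mathcal{A}\right)$, for some object $H$. The essential observation is that any direct summand is in particular a quotient, so the closure property is exactly what is needed.

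First I would consider the canonical projection $\pi:F\rightarrow G$ associated with such a decomposition $F\cong G\oplus H$. This morphism is split (a section is provided by the inclusion $G\hookrightarrow F$), hence it is an epimorphism in the abelian category $\mathbf{Fct}\left(\mathfrak{M},\mathcal{A}\right)$. Consequently $G$ is a quotient of $F$. By hypothesis $F$ is strong polynomial of degree $n$, so $F$ is an object of $\mathcal{P}ol_{n}^{strong}\left(\mathfrak{M},\mathcal{A}\right)$; since this full subcategory is closed under quotient by Proposition \ref{prop:proppoln}, it follows that $G$ is an object of $\mathcal{P}ol_{n}^{strong}\left(\mathfrak{M},\mathcal{A}\right)$, as claimed.

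There is no genuine obstacle in this argument, as it is a formal consequence of an already-established closure property. The only subtlety worth flagging is one of bookkeeping: the phrase \emph{strong polynomial of degree $n$} in the statement is to be read as membership in $\mathcal{P}ol_{n}^{strong}\left(\mathfrak{M},\mathcal{A}\right)$, i.e. as \emph{degree less than or equal to $n$}, rather than as \emph{strong degree exactly $n$}. The closure under quotient preserves the former condition but need not preserve the latter (a summand may well have strictly smaller strong degree), so the conclusion is correctly phrased in terms of the category $\mathcal{P}ol_{n}^{strong}\left(\mathfrak{M},\mathcal{A}\right)$.
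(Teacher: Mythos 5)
Your proof is correct and matches the paper's argument exactly: the paper likewise deduces the corollary in one line from the closure of $\mathcal{P}ol_{n}^{strong}\left(\mathfrak{M},\mathcal{A}\right)$ under quotients (Proposition \ref{prop:proppoln}), a direct summand being a quotient via the split projection. Your additional remark that ``degree $n$'' is to be read as membership in $\mathcal{P}ol_{n}^{strong}\left(\mathfrak{M},\mathcal{A}\right)$, i.e.\ degree at most $n$, is a sound clarification consistent with the paper's conventions.
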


\begin{proof}
According to Proposition \ref{prop:proppoln}, the category $\mathcal{P}ol_{n}^{strong}\left(\mathfrak{M},\mathcal{A}\right)$
is closed under quotients.
\end{proof}
\begin{rem}
\label{rem:notclosedundersubobjects}The category $\mathcal{P}ol_{n}^{strong}\left(\mathfrak{M},\mathcal{A}\right)$
is not necessarily closed under subobjects. For example, we will see
in  Section \ref{subsec:Examples-of-polynomial} that for $\mathfrak{M}=\mathfrak{U}\boldsymbol{\beta}$
and $\mathcal{A}=\mathbb{C}\left[t^{\pm1}\right]\textrm{-}\mathfrak{Mod}$,
the functor $\overline{\mathfrak{Bur}}_{t}$ is a subobject of $\tau_{1}\overline{\mathfrak{Bur}}_{t}$
(see Proposition \ref{Prop:reduburdeg2}), $\overline{\mathfrak{Bur}}_{t}$
is strong polynomial of degree $2$ (see Proposition \ref{Prop:reduburdeg2})
whereas $\tau_{1}\overline{\mathfrak{Bur}}_{t}$ is strong polynomial
of degree $1$ (see Proposition \ref{prop:tau1bureddeg1}). If we
assume that the unit $0$ is also a terminal object of $\mathfrak{M}$,
then $\kappa_{x}$ is the null endofunctor, $\delta_{x}$ is exact
and commutes with all limits. In this case, the category $\mathcal{P}ol_{n}^{strong}\left(\mathfrak{M},\mathcal{A}\right)$
is closed under subobjects.
\end{rem}

\begin{rem}
\label{exa:suffit1}If we consider $\mathfrak{M}=\mathfrak{U}\boldsymbol{\beta}$,
then each object $n$ (ie a natural number) is clearly $1^{\natural n}$.
Hence, because of the last statement of Proposition \ref{prop:proppoln},
when we will deal with strong polynomiality of objects in $\mathbf{Fct}\left(\mathfrak{U}\boldsymbol{\beta},\mathcal{A}\right)$,
it will suffice to consider $\tau_{1}$.
\end{rem}

\begin{prop}
\cite[Proposition 1.9]{DV3}\label{prop:strongdef0} Let $F$ be an
object of $\mathbf{Fct}\left(\mathfrak{M},\mathcal{A}\right)$. Then,
the functor $F$ is an object of $\mathcal{P}ol_{0}^{strong}\left(\mathfrak{M},\mathcal{A}\right)$
if and only if it is the quotient of a constant functor of $\mathbf{Fct}\left(\mathfrak{M},\mathcal{A}\right)$.
\end{prop}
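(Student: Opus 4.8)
The plan is to unwind the definitions into a single concrete condition, from which both implications follow almost directly. By definition $F$ lies in $\mathcal{P}ol_{0}^{strong}\left(\mathfrak{M},\mathcal{A}\right)$ if and only if $\delta_{x}\left(F\right)=0$ for every object $x$ of $\mathfrak{M}$, since $\mathcal{P}ol_{-1}^{strong}\left(\mathfrak{M},\mathcal{A}\right)=\left\{ 0\right\} $. Because $\delta_{x}=\textrm{coker}\left(i_{x}\right)$ and (co)kernels, hence epimorphisms, in $\mathbf{Fct}\left(\mathfrak{M},\mathcal{A}\right)$ are computed objectwise, this condition says exactly that the natural transformation $i_{x}\left(F\right):F\rightarrow\tau_{x}F$, whose component at an object $m$ is $F\left(\iota_{x}\natural id_{m}\right):F\left(m\right)\rightarrow F\left(x\natural m\right)$, is an epimorphism in $\mathcal{A}$ for all $x$ and $m$.

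For the forward implication, I would exhibit $F$ as a quotient of the constant functor $\underline{F\left(0\right)}$, which sends every object to $F\left(0\right)$ and every morphism to $id_{F\left(0\right)}$. Define $\theta:\underline{F\left(0\right)}\rightarrow F$ objectwise by $\theta_{m}=F\left(\iota_{m}\right):F\left(0\right)\rightarrow F\left(m\right)$. Naturality is immediate from the initiality of $0$: for a morphism $f:m\rightarrow m'$ one has $f\circ\iota_{m}=\iota_{m'}$, so $F\left(f\right)\circ\theta_{m}=F\left(\iota_{m'}\right)=\theta_{m'}$, which is the required naturality square since $\underline{F\left(0\right)}\left(f\right)=id$. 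To see $\theta$ is an epimorphism, observe that the component of $i_{x}\left(F\right)$ at the object $0$ is $F\left(\iota_{x}\right):F\left(0\right)\rightarrow F\left(x\right)$ (using $0\natural 0=0$ and $x\natural 0=x$), which is an epimorphism for every $x$ by hypothesis; specializing $x=m$ shows precisely that $\theta_{m}$ is an epimorphism. Hence $\theta$ is objectwise epi and $F$ is a quotient of a constant functor.

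For the converse, suppose $\pi:\underline{A}\twoheadrightarrow F$ is an epimorphism with $\underline{A}$ a constant functor. For such a functor the component $i_{x}\left(\underline{A}\right)_{m}$ equals $\underline{A}\left(\iota_{x}\natural id_{m}\right)=id_{A}$, so $i_{x}\left(\underline{A}\right)$ is an isomorphism and $\delta_{x}\left(\underline{A}\right)=0$ for all $x$. Since $\delta_{x}$ is right exact -- a consequence of the natural long exact sequence (\ref{eq:LESkappadelta}) of Proposition \ref{prop:lemmecaract}, applied to the short exact sequence $0\rightarrow\ker\pi\rightarrow\underline{A}\rightarrow F\rightarrow 0$ -- the epimorphism $\pi$ induces an epimorphism $\delta_{x}\left(\underline{A}\right)\twoheadrightarrow\delta_{x}\left(F\right)$. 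As the source vanishes, $\delta_{x}\left(F\right)=0$ for all $x$, that is, $F\in\mathcal{P}ol_{0}^{strong}\left(\mathfrak{M},\mathcal{A}\right)$.

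I expect no serious obstacle here: the proof is essentially a matter of bookkeeping. The only points that need care are the standard fact that cokernels, and hence epimorphisms, in $\mathbf{Fct}\left(\mathfrak{M},\mathcal{A}\right)$ are detected objectwise, the correct choice of constant functor $\underline{F\left(0\right)}$ in the forward direction, and the key observation that the vanishing of $\delta_{x}\left(F\right)$, evaluated at the initial object $0$, already delivers the surjectivity of $F\left(\iota_{x}\right)$ needed to build the desired quotient map.
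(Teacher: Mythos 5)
Your proof is correct. The paper gives no argument of its own for this proposition---it cites \cite[Proposition 1.9]{DV3} and asserts that the proof carries over to the pre-braided setting---and your writeup is exactly that standard argument: the objectwise-epimorphism characterization of $\delta_{x}\left(F\right)=0$, the canonical map $\underline{F\left(0\right)}\rightarrow F$ obtained from initiality of $0$ for the forward direction, and right exactness of $\delta_{x}$ (via the exact sequence (\ref{eq:LESkappadelta}) of Proposition \ref{prop:lemmecaract}) applied to $0\rightarrow\ker\pi\rightarrow\underline{A}\rightarrow F\rightarrow0$ for the converse.
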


Finally, let us point out the following property of the strong polynomial
degree with respect to the translation functor.
\begin{lem}
\label{lem:varepsiFdecreasedegree}Let $d$ and $k$ be natural numbers
and $F$ be an object of $\mathbf{Fct}\left(\mathfrak{U}\boldsymbol{\beta},\textrm{\ensuremath{\mathbb{K}}-}\mathfrak{Mod}\right)$
such that $\tau_{k}\left(F\right)$ is an object of $\mathcal{P}ol_{d}^{strong}\left(\mathfrak{U}\boldsymbol{\beta},\textrm{\ensuremath{\mathbb{K}}-}\mathfrak{Mod}\right)$.
Then, $F$ is an object of $\mathcal{P}ol_{d+k}\left(\mathfrak{U}\boldsymbol{\beta},\textrm{\ensuremath{\mathbb{K}}-}\mathfrak{Mod}\right)$.
\end{lem}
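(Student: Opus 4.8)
The plan is to induct on $k$, the genuine content lying in the case $k=1$. For $k=0$ there is nothing to do: since the monoidal unit $0$ is the unit of $\natural$, we have $\tau_{0}=Id$, so $F=\tau_{0}(F)$ already belongs to $\mathcal{P}ol_{d}^{strong}\left(\mathfrak{U}\boldsymbol{\beta},\mathbb{K}\textrm{-}\mathfrak{Mod}\right)$.

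The case $k=1$ is the heart of the argument. Assuming $\tau_{1}(F)$ is strong polynomial of degree $\leq d$, I would specialise the exact sequence $\eqref{eq:ESCaract}$ of Proposition \ref{prop:lemmecaract} to $x=1$ and evaluate it at $F$, obtaining
\[
0\longrightarrow\kappa_{1}(F)\longrightarrow F\overset{i_{1}}{\longrightarrow}\tau_{1}(F)\overset{\varDelta_{1}}{\longrightarrow}\delta_{1}(F)\longrightarrow0.
\]
This exhibits $\delta_{1}(F)=\textrm{coker}\left(i_{1}(F)\right)$ as a quotient of $\tau_{1}(F)$. Since the category $\mathcal{P}ol_{d}^{strong}\left(\mathfrak{U}\boldsymbol{\beta},\mathbb{K}\textrm{-}\mathfrak{Mod}\right)$ is closed under quotients by Proposition \ref{prop:proppoln}, it follows that $\delta_{1}(F)$ is strong polynomial of degree $\leq d$. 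Now, every object of $\mathfrak{U}\boldsymbol{\beta}$ is $1^{\natural n}$, so the set $\mathfrak{E}=\left\{1\right\}$ satisfies the hypothesis of the last statement of Proposition \ref{prop:proppoln} (this is precisely Remark \ref{exa:suffit1}): to decide strong polynomiality it suffices to control $\delta_{1}$. Hence $F$ belongs to $\mathcal{P}ol_{d+1}^{strong}\left(\mathfrak{U}\boldsymbol{\beta},\mathbb{K}\textrm{-}\mathfrak{Mod}\right)$, settling $k=1$.

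For the inductive step, assume the statement for some $k$ and suppose $\tau_{k+1}(F)$ is strong polynomial of degree $\leq d$. Using Proposition \ref{prop:extprebraided} and the strictness of $\natural$ I would write $\tau_{k+1}=\tau_{k}\circ\tau_{1}$, so that $\tau_{k}\left(\tau_{1}(F)\right)$ is strong polynomial of degree $\leq d$. Applying the inductive hypothesis to the functor $\tau_{1}(F)$ yields that $\tau_{1}(F)$ is strong polynomial of degree $\leq d+k$; then the case $k=1$, applied with $d$ replaced by $d+k$, gives that $F$ is strong polynomial of degree $\leq d+k+1$, which completes the induction.

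I do not expect a real obstacle: every ingredient is already in place, and the argument is purely formal. The only point deserving care is the reduction from "for all $x$, $\delta_{x}(F)$ drops degree" to the single functor $\delta_{1}$, which relies on the decomposition of each object of $\mathfrak{U}\boldsymbol{\beta}$ as a $\natural$-power of $1$ and hence on the $\mathfrak{E}$-criterion of Proposition \ref{prop:proppoln}; this is exactly what makes the quotient $\delta_{1}(F)$ of $\tau_{1}(F)$ sufficient to conclude.
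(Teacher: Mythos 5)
Your proof is correct, but it takes a genuinely different route from the paper's. The paper fixes $k$ and inducts on the polynomial degree $d$ of $\tau_{k}\left(F\right)$: its engine is the commutation isomorphism $\delta_{1}\circ\tau_{k}\cong\tau_{k}\circ\delta_{1}$ of Proposition \ref{prop:lemmecaract}, which converts ``$\delta_{1}\tau_{k}\left(F\right)$ has degree $\leq d-1$'' into ``$\tau_{k}\left(\delta_{1}F\right)$ has degree $\leq d-1$'' so that the inductive hypothesis applies to $\delta_{1}\left(F\right)$; the base case $d=0$ then needs a separate support argument ($\tau_{k}\left(\delta_{1}F\right)=0$ forces $\delta_{1}\left(F\right)$ to vanish on all objects $n\geq k$, whence $\delta_{1}^{k+1}\left(F\right)=0$). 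You instead induct on $k$ and reduce everything to the case $k=1$, where your engine is the observation that $\delta_{1}\left(F\right)=\mathrm{coker}\left(i_{1}\left(F\right)\right)$ is a quotient of $\tau_{1}\left(F\right)$, combined with the closure of $\mathcal{P}ol_{d}^{strong}\left(\mathfrak{U}\boldsymbol{\beta},\mathbb{K}\textrm{-}\mathfrak{Mod}\right)$ under quotients (Proposition \ref{prop:proppoln}); the strict factorization $\tau_{k+1}=\tau_{k}\circ\tau_{1}$ then runs the induction. Both arguments share the reduction to the single functor $\delta_{1}$ via the $\mathfrak{E}$-criterion of Proposition \ref{prop:proppoln} (Remark \ref{exa:suffit1}). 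What your version buys is economy: it dispenses with the paper's base-case bookkeeping and with the commutation isomorphisms altogether. The trade-off is that closure under quotients is a specifically \emph{strong}-polynomial phenomenon (the paper's atomic-functor example shows it fails for very strong polynomial functors), whereas the commutation properties the paper leans on are the tools that carry over to the very strong setting; since the present lemma concerns only strong polynomiality, this costs you nothing here.
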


\begin{proof}
We proceed by induction on the degree of polynomiality of $\tau_{k}\left(F\right)$.
First, assuming that $\tau_{k}\left(F\right)$ belongs to $\mathcal{P}ol_{0}^{strong}\left(\mathfrak{U}\boldsymbol{\beta},\textrm{\ensuremath{\mathbb{K}}-}\mathfrak{Mod}\right)$,
we deduce from the commutation property $6$ of Proposition \ref{prop:lemmecaract}
that $\tau_{k}\left(\delta_{1}F\right)=0$. It follows from the definition
of $\tau_{k}\left(F\right)$ (see Definition \ref{def:deftaux}) that
for all $n\geq2$, $\delta_{1}\left(F\right)\left(n\right)=0$. Hence
\[
\underset{\textrm{\ensuremath{k+1} times}}{\underbrace{\delta_{1}\cdots\delta_{1}\delta_{1}}}\left(F\right)\cong0
\]
and therefore $F$ is an object of $\mathcal{P}ol_{k}\left(\mathfrak{U}\boldsymbol{\beta},\textrm{\ensuremath{\mathbb{K}}-}\mathfrak{Mod}\right)$.
Now, assume that $\tau_{k}\left(F\right)$ is a strong polynomial
functor of degree $d\geq0$. Since $\left(\tau_{k}\circ\delta_{1}\right)\left(F\right)\cong\left(\delta_{1}\circ\tau_{k}\right)\left(F\right)$
by the commutation property $6$ of Proposition \ref{prop:lemmecaract},
$\left(\tau_{k}\circ\delta_{1}\right)\left(F\right)$ is an object
of \textit{$\mathcal{P}ol_{d-1}^{strong}\left(\mathfrak{U}\boldsymbol{\beta},\textrm{\ensuremath{\mathbb{K}}-}\mathfrak{Mod}\right)$.}
The inductive hypothesis implies that $\delta_{1}\left(F\right)$
is an object of \textit{$\mathcal{P}ol_{d+k}^{strong}\left(\mathfrak{U}\boldsymbol{\beta},\textrm{\ensuremath{\mathbb{K}}-}\mathfrak{Mod}\right)$.}
\end{proof}
\begin{rem}
Let us consider the atomic functor $\mathfrak{A}_{n}$ (with $n>0$),
which is strong polynomial of degree $n$ (see Example \ref{exa:atomic functor}).
Then $\tau_{k}\left(\mathfrak{A}_{n}\right)\cong\mathfrak{A}_{n-k}^{\oplus n}$
is strong polynomial of degree $n-k$, for $k$ a natural number such
that $k\leq n$. This illustrates the fact that $d+k$ is the best
boundary for the degree of polynomiality in Lemma \ref{lem:varepsiFdecreasedegree}.
\end{rem}

\subsection{Very strong polynomial functors}

Let us introduce a particular type of strong polynomial functor, related
to coefficient systems of finite degree (see Remark \ref{rem:coefficientsyst}
below). We recall that we consider a pre-braided strict monoidal category
$\left(\mathfrak{M},\natural,0\right)$ such that the monoidal unit
$0$ is an initial object and an abelian category $\mathcal{A}$.
\begin{defn}
\label{def:defverystrong}We recursively define the category $\mathcal{VP}ol_{n}\left(\mathfrak{M},\mathcal{A}\right)$
of very strong polynomial functors of degree less than or equal to
$n$ to be the full subcategory of $\mathcal{P}ol_{n}^{strong}\left(\mathfrak{M},\mathcal{A}\right)$
as follows:

\begin{enumerate}
\item If $n<0$, $\mathcal{VP}ol_{n}\left(\mathfrak{M},\mathcal{A}\right)=\left\{ 0\right\} $;
\item if $n\geq0$, a functor $F\in\mathcal{P}ol_{n}^{strong}\left(\mathfrak{M},\mathcal{A}\right)$
is an object of $\mathcal{VP}ol_{n}\left(\mathfrak{M},\mathcal{A}\right)$
if for all objects $x$ of $\mathfrak{M}$, $\kappa_{x}\left(F\right)=0$
and the functor $\delta_{x}\left(F\right)$ is an object of $\mathcal{\mathcal{VP}}ol_{n-1}\left(\mathfrak{M},\mathcal{A}\right)$.
\end{enumerate}
\end{defn}

For an object $F$ of $\mathbf{Fct}\left(\mathfrak{M},\mathcal{A}\right)$
which is very strong polynomial of degree less than or equal to $n\in\mathbb{N}$,
the smallest $d\in\mathbb{N}$ ($d\leq n$) for which $F$ is an object
of $\mathcal{VP}ol_{d}\left(\mathfrak{M},\mathcal{A}\right)$ is called
the very strong degree of $F$.
\begin{rem}
\label{rem:coefficientsyst}A certain type of functor, called a coefficient
system of finite degree, closely related to the strong polynomial
one, is used by Randal-Williams and Wahl in \cite[Definition 4.10]{WahlRandal-Williams}
for their homological stability theorems, generalizing the concept
introduced by van der Kallen for general linear groups \cite{vanderKallen}.
Using the framework introduced by Randal-Williams and Wahl, a coefficient
system in every object $x$ of $\mathfrak{M}$ of degree $n$ at $N=0$
is a very strong polynomial functor.
\end{rem}

\begin{rem}
\label{rem:notclosedundersubobjects'}As we force $\kappa_{x}$ to
be null for all objects $x$ of $\mathfrak{M}$, the category $\mathcal{\mathcal{VP}}ol_{n}\left(\mathfrak{M},\mathcal{A}\right)$
is closed under kernel functors of the epimorphisms. In particular,
this category is closed under direct summands. However, $\mathcal{\mathcal{VP}}ol_{n}\left(\mathfrak{M},\mathcal{A}\right)$
is not necessarily closed under subobjects. For instance, as for Remark
\ref{rem:notclosedundersubobjects}, we have that the functor $\overline{\mathfrak{Bur}}_{t}$
is strong polynomial of degree $2$ (see Proposition \ref{Prop:reduburdeg2}),
the functor $\tau_{1}\overline{\mathfrak{Bur}}_{t}$ is very strong
polynomial of degree $1$ (see Proposition \ref{prop:tau1bureddeg1}),
but $\overline{\mathfrak{Bur}}_{t}$ is a subobject of $\tau_{1}\overline{\mathfrak{Bur}}_{t}$
(see Proposition \ref{Prop:reduburdeg2}).
\end{rem}

\begin{prop}
\label{prop:prop verystrongpoly}The category $\mathcal{VP}ol_{n}\left(\mathfrak{M},\mathcal{A}\right)$
is closed under the translation endofunctor $\tau_{x}$, under kernel
of epimorphism and under extension. Moreover, assuming that there
exists a set $\mathfrak{E}$ of objects of $\mathfrak{M}$ such that:
\[
\forall m\in Obj\left(\mathfrak{M}\right),\,\,\exists\left\{ e_{i}\right\} _{i\in I}\in Obj\left(\mathfrak{E}\right)\textrm{ (where \ensuremath{I} is finite)},\,\,m\cong\underset{i\in I}{\natural}e_{i},
\]
then, an object $F$ of $\mathbf{Fct}\left(\mathfrak{M},\mathcal{A}\right)$
belongs to $\mathcal{VP}ol_{n}\left(\mathfrak{M},\mathcal{A}\right)$
if and only if $\kappa_{e}\left(F\right)=0$ and $\delta_{e}\left(F\right)$
is an object of $\mathcal{V}\mathcal{P}ol_{n-1}\left(\mathfrak{M},\mathcal{A}\right)$
for all objects $e$ of $\mathfrak{E}$.
\end{prop}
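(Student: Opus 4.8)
The plan is to argue by induction on $n$, closely following the proof of the strong case (Proposition \ref{prop:proppoln}) but systematically carrying along the extra requirement that $\kappa_{x}$ vanishes. The base case $n<0$ is trivial since $\mathcal{VP}ol_{n}(\mathfrak{M},\mathcal{A})=\{0\}$, so I would prove the inductive step for $n\geq 0$ uniformly, the recursion bottoming out at $\mathcal{VP}ol_{-1}=\{0\}$. For the inductive step I would establish the three closure statements first and then deduce the reduction criterion. Throughout, the engine is Proposition \ref{prop:lemmecaract}, whose commutation isomorphisms and exact sequences (\ref{eq:LESkappadelta}) and (\ref{eq:LESsumobject}) do all the real work, while membership in $\mathcal{P}ol_{n}^{strong}$ is inherited for free from Proposition \ref{prop:proppoln}.

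For closure under $\tau_{x}$, given $F\in\mathcal{VP}ol_{n}$, the commutation $\kappa_{y}\tau_{x}\cong\tau_{x}\kappa_{y}$ (property $6$ of Proposition \ref{prop:lemmecaract}) yields $\kappa_{y}(\tau_{x}F)\cong\tau_{x}(\kappa_{y}F)=0$, while $\delta_{y}\tau_{x}\cong\tau_{x}\delta_{y}$ (property $5$) combined with the inductive hypothesis applied to $\delta_{y}F\in\mathcal{VP}ol_{n-1}$ gives $\delta_{y}(\tau_{x}F)\in\mathcal{VP}ol_{n-1}$. For closure under kernel of an epimorphism and under extension I would feed a short exact sequence $0\rightarrow F\rightarrow G\rightarrow H\rightarrow0$ into the six-term sequence (\ref{eq:LESkappadelta}): whenever two of the three functors have vanishing $\kappa_{x}$, exactness forces the third to vanish as well, and the sequence degenerates into a short exact sequence $0\rightarrow\delta_{x}F\rightarrow\delta_{x}G\rightarrow\delta_{x}H\rightarrow0$. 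Applying the inductive hypothesis (closure of $\mathcal{VP}ol_{n-1}$ under kernel of an epimorphism, respectively under extension) to this $\delta_{x}$-sequence places $\delta_{x}$ of the unknown term in $\mathcal{VP}ol_{n-1}$, which completes the step.

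For the reduction criterion I would run a secondary induction on the number $k$ of factors in a decomposition $m\cong e_{1}\natural\cdots\natural e_{k}$ with $e_{i}\in\mathfrak{E}$, the case $k=1$ being the hypothesis. Writing $m\cong x\natural y$ with $x=e_{k}$ and $y\cong e_{1}\natural\cdots\natural e_{k-1}$, I would feed $F$ into the exact sequence (\ref{eq:LESsumobject}). Since $\kappa_{y}(F)=0$ by the secondary inductive hypothesis, the middle term $\tau_{x}\kappa_{y}(F)$ vanishes, so (\ref{eq:LESsumobject}) separates into the vanishing $\kappa_{x\natural y}(F)=0$ and a short exact sequence $0\rightarrow\delta_{y}(F)\rightarrow\delta_{x\natural y}(F)\rightarrow\tau_{y}\delta_{x}(F)\rightarrow0$. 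Here $\delta_{y}(F)\in\mathcal{VP}ol_{n-1}$ and $\delta_{x}(F)\in\mathcal{VP}ol_{n-1}$, so $\tau_{y}\delta_{x}(F)\in\mathcal{VP}ol_{n-1}$ by the inductive closure under $\tau$, and closure under extension at degree $n-1$ then gives $\delta_{x\natural y}(F)\in\mathcal{VP}ol_{n-1}$. Hence $\kappa_{m}(F)=0$ and $\delta_{m}(F)\in\mathcal{VP}ol_{n-1}$ for every object $m$, and Proposition \ref{prop:proppoln} supplies $F\in\mathcal{P}ol_{n}^{strong}$, whence $F\in\mathcal{VP}ol_{n}$.

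I expect the main obstacle to be the bookkeeping in this last step: one must check that the middle map of (\ref{eq:LESsumobject}) genuinely drops out once $\kappa_{y}(F)=0$, so that the six-term sequence really does split into the kernel vanishing and a clean short exact sequence on the difference functors, and that the closure properties proved in the first stage may legitimately be invoked at degree $n-1$ inside the secondary induction. Everything else is a faithful transcription of the strong case, with the vanishing of $\kappa$ propagated through the exact sequences of Proposition \ref{prop:lemmecaract}.
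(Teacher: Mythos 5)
Your proposal is correct and follows essentially the same route as the paper's proof: the closure assertions come from the commutation properties of Proposition \ref{prop:lemmecaract} together with the collapse of the six-term sequence (\ref{eq:LESkappadelta}) once the relevant $\kappa_{x}$'s vanish, and the $\mathfrak{E}$-criterion comes from the sequence (\ref{eq:LESsumobject}), an induction on the number of monoidal factors, and closure under extension and translation at degree $n-1$, exactly as in the paper (your version just makes the induction bookkeeping explicit). Two harmless overstatements are worth noting: your blanket principle that vanishing of $\kappa_{x}$ on two of the three terms of a short exact sequence forces its vanishing on the third fails when the two terms are the subobject and the middle one (which is precisely why $\mathcal{VP}ol_{n}\left(\mathfrak{M},\mathcal{A}\right)$ is not closed under quotients), though the two configurations you actually invoke --- middle plus quotient, and subobject plus quotient --- are the valid ones; and Proposition \ref{prop:proppoln} cannot supply strong polynomiality of the kernel term (the strong category is not closed under subobjects), but this is immaterial since $\delta_{x}\left(G\right)\in\mathcal{VP}ol_{n-1}\left(\mathfrak{M},\mathcal{A}\right)\subseteq\mathcal{P}ol_{n-1}^{strong}\left(\mathfrak{M},\mathcal{A}\right)$ for all $x$ already forces $G\in\mathcal{P}ol_{n}^{strong}\left(\mathfrak{M},\mathcal{A}\right)$ by the recursive definition of strong polynomiality.
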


\begin{proof}
The first assertion follows from the fact that for all objects $x$
of $\mathfrak{M}$, the endofunctor $\tau_{x}$ commutes with the
endofunctors $\delta_{x}$ and $\kappa_{x}$ (see Proposition \ref{prop:lemmecaract}).
For the second and third assertions, let us consider two short exact
sequences of $\mathbf{Fct}\left(\mathfrak{M},\mathcal{A}\right)$:
$0\longrightarrow G\longrightarrow F_{1}\longrightarrow F_{2}\longrightarrow0$
and $0\longrightarrow F_{3}\longrightarrow H\longrightarrow F_{4}\longrightarrow0$
with $F_{i}$ a very strong polynomial functor of degree $n$ for
all $i$. Let $x$ be an object of $\mathfrak{M}$. We use the exact
sequence (\ref{eq:LESkappadelta}) of Proposition \ref{prop:lemmecaract}
to obtain the two following exact sequences in the category $\mathbf{Fct}\left(\mathfrak{M},\mathcal{A}\right)$:
\[
0\longrightarrow\kappa_{x}\left(G\right)\longrightarrow0\longrightarrow0\longrightarrow\delta_{x}\left(G\right)\longrightarrow\delta_{x}\left(F_{1}\right)\longrightarrow\delta_{x}\left(F_{2}\right)\longrightarrow0;
\]
\[
0\longrightarrow0\longrightarrow\kappa_{x}\left(H\right)\longrightarrow0\longrightarrow\delta_{x}\left(F_{3}\right)\longrightarrow\delta_{x}\left(H\right)\longrightarrow\delta_{x}\left(F_{4}\right)\longrightarrow0.
\]
Therefore, $\kappa_{x}\left(G\right)=\kappa_{x}\left(H\right)=0$
and the result follows directly by induction on the degree of polynomiality.
For the last point, we consider the long exact sequence (\ref{eq:LESsumobject})
of Proposition \ref{prop:lemmecaract} applied to an object $F$ of
$\mathcal{VP}ol_{n}\left(\mathfrak{M},\mathcal{A}\right)$ to obtain
the following exact sequence in the category $\mathbf{Fct}\left(\mathfrak{M},\mathcal{A}\right)$:
\[
0\longrightarrow\kappa_{y}\left(F\right)\longrightarrow\kappa_{x\natural y}\left(F\right)\longrightarrow\tau_{x}\kappa_{y}\left(F\right)\longrightarrow\delta_{y}\left(F\right)\longrightarrow\delta_{x\natural y}\left(F\right)\longrightarrow\tau_{y}\delta_{x}\left(F\right)\longrightarrow0.
\]
Hence, by induction on the length of objects as monoidal product of
$\left\{ e_{i}\right\} _{i\in I}$, we deduce that $\kappa_{m}\left(F\right)=0$
for all objects $m$ of $\mathfrak{M}$ if and only if $\kappa_{e}\left(F\right)=0$
for all objects $e$ of $\mathfrak{E}$. Moreover, since $\mathcal{VP}ol_{n}\left(\mathfrak{M},\mathcal{A}\right)$
is closed under extension and by the translation endofunctor $\tau_{y}$,
the result follows by induction on the degree of polynomiality $n$.
\end{proof}
\begin{prop}
\label{prop:degre0verystrong}Let $F$ be an object of $\mathbf{Fct}\left(\mathfrak{\mathfrak{M}},\mathcal{A}\right)$.
The functor $F$ is an object of $\mathcal{VP}ol_{0}\left(\mathfrak{M},\mathcal{A}\right)$
if and only if it is isomorphic to $\tau_{k}F$ for all natural numbers
$k$.
\end{prop}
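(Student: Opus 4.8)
The plan is to reduce everything to a single characterization: $F$ lies in $\mathcal{VP}ol_0\left(\mathfrak{M},\mathcal{A}\right)$ if and only if the canonical natural transformation $i_x(F)\colon F\to\tau_x(F)$ is an isomorphism for every object $x$ of $\mathfrak{M}$. First I would unwind Definition \ref{def:defverystrong} at $n=0$. Since $\mathcal{VP}ol_{-1}\left(\mathfrak{M},\mathcal{A}\right)=\left\{0\right\}$, an object $F$ belongs to $\mathcal{VP}ol_0\left(\mathfrak{M},\mathcal{A}\right)$ exactly when, for all objects $x$, one has $\kappa_x(F)=0$ and $\delta_x(F)=0$; indeed $\delta_x(F)=0$ for all $x$ is precisely the condition that $F$ belong to $\mathcal{P}ol_0^{strong}\left(\mathfrak{M},\mathcal{A}\right)$, and the requirement $\delta_x(F)\in\mathcal{VP}ol_{-1}$ adds nothing beyond $\delta_x(F)=0$.

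Next I would feed this into the four-term exact sequence (\ref{eq:ESCaract}) of Proposition \ref{prop:lemmecaract}, evaluated at $F$, which identifies $\kappa_x(F)=\ker\bigl(i_x(F)\bigr)$ and $\delta_x(F)=\mathrm{coker}\bigl(i_x(F)\bigr)$. As $\mathbf{Fct}\left(\mathfrak{M},\mathcal{A}\right)$ is abelian, a morphism is an isomorphism precisely when both its kernel and its cokernel vanish; hence the conditions $\kappa_x(F)=0=\delta_x(F)$ are equivalent to $i_x(F)$ being an isomorphism. This yields both implications at once. For the forward direction, $i_x(F)$ being an isomorphism exhibits $F\cong\tau_x(F)$ through the canonical map, and specializing $x$ to the objects $1^{\natural k}$ (each natural number $k$ is such an object, cf. Remark \ref{exa:suffit1}) gives $F\cong\tau_k(F)$ for all $k$. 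For the converse, once $i_k(F)$ is an isomorphism for all $k$, we recover $\kappa_k(F)=0=\delta_k(F)$; in $\mathfrak{U}\boldsymbol{\beta}$ every object is one of these $k$, so all $\kappa_x(F)$ and $\delta_x(F)$ vanish (in a general $\mathfrak{M}$ one would instead propagate vanishing from a generating set $\mathfrak{E}$ to all objects via the long exact sequence (\ref{eq:LESsumobject}), as in the proof of Proposition \ref{prop:prop verystrongpoly}).

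The only delicate point, and the main obstacle, is that the isomorphism $F\cong\tau_k(F)$ must be understood as realized by the canonical morphism $i_k(F)$, not as an abstract isomorphism of functors. An abstract isomorphism is genuinely insufficient: the functor sending every $n$ to $\mathbb{K}$ with trivial braid action and with all stabilization maps $i_1(-)_n$ equal to zero is a well-defined object of $\mathbf{Fct}\left(\mathfrak{U}\boldsymbol{\beta},\mathbb{K}\textrm{-}\mathfrak{Mod}\right)$ satisfying $\tau_k(F)\cong F$ for every $k$, yet it has $\kappa_1(F)=F\neq0$ and so does not belong to $\mathcal{VP}ol_0$. I would therefore phrase the converse as ``$i_k(F)$ is an isomorphism'', which is exactly what the exact sequence supplies, so that the equivalence is both correct and immediate.
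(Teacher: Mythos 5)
Your proof is correct and rests on exactly the same tool as the paper's own (one-line) proof: the four-term exact sequence (\ref{eq:ESCaract}) of Proposition \ref{prop:lemmecaract} identifies $\kappa_{x}\left(F\right)$ and $\delta_{x}\left(F\right)$ as the kernel and cokernel of $i_{x}\left(F\right)$, so the defining conditions of $\mathcal{VP}ol_{0}\left(\mathfrak{M},\mathcal{A}\right)$, namely $\kappa_{x}\left(F\right)=0$ and $\delta_{x}\left(F\right)=0$ for all $x$, hold precisely when each $i_{x}\left(F\right)$ is an isomorphism. Your closing caveat, however, goes beyond the paper and is a genuine correction to the statement as printed: the converse really does fail if ``isomorphic to $\tau_{k}F$'' is read as an abstract isomorphism, and your counterexample is valid. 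Indeed, your functor is exactly $\bigoplus_{n\in\mathbb{N}}\mathfrak{A}_{n}$ built from the paper's atomic functors of Example \ref{exa:atomic functor} (constant value $\mathbb{K}$, identity on automorphisms, zero on every morphism $\left[n'-n,\sigma\right]$ with $n'>n$), and it satisfies $\tau_{k}F\cong F$ for every $k$ while $i_{1}\left(F\right)=0$, so that $\kappa_{1}\left(F\right)=F\neq0$ and $F\notin\mathcal{VP}ol_{0}\left(\mathfrak{U}\boldsymbol{\beta},\mathbb{K}\textrm{-}\mathfrak{Mod}\right)$. Note that every place the paper invokes this proposition (constant functors in Propositions \ref{prop:BurTYMverystrong} and \ref{prop:tau1bureddeg1}, and Lemma \ref{lem:hypotheseverystrongok}) the isomorphism in play is realized by the canonical map $i_{k}$, so your rephrasing is the correct reading of the statement and the one the paper implicitly uses.
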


\begin{proof}
The result follows using the long exact sequence (\ref{eq:ESCaract})
of Proposition \ref{prop:lemmecaract} applied to $F$.
\end{proof}
The following example show that there exist strong polynomial functors
which are not very strong polynomial in any degree.
\begin{example}
\label{exa:atomic functor}Let us consider the categories $\mathfrak{U}\boldsymbol{\beta}$
and $\mathbb{K}\textrm{-}\mathfrak{Mod}$, and $n$ a natural number.
Let $\mathbb{K}$ be considered as an object of $\mathbb{K}\textrm{-}\mathfrak{Mod}$
and $0$ be the trivial $\mathbb{K}$-module. Let $\mathfrak{A}_{n}$
be an object of $\mathbf{Fct}\left(\mathfrak{U}\boldsymbol{\beta},\mathbb{K}\textrm{-}\mathfrak{Mod}\right)$,
defined by:

\begin{itemize}
\item Objects: $\forall m\in\mathbb{N}$, $\mathfrak{A}_{n}\left(m\right)=\begin{cases}
\mathbb{K} & \textrm{if \ensuremath{n=m}}\\
0 & \textrm{otherwise}
\end{cases}$.
\item Morphisms: let \emph{$\left[j-i,f\right]$ }with $f\in\mathbf{B}_{n}$
be a morphism from $i$ to $j$ in the category $\mathfrak{U}\boldsymbol{\beta}$.
Then: 
\[
\mathfrak{A}_{n}\left(f\right)=\begin{cases}
id_{\mathbb{K}} & \textrm{if \ensuremath{i=j=n}}\\
0 & \textrm{otherwise.}
\end{cases}
\]
\end{itemize}
The functor $\mathfrak{A}_{n}$ is called an atomic functor in $\mathbb{K}$
of degree $n$. For coherence, we fix $\mathfrak{A}_{-1}$ to be the
null functor of $\mathbf{Fct}\left(\mathfrak{U}\boldsymbol{\beta},\mathbb{K}\textrm{-}\mathfrak{Mod}\right)$.
Then, it is clear that $i_{p}\left(\mathfrak{A}_{n}\right)$ is the
zero natural transformation. On the one hand, we deduce the following
natural equivalence $\kappa_{1}\left(\mathfrak{A}_{n}\right)\cong\mathfrak{A}_{n}$
and a fortiori $\mathfrak{A}_{n}$ is not a very strong polynomial
functor. On the other hand, it is worth noting the natural equivalence
$\delta_{1}\left(\mathfrak{A}_{n}\right)\cong\tau_{1}\left(\mathfrak{A}_{n}\right)$
and the fact that $\tau_{1}\left(\mathfrak{A}_{n}\right)\cong\mathfrak{A}_{n-1}$.
Therefore, we recursively prove that $\mathfrak{A}_{n}$ is a strong
polynomial functor of degree $n$.
\end{example}

\begin{rem}
Contrary to $\mathcal{P}ol_{n}^{strong}\left(\mathfrak{M},\mathcal{A}\right)$,
a quotient of an object $F$ of $\mathcal{VP}ol_{n}\left(\mathfrak{M},\mathcal{A}\right)$
is not necessarily a very strong polynomial functor. For example,
for $\mathfrak{M}=\mathfrak{U}\boldsymbol{\beta}$ and $\mathcal{A}=\mathbb{K}\textrm{-}\mathfrak{Mod}$,
let us consider the functor $\mathfrak{A}_{0}$ defined in Example
\ref{exa:atomic functor}, which we proved to be a strong polynomial
functor of degree $0$. Let $\mathfrak{A}$ be the constant object
of $\mathbf{Fct}\left(\mathfrak{U}\boldsymbol{\beta},\mathbb{K}\textrm{-}\mathfrak{Mod}\right)$
equal to $\mathbb{K}$. Then, we define a natural transformation $\alpha:\mathfrak{A}\rightarrow\mathfrak{A}_{0}$
assigning:
\[
\forall n\in\mathbb{N},\,\,\alpha_{n}=\begin{cases}
id_{\mathbb{K}} & \textrm{if \ensuremath{n=0}}\\
t_{\mathbb{K}} & \textrm{otherwise.}
\end{cases}
\]
Moreover, it is an epimorphism in the category $\mathbf{Fct}\left(\mathfrak{U}\boldsymbol{\beta},\mathbb{K}\textrm{-}\mathfrak{Mod}\right)$
since for all natural numbers $n$, $coker\left(\alpha_{n}\right)=0_{\mathbb{K}\textrm{-}\mathfrak{Mod}}$.
We proved in Example \ref{exa:atomic functor} that $\mathfrak{A}_{0}$
is not a very strong polynomial functor of degree $0$ whereas $\mathfrak{A}$
is a very strong polynomial functor of degree $0$ by Proposition
\ref{prop:degre0verystrong}.
\end{rem}

Finally, let us remark the following behaviour of the translation
functor with respect to very strong polynomial degree.
\begin{lem}
\label{lem:hypotheseverystrongok}Let $d$ and $k$ be a natural numbers
and $F$ be an object of $\mathcal{VP}ol_{d}\left(\mathfrak{\mathfrak{M}},\textrm{\ensuremath{\mathbb{K}}-}\mathfrak{Mod}\right)$.
Then the functor $\tau_{k}\left(F\right)$ is very strong polynomial
of degree equal to that of $F$.
\end{lem}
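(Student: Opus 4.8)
The plan is to prove the two inequalities separately: that the very strong degree of $\tau_{k}\left(F\right)$ is at most that of $F$, and that it is at least that of $F$. Throughout I would reduce to the case of a single translation, since the monoidal structure gives $\tau_{k}\cong\tau_{1}\circ\cdots\circ\tau_{1}$ ($k$ times, as in the proof of Proposition \ref{prop:extprebraided}), so it suffices to establish the statement for $\tau_{x}$ with $x$ a single object and then iterate. The upper bound is immediate: by Proposition \ref{prop:prop verystrongpoly} the category $\mathcal{VP}ol_{d}\left(\mathfrak{M},\textrm{\ensuremath{\mathbb{K}}-}\mathfrak{Mod}\right)$ is closed under the translation endofunctor $\tau_{x}$, so if $F$ has very strong degree $d$ then $\tau_{x}\left(F\right)$ lies in $\mathcal{VP}ol_{d}$ and has very strong degree at most $d$. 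All the work is therefore in showing that the degree cannot strictly drop, and I would do this by induction on $d$.

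For the base case $d=0$, a functor of very strong degree $0$ is by definition a nonzero object of $\mathcal{VP}ol_{0}\left(\mathfrak{M},\textrm{\ensuremath{\mathbb{K}}-}\mathfrak{Mod}\right)$. Proposition \ref{prop:degre0verystrong} characterizes such functors as precisely those isomorphic to $\tau_{k}F$ for all $k$; hence $\tau_{k}\left(F\right)\cong F$ is again nonzero and of very strong degree $0$. This disposes of the degenerate case and in fact already handles the full $\tau_{k}$ at degree $0$.

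For the inductive step, suppose the result holds in all degrees $<d$ and let $F$ have very strong degree exactly $d\geq1$. Since $F\in\mathcal{VP}ol_{d}$, we have $\kappa_{y}\left(F\right)=0$ for every object $y$, so the only way $F$ can fail to lie in $\mathcal{VP}ol_{d-1}$ is that there exists an object $y_{0}$ with $\delta_{y_{0}}\left(F\right)\notin\mathcal{VP}ol_{d-2}$; as $\delta_{y_{0}}\left(F\right)\in\mathcal{VP}ol_{d-1}$ automatically, this $\delta_{y_{0}}\left(F\right)$ has very strong degree exactly $d-1$. The key input is now the commutation $\delta_{y_{0}}\circ\tau_{x}\cong\tau_{x}\circ\delta_{y_{0}}$ from commutation property $5$ of Proposition \ref{prop:lemmecaract}, which gives $\delta_{y_{0}}\left(\tau_{x}F\right)\cong\tau_{x}\left(\delta_{y_{0}}F\right)$. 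By the inductive hypothesis applied to $\delta_{y_{0}}\left(F\right)$, the functor $\tau_{x}\left(\delta_{y_{0}}F\right)$ still has very strong degree $d-1$, hence does not belong to $\mathcal{VP}ol_{d-2}$. If $\tau_{x}\left(F\right)$ lay in $\mathcal{VP}ol_{d-1}$, then by definition $\delta_{y_{0}}\left(\tau_{x}F\right)$ would belong to $\mathcal{VP}ol_{d-2}$, a contradiction. Therefore $\tau_{x}\left(F\right)\notin\mathcal{VP}ol_{d-1}$, and combined with the upper bound its very strong degree is exactly $d$; iterating over the factors of $\tau_{k}$ completes the proof. The main obstacle is precisely this lower bound: one must locate an object $y_{0}$ realizing the degree of $F$ and push it through $\tau_{x}$ using the commutation of $\tau_{x}$ and $\delta_{y_{0}}$, since the naive closure argument only controls the degree from above.
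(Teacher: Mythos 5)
Your proof is correct and follows essentially the same route as the paper's: an induction on the very strong degree whose base case is Proposition \ref{prop:degre0verystrong} and whose inductive step rests on the commutation of the translation functor with $\kappa$ and $\delta$ from Proposition \ref{prop:lemmecaract}. The only cosmetic difference is that you separate the upper bound (closure of $\mathcal{VP}ol_{d}$ under translation) from the lower bound via an explicit witness object $y_{0}$, whereas the paper's induction, working with $\delta_{1}$ and $\kappa_{1}$ directly (legitimate by Remark \ref{exa:suffit1}), yields both bounds at once.
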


\begin{proof}
We proceed by induction on the degree of polynomiality of $F$. First,
if we assume that $F$ belongs to $\mathcal{VP}ol_{0}\left(\mathfrak{\mathfrak{M}},\textrm{\ensuremath{\mathbb{K}}-}\mathfrak{Mod}\right)$,
then according to Proposition \ref{prop:degre0verystrong}, $\tau_{k}\left(F\right)\cong F$
is a degree $0$ very strong polynomial functor. Now, assume that
$F$ is a very strong polynomial functor of degree $n\geq0$. Using
the commutation properties $5$ and $6$ of Proposition \ref{prop:lemmecaract},
we deduce that $\left(\kappa_{1}\circ\tau_{k}\right)\left(F\right)\cong\left(\tau_{k}\circ\kappa_{1}\right)\left(F\right)=0$
and $\left(\delta_{1}\circ\tau_{k}\right)\left(F\right)\cong\left(\tau_{k}\circ\delta_{1}\right)\left(F\right)$.
Since the functor $\delta_{1}\left(F\right)$ is a degree $n-1$ very
strong polynomial functor, the result follows from the inductive hypothesis.
\end{proof}
\begin{rem}
The previous proof does not work for strong polynomial functors since
the initial step fails. Indeed, considering the atomic functor $\mathfrak{A}_{1}$,
which is strong polynomial of degree $1$ (see Example \ref{exa:atomic functor}),
then $\tau_{2}\left(\mathfrak{A}_{0}\right)=0$.
\end{rem}

\subsection{Examples of polynomial functors over $\mathfrak{U}\boldsymbol{\beta}$\label{subsec:Examples-of-polynomial}}

The different functors introduced in Section \ref{subsec:Examples-of-functors}
are strong polynomial functors.

\paragraph{Very strong polynomial functors of degree one:}

Let us first investigate the polynomiality of the functors $\mathfrak{Bur}_{t}$
and $\mathfrak{TYM}_{t}$.
\begin{prop}
\label{prop:BurTYMverystrong}The functors $\mathfrak{Bur}_{t}$ and
$\mathfrak{TYM}_{t}$ are very strong polynomial functors of degree
$1$.
\begin{proof}
For the functor $\mathfrak{Bur}_{t}$, the proof is mutatis mutandis
the same as the one for the dual version considered in \cite[Example 4.15]{WahlRandal-Williams}.
We will thus focus on the case of the functor $\mathfrak{TYM}_{t}$.
Let $n$ be a natural number. By Remark \ref{exa:suffit1}, it is
enough to consider the application $i_{1}\mathfrak{TYM_{t}}\left(\left[0,id_{n}\right]\right)=\iota_{\mathbb{C}\left[t^{\pm1}\right]^{\oplus n'-n}}\oplus id_{\mathbb{C}\left[t^{\pm1}\right]^{\oplus n}}$.
This map is a monomorphism and its cokernel is $\mathbb{C}\left[t^{\pm1}\right]$.
Hence $\kappa_{1}\mathfrak{TYM}_{t}$ is the null functor of $\mathbf{Fct}\left(\mathfrak{U}\boldsymbol{\beta},\mathbb{C}\left[t^{\pm1}\right]\textrm{-}\mathfrak{Mod}\right)$.
Let $n'$ be a natural number such that $n'\geq n$ and let $\left[n'-n,\sigma\right]\in Hom_{\mathfrak{U}\boldsymbol{\beta}}\left(n,n'\right)$.
By naturality and the universal property of the cokernel, there exists
a unique endomorphism of $\mathbb{C}\left[t^{\pm1}\right]$ such that
the following diagram commutes, where the lines are exact. It is exactly
the definition of $\delta_{1}\mathfrak{TYM}_{t}\left(\left[n'-n,\sigma\right]\right)$.
\[
\xymatrix{0\ar@{->}[r] & \mathbb{C}\left[t^{\pm1}\right]^{\oplus n}\ar@{->}[rrr]^{\iota_{\mathbb{C}\left[t^{\pm1}\right]}\oplus id_{\mathbb{C}\left[t^{\pm1}\right]^{\oplus n}}}\ar@{->}[d]_{\mathfrak{TYM}\left(\left[n'-n,\sigma\right]\right)} &  &  & \mathbb{C}\left[t^{\pm1}\right]^{\oplus n+1}\ar@{->}[rr]^{\pi_{n+1}}\ar@{->}[d]^{\tau_{1}\left(\mathfrak{TYM}\right)\left(\left[n'-n,\sigma\right]\right)} &  & \mathbb{C}\left[t^{\pm1}\right]\ar@{->}[r]\ar@{.>}[d]^{\exists!} & 0\\
0\ar@{->}[r] & \mathbb{C}\left[t^{\pm1}\right]^{\oplus n'}\ar@{->}[rrr]_{\iota_{\mathbb{C}\left[t^{\pm1}\right]}\oplus id_{\mathbb{C}\left[t^{\pm1}\right]^{\oplus n'}}} &  &  & \mathbb{C}\left[t^{\pm1}\right]^{\oplus n'+1}\ar@{->}[rr]_{\pi_{n'+1}} &  & \mathbb{C}\left[t^{\pm1}\right]\ar@{->}[r] & 0.
}
\]
For all $\left(a,b\right)\in\mathbb{C}\left[t^{\pm1}\right]\oplus\mathbb{C}\left[t^{\pm1}\right]^{\oplus n}=\mathbb{C}\left[t^{\pm1}\right]^{\oplus n+1}$,
$\tau_{1}\left(\mathfrak{TYM}_{t}\right)\left(\left[n'-n,\sigma\right]\right)\left(a,b\right)=\left(a,\mathfrak{TYM}_{t}\left(\left[n'-n,\sigma\right]\right)\left(b\right)\right)$.
Therefore, $\left(\pi_{n'+1}\circ\tau_{1}\left(\mathfrak{TYM}_{t}\right)\left(\left[n'-n,\sigma\right]\right)\right)\left(a,b\right)=a=\pi_{n+1}\left(a,b\right)$.
Hence, $id_{\mathbb{C}\left[t^{\pm1}\right]}$ also makes the diagram
commutative and thus $\delta_{1}\mathfrak{TYM}_{t}\left(\left[n'-n,\sigma\right]\right)=id_{\mathbb{C}\left[t^{\pm1}\right]}.$
Hence, $\delta_{1}\mathfrak{TYM}_{t}$ is the constant functor equal
to $\mathbb{C}\left[t^{\pm1}\right]$. A fortiori, because of Proposition
\ref{prop:degre0verystrong}, $\delta_{1}\mathfrak{TYM}_{t}$ is a
very strong polynomial functor of degree $0$.
\end{proof}
\end{prop}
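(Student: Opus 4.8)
The plan is to handle the two functors separately. For $\mathfrak{Bur}_{t}$ the very strong polynomiality of degree $1$ is already recorded by Randal-Williams and Wahl in \cite[Example 4.15]{WahlRandal-Williams}, so I would simply invoke that computation. The substance lies in the case of $\mathfrak{TYM}_{t}$, which I would treat directly via the characterisation of very strong polynomial functors in terms of the evanescence and difference functors $\kappa_{x}$ and $\delta_{x}$. By Remark \ref{exa:suffit1}, every object of $\mathfrak{U}\boldsymbol{\beta}$ is the monoidal power $1^{\natural n}$, so by Proposition \ref{prop:prop verystrongpoly} (applied with $\mathfrak{E}=\{1\}$) it suffices to control $\kappa_{1}$ and $\delta_{1}$.

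First I would examine the natural transformation $i_{1}\colon Id\rightarrow\tau_{1}$ evaluated on $\mathfrak{TYM}_{t}$. At each object $n$ this is the embedding $\iota_{\mathbb{C}[t^{\pm1}]}\oplus id_{\mathbb{C}[t^{\pm1}]^{\oplus n}}$ of Notation \ref{nota:defembeedingtymbu}, a split monomorphism of $\mathbb{C}[t^{\pm1}]$-modules with one-dimensional cokernel. Objectwise injectivity of $i_{1}$ forces $\kappa_{1}\mathfrak{TYM}_{t}=0$, which is the first of the two conditions required for very strong polynomiality.

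Next I would compute $\delta_{1}\mathfrak{TYM}_{t}=\textrm{coker}(i_{1})$. The short exact sequence $0\rightarrow\mathbb{C}[t^{\pm1}]^{\oplus n}\rightarrow\mathbb{C}[t^{\pm1}]^{\oplus n+1}\rightarrow\mathbb{C}[t^{\pm1}]\rightarrow0$ is natural in $n$, so by the universal property of the cokernel each morphism $[n'-n,\sigma]$ induces a unique map on the quotient $\mathbb{C}[t^{\pm1}]$. The decisive check, and the only place where the explicit Tong-Yang-Ma matrices enter, is that this induced map is the identity: since $\tau_{1}(\mathfrak{TYM}_{t})([n'-n,\sigma])$ acts trivially on the first coordinate (the newly adjoined copy of $\mathbb{C}[t^{\pm1}]$ lies outside the support of the translated generators), the projection commutes with $id_{\mathbb{C}[t^{\pm1}]}$. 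Hence $\delta_{1}\mathfrak{TYM}_{t}$ is the constant functor at $\mathbb{C}[t^{\pm1}]$, which by Proposition \ref{prop:degre0verystrong} is an object of $\mathcal{VP}ol_{0}\left(\mathfrak{U}\boldsymbol{\beta},\mathbb{C}[t^{\pm1}]\textrm{-}\mathfrak{Mod}\right)$. Combining $\kappa_{1}\mathfrak{TYM}_{t}=0$ with $\delta_{1}\mathfrak{TYM}_{t}\in\mathcal{VP}ol_{0}$ then yields that $\mathfrak{TYM}_{t}$ is very strong polynomial of degree $1$.

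The main obstacle is the coordinate bookkeeping in the third step: one must verify that translation by the object $1$ genuinely fixes the distinguished copy of $\mathbb{C}[t^{\pm1}]$, so that the quotient action is the identity rather than a nontrivial multiplication by a power of $t$. Everything else is a formal consequence of the machinery of Proposition \ref{prop:lemmecaract} and the characterisations in Propositions \ref{prop:degre0verystrong} and \ref{prop:prop verystrongpoly}.
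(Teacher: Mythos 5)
Your proposal is correct and follows essentially the same route as the paper's proof: citing \cite{WahlRandal-Williams} for $\mathfrak{Bur}_{t}$, reducing to $\kappa_{1}$ and $\delta_{1}$ via Remark \ref{exa:suffit1}, obtaining $\kappa_{1}\mathfrak{TYM}_{t}=0$ from objectwise injectivity of $i_{1}$, identifying $\delta_{1}\mathfrak{TYM}_{t}$ with the constant functor at $\mathbb{C}\left[t^{\pm1}\right]$ by the universal property of the cokernel, and concluding with Proposition \ref{prop:degre0verystrong}. The key identity $\tau_{1}\left(\mathfrak{TYM}_{t}\right)\left(\left[n'-n,\sigma\right]\right)\left(a,b\right)=\left(a,\mathfrak{TYM}_{t}\left(\left[n'-n,\sigma\right]\right)\left(b\right)\right)$, which you rightly flag as the main point (and which the paper also only asserts), does hold even though $id_{1}\natural\left[n'-n,\sigma\right]$ involves the pre-braiding $\left(b_{1,n'-n}^{\boldsymbol{\beta}}\right)^{-1}$: the $t$-twists in $TYM\left(t\right)$ only ever multiply coordinates that are zero, so the induced map on the quotient is indeed the identity rather than multiplication by a power of $t$.
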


\paragraph{The particular case of $\overline{\mathfrak{Bur}}_{t}$:}
\begin{defn}
\label{def:outilssupppl}Let $\mathcal{T}_{1}:\mathfrak{U}\boldsymbol{\beta}\longrightarrow\mathbb{C}\left[t^{\pm1}\right]\textrm{-}\mathfrak{Mod}$
be the subobject of the constant functor $\mathfrak{X}$ (see Notation
\ref{exa:xdeg0}) such that $\mathcal{T}_{1}\left(0\right)=0$ and
$\mathcal{T}_{1}\left(n\right)=\mathbb{C}\left[t^{\pm1}\right]$ for
all non-zero natural numbers $n$.
\end{defn}

\begin{rem}
\label{rem:T1strongpolydeg1}It follows from Definition \ref{def:outilssupppl}
that $\delta_{1}\mathcal{T}_{1}\cong\mathfrak{A}_{0}$ (where $\mathfrak{A}_{0}$
is introduced in Example \ref{exa:atomic functor}). Therefore, $\mathcal{T}_{1}$
is a strong polynomial functor of degree $1$, but is not very strong
polynomial. Nevertheless, it is worth noting that $\kappa_{1}\mathcal{T}_{1}=0$.
\end{rem}

\begin{prop}
\label{Prop:reduburdeg2}The functor $\overline{\mathfrak{Bur}}$
is a strong polynomial functor of degree $2$. This functor is not
very strong polynomial. More precisely, we have the following short
exact sequence in $\mathbf{Fct}\left(\mathfrak{U}\boldsymbol{\beta},\mathbb{C}\left[t^{\pm1}\right]\textrm{-}\mathfrak{Mod}\right)$:
\[
\xymatrix{0\ar@{->}[r] & \overline{\mathfrak{Bur}}_{t}\ar@{->}[r] & \tau_{1}\overline{\mathfrak{Bur}}_{t}\ar@{->}[r] & \mathcal{T}_{1}\ar@{->}[r] & 0}
.
\]
\begin{proof}
The natural transformation $i_{1}\left(\overline{\mathfrak{Bur}}_{t}\right)_{n}:\overline{\mathfrak{Bur}}_{t}\left(n\right)\rightarrow\tau_{1}\overline{\mathfrak{Bur}}_{t}\left(n\right)$
(introduced in Definition \ref{def:defix}) is defined to be $\iota_{\mathbb{C}\left[t^{\pm1}\right]^{\oplus n'-n}}\oplus id_{\mathbb{C}\left[t^{\pm1}\right]^{\oplus n-1}}$.
Let $n\geq2$ be a natural number. This map is a monomorphism (so
$\kappa_{1}\overline{\mathfrak{Bur}}_{t}=0$) and its cokernel is
$\mathbb{C}\left[t^{\pm1}\right]$. Repeating mutatis mutandis the
work done in the proof of Proposition \ref{prop:BurTYMverystrong},
we deduce that for all $\left[n'-n,\sigma\right]\in Hom_{\mathfrak{U}\boldsymbol{\beta}}\left(n,n'\right)$
(with $n'\geq n\geq2$), $\delta_{1}\overline{\mathfrak{Bur}}_{t}\left(\left[n'-n,\sigma\right]\right)=Id_{\mathbb{C}\left[t^{\pm1}\right]}$.
In addition, since $\overline{\mathfrak{Bur}}_{t}\left(1\right)=0$
and $\tau_{1}\overline{\mathfrak{Bur}}_{t}\left(1\right)=\mathbb{C}\left[t^{\pm1}\right]$,
we deduce that $\delta_{1}\overline{\mathfrak{Bur}}_{t}\left(1\right)=\mathbb{C}\left[t^{\pm1}\right]$
and for all $n'\geq1$, for all $\left[n'-1,\sigma\right]\in Hom_{\mathfrak{U}\boldsymbol{\beta}}\left(1,n'\right)$,
$\delta_{1}\overline{\mathfrak{Bur}}_{t}\left(\left[n'-1,\sigma\right]\right)=Id_{\mathbb{C}\left[t^{\pm1}\right]}$.
Hence, we prove that $\delta_{1}\overline{\mathfrak{Bur}}_{t}\cong\mathcal{T}_{1}$
where $\mathcal{T}_{1}$ is introduced in Definition \ref{def:outilssupppl}.
The results follow from the fact that $\delta_{1}\mathcal{T}_{1}\cong\mathfrak{A}_{0}$
by Remark \ref{rem:T1strongpolydeg1}.
\end{proof}
\end{prop}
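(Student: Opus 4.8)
The plan is to follow the template of Proposition~\ref{prop:BurTYMverystrong}: everything reduces to identifying the difference functor $\delta_1\overline{\mathfrak{Bur}}_t$, after which both polynomiality statements and the short exact sequence fall out formally. By Remark~\ref{exa:suffit1} it suffices to work with $\tau_1$ throughout, since every object of $\mathfrak{U}\boldsymbol{\beta}$ is $1^{\natural n}$.

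First I would make the natural transformation $i_1(\overline{\mathfrak{Bur}}_t)$ of Definition~\ref{def:defix} explicit. Evaluated at $n$ it is the morphism $\overline{\mathfrak{Bur}}_t\left(\left[1,id_{n+1}\right]\right)$, namely the coordinate embedding $\iota_{\mathbb{C}\left[t^{\pm1}\right]}\oplus id_{\mathbb{C}\left[t^{\pm1}\right]^{\oplus n-1}}\colon\mathbb{C}\left[t^{\pm1}\right]^{\oplus n-1}\hookrightarrow\mathbb{C}\left[t^{\pm1}\right]^{\oplus n}$. This is a monomorphism for every $n$, so $\kappa_1\overline{\mathfrak{Bur}}_t=0$. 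Taking cokernels object by object, the decisive point is the low-degree bookkeeping: since $\overline{\mathfrak{Bur}}_t\left(0\right)=\overline{\mathfrak{Bur}}_t\left(1\right)=0$ while $\overline{\mathfrak{Bur}}_t\left(n\right)=\mathbb{C}\left[t^{\pm1}\right]^{\oplus n-1}$ for $n\geq1$, one finds $\delta_1\overline{\mathfrak{Bur}}_t\left(0\right)=0$ and $\delta_1\overline{\mathfrak{Bur}}_t\left(n\right)=\mathbb{C}\left[t^{\pm1}\right]$ for all $n\geq1$. This is exactly where the reduced case diverges from Proposition~\ref{prop:BurTYMverystrong}: for $\mathfrak{TYM}_t$ the cokernel was already nonzero at the object $0$, so $\delta_1\mathfrak{TYM}_t$ was the constant functor, whereas here the vanishing at $0$ produces the functor $\mathcal{T}_1$ of Definition~\ref{def:outilssupppl}, which is strong polynomial of degree $1$ rather than $0$.

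Next I would pin down the morphisms of $\delta_1\overline{\mathfrak{Bur}}_t$. For $n'\geq n\geq2$ and $\left[n'-n,\sigma\right]\in Hom_{\mathfrak{U}\boldsymbol{\beta}}\left(n,n'\right)$, the universal property of the cokernel yields a unique induced endomorphism of $\mathbb{C}\left[t^{\pm1}\right]$ fitting the naturality square for $i_1$, and a diagram chase parallel to the one in Proposition~\ref{prop:BurTYMverystrong} should show that $id_{\mathbb{C}\left[t^{\pm1}\right]}$ already makes that square commute; hence $\delta_1\overline{\mathfrak{Bur}}_t\left(\left[n'-n,\sigma\right]\right)=id_{\mathbb{C}\left[t^{\pm1}\right]}$. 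The morphisms out of the object $1$, where $\overline{\mathfrak{Bur}}_t\left(1\right)=0$, must be treated separately but are immediate because the source of $i_1$ vanishes there. Comparing with Definition~\ref{def:outilssupppl} then gives the natural isomorphism $\delta_1\overline{\mathfrak{Bur}}_t\cong\mathcal{T}_1$.

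Finally I would assemble the conclusions. By Remark~\ref{rem:T1strongpolydeg1}, $\delta_1\mathcal{T}_1\cong\mathfrak{A}_0$ is strong polynomial of degree $0$, so $\mathcal{T}_1$, and therefore $\delta_1\overline{\mathfrak{Bur}}_t$, is strong polynomial of degree $1$; by the defining recursion this makes $\overline{\mathfrak{Bur}}_t$ strong polynomial of degree $2$. The claimed short exact sequence is simply the exact sequence~\eqref{eq:ESCaract} of Proposition~\ref{prop:lemmecaract} specialised to $\overline{\mathfrak{Bur}}_t$, using $\kappa_1\overline{\mathfrak{Bur}}_t=0$ and the identification $\delta_1\overline{\mathfrak{Bur}}_t\cong\mathcal{T}_1$. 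For the failure of very strong polynomiality it suffices to recall from Remark~\ref{rem:T1strongpolydeg1} that $\mathcal{T}_1$ is not very strong polynomial; since very strong polynomiality of $\overline{\mathfrak{Bur}}_t$ would force $\delta_1\overline{\mathfrak{Bur}}_t$ to be very strong polynomial, this is ruled out. I expect the genuine work to lie in the morphism computation of the third paragraph: unlike $\mathfrak{TYM}_t$, the matrices of Definition~\ref{def:defburred} carry nontrivial off-diagonal entries, so one cannot simply read the induced map off a block-diagonal form and must verify honestly that the braid action descends to the identity on the one-dimensional cokernel.
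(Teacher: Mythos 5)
Your proposal is correct and follows essentially the same route as the paper's proof: identify $i_{1}\left(\overline{\mathfrak{Bur}}_{t}\right)$ as the coordinate embedding (hence a monomorphism, giving $\kappa_{1}\overline{\mathfrak{Bur}}_{t}=0$), compute the cokernel object by object with the low-degree cases $n=0,1$ handled separately, identify the induced maps as identities by the cokernel argument of Proposition \ref{prop:BurTYMverystrong}, conclude $\delta_{1}\overline{\mathfrak{Bur}}_{t}\cong\mathcal{T}_{1}$, and derive all three assertions from Remark \ref{rem:T1strongpolydeg1}. Your explicit appeal to the exact sequence (\ref{eq:ESCaract}) for the short exact sequence and your spelled-out argument for the failure of very strong polynomiality are only slightly more detailed versions of what the paper leaves implicit.
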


For formal reasons (see Proposition \ref{prop:lemmecaract}), $\overline{\mathfrak{Bur}}_{t}$
is a subfunctor of $\tau_{1}\overline{\mathfrak{Bur}}_{t}$. The following
proposition illustrates Remarks \ref{rem:notclosedundersubobjects}
and \ref{rem:notclosedundersubobjects'}.
\begin{prop}
\label{prop:tau1bureddeg1}The functor $\tau_{1}\overline{\mathfrak{Bur}}_{t}$
is a very strong polynomial functor of degree $1$.
\end{prop}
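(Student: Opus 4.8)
The plan is to apply the inductive characterisation of very strong polynomiality from Proposition \ref{prop:prop verystrongpoly}, together with Remark \ref{exa:suffit1}: since every object of $\mathfrak{U}\boldsymbol{\beta}$ is of the form $1^{\natural n}$, it suffices to show that $\kappa_1\left(\tau_1\overline{\mathfrak{Bur}}_t\right)=0$ and that $\delta_1\left(\tau_1\overline{\mathfrak{Bur}}_t\right)$ is a very strong polynomial functor of degree $0$. Both conditions I would reduce to already-established computations by means of the commutation properties of Proposition \ref{prop:lemmecaract}.

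First, by commutation property $6$ of Proposition \ref{prop:lemmecaract}, $\kappa_1\circ\tau_1\cong\tau_1\circ\kappa_1$; since the proof of Proposition \ref{Prop:reduburdeg2} records that $\kappa_1\overline{\mathfrak{Bur}}_t=0$, this yields $\kappa_1\left(\tau_1\overline{\mathfrak{Bur}}_t\right)\cong\tau_1\left(\kappa_1\overline{\mathfrak{Bur}}_t\right)=0$. Likewise, by commutation property $5$, $\delta_1\circ\tau_1\cong\tau_1\circ\delta_1$, and Proposition \ref{Prop:reduburdeg2} gives $\delta_1\overline{\mathfrak{Bur}}_t\cong\mathcal{T}_1$, so that $\delta_1\left(\tau_1\overline{\mathfrak{Bur}}_t\right)\cong\tau_1\mathcal{T}_1$.

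The next step is to identify $\tau_1\mathcal{T}_1$ explicitly. By Definition \ref{def:outilssupppl}, $\mathcal{T}_1(m)=\mathbb{C}\left[t^{\pm1}\right]$ for every $m\geq1$, and as a subfunctor of the constant functor $\mathfrak{X}$ all its structural morphisms between nonzero objects are identities. Precomposing with $1\natural-$ therefore deletes the sole exceptional value at the initial object $0$, so that $\tau_1\mathcal{T}_1$ is isomorphic to the constant functor $\mathfrak{X}$. By Proposition \ref{prop:degre0verystrong}, a functor lies in $\mathcal{VP}ol_0\left(\mathfrak{U}\boldsymbol{\beta},\mathbb{C}\left[t^{\pm1}\right]\textrm{-}\mathfrak{Mod}\right)$ if and only if it is isomorphic to all its translates, which the constant functor manifestly satisfies; hence $\delta_1\left(\tau_1\overline{\mathfrak{Bur}}_t\right)\cong\mathfrak{X}$ is very strong polynomial of degree $0$.

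Combining these facts, $\tau_1\overline{\mathfrak{Bur}}_t$ belongs to $\mathcal{VP}ol_1\left(\mathfrak{U}\boldsymbol{\beta},\mathbb{C}\left[t^{\pm1}\right]\textrm{-}\mathfrak{Mod}\right)$; and since $\delta_1\left(\tau_1\overline{\mathfrak{Bur}}_t\right)\cong\mathfrak{X}\neq0$, Proposition \ref{prop:degre0verystrong} rules out degree $0$, so the very strong degree is exactly $1$. The only point requiring care — and the mild obstacle — is the bookkeeping of the exceptional value of $\mathcal{T}_1$ at the object $0$, which is precisely what the translation $\tau_1$ removes; beyond that, the argument is a formal consequence of the commutation properties of Proposition \ref{prop:lemmecaract} already at our disposal.
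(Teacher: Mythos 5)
Your proof is correct, but it reaches the key identification $\delta_{1}\left(\tau_{1}\overline{\mathfrak{Bur}}_{t}\right)\cong\mathfrak{X}$ by a genuinely different route than the paper. The paper's proof repeats, mutatis mutandis, the explicit cokernel computation of Proposition \ref{Prop:reduburdeg2} (the diagram chase with $i_{1}$ and the universal property of the cokernel) applied directly to $\tau_{1}\overline{\mathfrak{Bur}}_{t}$, whereas you deduce everything formally from results already on record: the commutations $\kappa_{1}\circ\tau_{1}\cong\tau_{1}\circ\kappa_{1}$ and $\delta_{1}\circ\tau_{1}\cong\tau_{1}\circ\delta_{1}$ of Proposition \ref{prop:lemmecaract}, the computations $\kappa_{1}\overline{\mathfrak{Bur}}_{t}=0$ and $\delta_{1}\overline{\mathfrak{Bur}}_{t}\cong\mathcal{T}_{1}$ recorded in Proposition \ref{Prop:reduburdeg2}, and the elementary observation that $\tau_{1}\mathcal{T}_{1}\cong\mathfrak{X}$, since translation deletes the sole exceptional value of $\mathcal{T}_{1}$ at the initial object. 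Your route buys a computation-free argument that also makes the vanishing of $\kappa_{1}\left(\tau_{1}\overline{\mathfrak{Bur}}_{t}\right)$ explicit — a condition required by Definition \ref{def:defverystrong} that the paper's proof leaves implicit — while the paper's direct recomputation is self-contained and does not depend on tracking the structural morphisms of $\mathcal{T}_{1}$ as a subfunctor of $\mathfrak{X}$. One small mis-citation at the end: to rule out very strong degree $0$ you should invoke the recursive definition of $\mathcal{VP}ol_{0}$, which forces $\delta_{1}F$ to lie in $\mathcal{VP}ol_{-1}=\left\{ 0\right\} $, rather than Proposition \ref{prop:degre0verystrong}; since $\delta_{1}\left(\tau_{1}\overline{\mathfrak{Bur}}_{t}\right)\cong\mathfrak{X}\neq0$, your conclusion that the very strong degree is exactly $1$ stands.
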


\begin{proof}
Repeating mutatis mutandis the work done in the proof of Proposition
\ref{Prop:reduburdeg2}, we prove that $\delta_{1}\tau_{1}\overline{\mathfrak{Bur}}_{t}$
is the constant functor equal to $\mathbb{C}\left[t^{\pm1}\right]$
(denoted by $\mathfrak{X}$ in Notation \ref{exa:xdeg0}). Since $\mathfrak{X}$
is a constant functor, $\delta_{1}\tau_{1}\overline{\mathfrak{Bur}}_{t}$
is by Proposition \ref{prop:degre0verystrong} a very strong polynomial
functor of degree $0$.
\end{proof}

\paragraph{A very strong polynomial functor of degree two:}

We could have defined the unreduced Burau functor of Example \ref{exa:defunredbur}
assigning $\left(\left(\mathbb{C}\left[t^{\pm1}\right]\right)\left[q^{\pm1}\right]\right)^{\oplus n}$
to each object $n\in\mathbb{N}$.
\begin{notation}
\label{nota:defburhat}Abusing the notation, $\left(\mathbb{C}\left[t^{\pm1}\right]\right)\left[q^{\pm1}\right]:\mathfrak{U}\boldsymbol{\beta}\rightarrow\left(\mathbb{C}\left[t^{\pm1}\right]\right)\left[q^{\pm1}\right]\textrm{-}\mathfrak{Mod}$
denotes the constant functor at $\left(\mathbb{C}\left[t^{\pm1}\right]\right)\left[q^{\pm1}\right]$.
The functor $\mathfrak{Bur}_{t}\underset{\mathbb{C}\left[t^{\pm1}\right]}{\otimes}\left(\mathbb{C}\left[t^{\pm1}\right]\right)\left[q^{\pm1}\right]$
is denoted by $\check{\mathfrak{Bur}_{t}}:\mathfrak{U}\boldsymbol{\beta}\rightarrow\left(\mathbb{C}\left[t^{\pm1}\right]\right)\left[q^{\pm1}\right]\textrm{-}\mathfrak{Mod}$.
\end{notation}

\begin{rem}
These functors $\left(\mathbb{C}\left[t^{\pm1}\right]\right)\left[q^{\pm1}\right]$
and $\check{\mathfrak{Bur}_{t}}$ are also very strong polynomial
of degree one (the proof is exactly the same as the one for $\mathfrak{Bur}_{t}$
in Proposition \ref{rem:T1strongpolydeg1}).
\end{rem}

\begin{lem}
\label{lem:delta1lkbur}Considering the modified version of the unreduced
Burau functor of Remark \ref{nota:defburhat}, then $\delta_{1}\mathfrak{LK}$
is equivalent to $\check{\mathfrak{Bur}_{t}}$.
\end{lem}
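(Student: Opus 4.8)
The plan is to compute $\delta_{1}\mathfrak{LK}$ directly from its definition as a cokernel and then to match the outcome, object by object and morphism by morphism, with $\check{\mathfrak{Bur}_t}$. By Remark \ref{exa:suffit1} it suffices to analyse the endofunctor $\delta_{1}$. Recall from Definition \ref{def:defix} that $\delta_{1}\mathfrak{LK}\left(n\right)=\textrm{coker}\left(i_{1}\left(\mathfrak{LK}\right)_{n}\right)$, where $i_{1}\left(\mathfrak{LK}\right)_{n}=\mathfrak{LK}\left(\left[1,id_{n+1}\right]\right):\mathfrak{LK}\left(n\right)\rightarrow\mathfrak{LK}\left(n+1\right)$. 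First I would use the description of the Lawrence--Krammer functor in Example \ref{exa:lkfunctor}: on objects $\mathfrak{LK}\left(n\right)=\bigoplus_{1\le j<k\le n}V_{j,k}$, and the standard inclusion carries $V_{j,k}$ isomorphically onto $V_{j+1,k+1}$. Hence $i_{1}\left(\mathfrak{LK}\right)_{n}$ identifies $\mathfrak{LK}\left(n\right)$ with the submodule $\bigoplus_{2\le j<k\le n+1}V_{j,k}$, so its cokernel is freely generated by the classes of the $v_{1,k}$ for $2\le k\le n+1$. This exhibits $\delta_{1}\mathfrak{LK}\left(n\right)$ as a free module of rank $n$, of the same rank as $\check{\mathfrak{Bur}_t}\left(n\right)$.

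Next I would compute the induced braid action. For an Artin generator $\sigma_{i}$, the map $\delta_{1}\mathfrak{LK}\left(\sigma_{i}\right)$ is induced on the cokernel by $\tau_{1}\mathfrak{LK}\left(\sigma_{i}\right)=\mathfrak{LK}\left(id_{1}\natural\sigma_{i}\right)=\mathfrak{LK}\left(\sigma_{i+1}\right)$. Applying the explicit formula for $\mathfrak{LK}\left(\sigma_{i+1}\right)$ to $v_{1,k}$ and reducing modulo $\bigoplus_{2\le j<k\le n+1}V_{j,k}$, the only surviving contributions come from $k\in\left\{i+1,i+2\right\}$; crucially, every term $v_{j,k}$ with $j\ge2$ — in particular all the $q$-dependent terms $v_{k-1,k}$ — dies in the quotient. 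This leaves a purely $t$-linear two-by-two Burau-type block acting on the classes of $v_{1,i+1}$ and $v_{1,i+2}$, the identity elsewhere, so the induced representation carries no $q$, in accordance with $\check{\mathfrak{Bur}_t}$ being the unreduced Burau functor extended to the larger ground ring.

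It then remains to handle the structural inclusions and to assemble everything into a natural equivalence. For the inclusion $\left[n'-n,id_{n'}\right]$, the translation $\tau_{1}\mathfrak{LK}$ sends it to $\mathfrak{LK}\left(id_{1}\natural\left[n'-n,id_{n'}\right]\right)$, which by Proposition \ref{prop:homogenousprebraided} equals $\mathfrak{LK}\left(\left[n'-n,\left(b_{1,n'-n}^{\boldsymbol{\beta}}\right)^{-1}\natural id_{n}\right]\right)$. The braiding correction moves the distinguished first strand back into place, and a short computation (using for instance $\mathfrak{LK}\left(\sigma_{1}^{-1}\right)\left(v_{2,m}\right)=v_{1,m}$) shows that the descended map is the standard inclusion of the last $n$ coordinates, matching the inclusion of $\check{\mathfrak{Bur}_t}$. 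Finally I would exhibit an explicit family of isomorphisms $\left\{\varLambda_{n}\right\}$ conjugating the above blocks into the matrices $B\left(t\right)$ of $\check{\mathfrak{Bur}_t}$ while respecting the inclusions, in the spirit of the matrices $r_{n}$ of Proposition \ref{prop:recoveringunredBurau}.

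The main obstacle will be precisely this last step. Proposition \ref{prop:recoveringunredBurau} asserts a natural equivalence only over the groupoid $\boldsymbol{\beta}$, where there are no inclusions to respect and a single global conjugation suffices. Over $\mathfrak{U}\boldsymbol{\beta}$ one must instead find one family $\left\{\varLambda_{n}\right\}$ that simultaneously conjugates every braid block into the form $B\left(t\right)$ \emph{and} is compatible with the last-coordinate inclusions, which forces $\varLambda_{n+1}$ to restrict to $\varLambda_{n}$ on the last $n$ coordinates. I expect that a nested family of lower-triangular unipotent matrices does the job (a naive reversal, as used over $\boldsymbol{\beta}$, is incompatible with the inclusions), but verifying naturality against both the $\sigma_{i}$ and the inclusions at once is the delicate bookkeeping the proof must carry out.
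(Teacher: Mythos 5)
Your first three steps reproduce the paper's proof exactly: the paper computes the cokernel of $i_{1}\mathfrak{LK}$ on the classes of the $v_{1,l}$ (for $2\leq l\leq n+1$), applies the explicit formula for $\mathfrak{LK}\left(\sigma_{i+1}\right)$ to these classes and discards every $v_{j,k}$ with $j\geq2$ (so in particular all $q$-terms die), and identifies the descended map of $\left[1,id_{n}\right]$ with the last-coordinate inclusion via $\tau_{1}\left(\mathfrak{LK}\right)\left(\left[1,id_{n}\right]\right)=\mathfrak{LK}\left(\sigma_{1}^{-1}\right)\circ\mathfrak{LK}\left(\left[1,id_{n+2}\right]\right)$. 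The divergence is your fourth step: the paper performs no conjugation at all. It ends by declaring
\[
\delta_{1}\mathfrak{LK}\left(\sigma_{i}\right)=Id_{i-1}\oplus\left[\begin{array}{cc}
0 & t\\
1 & 1-t
\end{array}\right]\oplus Id_{n-i-1}=\check{\mathfrak{Bur}_{t}}\left(\sigma_{i}\right),
\]
that is, it takes the block just computed to \emph{be} the unreduced Burau block, concluding equality on the nose. You are right that this is in tension with the literal $B\left(t\right)=\left[\begin{array}{cc}
1-t & t\\
1 & 0
\end{array}\right]$ of Theorem \ref{thm:resulttym}: the two blocks are conjugate but not equal, so, read strictly, the paper establishes the lemma only up to this change of convention for the Burau functor; the obstacle you anticipate simply never arises in the paper's own argument.

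Moreover, your proposed resolution of that obstacle is correct and completable, so your route does close. Take $\phi_{n}=Id_{n}+\left(1-t^{-1}\right)N_{n}$, where $N_{n}$ is the strictly lower-triangular matrix all of whose sub-diagonal entries equal $1$. Writing the computed matrix of $\sigma_{i}$ as $Id+E_{i}$ and $Id_{i-1}\oplus B\left(t\right)\oplus Id_{n-i-1}$ as $Id+F_{i}$, the intertwining condition $\phi_{n}\left(Id+E_{i}\right)=\left(Id+F_{i}\right)\phi_{n}$ reduces to the identity $E_{i}-F_{i}=\left(1-t^{-1}\right)\left(F_{i}N_{n}-N_{n}E_{i}\right)$, which is a short direct check; and since all sub-diagonal entries of $\phi_{n}$ are equal, $\phi_{n+1}$ restricts to $\phi_{n}$ on the last $n$ coordinates, so the family commutes with the inclusions (naturality against a general $\left[n'-n,\sigma\right]$ then follows because it factors as $\sigma\circ\left[n'-n,id_{n'}\right]$). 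Note, however, that what this produces is a natural isomorphism $\delta_{1}\mathfrak{LK}\cong\check{\mathfrak{Bur}_{t}}$ rather than the stated equality --- which is all that the only application of the lemma, Proposition \ref{prop:LKpoly2}, actually requires.
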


\begin{proof}
We consider the application $i_{1}\mathfrak{LK}\left(\left[0,id_{n}\right]\right)$.
This map is a monomorphism and its cokernel is $\underset{2\leq l\leq n+1}{\bigoplus}V_{1,l}$.
Let $n$ and $n'$ be two natural numbers such that $n'\geq n$. Let
$\left[n'-n,\sigma\right]\in Hom_{\mathfrak{U}\boldsymbol{\beta}}\left(n,n'\right)$.
By naturality and because of the universal property of the cokernel,
there exists a unique endomorphism of $\left(\mathbb{C}\left[t^{\pm1}\right]\right)\left[q^{\pm1}\right]$-modules
such that the following diagram commutes, where the lines are exact.
It is exactly the definition of $\delta_{1}\mathfrak{LK}\left(\left[n'-n,\sigma\right]\right)$.
\[
\xymatrix{0\ar@{->}[r] & \underset{1\leq j<k\leq n}{\bigoplus}V_{j,k}\ar@{->}[rrr]^{\mathfrak{LK}\left(\left[1,id_{1+n}\right]\right)}\ar@{->}[d]_{\mathfrak{LK}\left(\left[n'-n,\sigma\right]\right)} &  &  & \underset{1\leq i<l\leq n+1}{\bigoplus}V_{i,l}\ar@{->}[rrr]^{{\color{white}ooo}\pi_{n}}\ar@{->}[d]^{\tau_{1}\left(\mathfrak{LK}\right)\left(\left[n'-n,\sigma\right]\right)} &  &  & \underset{2\leq l\leq n+1}{\bigoplus}V_{1,l}\ar@{->}[r]\ar@{.>}[d]^{\exists!} & 0\\
0\ar@{->}[r] & \underset{1\leq j'<k'\leq n'}{\bigoplus}V_{j',k'}\ar@{->}[rrr]_{\mathfrak{LK}\left(\left[1,id_{1+n'}\right]\right)} &  &  & \underset{1\leq l'\leq n'+1}{\bigoplus}V_{i',l'}\ar@{->}[rrr]_{{\color{white}ooo}\pi_{n'}} &  &  & \underset{2\leq l'\leq n'+1}{\bigoplus}V_{1,l'}\ar@{->}[r] & 0.
}
\]
Let $i\in\left\{ 1,\ldots,n-1\right\} $, $l\in\left\{ 2,\ldots,n+1\right\} $
and $v_{1,l}$ be an element of $V_{1,l}$. Then we compute:
\[
\tau_{1}\mathfrak{LK}\left(\sigma_{i}\right)v_{1,l}=\mathfrak{LK}\left(\sigma_{1+i}\right)\left(v_{1,l}\right)=\begin{cases}
v_{1,l} & \textrm{if \ensuremath{i+1\notin\left\{ l-1,l\right\} },}\\
tv_{1,i+1}+\left(1-t\right)v_{1,i+2}-\left(t^{2}-t\right)qv_{i+1,i+2} & \textrm{if \ensuremath{i+2=l}},\\
v_{1,i+2} & \textrm{if \ensuremath{i+1=l}}.
\end{cases}
\]
We deduce that in the canonical basis $\left\{ \mathbf{e}_{1,2},\mathbf{e}_{1,3},\ldots,\mathbf{e}_{1,n+1}\right\} $
of $\underset{2\leq l\leq n+1}{\bigoplus}V_{1,l}$: 
\[
\delta_{1}\mathfrak{LK}\left(\sigma_{i}\right)=Id_{i-1}\oplus\left[\begin{array}{cc}
0 & t\\
1 & 1-t
\end{array}\right]\oplus Id_{n-i-1}=\check{\mathfrak{Bur}_{t}}\left(\sigma_{i}\right).
\]
So as to identify $\delta_{1}\mathfrak{LK}$, it remains to consider
the action on morphisms of type $\left[1,id_{n+1}\right]$. According
to the definition of the Lawrence-Krammer functor, we have $\tau_{1}\left(\mathfrak{LK}\right)\left(\left[1,id_{n+1}\right]\right)=\mathfrak{LK}\left(\sigma_{1}^{-1}\right)\circ\mathfrak{LK}\left(\left[1,id_{n+2}\right]\right)$
and:
\[
\mathfrak{LK}\left(\sigma_{1}\right)\left(v_{1,k}\right)=\begin{cases}
v_{2,k} & \textrm{if \ensuremath{k\in\left\{ 3,\ldots,n+2\right\} }},\\
-qt^{2}v_{1,2} & \textrm{if \ensuremath{k=2}}.
\end{cases}
\]
It follows that for all $v_{i,l}\in V_{i,l}$ with $1\leq i<l\leq n+1$:
\[
\pi_{n+1}\circ\tau_{1}\left(\mathfrak{LK}\right)\left(\left[1,id_{n+1}\right]\right)\left(v_{i,l}\right)=\begin{cases}
v_{1,l+1} & \textrm{if \ensuremath{i=1} and \ensuremath{l\in\left\{ 2,\ldots,n+1\right\} },}\\
0 & \textrm{otherwise.}
\end{cases}
\]
Hence, we deduce that for all $2\leq l\leq n+1$, $\delta_{1}\mathfrak{LK}\left(\left[1,id_{n+1}\right]\right)\left(v_{1,l}\right)=v_{1,l+1}=\check{\mathfrak{Bur}_{t}}\left(\left[1,id_{n+1}\right]\right)\left(v_{1,l}\right)$.
\end{proof}
\begin{prop}
\label{prop:LKpoly2}The functor $\mathfrak{LK}$ is a very strong
polynomial functor of degree $2$.
\begin{proof}
Let $n$ be a natural number. By Remark \ref{exa:suffit1}, we only
have to consider the application $i_{1}\mathfrak{LK}\left(\left[0,id_{n}\right]\right)$.
Since this map is a monomorphism with cokernel $\underset{1\leq i\leq n}{\bigoplus}V_{i,n+1}$,
$\kappa_{1}\mathfrak{\mathfrak{LK}}$ is the null constant functor.
Since the functor $\check{\mathfrak{Bur}_{t}}$ is very strong polynomial
of degree one (following exactly the same proof as the one of Proposition
\ref{prop:BurTYMverystrong}), we deduce from Lemma \ref{lem:delta1lkbur}
that $\mathfrak{LK}$ is very strong polynomial of degree two.
\end{proof}
\end{prop}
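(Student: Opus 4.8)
The plan is to verify the two defining conditions of very strong polynomiality (Definition \ref{def:defverystrong}) only in the direction $x=1$, and then let the recursion supply the rest. Since every object of $\mathfrak{U}\boldsymbol{\beta}$ is the monoidal power $1^{\natural n}$, the last statement of Proposition \ref{prop:prop verystrongpoly} (applied with $\mathfrak{E}=\left\{ 1\right\}$) guarantees that it suffices to check that $\kappa_{1}\left(\mathfrak{LK}\right)=0$ and that $\delta_{1}\left(\mathfrak{LK}\right)$ is an object of $\mathcal{VP}ol_{1}\left(\mathfrak{U}\boldsymbol{\beta},\mathbb{C}\left[t^{\pm1}\right]\left[q^{\pm1}\right]\textrm{-}\mathfrak{Mod}\right)$. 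Granting these two points, $\mathfrak{LK}$ lands in $\mathcal{VP}ol_{2}$, and I would record separately that its very strong degree is \emph{exactly} two.

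First I would examine the natural transformation $i_{1}\left(\mathfrak{LK}\right)$ of Definition \ref{def:defix}. At a natural number $n$ its component is the morphism $\mathfrak{LK}\left(\left[1,id_{1+n}\right]\right)\colon\bigoplus_{1\leq j<k\leq n}V_{j,k}\rightarrow\bigoplus_{1\leq i<l\leq n+1}V_{i,l}$, which by the very definition of $\mathfrak{LK}$ on the inclusions $\left[1,id_{1+n}\right]$ is the inclusion of the direct summand indexed by those rank-one modules $V_{j,k}$ that do not involve the newly adjoined strand. Being the inclusion of a direct summand of a free module, it is a split monomorphism, so its kernel vanishes; hence $\kappa_{1}\left(\mathfrak{LK}\right)=0$, and by the reduction above $\kappa_{x}\left(\mathfrak{LK}\right)=0$ for every object $x$ of $\mathfrak{U}\boldsymbol{\beta}$.

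For the difference functor I would appeal directly to Lemma \ref{lem:delta1lkbur}, which identifies $\delta_{1}\left(\mathfrak{LK}\right)=\textrm{coker}\left(i_{1}\left(\mathfrak{LK}\right)\right)$ with the modified unreduced Burau functor $\check{\mathfrak{Bur}_{t}}$ of Notation \ref{nota:defburhat}. The functor $\check{\mathfrak{Bur}_{t}}$ is very strong polynomial of degree exactly one, by the same argument proving this for $\mathfrak{Bur}_{t}$ in Proposition \ref{prop:BurTYMverystrong}. Feeding $\delta_{1}\left(\mathfrak{LK}\right)\cong\check{\mathfrak{Bur}_{t}}\in\mathcal{VP}ol_{1}$ into the recursion of Definition \ref{def:defverystrong} shows $\mathfrak{LK}\in\mathcal{VP}ol_{2}$; moreover, since $\delta_{1}\left(\mathfrak{LK}\right)$ is genuinely of very strong degree one rather than zero, $\mathfrak{LK}$ cannot belong to $\mathcal{VP}ol_{1}$, so its very strong degree is exactly two.

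The conceptual content of the argument is entirely concentrated in Lemma \ref{lem:delta1lkbur}; the remaining steps are formal once that lemma is in hand. Accordingly, the main obstacle is the explicit determination of $\delta_{1}\left(\mathfrak{LK}\right)$ on morphisms: one must compute how the Artin generators $\sigma_{i}$ act on the basis vectors $v_{1,l}$ spanning the cokernel and then recognise the resulting matrices as the Burau matrices, a careful but routine bookkeeping with the defining formulas of $\mathfrak{LK}$.
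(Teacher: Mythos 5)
Your proposal is correct and follows essentially the same route as the paper: reduce to the direction $x=1$ (Remark \ref{exa:suffit1}, equivalently the last part of Proposition \ref{prop:prop verystrongpoly}), note that $i_{1}\mathfrak{LK}$ is a monomorphism so that $\kappa_{1}\mathfrak{LK}=0$, and conclude via Lemma \ref{lem:delta1lkbur} together with the fact that $\check{\mathfrak{Bur}_{t}}$ is very strong polynomial of degree one by the argument of Proposition \ref{prop:BurTYMverystrong}. Your only addition is the explicit check that the very strong degree is exactly two (since $\delta_{1}\mathfrak{LK}\cong\check{\mathfrak{Bur}_{t}}$ is not of degree zero), a point the paper leaves implicit.
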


\section{The Long-Moody functor applied to polynomial functors\label{sec:The-Long-Moody-functoreffect}}

Let us move on to the effect of the Long-Moody functors on (very)
strong polynomial functors. For this purpose, it is enough by Remark
\ref{exa:suffit1} to consider the cokernel of the map $i_{1}\mathbf{LM}$.
First, we decompose the functor $\tau_{1}\circ\mathbf{LM}$ (see Proposition
\ref{prop:splittingtranslation}) so as to understand the behaviour
of the image of $i_{1}\mathbf{LM}$ through this decomposition. This
allows us to prove a splitting decomposition of the difference functor
(see Theorem \ref{thm:Splitting LM}). This is the key point to prove
our main results, namely Corollary \ref{cor:Main result} and Theorem
\ref{thm:Main result2}. Finally, we give some additional properties
of Long-Moody functors with respect to polynomial functors.

Let $\left\{ \varsigma_{n}:\mathbf{F}_{n}\hookrightarrow\mathbf{B}_{n+1}\right\} _{n\in\mathbb{N}}$
and $\left\{ a_{n}:\mathbf{B}_{n}\rightarrow Aut\left(\mathbf{F}_{n}\right)\right\} _{n\in\mathbb{N}}$
be coherent families of morphisms (see Definition \ref{def:coherentmor}),
with associated Long-Moody functor $\mathbf{LM}_{a,\varsigma}$ (see
Theorem \ref{Thm:LMFunctor}), which we fix for all the work of this
section (in particular, we omit the $"a,\varsigma"$ from the notation).

\subsection{Decomposition of the translation functor}

We introduce two functors which will play a key role in the main result.
First, let us recall the following crucial property of the augmentation
ideal of a free product of groups, which follows by combining \cite[Lemma 4.3]{cohencohomo}
and \cite[Theorem 4.7]{cohencohomo}.
\begin{prop}
\label{prop:Splitting ideal}Let $G$ and $H$ be groups. Then, there
is a natural $\mathbb{K}\left[G\ast H\right]$-module isomorphism:
\[
\mathcal{I}_{\mathbb{K}\left[G\ast H\right]}\cong\left(\mathcal{I}_{\mathbb{K}\left[G\right]}\underset{\mathbb{K}\left[G\right]}{\otimes}\mathbb{K}\left[G\ast H\right]\right)\oplus\left(\mathcal{I}_{\mathbb{K}\left[H\right]}\underset{\mathbb{K}\left[H\right]}{\otimes}\mathbb{K}\left[G\ast H\right]\right).
\]
\end{prop}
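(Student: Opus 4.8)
The plan is to prove the decomposition by the Yoneda lemma, exploiting the fact that the augmentation ideal represents derivations. First I would recall the standard representability: for any group $\Gamma$ and any right $\mathbb{K}\left[\Gamma\right]$-module $M$, the assignment $f\mapsto\left(g\mapsto f(g-1)\right)$ is a natural isomorphism
\[
\mathrm{Hom}_{\mathbb{K}\left[\Gamma\right]}\left(\mathcal{I}_{\mathbb{K}\left[\Gamma\right]},M\right)\cong\mathrm{Der}\left(\Gamma,M\right),
\]
where $\mathrm{Der}\left(\Gamma,M\right)$ denotes the right derivations $d\colon\Gamma\to M$ satisfying $d(xy)=d(x)\cdot y+d(y)$, the universal one being $g\mapsto g-1$. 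By Yoneda, it then suffices to produce a \emph{natural} isomorphism between the functors of $M$ represented by the two sides of the claimed decomposition.

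The key step, and where I expect the main obstacle to lie, is to show that restriction along the inclusions $G\hookrightarrow G\ast H\hookleftarrow H$ induces a natural bijection
\[
\mathrm{Der}\left(G\ast H,M\right)\xrightarrow{\ \cong\ }\mathrm{Der}\left(G,\mathrm{Res}_{G}M\right)\times\mathrm{Der}\left(H,\mathrm{Res}_{H}M\right).
\]
Injectivity is immediate since $G\cup H$ generates $G\ast H$ and the derivation identity propagates values from a generating set. For surjectivity I would invoke the normal form in the free product: each $w\in G\ast H$ has a unique reduced expression $w=s_{1}\cdots s_{k}$ with the syllables $s_{i}$ lying alternately in $G\setminus\{1\}$ and $H\setminus\{1\}$, and I would set $d(w)=\sum_{i=1}^{k}d_{?}(s_{i})\cdot\left(s_{i+1}\cdots s_{k}\right)$, with $d_{?}$ equal to $d_{G}$ or $d_{H}$ according to the factor containing $s_{i}$. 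The delicate point is that this $d$ is well defined and is a genuine derivation; this reduces to uniqueness of reduced words together with $d_{G}(1)=d_{H}(1)=0$, and it is the one genuinely combinatorial verification in the argument.

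Finally I would assemble the chain of natural isomorphisms. Applying the induction--restriction adjunction $\mathrm{Hom}_{\mathbb{K}\left[G\right]}\left(N,\mathrm{Res}_{G}M\right)\cong\mathrm{Hom}_{\mathbb{K}\left[G\ast H\right]}\left(N\underset{\mathbb{K}\left[G\right]}{\otimes}\mathbb{K}\left[G\ast H\right],M\right)$ with $N=\mathcal{I}_{\mathbb{K}\left[G\right]}$ (and symmetrically for $H$), the two derivation factors become $\mathrm{Hom}$ out of $\mathcal{I}_{\mathbb{K}\left[G\right]}\underset{\mathbb{K}\left[G\right]}{\otimes}\mathbb{K}\left[G\ast H\right]$ and out of $\mathcal{I}_{\mathbb{K}\left[H\right]}\underset{\mathbb{K}\left[H\right]}{\otimes}\mathbb{K}\left[G\ast H\right]$, whose product is $\mathrm{Hom}$ out of their direct sum. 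Composing with the representability isomorphism and invoking Yoneda yields the desired natural $\mathbb{K}\left[G\ast H\right]$-module isomorphism, and tracing the identity morphism through the chain shows it is the concrete map $\left(g-1\right)\otimes w\mapsto\left(g-1\right)w$ and $\left(h-1\right)\otimes w\mapsto\left(h-1\right)w$.

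As a more module-theoretic alternative I would note that $\mathbb{K}\left[G\ast H\right]$ is free, hence flat, as a left $\mathbb{K}\left[G\right]$-module by the normal form, so tensoring the short exact sequence $0\to\mathcal{I}_{\mathbb{K}\left[G\right]}\to\mathbb{K}\left[G\right]\to\mathbb{K}\to0$ identifies $\mathcal{I}_{\mathbb{K}\left[G\right]}\underset{\mathbb{K}\left[G\right]}{\otimes}\mathbb{K}\left[G\ast H\right]$ with the right ideal $A=\ker\left(\mathbb{K}\left[G\ast H\right]\to\mathbb{K}\left[G\backslash(G\ast H)\right]\right)$ generated by $\{g-1\}$, and likewise a subobject $B$ for $H$. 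Surjectivity $A+B=\mathcal{I}_{\mathbb{K}\left[G\ast H\right]}$ then follows from the telescoping identity $w-1=\sum_{i}\left(s_{i}-1\right)s_{i+1}\cdots s_{k}$, while $A\cap B=0$ is once again a reduced-word computation. Either way, the combinatorics of the free-product normal form is the crux of the proof.
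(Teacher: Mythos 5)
Your proof is correct, but it cannot be compared step-by-step with the paper's own argument for a simple reason: the paper does not prove this proposition at all. It obtains it by combining \cite[Lemma 4.3]{cohencohomo} and \cite[Theorem 4.7]{cohencohomo}, i.e.\ it defers entirely to Cohen's book, so any honest proof is ``a different route''. Your representability argument --- identify $\mathrm{Hom}_{\mathbb{K}\left[\Gamma\right]}\left(\mathcal{I}_{\mathbb{K}\left[\Gamma\right]},M\right)$ with right derivations, split $\mathrm{Der}\left(G\ast H,M\right)\cong\mathrm{Der}\left(G,M\right)\times\mathrm{Der}\left(H,M\right)$, transport through the induction--restriction adjunction, and conclude by Yoneda --- is sound: all intermediate identifications are natural in $M$, your right-module/right-derivation conventions are consistent, and the argument has two virtues over the citation, namely self-containedness and the explicit form of the isomorphism, $\left(g-1\right)\otimes w\mapsto\left(g-1\right)w$ and $\left(h-1\right)\otimes w\mapsto\left(h-1\right)w$, which is exactly what the paper uses downstream when it defines the monomorphisms $\upsilon\left(F\right)_{n}$ and $\xi\left(F\right)_{n}$ of Definition \ref{def:morphcaract1} and checks their functoriality in Section \ref{sec:The-Long-Moody-functoreffect}. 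One simplification you should make: the step you single out as the combinatorial crux --- extending a pair $\left(d_{G},d_{H}\right)$ to a derivation on $G\ast H$ --- needs no reduced words. A right derivation $d\colon\Gamma\rightarrow M$ is exactly a section $\gamma\mapsto\left(d\left(\gamma\right),\gamma\right)$ of the projection $M\rtimes\Gamma\rightarrow\Gamma$, where $M\rtimes\Gamma$ carries the multiplication $\left(m,x\right)\left(m',x'\right)=\left(mx'+m',xx'\right)$; applying the universal property of the coproduct $G\ast H$ to the homomorphisms $g\mapsto\left(d_{G}\left(g\right),g\right)$ and $h\mapsto\left(d_{H}\left(h\right),h\right)$ yields a homomorphism $G\ast H\rightarrow M\rtimes\left(G\ast H\right)$, and it is a section because its composite with the projection is the identity on $G$ and on $H$, hence the identity by uniqueness. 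With this substitution your main argument contains no word combinatorics at all. By contrast, your module-theoretic variant genuinely cannot avoid it: there $A+B=\mathcal{I}_{\mathbb{K}\left[G\ast H\right]}$ is immediate from the telescoping identity, but $A\cap B=0$ is a real normal-form argument (e.g.\ a maximal-syllable-length analysis of an element written in the two bases $\left\{ \left(g-1\right)r\right\} $ and $\left\{ \left(h-1\right)s\right\} $) which you assert rather than prove; so the Yoneda route, not the variant, should be regarded as your proof.
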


\begin{rem}
\label{rem:structureKg*hmodule}In the statement of Proposition \ref{prop:Splitting ideal},
recall that the augmentation ideal $\mathcal{I}_{\mathbb{K}\left[G\right]}$
(respectively $\mathcal{I}_{\mathbb{K}\left[H\right]}$) is a free
right $\mathbb{K}\left[G\right]$-module (respectively $\mathbb{K}\left[H\right]$-module)
by Proposition \ref{prop:-augmentationidealfreemodule}. Moreover,
the group ring $\mathbb{K}\left[G\ast H\right]$ is a left $\mathbb{K}\left[G\right]$-module
(respectively left $\mathbb{K}\left[H\right]$-module) via the morphism
$id_{G}\ast\iota_{H}:G\rightarrow G*H$ (respectively $\iota_{G}\ast id_{H}:H\rightarrow G*H$
).
\end{rem}

\begin{notation}
\label{def:complementgammafn}Let $n$ and $n'$ be natural numbers
such that $n'\geq n$. We consider the morphism $id_{\mathbf{F}_{n}}*\iota_{\mathbf{F}_{n'-n}}:\mathbf{F}_{n}\hookrightarrow\mathbf{F}_{n'}$.
This corresponds to the identification of $\mathbf{F}_{n}$ as the
subgroup of $\mathbf{F}_{n'}$ generated by the $n$ first copies
of $\mathbf{F}_{1}$ in $\mathbf{F}_{n'}$.

In addition, the group morphism $id_{\mathbf{F}_{n}}*\iota_{\mathbf{F}_{n'-n}}:\mathbf{F}_{n}\hookrightarrow\mathbf{F}_{n'}$
canonically induces a $\mathbb{K}$-module morphism $id_{\mathcal{I}_{\mathbb{K}\left[\mathbf{F}_{n}\right]}}*\iota_{\mathcal{I}_{\mathbb{K}\left[\mathbf{F}_{n'-n}\right]}}:\mathcal{I}_{\mathbb{K}\left[\mathbf{F}_{n}\right]}\hookrightarrow\mathcal{I}_{\mathbb{K}\left[\mathbf{F}_{n'}\right]}$.
\end{notation}

For $F$ an object of $\mathbf{Fct}\left(\mathfrak{U}\boldsymbol{\beta},\mathbb{K}\textrm{-}\mathfrak{Mod}\right)$,
we consider the functor $\left(\tau_{1}\circ\mathbf{LM}\right)\left(F\right)$.
For all natural numbers $n$, by Proposition \ref{prop:Splitting ideal},
we have a $\mathbb{K}\left[\mathbf{F}_{1+n}\right]$-module isomorphism:

\begin{eqnarray*}
 &  & \mathcal{I}_{\mathbb{K}\left[\mathbf{F}_{1+n}\right]}\underset{\mathbb{K}\left[\mathbf{F}_{1+n}\right]}{\varotimes}F\left(n+2\right)\\
 &  & \cong\left(\left(\mathcal{I}_{\mathbb{K}\left[\mathbf{F}_{1}\right]}\underset{\mathbb{K}\left[\mathbf{F}_{1}\right]}{\varotimes}\mathbb{K}\left[\mathbf{F}_{1+n}\right]\right)\oplus\left(\mathcal{I}_{\mathbb{K}\left[\mathbf{F}_{n}\right]}\underset{\mathbb{K}\left[\mathbf{F}_{n}\right]}{\varotimes}\mathbb{K}\left[\mathbf{F}_{1+n}\right]\right)\right)\underset{\mathbb{K}\left[\mathbf{F}_{1+n}\right]}{\varotimes}F\left(n+2\right).
\end{eqnarray*}
Now, by Remark \ref{rem:structureKg*hmodule}, the $\mathbb{K}\left[\mathbf{F}_{n+1}\right]$-module
$F\left(n+2\right)$ is a $\mathbb{K}\left[\mathbf{F}_{1}\right]$-module
via 
\[
F\left(\varsigma_{1+n}\left(id_{\mathbf{F}_{1}}\ast\iota_{\mathbf{F}_{n}}\right)\right):\mathbf{F}_{1}\rightarrow Aut_{\mathbb{K}\textrm{-}\mathfrak{Mod}}\left(F\left(n+2\right)\right)
\]
 and $\mathbb{K}\left[\mathbf{F}_{n}\right]$-module via 
\[
F\left(\varsigma_{1+n}\left(\iota_{\mathbf{F}_{1}}\ast id_{\mathbf{F}_{n}}\right)\right):\mathbf{F}_{n}\rightarrow Aut_{\mathbb{K}\textrm{-}\mathfrak{Mod}}\left(F\left(n+2\right)\right).
\]
Therefore, because of the distributivity of tensor product with respect
to the direct sum, we have the following proposition.
\begin{prop}
\label{prop:splittingtaul1}Let $F\in Obj\left(\mathbf{Fct}\left(\mathfrak{U}\boldsymbol{\beta},\mathbb{K}\textrm{-}\mathfrak{Mod}\right)\right)$
and $n$ be a natural number. Then, we have the following $\mathbb{K}$-module
isomorphism:
\begin{eqnarray}
\tau_{1}\mathbf{LM}\left(F\right)\left(n\right) & \cong & \left(\mathcal{I}_{\mathbb{K}\left[\mathbf{F}_{1}\right]}\underset{\mathbb{K}\left[\mathbf{F}_{1}\right]}{\varotimes}F\left(n+2\right)\right)\oplus\left(\mathcal{I}_{\mathbb{K}\left[\mathbf{F}_{n}\right]}\underset{\mathbb{K}\left[\mathbf{F}_{n}\right]}{\varotimes}F\left(n+2\right)\right).\label{eq:(*)}
\end{eqnarray}
\end{prop}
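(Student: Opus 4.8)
The plan is to unravel the definitions and then invoke the splitting of the augmentation ideal of a free product (Proposition \ref{prop:Splitting ideal}), exactly as prepared in the discussion preceding the statement. First I would use the definition of the translation functor (Definition \ref{def:deftaux}) together with $1\natural n=n+1$ to rewrite $\tau_{1}\mathbf{LM}\left(F\right)\left(n\right)=\mathbf{LM}\left(F\right)\left(n+1\right)$. By the definition of the Long-Moody functor on objects in Theorem \ref{Thm:LMFunctor}, this is the $\mathbb{K}$-module $\mathcal{I}_{\mathbb{K}\left[\mathbf{F}_{1+n}\right]}\underset{\mathbb{K}\left[\mathbf{F}_{1+n}\right]}{\varotimes}F\left(n+2\right)$.

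Next I would apply Proposition \ref{prop:Splitting ideal} with $G=\mathbf{F}_{1}$ and $H=\mathbf{F}_{n}$, noting that $\mathbf{F}_{1}\ast\mathbf{F}_{n}=\mathbf{F}_{1+n}$ under the free product monoidal structure of $\mathfrak{gr}$ recalled in Notation \ref{nota:grandN}. This yields a $\mathbb{K}\left[\mathbf{F}_{1+n}\right]$-module isomorphism expressing $\mathcal{I}_{\mathbb{K}\left[\mathbf{F}_{1+n}\right]}$ as the direct sum of the two induced modules $\mathcal{I}_{\mathbb{K}\left[\mathbf{F}_{1}\right]}\underset{\mathbb{K}\left[\mathbf{F}_{1}\right]}{\varotimes}\mathbb{K}\left[\mathbf{F}_{1+n}\right]$ and $\mathcal{I}_{\mathbb{K}\left[\mathbf{F}_{n}\right]}\underset{\mathbb{K}\left[\mathbf{F}_{n}\right]}{\varotimes}\mathbb{K}\left[\mathbf{F}_{1+n}\right]$. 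Tensoring this splitting with $F\left(n+2\right)$ over $\mathbb{K}\left[\mathbf{F}_{1+n}\right]$, I would then use that the tensor product distributes over the direct sum, together with associativity of tensor products and the canonical isomorphism $\mathbb{K}\left[\mathbf{F}_{1+n}\right]\underset{\mathbb{K}\left[\mathbf{F}_{1+n}\right]}{\varotimes}F\left(n+2\right)\cong F\left(n+2\right)$, to collapse each summand. This produces precisely the claimed isomorphism (\ref{eq:(*)}).

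The only point requiring care is the identification of the two restricted module structures on $F\left(n+2\right)$. One must check that the $\mathbb{K}\left[\mathbf{F}_{1}\right]$-action appearing in the first summand is the one induced by $F\left(\varsigma_{1+n}\left(id_{\mathbf{F}_{1}}\ast\iota_{\mathbf{F}_{n}}\right)\right)$, and that the $\mathbb{K}\left[\mathbf{F}_{n}\right]$-action in the second summand is the one induced by $F\left(\varsigma_{1+n}\left(\iota_{\mathbf{F}_{1}}\ast id_{\mathbf{F}_{n}}\right)\right)$; this is exactly the content of Remark \ref{rem:structureKg*hmodule}, so no genuine obstacle remains. The argument is thus a direct assembly of Theorem \ref{Thm:LMFunctor}, Proposition \ref{prop:Splitting ideal}, and standard properties of the tensor product, which is why the statement can be recorded immediately after its preparatory paragraph.
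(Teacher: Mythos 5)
Your proposal is correct and follows essentially the same route as the paper: unwind $\tau_{1}\mathbf{LM}\left(F\right)\left(n\right)$ to $\mathcal{I}_{\mathbb{K}\left[\mathbf{F}_{1+n}\right]}\underset{\mathbb{K}\left[\mathbf{F}_{1+n}\right]}{\varotimes}F\left(n+2\right)$, split the augmentation ideal via Proposition \ref{prop:Splitting ideal} with $\mathbf{F}_{1}\ast\mathbf{F}_{n}=\mathbf{F}_{1+n}$, identify the restricted module structures through Remark \ref{rem:structureKg*hmodule}, and collapse the summands by distributivity and associativity of the tensor product. Your write-up is in fact slightly more explicit than the paper's about the cancellation $\mathbb{K}\left[\mathbf{F}_{1+n}\right]\underset{\mathbb{K}\left[\mathbf{F}_{1+n}\right]}{\varotimes}F\left(n+2\right)\cong F\left(n+2\right)$, but the argument is the same.
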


\begin{defn}
\label{def:morphcaract1}For all natural numbers $n$ and $F\in Obj\left(\mathbf{Fct}\left(\mathfrak{U}\boldsymbol{\beta},\mathbb{K}\textrm{-}\mathfrak{Mod}\right)\right)$,
we denote by

\begin{itemize}
\item $\upsilon\left(F\right)_{n}$ the monomorphism of $\mathbb{K}$-modules
$\left(id_{\mathcal{I}_{\mathbb{K}\left[\mathbf{F}_{1}\right]}}\ast\iota_{\mathcal{I}_{\mathbb{K}\left[\mathbf{F}_{n}\right]}}\right)\underset{\mathbb{K}\left[\mathbf{F}_{1+n}\right]}{\varotimes}id_{F\left(n+2\right)}:\mathcal{I}_{\mathbb{K}\left[\mathbf{F}_{1}\right]}\underset{\mathbb{K}\left[\mathbf{F}_{1}\right]}{\varotimes}F\left(n+2\right)\hookrightarrow\tau_{1}\mathbf{LM}\left(F\right)\left(n\right)$,
\item $\xi\left(F\right)_{n}$ the monomorphism of $\mathbb{K}$-modules
$\left(\iota_{\mathcal{I}_{\mathbb{K}\left[\mathbf{F}_{1}\right]}}*id_{\mathcal{I}_{\mathbb{K}\left[\mathbf{F}_{n}\right]}}\right)\underset{\mathbb{K}\left[\mathbf{F}_{1+n}\right]}{\varotimes}id_{F\left(n+2\right)}:\mathcal{I}_{\mathbb{K}\left[\mathbf{F}_{n}\right]}\underset{\mathbb{K}\left[\mathbf{F}_{n}\right]}{\varotimes}F\left(n+2\right)\hookrightarrow\tau_{1}\mathbf{LM}\left(F\right)\left(n\right)$,
\end{itemize}
associated with the direct sum of Proposition \ref{prop:splittingtaul1}.
\end{defn}

The aim of this section is in fact to show that this $\mathbb{K}$-module
decomposition leads to a decomposition of $\tau_{1}\mathbf{LM}$ (see
Theorem \ref{thm:Splitting LM}) as a functor.

\subsubsection{Additional conditions\label{subsec:Additional-conditions}}

We need two additional conditions so as to make the decomposition
of Proposition \ref{prop:splittingtaul1} functorial. First, we require
the morphisms $\left\{ a_{n}:\mathbf{B}_{n}\rightarrow Aut\left(\mathbf{F}_{n}\right)\right\} _{n\in\mathbb{N}}$
to satisfy the following property.
\begin{condition}
\label{cond:coherenceconditionbnautfn2}Let $n$ and $n'$ be natural
numbers such that $n'\geq n$. We require $a_{1+n'}\left(\left(b_{1,n'-n}^{\boldsymbol{\beta}}\right)^{-1}\natural id_{n}\right)\circ\left(\iota_{\mathbf{F}_{n'-n}}*id_{\mathbf{F}_{n+1}}\right)\circ\left(id_{\mathbf{F}_{1}}\ast\iota_{\mathbf{F}_{n}}\right)=id_{\mathbf{F}_{1}}\ast\iota_{\mathbf{F}_{n'}}$.
In other words, the following diagram is commutative:

\[
\xymatrix{\mathbf{F}_{1}\ar@{->}[d]_{id_{\mathbf{F}_{1}}*\iota_{\mathbf{F}_{n}}}\ar@{->}[rrrr]^{id_{\mathbf{F}_{1}}\ast\iota_{\mathbf{F}_{n'}}} &  &  &  & \mathbf{F}_{1+n'}\\
\mathbf{F}_{1+n}\ar@{->}[rrrr]_{\iota_{\mathbf{F}_{n'-n}}*id_{\mathbf{F}_{1+n}}} &  &  &  & \mathbf{F}_{n'-n}*\mathbf{F}_{1+n}\cong\mathbf{F}_{1+n'}.\ar@{->}[u]_{a_{1+n'}\left(\left(b_{1,n'-n}^{\boldsymbol{\beta}}\right)^{-1}\natural id_{n}\right)}
}
\]
 
\end{condition}

\begin{rem}
Condition \ref{cond:coherenceconditionbnautfn2} will be used to define
an intermediary functor (see Proposition \ref{prop:defvarepsi}).
\end{rem}

In addition, we will assume that the morphisms $\left\{ a_{n}:\mathbf{B}_{n}\rightarrow Aut\left(\mathbf{F}_{n}\right)\right\} _{n\in\mathbb{N}}$
satisfy the following condition.
\begin{condition}
\label{cond:coherenceconditionbnautfn3}Let $n$ and $n'$ be natural
numbers such that $n'\geq n$. We require $a_{n'}\left(id_{n'-n}\natural-\right):\mathbf{B}_{n}\rightarrow Aut\left(\mathbf{F}_{n'}\right)$
maps to the stabilizer of the homomorphism $id_{\mathbf{F}_{n'-n}}*\iota_{\mathbf{F}_{n}}:\mathbf{F}_{n'-n}\longrightarrow\mathbf{F}_{n'}$,
ie for all element $\sigma$ of $\mathbf{B}_{n}$ the following diagram
is commutative:

\[
\xymatrix{\mathbf{F}_{n'-n}\ar@{->}[rr]^{id_{\mathbf{F}_{n'-n}}*\iota_{\mathbf{F}_{n}}}\ar@{->}[dr]_{id_{\mathbf{F}_{n'-n}}*\iota_{\mathbf{F}_{n}}} &  & \mathbf{F}_{n'}\\
 & \mathbf{F}_{n'}.\ar@{->}[ur]_{a_{n'}\left(id_{n'-n}\natural\sigma\right)}
}
\]
\end{condition}

\begin{rem}
Condition \ref{cond:coherenceconditionbnautfn3} will be used in the
proof of Propositions \ref{prop:defvarepsi} and \ref{prop:subfuncotorvarepsi}.
\end{rem}

\begin{rem}
The relations of Conditions \ref{cond:coherenceconditionbnautfn2}
and \ref{cond:coherenceconditionbnautfn3} remain true mutatis mutandis,
for all natural numbers $n$, considering the induced morphisms $a_{n}:\mathbf{B}_{n}\rightarrow Aut\left(\mathcal{I}_{\mathbb{K}\left[\mathbf{F}_{n}\right]}\right)$
and $id_{\mathcal{I}_{\mathbb{K}\left[\mathbf{F}_{n}\right]}}*\iota_{\mathcal{I}_{\mathbb{K}\left[\mathbf{F}_{n'-n}\right]}}:\mathcal{I}_{\mathbb{K}\left[\mathbf{F}_{n}\right]}\hookrightarrow\mathcal{I}_{\mathbb{K}\left[\mathbf{F}_{n'}\right]}$.
\end{rem}

\begin{defn}
If the morphisms $\left\{ a_{n}:\mathbf{B}_{n}\rightarrow Aut\left(\mathbf{F}_{n}\right)\right\} _{n\in\mathbb{N}}$
also satisfy conditions \ref{cond:coherenceconditionbnautfn2} and
\ref{cond:coherenceconditionbnautfn3}, the coherent families of morphisms
$\left\{ \varsigma_{n}:\mathbf{F}_{n}\hookrightarrow\mathbf{B}_{n+1}\right\} _{n\in\mathbb{N}}$
and $\left\{ a_{n}:\mathbf{B}_{n}\rightarrow Aut\left(\mathbf{F}_{n}\right)\right\} _{n\in\mathbb{N}}$
are said to be reliable.
\end{defn}

\begin{prop}
The coherent families of morphisms $\left\{ a_{n,1}:\mathbf{B}_{n}\rightarrow Aut\left(\mathbf{F}_{n}\right)\right\} _{n\in\mathbb{N}}$
and $\left\{ \varsigma_{n,1}:\mathbf{F}_{n}\hookrightarrow\mathbf{B}_{n+1}\right\} _{n\in\mathbb{N}}$
of Examples \ref{ex:ex1defmapfnbn} and \ref{exan1} are reliable.
\end{prop}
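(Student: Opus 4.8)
The plan is to observe that coherence of these two families has already been established (in the corollary following the verification of Condition \ref{cond:coherenceconditionsigmanan}), so being reliable amounts precisely to checking that the Artin action $\left\{ a_{n,1}\right\} _{n\in\mathbb{N}}$ satisfies Conditions \ref{cond:coherenceconditionbnautfn2} and \ref{cond:coherenceconditionbnautfn3}. Since the family $\left\{ \varsigma_{n,1}\right\} _{n\in\mathbb{N}}$ does not occur in either of these two conditions, the whole verification uses only the explicit formulas for $a_{n,1}$ of Example \ref{exan1}, the definition of the braiding $b_{1,-}^{\boldsymbol{\beta}}$ (Definition \ref{exa:defprodmonUbeta}) and the free-product embeddings of Definition \ref{def:functorF} and Notation \ref{def:complementgammafn}. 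Both required equalities are equalities of group homomorphisms whose source is a free group ($\mathbf{F}_{n}$, respectively $\mathbf{F}_{1}$), so it suffices to check them on the free generators.

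First I would dispatch Condition \ref{cond:coherenceconditionbnautfn3}, which is the easier of the two. By the monoidal structure of Proposition \ref{prop:homogenousprebraided}, the braid $id_{n'-n}\natural\sigma\in\mathbf{B}_{n'}$ is obtained by laying $\sigma$ alongside the trivial braid on the first $n'-n$ strands, hence it is a word in the Artin generators $\sigma_{(n'-n)+1},\ldots,\sigma_{n'-1}$ only. Reading off the formulas of Example \ref{exan1}, the automorphism $a_{n',1}\left(id_{n'-n}\natural\sigma\right)$ therefore fixes $g_{1},\ldots,g_{n'-n}$ pointwise. As the image of $id_{\mathbf{F}_{n'-n}}*\iota_{\mathbf{F}_{n}}$ is exactly the subgroup of $\mathbf{F}_{n'}$ generated by $g_{1},\ldots,g_{n'-n}$, the triangle of Condition \ref{cond:coherenceconditionbnautfn3} commutes.

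The substance lies in Condition \ref{cond:coherenceconditionbnautfn2}, which I would prove by tracking the single generator $g_{1}$ of $\mathbf{F}_{1}$ along the two paths. Writing $m=n'-n$, the composite $\left(\iota_{\mathbf{F}_{n'-n}}*id_{\mathbf{F}_{n+1}}\right)\circ\left(id_{\mathbf{F}_{1}}*\iota_{\mathbf{F}_{n}}\right)$ carries $g_{1}$ to the generator $g_{m+1}$ of $\mathbf{F}_{1+n'}$. One then applies the Artin action of $\left(b_{1,m}^{\boldsymbol{\beta}}\right)^{-1}\natural id_{n}$. Using $\left(b_{1,m}^{\boldsymbol{\beta}}\right)^{-1}=\sigma_{1}^{-1}\circ\sigma_{2}^{-1}\circ\cdots\circ\sigma_{m}^{-1}$ together with the inverse-generator formula $a_{n,1}\left(\sigma_{j}^{-1}\right)\left(g_{j+1}\right)=g_{j}$ (obtained by inverting the relations of Example \ref{exan1}), I expect the index of the generator to be decremented one step at a time, $g_{m+1}\mapsto g_{m}\mapsto\cdots\mapsto g_{1}$, with no extra conjugating factors appearing because at each stage $a_{n,1}\left(\sigma_{j}^{-1}\right)$ is applied precisely to $g_{j+1}$. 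The right-hand side $id_{\mathbf{F}_{1}}*\iota_{\mathbf{F}_{n'}}$ sends $g_{1}$ to $g_{1}$, so the two homomorphisms agree on the generator and hence coincide.

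The hard part will be purely a matter of conventions and bookkeeping rather than of genuine difficulty: one must align the ``first $n'-n$ copies'' versus ``last $n+1$ copies'' identifications of the free-product embeddings, the effect of $\natural$ on shifting Artin indices, and the right-to-left composition convention fixed in the remark following Definition \ref{def:defbraidgroupoid}, and one must make sure that the orientation of $b_{1,m}^{\boldsymbol{\beta}}$ read off from Definition \ref{exa:defprodmonUbeta} matches the claimed expansion of its inverse. Once the index-tracking is set up correctly the computation is routine, and the same care shows that passing to the induced actions on the augmentation ideals $\mathcal{I}_{\mathbb{K}\left[\mathbf{F}_{n}\right]}$ preserves both identities, as recorded in the remark preceding the definition of reliable families.
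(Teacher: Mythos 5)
Your proposal is correct and follows essentially the same route as the paper's proof: Condition \ref{cond:coherenceconditionbnautfn3} is checked by noting that $a_{n',1}\left(id_{n'-n}\natural\sigma\right)$ involves only the shifted Artin generators and hence fixes $g_{1},\ldots,g_{n'-n}$, and Condition \ref{cond:coherenceconditionbnautfn2} is checked by tracking $g_{1}\mapsto g_{n'-n+1}$ under the embedding and then decrementing the index step by step via $a_{1+n',1}\left(\sigma_{j}^{-1}\right)\left(g_{j+1}\right)=g_{j}$, using $\left(b_{1,n'-n}^{\boldsymbol{\beta}}\right)^{-1}=\sigma_{1}^{-1}\circ\cdots\circ\sigma_{n'-n}^{-1}$. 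The index-tracking you flag as the delicate bookkeeping point is exactly the computation the paper performs, and it goes through with no conjugating factors, as you anticipate.
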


\begin{proof}
Recall from Definition \ref{exa:defprodmonUbeta} that $\left(b_{1,n'-n}^{\boldsymbol{\beta}}\right)^{-1}=\sigma_{1}^{-1}\circ\sigma_{2}^{-1}\circ\cdots\circ\sigma_{n'-n}^{-1}$.
We consider the element $e_{\mathbf{F}_{n'-n}}*g_{1}*e_{\mathbf{F}_{n}}=g_{n'-n+1}\in\mathbf{F}_{\left(n'-n\right)+1+n}$.
The definition of $a_{n,1}$ gives that $a_{1+n',1}\left(\sigma_{n'-n}\right)\left(g_{n'-n}\right)=g_{n'-n+1}$.
Therefore, we have that:
\[
a_{1+n',1}\left(\sigma_{n'-n}^{-1}\right)\left(g_{n'-n+1}\right)=g_{n'-n}.
\]
Iterating this observation, we deduce that $a_{1+n'}\left(\left(b_{1,n'-n}^{\boldsymbol{\beta}}\right)^{-1}\natural id_{n}\right)\left(g_{n'-n+1}\right)=g_{1}\in\mathbf{F}_{1+n'}$.
Hence, the family of morphisms $\left\{ a_{n,1}:\mathbf{B}_{n}\rightarrow Aut\left(\mathbf{F}_{n}\right)\right\} _{n\in\mathbb{N}}$
satisfies Condition \ref{cond:coherenceconditionbnautfn2}.

Similarly to Example \ref{exan1} earlier, for all $g\in\mathbf{F}_{n'-n}$
and each Artin generator $\sigma_{i}\in\mathbf{B}_{n}$, $a_{n'}\left(id_{n'-n}\natural\sigma_{i}\right)\left(g*e_{\mathbf{F}_{n}}\right)=g*e_{\mathbf{F}_{n}}$.
Hence, the family of morphisms $\left\{ a_{n,1}:\mathbf{B}_{n}\rightarrow Aut\left(\mathbf{F}_{n}\right)\right\} _{n\in\mathbb{N}}$
satisfies Condition \ref{cond:coherenceconditionbnautfn3}.
\end{proof}
\textbf{From now until the end of Section \ref{sec:The-Long-Moody-functoreffect},
we fix coherent reliable families of morphisms $\left\{ \varsigma_{n}:\mathbf{F}_{n}\hookrightarrow\mathbf{B}_{n+1}\right\} _{n\in\mathbb{N}}$
and $\left\{ a_{n}:\mathbf{B}_{n}\rightarrow Aut\left(\mathbf{F}_{n}\right)\right\} _{n\in\mathbb{N}}$.}

\subsubsection{The intermediary functors}

\paragraph{The functor $\tau_{2}$:}

Let us consider the factor $\mathcal{I}_{\mathbb{K}\left[\mathbf{F}_{1}\right]}\underset{\mathbb{K}\left[\mathbf{F}_{1}\right]}{\varotimes}F\left(n+2\right)$
of $\tau_{1}\mathbf{LM}\left(F\right)\left(n\right)$ in the decomposition
of Proposition \ref{prop:splittingtaul1}.
\begin{notation}
For all objects $F$ of $\mathbf{Fct}\left(\mathfrak{U}\boldsymbol{\beta},\mathbb{K}\textrm{-}\mathfrak{Mod}\right)$,
for all natural numbers $n$, we denote $\mathcal{I}_{\mathbb{K}\left[\mathbf{F}_{1}\right]}\underset{\mathbb{K}\left[\mathbf{F}_{1}\right]}{\varotimes}F\left(n+2\right)$
by $\varUpsilon\left(F\right)\left(n\right)$.
\end{notation}

Recall the monomorphisms $\left\{ \upsilon\left(F\right)_{n}:\varUpsilon\left(F\right)\left(n\right)\hookrightarrow\tau_{1}\mathbf{LM}\left(F\right)\left(n\right)\right\} _{n\in\mathbb{N}}$
of Definition \ref{def:morphcaract1}.
\begin{prop}
\label{prop:defvarepsi}Let $F$ be an object of $\mathbf{Fct}\left(\mathfrak{U}\boldsymbol{\beta},\mathbb{K}\textrm{-}\mathfrak{Mod}\right)$.
For all natural numbers $n$ and $n'$ such that $n'\geq n$, and
for all $\left[n'-n,\sigma\right]\in Hom_{\mathfrak{U}\boldsymbol{\beta}}\left(n,n'\right)$,
assign: 
\[
\varUpsilon\left(F\right)\left(\left[n'-n,\sigma\right]\right)=id_{\mathcal{I}_{\mathbb{K}\left[\mathbf{F}_{1}\right]}}\underset{\mathbb{K}\left[\mathbf{F}_{1}\right]}{\varotimes}F\left(id_{2}\natural\left[n'-n,\sigma\right]\right).
\]
This defines a subfunctor $\varUpsilon\left(F\right):\mathfrak{U}\boldsymbol{\beta}\rightarrow\mathbb{K}\textrm{-}\mathfrak{Mod}$
of $\tau_{1}\mathbf{LM}\left(F\right)$, using the monomorphisms $\left\{ \upsilon\left(F\right)_{n}\right\} _{n\in\mathbb{N}}$.
\end{prop}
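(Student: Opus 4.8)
The plan is to prove that the family of monomorphisms $\left\{ \upsilon\left(F\right)_{n}\right\} _{n\in\mathbb{N}}$ of Definition \ref{def:morphcaract1} is natural, i.e. that for every $\left[n'-n,\sigma\right]\in Hom_{\mathfrak{U}\boldsymbol{\beta}}\left(n,n'\right)$ the square
\[
\tau_{1}\mathbf{LM}\left(F\right)\left(\left[n'-n,\sigma\right]\right)\circ\upsilon\left(F\right)_{n}=\upsilon\left(F\right)_{n'}\circ\left(id_{\mathcal{I}_{\mathbb{K}\left[\mathbf{F}_{1}\right]}}\underset{\mathbb{K}\left[\mathbf{F}_{1}\right]}{\varotimes}F\left(id_{2}\natural\left[n'-n,\sigma\right]\right)\right)
\]
commutes. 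Granting this identity, the image of $\upsilon\left(F\right)_{n}$ is carried into the image of $\upsilon\left(F\right)_{n'}$ by every morphism, so these images assemble into a subfunctor; since each $\upsilon\left(F\right)_{n'}$ is a monomorphism, the induced map on the subfunctor is uniquely determined and must coincide with the stated formula, whose well-definedness over $\mathbb{K}\left[\mathbf{F}_{1}\right]$ is then automatic, as it is the corestriction of a genuine $\mathbb{K}$-linear map to a direct summand of the splitting of Proposition \ref{prop:splittingtaul1}. Functoriality of $\varUpsilon\left(F\right)$ (respect for identities and composition) is then inherited from that of $\tau_{1}\mathbf{LM}\left(F\right)$ together with the injectivity of the $\upsilon\left(F\right)_{n'}$.

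To establish the square, I would use that in $\mathfrak{U}\boldsymbol{\beta}$ every morphism factors as $\left[n'-n,\sigma\right]=\sigma\circ\left[n'-n,id_{n'}\right]$ (exactly as in Proposition \ref{prop:criterionfamilymorphismsfunctor}), so by pasting naturality squares it suffices to treat two cases: the automorphisms $\sigma\in\mathbf{B}_{n}$ and the inclusions $\left[n'-n,id_{n'}\right]$. For an automorphism, $\tau_{1}\mathbf{LM}\left(F\right)\left(\sigma\right)=\mathbf{LM}\left(F\right)\left(id_{1}\natural\sigma\right)$ acts on a generator $i\underset{\mathbb{K}\left[\mathbf{F}_{1}\right]}{\varotimes}v$ of the image by $a_{1+n}\left(id_{1}\natural\sigma\right)$ on the augmentation-ideal factor and by $F\left(id_{2}\natural\sigma\right)$ on $F\left(n+2\right)$ (Theorem \ref{Thm:LMFunctor}); the latter already matches the asserted formula, and the former preserves the summand precisely because Condition \ref{cond:coherenceconditionbnautfn3} (in the instance where its $n'-n$ equals $1$) forces $a_{1+n}\left(id_{1}\natural\sigma\right)$ to fix the inclusion $id_{\mathbf{F}_{1}}\ast\iota_{\mathbf{F}_{n}}$, hence the induced inclusion of ideals $id_{\mathcal{I}_{\mathbb{K}\left[\mathbf{F}_{1}\right]}}\ast\iota_{\mathcal{I}_{\mathbb{K}\left[\mathbf{F}_{n}\right]}}$.

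The inclusion case is where I expect the main obstacle to lie, since it requires unwinding the braiding correction of Proposition \ref{prop:homogenousprebraided}. There $\tau_{1}\mathbf{LM}\left(F\right)\left(\left[n'-n,id_{n'}\right]\right)=\mathbf{LM}\left(F\right)\left(\left[n'-n,\beta_{0}\right]\right)$ with $\beta_{0}=\left(b_{1,n'-n}^{\boldsymbol{\beta}}\right)^{-1}\natural id_{n}$, so on the ideal factor the relevant composite is $a_{1+n'}\left(\beta_{0}\right)\circ\left(\iota_{\mathcal{I}_{\mathbb{K}\left[\mathbf{F}_{n'-n}\right]}}\ast id\right)\circ\left(id\ast\iota_{\mathcal{I}_{\mathbb{K}\left[\mathbf{F}_{n}\right]}}\right)$, which collapses to $id_{\mathcal{I}_{\mathbb{K}\left[\mathbf{F}_{1}\right]}}\ast\iota_{\mathcal{I}_{\mathbb{K}\left[\mathbf{F}_{n'}\right]}}$ by Condition \ref{cond:coherenceconditionbnautfn2} (this is the very reason that condition is imposed). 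For the $F$-factor one must verify $F\left(id_{1}\natural\left[n'-n,\beta_{0}\right]\right)=F\left(id_{2}\natural\left[n'-n,id_{n'}\right]\right)$; both morphisms are computed from Proposition \ref{prop:homogenousprebraided}, and their equality in $\boldsymbol{\beta}$ reduces, after cancelling the untouched last $n$ strands, to the hexagon identity $b_{2,n'-n}^{\boldsymbol{\beta}}=\left(b_{1,n'-n}^{\boldsymbol{\beta}}\natural id_{1}\right)\circ\left(id_{1}\natural b_{1,n'-n}^{\boldsymbol{\beta}}\right)$ for the braiding of $\boldsymbol{\beta}$.

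Combining the two factors gives the commutativity of the square in the inclusion case, and together with the automorphism case this yields naturality of $\upsilon\left(F\right)$ and hence the subfunctor structure. The delicate points I anticipate are purely bookkeeping: keeping track of which copy of $\mathbf{F}_{1}$ carries the $\mathbb{K}\left[\mathbf{F}_{1}\right]$-module structure on $F\left(n+2\right)$, and checking that the braiding shift produced by $\tau_{1}$ lines up exactly with the composite of structural inclusions appearing in Condition \ref{cond:coherenceconditionbnautfn2}. Everything else is formal once these identifications are made.
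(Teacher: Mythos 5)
Your proposal is correct, and it reaches the result by a route that genuinely differs from the paper's in how well-definedness is handled. The paper's proof has two parts: it first verifies by direct computation that $F\left(id_{2}\natural\left[n'-n,\sigma\right]\right)$ intertwines the $\mathbb{K}\left[\mathbf{F}_{1}\right]$-module structures on $F\left(n+2\right)$ and $F\left(n'+2\right)$, so that the stated formula is manifestly well defined on the tensor product — this is where Conditions \ref{cond:conditionstability} and \ref{cond:coherenceconditionsigmanan} enter, through the equalities (\ref{eq:etape1}) and (\ref{eq:etape2}) — and only then checks stability of the summand. You bypass the first part entirely: you prove only the stability square, which rests on exactly the same engine as the paper's second part (Condition \ref{cond:coherenceconditionbnautfn2} collapsing $a_{1+n'}\left(\left(b_{1,n'-n}^{\boldsymbol{\beta}}\right)^{-1}\natural id_{n}\right)$ on the ideal factor, and Condition \ref{cond:coherenceconditionbnautfn3} making $a_{1+n'}\left(id_{1}\natural\sigma\right)$ fix it), and then recover well-definedness formally: the induced map is the corestriction of the already well-defined map $\tau_{1}\mathbf{LM}\left(F\right)\left(\left[n'-n,\sigma\right]\right)\circ\upsilon\left(F\right)_{n}$ to the summand $\mathrm{Im}\left(\upsilon\left(F\right)_{n'}\right)$, and it agrees with the stated formula on spanning elements. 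This is sound: all that the formula requires is balancedness of $\left(i,v\right)\mapsto i\otimes F\left(id_{2}\natural\left[n'-n,\sigma\right]\right)\left(v\right)$ over $\mathbb{K}\left[\mathbf{F}_{1}\right]$, and that follows from agreement with a map known to be well defined, by injectivity of $\upsilon\left(F\right)_{n'}$. The trade-offs: the paper's argument proves the stronger, untensored equivariance statement (which explains why all four coherence conditions are imposed on reliable families), whereas yours is more economical, never invoking Conditions \ref{cond:conditionstability} and \ref{cond:coherenceconditionsigmanan} in this proof. The price of your factorization $\left[n'-n,\sigma\right]=\sigma\circ\left[n'-n,id_{n'}\right]$ is the explicit hexagon identity $b_{2,n'-n}^{\boldsymbol{\beta}}=\left(b_{1,n'-n}^{\boldsymbol{\beta}}\natural id_{1}\right)\circ\left(id_{1}\natural b_{1,n'-n}^{\boldsymbol{\beta}}\right)$, which you correctly identify; the paper sidesteps it by treating a general $\left[n'-n,\sigma\right]$ in a single computation, where two applications of Proposition \ref{prop:homogenousprebraided} directly yield the $F$-factor $F\left(id_{1}\natural id_{1}\natural\left[n'-n,\sigma\right]\right)=F\left(id_{2}\natural\left[n'-n,\sigma\right]\right)$ with no hexagon needed.
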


\begin{proof}
Let us check that the assignment $\varUpsilon\left(F\right)$ is well
defined with respect to the tensor product. Let $n$ and $n'$ be
natural numbers such that $n'\geq n$, and $\left[n'-n,\sigma\right]\in Hom_{\mathfrak{U}\boldsymbol{\beta}}\left(n,n'\right)$
with $\sigma\in\mathbf{B}_{n'}$. Recall from Proposition \ref{prop:homogenousprebraided}
that $id_{2}\natural\left[n'-n,\sigma\right]=\left[n'-n,\left(id_{2}\natural\sigma\right)\circ\left(\left(b_{2,n'-n}^{\boldsymbol{\beta}}\right)^{-1}\natural id_{n}\right)\right]$.
On the one hand, by Condition \ref{cond:coherenceconditionsigmanan},
we have: 
\[
\left(id_{2}\natural\sigma\right)\circ\varsigma_{1+n'}\left(g_{1}\right)=\varsigma_{1+n'}\left(a_{1+n'}\left(id_{1}\natural\sigma\right)\left(g_{1}\right)\right)\circ\left(id_{2}\natural\sigma\right).
\]
Hence, it follows from Condition \ref{cond:coherenceconditionbnautfn3}
that 
\begin{equation}
\left(id_{2}\natural\sigma\right)\circ\varsigma_{1+n'}\left(g_{1}\right)=\varsigma_{1+n'}\left(g_{1}\right)\circ\left(id_{2}\natural\sigma\right).\label{eq:etape1}
\end{equation}
On the other hand, Condition \ref{cond:coherenceconditionbnautfn2}
gives that 
\[
g_{1}=a_{2+n'}\left(\left(b_{1,n'-n}^{\boldsymbol{\beta}}\right)^{-1}\natural id_{n+1}\right)\left(g_{n'-n+1}\right)
\]
and by Condition \ref{cond:coherenceconditionbnautfn3} we have 
\[
g_{1}=a_{2+n'}\left(id_{1}\natural\left(b_{1,n'-n}^{\boldsymbol{\beta}}\right)^{-1}\natural id_{n}\right)\left(g_{1}\right).
\]
By the definition of the braiding $b_{-,-}^{\boldsymbol{\beta}}$
(see Definition \ref{exa:defprodmonUbeta}), we deduce that:
\begin{eqnarray*}
\varsigma_{1+n'}\left(g_{1}\right) & = & \varsigma_{1+n'}\left(a_{2+n'}\left(\left(b_{2,n'-n}^{\boldsymbol{\beta}}\right)^{-1}\natural id_{n}\right)\left(g_{n'-n+1}\right)\right).
\end{eqnarray*}
Then, it follows from the combination of Conditions \ref{cond:conditionstability}
and \ref{cond:coherenceconditionsigmanan} that as morphisms in $\mathfrak{U}\boldsymbol{\beta}$:
\begin{align}
 & \left[n'-n,\varsigma_{1+n'}\left(g_{1}\right)\circ\left(\left(b_{2,n'-n}^{\boldsymbol{\beta}}\right)^{-1}\natural id_{n}\right)\right]\nonumber \\
= & \left[n'-n,\left(\left(b_{2,n'-n}^{\boldsymbol{\beta}}\right)^{-1}\natural id_{n}\right)\circ\left(id_{n'-n}\natural\varsigma_{1+n}\left(g_{1}\right)\right)\right].\label{eq:etape2}
\end{align}
Hence, we deduce from the relations (\ref{eq:etape1}) and (\ref{eq:etape2})
that:
\begin{align*}
 & \left[n'-n,\left(\left(id_{2}\natural\sigma\right)\circ\left(\left(b_{2,n'-n}^{\boldsymbol{\beta}}\right)^{-1}\natural id_{n}\right)\right)\circ\left(id_{n'-n}\natural\varsigma_{1+n}\left(g_{1}\right)\right)\right]\\
= & \left[n'-n,\varsigma_{1+n'}\left(g_{1}\right)\circ\left(\left(id_{2}\natural\sigma\right)\circ\left(\left(b_{2,n'-n}^{\boldsymbol{\beta}}\right)^{-1}\natural id_{n}\right)\right)\right].
\end{align*}
A fortiori, $F\left(id_{2}\natural\left[n'-n,\sigma\right]\right)\circ F\left(\varsigma_{1+n}\left(g_{1}\right)\right)=F\left(\varsigma_{1+n'}\left(g_{1}\right)\right)\circ F\left(id_{2}\natural\left[n'-n,\sigma\right]\right)$.
Hence, our assignment is well defined with respect to the tensor product.

Let us prove that the subspaces $\varUpsilon\left(F\right)\left(n\right)$
are stable under the action of $\mathfrak{U}\boldsymbol{\beta}$.
Let $i\in\mathcal{I}_{\mathbb{K}\left[\mathbf{F}_{1}\right]}$ and
$v\in F\left(n+2\right)$. We deduce from the definition of the monoidal
structure morphisms of $\mathfrak{U}\boldsymbol{\beta}$ (see Proposition
\ref{prop:homogenousprebraided}) and from the definition of the Long-Moody
functor (see Theorem \ref{Thm:LMFunctor}) that, for all $i\in\mathcal{I}_{\mathbb{K}\left[\mathbf{F}_{1}\right]}$
and for all $v\in F\left(n+2\right)$:
\begin{align*}
 & \left(\left(\tau_{1}\mathbf{LM}\left(F\right)\left(\left[n'-n,\sigma\right]\right)\right)\circ\upsilon\left(F\right)_{n}\right)\left(i\underset{\mathbb{K}\left[\mathbf{F}_{1}\right]}{\varotimes}v\right)\\
= & a_{1+n'}\left(id_{1}\natural\sigma\right)\left(a_{1+n'}\left(\left(b_{1,n'-n}^{\mathfrak{\boldsymbol{\beta}}}\right)^{-1}\natural id_{n}\right)\left(\iota_{\mathcal{I}_{\mathbb{K}\left[\mathbf{F}_{n'-n}\right]}}*id_{\mathcal{I}_{\mathbb{K}\left[\mathbf{F}_{1}\right]}}*\iota_{\mathcal{I}_{\mathbb{K}\left[\mathbf{F}_{n}\right]}}\right)\left(i\right)\right)\\
 & \underset{\mathbb{K}\left[\mathbf{F}_{n'+1}\right]}{\varotimes}F\left(id_{1}\natural id_{1}\natural\left[n'-n,\sigma\right]\right)\left(v\right).
\end{align*}
It follows from Condition \ref{cond:coherenceconditionbnautfn2} that:
\[
a_{1+n'}\left(\left(b_{1,n'-n}^{\mathfrak{\boldsymbol{\beta}}}\right)^{-1}\natural id_{n}\right)\left(\iota_{\mathcal{I}_{\mathbb{K}\left[\mathbf{F}_{n'-n}\right]}}*id_{\mathcal{I}_{\mathbb{K}\left[\mathbf{F}_{1}\right]}}*\iota_{\mathcal{I}_{\mathbb{K}\left[\mathbf{F}_{n}\right]}}\right)\left(i\right)=\left(id_{\mathcal{I}_{\mathbb{K}\left[\mathbf{F}_{1}\right]}}*\iota_{\mathcal{I}_{\mathbb{K}\left[\mathbf{F}_{n'}\right]}}\right)\left(i\right).
\]
Since by Condition \ref{cond:coherenceconditionbnautfn3}, $a_{1+n'}\left(id_{1}\natural\sigma\right)\left(id_{\mathcal{I}_{\mathbb{K}\left[\mathbf{F}_{1}\right]}}*\iota_{\mathcal{I}_{\mathbb{K}\left[\mathbf{F}_{n'}\right]}}\right)\left(i\right)=\left(id_{\mathcal{I}_{\mathbb{K}\left[\mathbf{F}_{1}\right]}}*\iota_{\mathcal{I}_{\mathbb{K}\left[\mathbf{F}_{n'}\right]}}\right)\left(i\right)$
for all elements $\sigma$ of $\mathbf{B}_{n'}$, we deduce that:
\begin{eqnarray*}
\left(\tau_{1}\mathbf{LM}\left(F\right)\left(\left[n'-n,\sigma\right]\right)\circ\upsilon\left(F\right)_{n}\right)\left(i\underset{\mathbb{K}\left[\mathbf{F}_{1}\right]}{\varotimes}v\right) & = & \left(\upsilon\left(F\right)_{n'}\circ\varUpsilon\left(F\right)\left(\left[n'-n,\sigma\right]\right)\right)\left(i\underset{\mathbb{K}\left[\mathbf{F}_{m}\right]}{\varotimes}v\right).
\end{eqnarray*}
Therefore, the functorial structure of $\tau_{1}\mathbf{LM}\left(F\right)$
induces by restriction the one of $\varUpsilon\left(F\right)$.
\end{proof}
Now, we can lift this link between \textit{$\varUpsilon\left(F\right)$
of $\tau_{1}\mathbf{LM}\left(F\right)$} to endofunctors of $\mathbf{Fct}\left(\mathfrak{U}\boldsymbol{\beta},\mathbb{K}\textrm{-}\mathfrak{Mod}\right)$.
\begin{prop}
\label{prop:subfuncotorvarepsi}Let $F$ and $G$ be two objects of
$\mathbf{Fct}\left(\mathfrak{U}\boldsymbol{\beta},\mathbb{K}\textrm{-}\mathfrak{Mod}\right)$,
and $\eta:F\rightarrow G$ be a natural transformation. For all natural
numbers $n$, assign :
\[
\left(\varUpsilon\left(\eta\right)\right)_{n}=id_{\mathcal{I}_{\mathbb{K}\left[\mathbf{F}_{1}\right]}}\underset{\mathbb{K}\left[\mathbf{F}_{1}\right]}{\varotimes}\eta_{n+2}.
\]
Then we define a subfunctor $\varUpsilon:\mathbf{Fct}\left(\mathfrak{U}\boldsymbol{\beta},\mathbb{K}\textrm{-}\mathfrak{Mod}\right)\rightarrow\mathbf{Fct}\left(\mathfrak{U}\boldsymbol{\beta},\mathbb{K}\textrm{-}\mathfrak{Mod}\right)$
of $\tau_{1}\mathbf{LM}$ using the monomorphisms $\left\{ \upsilon\left(F\right)_{n}\right\} _{n\in\mathbb{N}}$. 
\end{prop}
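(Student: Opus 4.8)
The plan is to parallel the morphism-level verifications already carried out for $\mathbf{LM}$ in the proof of Theorem \ref{Thm:LMFunctor}, taking the object-level statement of Proposition \ref{prop:defvarepsi} as the starting point. Three things must be established: that each $\varUpsilon\left(\eta\right)$ is a morphism of $\mathbf{Fct}\left(\mathfrak{U}\boldsymbol{\beta},\mathbb{K}\textrm{-}\mathfrak{Mod}\right)$, that $\varUpsilon$ preserves identities and composition, and that the family $\left\{ \upsilon\left(F\right)_{n}\right\} _{n\in\mathbb{N}}$ assembles into a natural transformation of endofunctors $\varUpsilon\rightarrow\tau_{1}\mathbf{LM}$. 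Crucially, no new hypotheses on $\left\{ a_{n}\right\} _{n\in\mathbb{N}}$ and $\left\{ \varsigma_{n}\right\} _{n\in\mathbb{N}}$ are needed: all the reliability conditions (in particular Conditions \ref{cond:coherenceconditionbnautfn2} and \ref{cond:coherenceconditionbnautfn3}) were already consumed in Proposition \ref{prop:defvarepsi} to make $\varUpsilon\left(F\right)$ a subfunctor of $\tau_{1}\mathbf{LM}\left(F\right)$ at the object level, and here the natural transformation $\eta$ acts only on the tensor factor $F\left(n+2\right)$.

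First I would check that $\varUpsilon\left(\eta\right):\varUpsilon\left(F\right)\rightarrow\varUpsilon\left(G\right)$ is natural with respect to $\mathfrak{U}\boldsymbol{\beta}$. For $\left[n'-n,\sigma\right]\in Hom_{\mathfrak{U}\boldsymbol{\beta}}\left(n,n'\right)$, both composites $\left(\varUpsilon\left(\eta\right)\right)_{n'}\circ\varUpsilon\left(F\right)\left(\left[n'-n,\sigma\right]\right)$ and $\varUpsilon\left(G\right)\left(\left[n'-n,\sigma\right]\right)\circ\left(\varUpsilon\left(\eta\right)\right)_{n}$ equal $id_{\mathcal{I}_{\mathbb{K}\left[\mathbf{F}_{1}\right]}}\underset{\mathbb{K}\left[\mathbf{F}_{1}\right]}{\varotimes}-$ applied respectively to $\eta_{n'+2}\circ F\left(id_{2}\natural\left[n'-n,\sigma\right]\right)$ and to $G\left(id_{2}\natural\left[n'-n,\sigma\right]\right)\circ\eta_{n+2}$. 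These two morphisms of $\mathbb{K}$-modules coincide because $\eta$ is a natural transformation of functors over $\mathfrak{U}\boldsymbol{\beta}$, evaluated at the morphism $id_{2}\natural\left[n'-n,\sigma\right]:n+2\rightarrow n'+2$. The functoriality of $\varUpsilon$, namely $\varUpsilon\left(id_{F}\right)=id_{\varUpsilon\left(F\right)}$ and $\varUpsilon\left(\mu\circ\eta\right)=\varUpsilon\left(\mu\right)\circ\varUpsilon\left(\eta\right)$, then follows formally from the corresponding properties of $id_{\mathcal{I}_{\mathbb{K}\left[\mathbf{F}_{1}\right]}}\underset{\mathbb{K}\left[\mathbf{F}_{1}\right]}{\varotimes}-$, exactly as in step \ref{enu:It-remains-to} of the proof of Theorem \ref{Thm:LMFunctor}.

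The substantive remaining point is the naturality of $\upsilon$: for $\eta:F\rightarrow G$ I must verify $\upsilon\left(G\right)_{n}\circ\left(\varUpsilon\left(\eta\right)\right)_{n}=\left(\tau_{1}\mathbf{LM}\left(\eta\right)\right)_{n}\circ\upsilon\left(F\right)_{n}$ for all $n$. The key observation is the identification $\left(\tau_{1}\mathbf{LM}\left(\eta\right)\right)_{n}=\left(\mathbf{LM}\left(\eta\right)\right)_{1+n}=id_{\mathcal{I}_{\mathbb{K}\left[\mathbf{F}_{1+n}\right]}}\underset{\mathbb{K}\left[\mathbf{F}_{1+n}\right]}{\varotimes}\eta_{n+2}$ coming from Theorem \ref{Thm:LMFunctor}. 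Chasing a generator $i\underset{\mathbb{K}\left[\mathbf{F}_{1}\right]}{\varotimes}v$ of $\varUpsilon\left(F\right)\left(n\right)$ through both paths, each sends it to $\left(id_{\mathcal{I}_{\mathbb{K}\left[\mathbf{F}_{1}\right]}}*\iota_{\mathcal{I}_{\mathbb{K}\left[\mathbf{F}_{n}\right]}}\right)\left(i\right)\underset{\mathbb{K}\left[\mathbf{F}_{1+n}\right]}{\varotimes}\eta_{n+2}\left(v\right)$, since $\upsilon$ moves only the augmentation-ideal factor while $\varUpsilon\left(\eta\right)$ and $\mathbf{LM}\left(\eta\right)$ act on the factor $F\left(n+2\right)$ through $\eta_{n+2}$; the equality is then an instance of the bifunctoriality of the tensor product. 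Because the $\upsilon\left(F\right)_{n}$ are monomorphisms by the direct-sum decomposition of Proposition \ref{prop:splittingtaul1}, this exhibits $\varUpsilon$ as a subfunctor of $\tau_{1}\mathbf{LM}$.

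I do not expect any genuine obstacle here: the delicate interplay of the braiding with the stabiliser Conditions \ref{cond:coherenceconditionbnautfn2} and \ref{cond:coherenceconditionbnautfn3} was already discharged at the object level in Proposition \ref{prop:defvarepsi}, and at the morphism level $\eta$ only touches the tensor factor $F\left(n+2\right)$, on which every map in sight commutes by naturality of $\eta$ and bifunctoriality of $\underset{\mathbb{K}\left[\mathbf{F}_{1}\right]}{\varotimes}$. The only mild care required is bookkeeping of the two reindexings $n\mapsto n+2$ (the two translations built into $\tau_{1}\mathbf{LM}$) and $n\mapsto 1+n$ (the outer $\tau_{1}$ acting on $\mathbf{LM}$), so that $\eta$ is consistently evaluated at the index $n+2$ throughout.
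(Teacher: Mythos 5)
Your proposal is correct and takes essentially the same approach as the paper: the paper's own proof consists of exactly the two points you verify, namely that the consistency of $\varUpsilon$ on morphisms repeats mutatis mutandis the morphism-level part of the proof of Theorem \ref{Thm:LMFunctor}, and that the relation $\upsilon\left(G\right)_{n}\circ\left(\varUpsilon\left(\eta\right)\right)_{n}=\left(\tau_{1}\circ\mathbf{LM}\right)\left(\eta\right)_{n}\circ\upsilon\left(F\right)_{n}$ follows directly from the definitions. Your write-up simply makes explicit what the paper leaves implicit (naturality of $\eta$ at $id_{2}\natural\left[n'-n,\sigma\right]$, and the identification $\left(\tau_{1}\mathbf{LM}\left(\eta\right)\right)_{n}=id_{\mathcal{I}_{\mathbb{K}\left[\mathbf{F}_{1+n}\right]}}\underset{\mathbb{K}\left[\mathbf{F}_{1+n}\right]}{\varotimes}\eta_{n+2}$ followed by the element chase), including the correct observation that the reliability Conditions \ref{cond:coherenceconditionbnautfn2} and \ref{cond:coherenceconditionbnautfn3} are consumed only at the object level in Proposition \ref{prop:defvarepsi} and are not needed again here.
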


\begin{proof}
The consistency of our definition follows repeating mutatis mutandis
point (\ref{enu:It-remains-to}) of the proof of Theorem \ref{Thm:LMFunctor}.
It directly follows from the definitions of $\left(\varUpsilon\left(\eta\right)\right)_{n}$,
$\upsilon\left(G\right)_{n}$ and $\tau_{1}\circ\mathbf{LM}$ (see
Definition \ref{subsec:The-Long-Moody-functors}) that $\upsilon\left(G\right)_{n}\circ\left(\varUpsilon\right)\left(\eta\right)_{n}=\left(\tau_{1}\circ\mathbf{LM}\right)\left(\eta\right)_{n}\circ\upsilon\left(F\right)_{n}.$
\end{proof}
In fact, we have an easy description of the functor $\varUpsilon$.
\begin{prop}
\label{prop:equivalencevarepsitau2}There is a natural equivalence
$\varUpsilon\cong\tau_{2}$ where $\tau_{2}$ is the translation functor
introduced in Definition \ref{def:deftaux}.
\end{prop}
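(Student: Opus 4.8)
The plan is to write down an explicit natural transformation $\varUpsilon \to \tau_2$ and to check it is a componentwise isomorphism compatible with both the $\mathfrak{U}\boldsymbol{\beta}$-structure and the functoriality in $F$. First I would unwind the target: by Definition \ref{def:deftaux}, $\tau_2$ is precomposition by the endofunctor $2\natural-$, so $\tau_2\left(F\right)\left(n\right)=F\left(2\natural n\right)=F\left(n+2\right)$ on objects and $\tau_2\left(F\right)\left(\left[n'-n,\sigma\right]\right)=F\left(id_2\natural\left[n'-n,\sigma\right]\right)$ on morphisms. Thus the content of the statement is that tensoring $F\left(n+2\right)$ with $\mathcal{I}_{\mathbb{K}\left[\mathbf{F}_1\right]}$ over $\mathbb{K}\left[\mathbf{F}_1\right]$ changes nothing up to canonical isomorphism. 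This holds because $\mathbf{F}_1\cong\mathbb{Z}$ is free of rank one, so by Proposition \ref{prop:-augmentationidealfreemodule} the augmentation ideal $\mathcal{I}_{\mathbb{K}\left[\mathbf{F}_1\right]}$ is a free $\mathbb{K}\left[\mathbf{F}_1\right]$-module with the single basis element $g_1-1$.

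Next I would define, for every object $F$ and every natural number $n$, the $\mathbb{K}$-linear map
\[
\Phi\left(F\right)_n:\varUpsilon\left(F\right)\left(n\right)=\mathcal{I}_{\mathbb{K}\left[\mathbf{F}_1\right]}\underset{\mathbb{K}\left[\mathbf{F}_1\right]}{\varotimes}F\left(n+2\right)\longrightarrow F\left(n+2\right),\qquad \left(g_1-1\right)\underset{\mathbb{K}\left[\mathbf{F}_1\right]}{\varotimes}v\longmapsto v.
\]
Freeness of rank one guarantees that every element of this tensor product is uniquely of the form $\left(g_1-1\right)\varotimes v$, so $\Phi\left(F\right)_n$ is a well-defined isomorphism of $\mathbb{K}$-modules with inverse $v\mapsto\left(g_1-1\right)\varotimes v$. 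This is precisely the rank-one instance of the isomorphism $\varLambda$ recorded in Remark \ref{rem:basisfunctor}, now applied with the group $\mathbf{F}_1$ while $F$ is evaluated at $n+2$.

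Then I would verify that the $\Phi\left(F\right)_n$ assemble into a natural transformation $\varUpsilon\left(F\right)\to\tau_2\left(F\right)$ in $\mathbf{Fct}\left(\mathfrak{U}\boldsymbol{\beta},\mathbb{K}\textrm{-}\mathfrak{Mod}\right)$, and that these are themselves natural in $F$. Since $\varUpsilon\left(F\right)\left(\left[n'-n,\sigma\right]\right)=id_{\mathcal{I}_{\mathbb{K}\left[\mathbf{F}_1\right]}}\varotimes F\left(id_2\natural\left[n'-n,\sigma\right]\right)$ acts on a basis tensor by $\left(g_1-1\right)\varotimes v\mapsto\left(g_1-1\right)\varotimes F\left(id_2\natural\left[n'-n,\sigma\right]\right)\left(v\right)$, applying $\Phi$ on both sides yields $F\left(id_2\natural\left[n'-n,\sigma\right]\right)\left(v\right)=\tau_2\left(F\right)\left(\left[n'-n,\sigma\right]\right)\left(v\right)$, so the naturality square commutes. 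Naturality in $F$ is identical: for $\eta:F\to G$ one has $\varUpsilon\left(\eta\right)_n=id_{\mathcal{I}_{\mathbb{K}\left[\mathbf{F}_1\right]}}\varotimes\eta_{n+2}$ by Proposition \ref{prop:subfuncotorvarepsi}, whence $\Phi\left(G\right)_n\circ\varUpsilon\left(\eta\right)_n=\eta_{n+2}\circ\Phi\left(F\right)_n=\tau_2\left(\eta\right)_n\circ\Phi\left(F\right)_n$.

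The only point requiring care, and what keeps the statement from being tautological, is that the $\mathbb{K}\left[\mathbf{F}_1\right]$-module structure used to form the tensor product varies with $n$ (it is induced by $\varsigma_{1+n}$), so that $\Phi\left(F\right)_n$ discards a structure that genuinely depends on $n$. However, the well-definedness of $\varUpsilon\left(F\right)$ on morphisms, already established in Proposition \ref{prop:defvarepsi} using Conditions \ref{cond:conditionstability}, \ref{cond:coherenceconditionbnautfn2} and \ref{cond:coherenceconditionbnautfn3}, is exactly the equivariance needed for $id_{\mathcal{I}_{\mathbb{K}\left[\mathbf{F}_1\right]}}\varotimes F\left(id_2\natural\left[n'-n,\sigma\right]\right)$ to descend to the tensor product. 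Once that is granted, the maps $\Phi\left(F\right)_n$ intertwine the two functorial structures on the nose, and $\Phi$ is the desired natural equivalence $\varUpsilon\cong\tau_2$.
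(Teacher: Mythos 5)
Your proposal is correct and follows essentially the same route as the paper: the paper's proof defines exactly the same componentwise isomorphism $\chi_{n,F}\colon\left(g_{1}-1\right)\underset{\mathbb{K}\left[\mathbf{F}_{1}\right]}{\varotimes}v\longmapsto v$, justified by the freeness of $\mathcal{I}_{\mathbb{K}\left[\mathbf{F}_{1}\right]}$ from Proposition \ref{prop:-augmentationidealfreemodule}, and then deduces naturality from Definition \ref{def:deftaux} and Proposition \ref{prop:defvarepsi}. Your write-up merely spells out the naturality checks (in the $\mathfrak{U}\boldsymbol{\beta}$-variable and in $F$) that the paper leaves implicit, which is a faithful expansion rather than a different argument.
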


\begin{proof}
Let $F$ be an object of $\mathbf{Fct}\left(\mathfrak{U}\boldsymbol{\beta},\mathbb{K}\textrm{-}\mathfrak{Mod}\right)$.
By Proposition \ref{prop:-augmentationidealfreemodule}, for all natural
numbers $n$, we have an isomorphism:
\begin{eqnarray*}
\chi_{n,F}:\mathcal{I}_{\mathbb{K}\left[\mathbf{F}_{1}\right]}\underset{\mathbb{K}\left[\mathbf{F}_{1}\right]}{\varotimes}F\left(n+2\right) & \overset{\cong}{\longrightarrow} & F\left(n+2\right).\\
\left(g_{1}-1\right)\underset{\mathbb{K}\left[\mathbf{F}_{n}\right]}{\varotimes}v & \longmapsto & v
\end{eqnarray*}
It follows from Definition \ref{def:deftaux} and Proposition \ref{prop:defvarepsi}
that the isomorphisms $\left\{ \chi_{n,F}\right\} _{n\in\mathbb{N}}$
define the desired natural equivalence $\varUpsilon\overset{\chi}{\rightarrow}\tau_{2}$.
\end{proof}

\paragraph{The functor $\mathbf{LM}\circ\tau_{1}$:}

Now, let us consider the part $\mathcal{I}_{\mathbb{K}\left[\mathbf{F}_{n}\right]}\underset{\mathbb{K}\left[\mathbf{F}_{n}\right]}{\varotimes}F\left(n+2\right)$
of $\tau_{1}\circ\mathbf{LM}\left(F\right)\left(n\right)$ in the
decomposition of Proposition \ref{prop:splittingtaul1}. In fact,
we are going to prove that these modules assemble to form a functor
which identifies with $\mathbf{LM}\left(\tau_{1}F\right)$. We recall
from Theorem \ref{Thm:LMFunctor} and Definition \ref{def:deftaux}
the following fact.
\begin{rem}
\label{def:lmtau1}The functor $\mathbf{LM}\circ\tau_{1}:\mathbf{Fct}\left(\mathfrak{U}\boldsymbol{\beta},\mathbb{K}\textrm{-}\mathfrak{Mod}\right)\rightarrow\mathbf{Fct}\left(\mathfrak{U}\boldsymbol{\beta},\mathbb{K}\textrm{-}\mathfrak{Mod}\right)$
is defined by:

\begin{itemize}
\item for $F\in Obj\left(\mathbf{Fct}\left(\mathfrak{U}\boldsymbol{\beta},\mathbb{K}\textrm{-}\mathfrak{Mod}\right)\right)$,
$\forall n\in\mathbb{N}$, $\left(\mathbf{LM}\circ\tau_{1}\right)\left(F\right)\left(n\right)=\mathcal{I}_{\mathbb{K}\left[\mathbf{F}_{n}\right]}\underset{\mathbb{K}\left[\mathbf{F}_{n}\right]}{\varotimes}F\left(n+2\right)$,
where $F\left(n+2\right)$ is a left $\mathbb{K}\left[\mathbf{F}_{n}\right]$-module
using $F\left(id_{1}\natural\varsigma_{n}\left(-\right)\right):\mathbf{F}_{n}\rightarrow Aut_{\mathbb{K}\textrm{-}\mathfrak{Mod}}\left(F\left(n+2\right)\right)$.
For $n,n'\in\mathbb{N}$, such that $n'\geq n$, and $\left[n'-n,\sigma\right]\in Hom_{\mathfrak{U}\boldsymbol{\beta}}\left(n,n'\right)$:
\[
\left(\mathbf{LM}\circ\tau_{1}\right)\left(F\right)\left(\left[n'-n,\sigma\right]\right)=a_{n'}\left(\sigma\right)\left(\iota_{\mathcal{I}_{\mathbb{K}\left[\mathbf{F}_{n'-n}\right]}}*id_{\mathcal{I}_{\mathbb{K}\left[\mathbf{F}_{n}\right]}}\right)\underset{\mathbb{K}\left[\mathbf{F}_{n'}\right]}{\varotimes}F\left(id_{1}\natural id_{1}\natural\left[n'-n,\sigma\right]\right).
\]
\item Morphisms: let $F$ and $G$ be two objects of $\mathbf{Fct}\left(\mathfrak{U}\boldsymbol{\beta},\mathbb{K}\textrm{-}\mathfrak{Mod}\right)$,
and $\eta:F\rightarrow G$ be a natural transformation. The natural
transformation $\left(\mathbf{LM}\circ\tau_{1}\right)\left(\eta\right):\left(\mathbf{LM}\circ\tau_{1}\right)\left(F\right)\rightarrow\left(\mathbf{LM}\circ\tau_{1}\right)\left(G\right)$
for all natural numbers $n$ is given by:
\[
\left(\left(\mathbf{LM}\circ\tau_{1}\right)\left(\eta\right)\right)_{n}=id_{\mathcal{I}_{\mathbb{K}\left[\mathbf{F}_{n}\right]}}\underset{\mathbb{K}\left[\mathbf{F}_{n}\right]}{\varotimes}\eta_{n+2}.
\]
\end{itemize}
\end{rem}

\begin{prop}
\label{prop:subfunclmtm}For all $F\in Obj\left(\mathbf{Fct}\left(\mathfrak{U}\boldsymbol{\beta},\mathbb{K}\textrm{-}\mathfrak{Mod}\right)\right)$,
the monomorphisms $\left\{ \xi\left(F\right)_{n}\right\} _{n\in\mathbb{N}}$
(see Definition \ref{def:morphcaract1}) allow to define a natural
transformation $\xi'\left(F\right):\left(\mathbf{LM}\circ\tau_{1}\right)\left(F\right)\rightarrow\left(\tau_{1}\circ\mathbf{LM}\right)\left(F\right)$
where, for all natural numbers $n$: 
\[
\xi'\left(F\right)_{n}=\left(\iota_{\mathcal{I}_{\mathbb{K}\left[\mathbf{F}_{1}\right]}}*id_{\mathcal{I}_{\mathbb{K}\left[\mathbf{F}_{n}\right]}}\right)\underset{\mathbb{K}\left[\mathbf{F}_{1+n}\right]}{\varotimes}F\left(\left(b_{1,1}^{\mathfrak{\boldsymbol{\beta}}}\right)^{-1}\natural id_{n}\right).
\]
This yields a natural transformation $\xi':\mathbf{LM}\circ\tau_{1}\rightarrow\tau_{1}\circ\mathbf{LM}$.
\end{prop}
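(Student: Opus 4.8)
The plan is to verify, in order, that each $\xi'(F)_{n}$ is a well-defined $\mathbb{K}$-linear map, that the collection $\left\{ \xi'(F)_{n}\right\}_{n\in\mathbb{N}}$ is a morphism of $\mathbf{Fct}\left(\mathfrak{U}\boldsymbol{\beta},\mathbb{K}\textrm{-}\mathfrak{Mod}\right)$, and finally that $\xi'$ is natural in $F$. The conceptual point to keep in mind is that the source $\left(\mathbf{LM}\circ\tau_{1}\right)(F)(n)=\mathcal{I}_{\mathbb{K}\left[\mathbf{F}_{n}\right]}\varotimes_{\mathbb{K}\left[\mathbf{F}_{n}\right]}F(n+2)$ carries the $\mathbb{K}\left[\mathbf{F}_{n}\right]$-structure on $F(n+2)$ coming from $F\left(id_{1}\natural\varsigma_{n}(-)\right)$ (Remark \ref{def:lmtau1}), whereas $\xi'(F)_{n}$ lands in the second summand of the decomposition \eqref{eq:(*)} of Proposition \ref{prop:splittingtaul1}, whose $\mathbf{F}_{n}$-structure uses $F\left(\varsigma_{1+n}\left(\iota_{\mathbf{F}_{1}}\ast id_{\mathbf{F}_{n}}\right)\right)$. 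The twist $F\left(\left(b_{1,1}^{\boldsymbol{\beta}}\right)^{-1}\natural id_{n}\right)$ built into $\xi'(F)_{n}$ is exactly the isomorphism intertwining these two module structures, so that $\xi'(F)_{n}=\xi(F)_{n}\circ\left(id_{\mathcal{I}_{\mathbb{K}\left[\mathbf{F}_{n}\right]}}\varotimes F\left(\left(b_{1,1}^{\boldsymbol{\beta}}\right)^{-1}\natural id_{n}\right)\right)$ with $\xi(F)_{n}$ the monomorphism of Definition \ref{def:morphcaract1}.

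First I would check well-definedness with respect to the balancing relation. For $g\in\mathbf{F}_{n}$, $i\in\mathcal{I}_{\mathbb{K}\left[\mathbf{F}_{n}\right]}$ and $v\in F(n+2)$, moving $g$ across the tensor on the source side produces the action $F\left(id_{1}\natural\varsigma_{n}(g)\right)$, while on the target side the image generator $e_{\mathbf{F}_{1}}\ast g\in\mathbf{F}_{1+n}$ acts by $F\left(\varsigma_{1+n}\left(e_{\mathbf{F}_{1}}\ast g\right)\right)$. Equality of the two outcomes reduces precisely to the relation \eqref{eq:equiva} of Lemma \ref{rem:noteworthyconseq}, that is, to Condition \ref{cond:conditionstability}; this is where that condition enters. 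Since $\xi(F)_{n}$ is a monomorphism and the twist is an isomorphism, $\xi'(F)_{n}$ is then automatically a monomorphism.

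Next, to prove that $\left\{ \xi'(F)_{n}\right\}_{n}$ is a natural transformation over $\mathfrak{U}\boldsymbol{\beta}$, I would use Proposition \ref{prop:criterionnaturaltransfo}, which splits the verification into compatibility with the automorphisms $\sigma\in\mathbf{B}_{n}$ and compatibility with the stabilisation morphisms $\left[n'-n,id_{n'}\right]$ (relation \eqref{eq:criterion3}). In each case the computation separates into an ideal part and an $F$-part. The ideal part is governed by Condition \ref{cond:coherenceconditionbnautfn1}: for an automorphism one invokes its left-hand square $\left(\iota_{\mathbf{F}_{1}}\ast id_{\mathbf{F}_{n}}\right)\circ a_{n}(\sigma)=a_{1+n}\left(id_{1}\natural\sigma\right)\circ\left(\iota_{\mathbf{F}_{1}}\ast id_{\mathbf{F}_{n}}\right)$, while for $\left[n'-n,id_{n'}\right]$ one invokes its right-hand square, after observing that $\left(b_{1,n'-n}^{\boldsymbol{\beta}}\right)^{-1}\natural id_{n}$ has the form $\sigma'\natural id_{n}$ with $\sigma'\in\mathbf{B}_{(n'+1)-n}$, so that $a_{1+n'}\left(\left(b_{1,n'-n}^{\boldsymbol{\beta}}\right)^{-1}\natural id_{n}\right)$ fixes the last $n$ free generators. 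The $F$-part for automorphisms is immediate because $\left(b_{1,1}^{\boldsymbol{\beta}}\right)^{-1}\natural id_{n}$ and $id_{2}\natural\sigma$ involve disjoint strands and hence commute in $\mathbf{B}_{n+2}$.

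The step I expect to be the main obstacle is the $F$-part of relation \eqref{eq:criterion3}: one must establish the equality of morphisms of $\mathfrak{U}\boldsymbol{\beta}$
\[
\left(\left(b_{1,1}^{\boldsymbol{\beta}}\right)^{-1}\natural id_{n'}\right)\circ\left(id_{2}\natural\left[n'-n,id_{n'}\right]\right)=\left(id_{1}\natural\left[n'-n,\left(b_{1,n'-n}^{\boldsymbol{\beta}}\right)^{-1}\natural id_{n}\right]\right)\circ\left(\left(b_{1,1}^{\boldsymbol{\beta}}\right)^{-1}\natural id_{n}\right).
\]
This is a pure pre-braiding identity: rewriting each side with the monoidal formula of Proposition \ref{prop:homogenousprebraided} turns it into the assertion that two representatives $\left[n'-n,-\right]$ differ by a factor $g\natural id_{n+2}$, which follows from the naturality of $b_{-,-}^{\boldsymbol{\beta}}$ together with the hexagon relation $b_{2,n'-n}^{\boldsymbol{\beta}}=\left(b_{1,n'-n}^{\boldsymbol{\beta}}\natural id_{1}\right)\circ\left(id_{1}\natural b_{1,n'-n}^{\boldsymbol{\beta}}\right)$ on $\boldsymbol{\beta}$. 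Applying $F$ then yields the required equality of $F$-parts, completing the $\mathfrak{U}\boldsymbol{\beta}$-naturality. Finally, naturality of $\xi'$ in $F$ is routine: for $\eta\colon F\rightarrow G$ the square collapses to $G\left(\left(b_{1,1}^{\boldsymbol{\beta}}\right)^{-1}\natural id_{n}\right)\circ\eta_{n+2}=\eta_{n+2}\circ F\left(\left(b_{1,1}^{\boldsymbol{\beta}}\right)^{-1}\natural id_{n}\right)$, which is just naturality of $\eta$, and this parallels the verification for natural transformations in the proof of Theorem \ref{Thm:LMFunctor}.
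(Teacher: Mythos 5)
Your proposal is correct, and its skeleton---balancing of the tensor product via Condition \ref{cond:conditionstability} through Lemma \ref{rem:noteworthyconseq}, naturality over $\mathfrak{U}\boldsymbol{\beta}$ via Condition \ref{cond:coherenceconditionbnautfn1}, then naturality in $F$ from naturality of $\eta$---is exactly that of the paper, with the same conditions invoked at the same places. Where you genuinely diverge is the middle step. You filter it through Proposition \ref{prop:criterionnaturaltransfo}, treating braid automorphisms and the stabilisation morphisms $\left[n'-n,id_{n'}\right]$ separately, and you prove the resulting pre-braiding identity by hand from naturality of $b_{-,-}^{\boldsymbol{\beta}}$ and the hexagon $b_{2,n'-n}^{\boldsymbol{\beta}}=\left(b_{1,n'-n}^{\boldsymbol{\beta}}\natural id_{1}\right)\circ\left(id_{1}\natural b_{1,n'-n}^{\boldsymbol{\beta}}\right)$; this computation is valid. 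The paper instead handles an arbitrary $\left[n'-n,\sigma\right]$ in a single computation, and there the step you single out as the main obstacle is invisible: by bifunctoriality of $\natural$ on $\mathfrak{U}\boldsymbol{\beta}$ (the interchange law, already guaranteed by Proposition \ref{prop:homogenousprebraided}), both relevant composites of $F$-components are equal to
\[
F\left(\left(b_{1,1}^{\boldsymbol{\beta}}\right)^{-1}\natural\left[n'-n,\sigma\right]\right),
\]
since
\[
\left(id_{2}\natural\left[n'-n,\sigma\right]\right)\circ\left(\left(b_{1,1}^{\boldsymbol{\beta}}\right)^{-1}\natural id_{n}\right)=\left(b_{1,1}^{\boldsymbol{\beta}}\right)^{-1}\natural\left[n'-n,\sigma\right]=\left(\left(b_{1,1}^{\boldsymbol{\beta}}\right)^{-1}\natural id_{n'}\right)\circ\left(id_{2}\natural\left[n'-n,\sigma\right]\right),
\]
so that all the content sits in the ideal components, which are settled by the same two applications of Condition \ref{cond:coherenceconditionbnautfn1} that you make (the square with $\sigma'=id_{1}$ for the automorphism part, the triangle with $\sigma'=\left(b_{1,n'-n}^{\boldsymbol{\beta}}\right)^{-1}$ for the stabilisation part). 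In effect, your hexagon argument re-proves the instance of the interchange law that the paper gets for free; what your route buys is an explicit accounting of where each axiom of the pre-braiding enters, while the paper's buys brevity by exploiting the already-established monoidal structure of $\mathfrak{U}\boldsymbol{\beta}$.
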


\begin{proof}
Let $n$ and $n'$ be natural numbers such that $n'\geq n$, and $\left[n'-n,\sigma\right]\in Hom_{\mathfrak{U}\boldsymbol{\beta}}\left(n,n'\right)$
with $\sigma\in\mathbf{B}_{n'}$. Let $i\in\mathcal{I}_{\mathbb{K}\left[\mathbf{F}_{n}\right]}$,
$v\in F\left(n+2\right)$ and $g\in\mathbf{F}_{n}$. By Condition
\ref{cond:conditionstability} (using Lemma \ref{rem:noteworthyconseq}
with $n'=n+1$) the following equality holds in $\mathbf{B}_{n+2}$:
\[
\left(\left(b_{1,1}^{\boldsymbol{\beta}}\right)^{-1}\natural id_{n}\right)\circ\left(id_{1}\natural\varsigma_{n}\left(g\right)\right)=\varsigma_{1+n}\left(e_{\mathbf{F}_{1}}*g\right)\circ\left(\left(b_{1,1}^{\boldsymbol{\beta}}\right)^{-1}\natural id_{n}\right).
\]
Recall that $F\left(n+2\right)$ is a $\mathbb{K}\left[\mathbf{F}_{n}\right]$-module
via $F\left(\varsigma_{1+n}\circ\left(\iota_{\mathbf{F}_{1}}\ast id_{\mathbf{F}_{n}}\right)\right)$
and $\tau_{1}F\left(n+1\right)$ is a $\mathbb{K}\left[\mathbf{F}_{n}\right]$-module
via $F\left(id_{1}\natural\left(\varsigma_{n}\circ id_{\mathbf{F}_{n}}\right)\right)$.
Then it follows that the assignment $\xi'\left(F\right)_{n}$ is well-defined
with respect to the tensor product structures of $\left(\mathbf{LM}\circ\tau_{1}\right)\left(F\right)\left(n\right)$
and $\left(\tau_{1}\circ\mathbf{LM}\right)\left(F\right)\left(n\right)$.
Moreover, we compute that:
\begin{eqnarray*}
 &  & \left(\left(\tau_{1}\circ\mathbf{LM}\right)\left(F\right)\left(\left[n'-n,\sigma\right]\right)\right)\circ\left(\xi'\left(F\right)_{n}\right)\left(i\underset{\mathbb{K}\left[\mathbf{F}_{n}\right]}{\varotimes}v\right)\\
 & = & a_{1+n'}\left(id_{1}\natural\sigma\right)\left(a_{1+n'}\left(\left(b_{1,n'-n}^{\mathfrak{\boldsymbol{\beta}}}\right)^{-1}\natural id_{n}\right)\left(\iota_{\mathcal{I}_{\mathbb{K}\left[\mathbf{F}_{1+n'-n}\right]}}*id_{\mathcal{I}_{\mathbb{K}\left[\mathbf{F}_{n}\right]}}\right)\left(i\right)\right)\\
 &  & \underset{\mathbb{K}\left[\mathbf{F}_{n'+1}\right]}{\varotimes}F\left(\left(b_{1,1}^{\mathfrak{\boldsymbol{\beta}}}\right)^{-1}\natural\left[n'-n,\sigma\right]\right)\left(v\right).
\end{eqnarray*}
It follows from Condition \ref{cond:coherenceconditionbnautfn1} that:
\[
a_{1+n'}\left(\left(b_{1,n'-n}^{\mathfrak{\boldsymbol{\beta}}}\right)^{-1}\natural id_{n}\right)\circ\left(\iota_{\mathcal{I}_{\mathbb{K}\left[\mathbf{F}_{1+n'-n}\right]}}*id_{\mathcal{I}_{\mathbb{K}\left[\mathbf{F}_{n}\right]}}\right)\left(i\right)=\left(\iota_{\mathcal{I}_{\mathbb{K}\left[\mathbf{F}_{1+n'-n}\right]}}*id_{\mathcal{I}_{\mathbb{K}\left[\mathbf{F}_{n}\right]}}\right)\left(i\right).
\]
Again by Condition \ref{cond:coherenceconditionbnautfn1}, we deduce
that:
\[
a_{1+n'}\left(id_{1}\natural\sigma\right)\circ\left(\iota_{\mathcal{I}_{\mathbb{K}\left[\mathbf{F}_{1+n'-n}\right]}}*id_{\mathcal{I}_{\mathbb{K}\left[\mathbf{F}_{n}\right]}}\right)\left(i\right)=\iota_{\mathcal{I}_{\mathbb{K}\left[\mathbf{F}_{1}\right]}}*a_{n'}\left(\sigma\right)\left(\iota_{\mathcal{I}_{\mathbb{K}\left[\mathbf{F}_{n'-n}\right]}}*id_{\mathcal{I}_{\mathbb{K}\left[\mathbf{F}_{n}\right]}}\right)\left(i\right).
\]
Hence, we deduce that:
\[
\left(\left(\tau_{1}\circ\mathbf{LM}\right)\left(F\right)\left(\left[n'-n,\sigma\right]\right)\right)\circ\left(\xi'\left(F\right)_{n}\right)=\left(\xi'\left(F\right)_{n'}\right)\circ\left(\left(\mathbf{LM}\circ\tau_{1}\right)\left(F\right)\left(\left[n'-n,\sigma\right]\right)\right).
\]

Let $\eta:F\rightarrow G$ be a natural transformation in the category
$\mathbf{Fct}\left(\mathfrak{U}\boldsymbol{\beta},\mathbb{K}\textrm{-}\mathfrak{Mod}\right)$
and let $n$ be a natural number. Since $\eta$ is a natural transformation,
we have:
\[
G\left(\left(b_{1,1}^{\mathfrak{\boldsymbol{\beta}}}\right)^{-1}\natural id_{n}\right)\circ\eta_{n+2}=\eta_{n+2}\circ F\left(\left(b_{1,1}^{\mathfrak{\boldsymbol{\beta}}}\right)^{-1}\natural id_{n}\right).
\]
Hence, we deduce from the definitions of $\tau_{1}\circ\mathbf{LM}$
(see Theorem \ref{Thm:LMFunctor}) and of $\mathbf{LM}\circ\tau_{1}$
(see Remark \ref{def:lmtau1}) that:
\[
\xi'\left(G\right)_{n}\circ\left(\mathbf{LM}\circ\tau_{1}\right)\left(\eta\right)_{n}=\left(\tau_{1}\circ\mathbf{LM}\right)\left(\eta\right)_{n}\circ\xi'\left(F\right)_{n}.
\]
\end{proof}

\subsubsection{Splitting of the translation functor}

Now, we can establish a decomposition result for the translation functor
applied to a Long-Moody functor.
\begin{prop}
\label{prop:splittingtranslation}There is a natural equivalence of
endofunctors of $\mathbf{Fct}\left(\mathfrak{U}\boldsymbol{\beta},\mathbb{K}\textrm{-}\mathfrak{Mod}\right)$:
\[
\tau_{1}\circ\mathbf{LM}\cong\tau_{2}\oplus\left(\mathbf{LM}\circ\tau_{1}\right).
\]
\end{prop}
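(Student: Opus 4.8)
The plan is to assemble the two ingredients already built in the preceding propositions into a single morphism out of a direct sum and then check that it is a pointwise isomorphism. Concretely, I have the subfunctor inclusion $\upsilon\colon\varUpsilon\hookrightarrow\tau_{1}\circ\mathbf{LM}$ of Proposition~\ref{prop:subfuncotorvarepsi}, together with the natural equivalence $\chi\colon\varUpsilon\xrightarrow{\cong}\tau_{2}$ of Proposition~\ref{prop:equivalencevarepsitau2}, and the natural transformation $\xi'\colon\mathbf{LM}\circ\tau_{1}\to\tau_{1}\circ\mathbf{LM}$ of Proposition~\ref{prop:subfunclmtm}. Composing the first two, I obtain $\upsilon\circ\chi^{-1}\colon\tau_{2}\to\tau_{1}\circ\mathbf{LM}$, and via the universal property of the objectwise direct sum these combine into one natural transformation
\[
\Theta=(\upsilon\circ\chi^{-1})\oplus\xi'\colon\tau_{2}\oplus(\mathbf{LM}\circ\tau_{1})\longrightarrow\tau_{1}\circ\mathbf{LM}.
\]
Naturality of $\Theta$ (both in the $\mathfrak{U}\boldsymbol{\beta}$-variable and in the functor argument $F$) is immediate, being exactly the content of the cited propositions, so the entire task reduces to showing that $\Theta(F)_{n}$ is an isomorphism of $\mathbb{K}$-modules for every object $F$ and every $n\in\mathbb{N}$.

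This pointwise statement is supplied by Proposition~\ref{prop:splittingtaul1}. By construction the restriction of $\Theta(F)_{n}$ to the first summand is $\upsilon(F)_{n}\circ\chi^{-1}_{n,F}$, whose image is $\mathcal{I}_{\mathbb{K}[\mathbf{F}_{1}]}\otimes_{\mathbb{K}[\mathbf{F}_{1}]}F(n+2)$, the first direct factor of the splitting; and its restriction to the second summand is $\xi'(F)_{n}$, landing in the second direct factor $\mathcal{I}_{\mathbb{K}[\mathbf{F}_{n}]}\otimes_{\mathbb{K}[\mathbf{F}_{n}]}F(n+2)$. Since these two factors are complementary and span $\tau_{1}\mathbf{LM}(F)(n)$ by Proposition~\ref{prop:splittingtaul1}, it suffices to see that each restriction is an isomorphism onto its factor; the first is so because $\chi$ and $\upsilon$ are, and the second is so because $\xi'(F)_{n}$ differs from the defining inclusion $\xi(F)_{n}$ of Definition~\ref{def:morphcaract1} only by the factor $F\!\left((b_{1,1}^{\boldsymbol{\beta}})^{-1}\natural id_{n}\right)$, which is an automorphism of $F(n+2)$.

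Granting this, $\Theta$ is a natural transformation that is an isomorphism on every object, hence a natural equivalence, which is the desired statement $\tau_{1}\circ\mathbf{LM}\cong\tau_{2}\oplus(\mathbf{LM}\circ\tau_{1})$.

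The one delicate point I expect to require care is the identification of the image of $\xi'$ with the second factor of the splitting. The subtlety is that $F(n+2)$ carries two a priori different $\mathbb{K}[\mathbf{F}_{n}]$-module structures, namely $F(id_{1}\natural\varsigma_{n}(-))$ used in $\mathbf{LM}\circ\tau_{1}$ (Remark~\ref{def:lmtau1}) and $F(\varsigma_{1+n}(\iota_{\mathbf{F}_{1}}*id_{\mathbf{F}_{n}}))$ used in Proposition~\ref{prop:splittingtaul1}; the role of the twist $F\!\left((b_{1,1}^{\boldsymbol{\beta}})^{-1}\natural id_{n}\right)$, via Condition~\ref{cond:conditionstability} and Lemma~\ref{rem:noteworthyconseq}, is precisely to intertwine them, so that $\xi'(F)_{n}$ is a well-defined isomorphism onto the second factor. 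This is exactly the compatibility already verified when proving that $\xi'$ is well defined, so no genuinely new computation is needed here.
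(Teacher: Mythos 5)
Your proposal is correct and follows essentially the same route as the paper: form the map $\upsilon\oplus\xi'$ out of the direct sum, verify it is a pointwise isomorphism via Proposition \ref{prop:splittingtaul1}, and identify $\varUpsilon$ with $\tau_{2}$ via Proposition \ref{prop:equivalencevarepsitau2} (you merely compose with $\chi^{-1}$ at the start rather than at the end). Your explicit justification that the image of $\xi'$ is the second direct factor — noting that the twist $F\bigl(\bigl(b_{1,1}^{\boldsymbol{\beta}}\bigr)^{-1}\natural id_{n}\bigr)$ intertwines the two $\mathbb{K}\left[\mathbf{F}_{n}\right]$-module structures on $F\left(n+2\right)$ — is a point the paper leaves implicit, but it is the same argument.
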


\begin{proof}
Recall the natural transformations $\upsilon:\varUpsilon\rightarrow\tau_{1}\circ\mathbf{LM}$
(introduced in Proposition \ref{prop:subfuncotorvarepsi}) and $\xi':\mathbf{LM}\circ\tau_{1}\rightarrow\tau_{1}\circ\mathbf{LM}$
(defined in Proposition \ref{prop:subfunclmtm}). The direct sum in
the category $\mathbf{Fct}\left(\mathfrak{U}\boldsymbol{\beta},\mathbb{K}\textrm{-}\mathfrak{Mod}\right)$
(induced by the direct sum in the category $\mathbb{K}\textrm{-}\mathfrak{Mod}$)
allows us to define a natural transformation:\textit{
\[
\upsilon\oplus\xi':\varUpsilon\oplus\left(\mathbf{LM}\circ\tau_{1}\right)\longrightarrow\left(\tau_{1}\circ\mathbf{LM}\right)\left(F\right).
\]
}This is a natural equivalence since for all natural numbers $n$,
we have an isomorphism of $\mathbb{K}$-modules according to Proposition
\ref{prop:splittingtaul1}: $\varUpsilon\left(F\right)\left(n\right)\oplus\left(\mathbf{LM}\circ\tau_{1}\right)\left(F\right)\left(n\right)\cong\left(\tau_{1}\circ\mathbf{LM}\right)\left(F\right)\left(n\right)$.
We conclude using Proposition \ref{prop:equivalencevarepsitau2}.
\end{proof}

\subsection{Splitting of the difference functor}

Recall the natural transformation $i_{1}:Id_{\mathbf{Fct}\left(\mathfrak{U}\boldsymbol{\beta},\mathbb{K}\textrm{-}\mathfrak{Mod}\right)}\rightarrow\tau_{1}$
of $\mathbf{Fct}\left(\mathfrak{U}\boldsymbol{\beta},\mathbb{K}\textrm{-}\mathfrak{Mod}\right)$.
Our aim is to study the cokernel of $i_{1}\circ\mathbf{LM}$. We recall
that for $F$ an object of $\mathbf{Fct}\left(\mathfrak{U}\boldsymbol{\beta},\mathbb{K}\textrm{-}\mathfrak{Mod}\right)$,
for all natural numbers $n$, $\left(i_{1}\mathbf{LM}\right)\left(F\right)_{n}=\mathbf{LM}\left(F\right)\left(\left[1,id_{1+n}\right]\right)$
(see Definition \ref{def:defix}).
\begin{rem}
\label{rem:evaluationi1lm}Explicitly for all elements $i$ of $\mathcal{I}_{\mathbb{K}\left[\mathbf{F}_{n}\right]}$,
for all elements $v$ of $F\left(n\right)$:
\[
\left(i_{1}\mathbf{LM}\right)\left(F\right)_{n}\left(i\underset{\mathbb{K}\left[\mathbf{F}_{n}\right]}{\varotimes}v\right)=\left(\iota_{\mathcal{I}_{\mathbb{K}\left[\mathbf{F}_{1}\right]}}*id_{\mathcal{I}_{\mathbb{K}\left[\mathbf{F}_{n}\right]}}\right)\left(i\right)\underset{\mathbb{K}\left[\mathbf{F}_{1+n}\right]}{\varotimes}F\left(id_{1}\natural\iota_{1}\natural id_{n}\right)\left(v\right).
\]
\end{rem}

\paragraph{The natural transformation $\mathbf{LM}\circ i_{1}$:}

Let us consider the exact sequence (\ref{eq:ESCaract}) in the category
of endofunctors of $\mathbf{Fct}\left(\mathfrak{U}\boldsymbol{\beta},\mathbb{K}\textrm{-}\mathfrak{Mod}\right)$
of Proposition \ref{prop:lemmecaract}:
\[
\xymatrix{0\ar@{->}[r] & \kappa_{1}\ar@{->}[r]^{\Omega_{1}} & Id\ar@{->}[r]^{i_{1}} & \tau_{1}\ar@{->}[r]^{\varDelta_{1}} & \delta_{1}\ar@{->}[r] & 0}
.
\]
Since the Long-Moody functor is exact (see Proposition \ref{prop:exactnessLM}),
we have the following exact sequence:
\begin{equation}
\xymatrix{0\ar@{->}[r] & \mathbf{LM}\circ\kappa_{1}\ar@{->}[rr]^{{\color{white}ooo}\mathbf{LM}\left(\Omega_{1}\right)} &  & \mathbf{LM}\ar@{->}[rr]^{\mathbf{LM}\left(i_{1}\right)} &  & \mathbf{LM}\circ\tau_{1}\ar@{->}[rr]^{\mathbf{LM}\left(\varDelta_{1}\right)} &  & \mathbf{LM}\circ\delta_{1}\ar@{->}[r] & 0}
.\label{eq:LESLMi1}
\end{equation}

\begin{rem}
\label{rem:evaluationlmi1}From the definition of $\mathbf{LM}$ (see
Theorem \ref{Thm:LMFunctor}), we deduce that for $F$ an object of
$\mathbf{Fct}\left(\mathfrak{U}\boldsymbol{\beta},\mathbb{K}\textrm{-}\mathfrak{Mod}\right)$,
for all natural numbers $n$, for all elements $i$ of $\mathcal{I}_{\mathbb{K}\left[\mathbf{F}_{n}\right]}$,
for all elements $v$ of $F\left(n\right)$:
\[
\mathbf{LM}\left(i_{1}\right)\left(F\right)_{n}\left(i\underset{\mathbb{K}\left[\mathbf{F}_{n}\right]}{\varotimes}v\right)=i\underset{\mathbb{K}\left[\mathbf{F}_{n}\right]}{\varotimes}F\left(\iota_{1}\natural id_{1}\natural id_{n}\right)\left(v\right).
\]
Recall the natural transformation $\xi':\mathbf{LM}\circ\tau_{1}\rightarrow\tau_{1}\circ\mathbf{LM}$
introduced in \ref{prop:subfunclmtm}.
\end{rem}

\begin{lem}
\label{lem:relationLMi1andi1LM}As natural transformations from $\mathbf{LM}$
to $\tau_{1}\circ\mathbf{LM}$, which are endofunctors of the category
$\mathbf{Fct}\left(\mathfrak{U}\boldsymbol{\beta},\mathbb{K}\textrm{-}\mathfrak{Mod}\right)$,
the following equality holds:
\[
\xi'\circ\left(\mathbf{LM}\left(i_{1}\right)\right)=i_{1}\mathbf{LM}.
\]
\end{lem}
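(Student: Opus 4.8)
The plan is to verify this equality of natural transformations componentwise on generators of the relevant tensor products. Both $\xi'\circ(\mathbf{LM}(i_1))$ and $i_1\mathbf{LM}$ are natural transformations $\mathbf{LM}\to\tau_1\circ\mathbf{LM}$ — the former being the composite $\mathbf{LM}\xrightarrow{\mathbf{LM}(i_1)}\mathbf{LM}\circ\tau_1\xrightarrow{\xi'}\tau_1\circ\mathbf{LM}$, the latter the right whiskering of $i_1$ with $\mathbf{LM}$ — and each is already known to be a well-defined morphism in the relevant functor category (by Proposition \ref{prop:subfunclmtm} together with the exactness of $\mathbf{LM}$, and by the exact sequence (\ref{eq:ESCaract})). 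Hence it suffices to fix an object $F$ and a natural number $n$ and to check that the two $\mathbb{K}$-linear maps $\mathbf{LM}(F)(n)\to(\tau_1\circ\mathbf{LM})(F)(n)$ agree on every generator $i\varotimes v$ with $i\in\mathcal{I}_{\mathbb{K}[\mathbf{F}_n]}$ and $v\in F(n+1)$; no well-definedness needs to be rechecked.

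First I would evaluate both sides using the formulas already recorded. By Remark \ref{rem:evaluationlmi1}, $\mathbf{LM}(i_1)(F)_n(i\varotimes v)=i\varotimes F(\iota_1\natural id_1\natural id_n)(v)$, and applying $\xi'(F)_n$ from Proposition \ref{prop:subfunclmtm} gives
\[
\bigl(\xi'\circ\mathbf{LM}(i_1)\bigr)(F)_n\bigl(i\varotimes v\bigr)=\bigl(\iota_{\mathcal{I}_{\mathbb{K}[\mathbf{F}_1]}}*id_{\mathcal{I}_{\mathbb{K}[\mathbf{F}_n]}}\bigr)(i)\varotimes F\bigl(((b_{1,1}^{\boldsymbol{\beta}})^{-1}\natural id_n)\circ(\iota_1\natural id_1\natural id_n)\bigr)(v).
\]
On the other hand, Remark \ref{rem:evaluationi1lm} gives
\[
(i_1\mathbf{LM})(F)_n(i\varotimes v)=\bigl(\iota_{\mathcal{I}_{\mathbb{K}[\mathbf{F}_1]}}*id_{\mathcal{I}_{\mathbb{K}[\mathbf{F}_n]}}\bigr)(i)\varotimes F(id_1\natural\iota_1\natural id_n)(v).
\]
The augmentation-ideal factors already coincide, so the entire statement reduces to the single identity $((b_{1,1}^{\boldsymbol{\beta}})^{-1}\natural id_n)\circ(\iota_1\natural id_1\natural id_n)=id_1\natural\iota_1\natural id_n$ of morphisms $n+1\to n+2$ in $\mathfrak{U}\boldsymbol{\beta}$.

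This last identity is the genuinely interesting point, and it is exactly the pre-braiding axiom. By Definition \ref{def:defprebraided} applied with $A=B=1$ in the pre-braided category $\mathfrak{U}\boldsymbol{\beta}$, one has $b_{1,1}^{\boldsymbol{\beta}}\circ(id_1\natural\iota_1)=\iota_1\natural id_1$, equivalently $id_1\natural\iota_1=(b_{1,1}^{\boldsymbol{\beta}})^{-1}\circ(\iota_1\natural id_1)$ as morphisms $1\to2$. Whiskering on the right by $\natural\,id_n$, which is a functor and so preserves composition, yields precisely the required equality; substituting it back shows the $F$-factors match as well, so the two maps agree on generators and therefore coincide.

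The main (and essentially only) obstacle is the bookkeeping in expanding $id_1\natural\iota_1\natural id_n$ and $\iota_1\natural id_1\natural id_n$ as morphisms of $\mathfrak{U}\boldsymbol{\beta}$ — in particular tracking the pre-braiding term $(b_{1,1}^{\boldsymbol{\beta}})^{-1}$ that arises from the formula for $\natural$ on morphisms in Proposition \ref{prop:homogenousprebraided} — and recognizing that the discrepancy between them is resolved by a single use of the pre-braiding axiom. It is worth emphasizing that none of the coherence Conditions on $\varsigma$ and $a$ enter here, since the augmentation-ideal components of the two transformations are literally the same map; the equality is purely a matter of the ambient monoidal structure of $\mathfrak{U}\boldsymbol{\beta}$.
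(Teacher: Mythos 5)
Your proposal is correct and follows essentially the same route as the paper's proof: evaluate both natural transformations on generators $i\underset{\mathbb{K}\left[\mathbf{F}_{n}\right]}{\varotimes}v$ using Remark \ref{rem:evaluationlmi1}, Proposition \ref{prop:subfunclmtm} and Remark \ref{rem:evaluationi1lm}, and reduce everything to the single pre-braiding identity $\left(b_{1,1}^{\boldsymbol{\beta}}\right)^{-1}\circ\left(\iota_{1}\natural id_{1}\right)=id_{1}\natural\iota_{1}$ of Definition \ref{def:defprebraided}, whiskered by $\natural\, id_{n}$. Your write-up simply makes explicit the bookkeeping that the paper compresses into one sentence.
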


\begin{proof}
Let $F$ be an object of $\mathbf{Fct}\left(\mathfrak{U}\boldsymbol{\beta},\mathbb{K}\textrm{-}\mathfrak{Mod}\right)$.
Let $n$ be a natural number. Let $i$ be an element of $\mathcal{I}_{\mathbb{K}\left[\mathbf{F}_{n}\right]}$
and let $v$ be an element of $F\left(n\right)$. Since $\left(b_{1,1}^{\mathfrak{\boldsymbol{\beta}}}\right)^{-1}\circ\left(\iota_{1}\natural id_{1}\right)=id_{1}\natural\iota_{1}$
by Definition \ref{def:defprebraided}, we deduce from Proposition
\ref{prop:subfunclmtm}, Remark \ref{rem:evaluationlmi1} and Remark
\ref{rem:evaluationi1lm}, that:
\[
\left(\xi'\circ\left(\mathbf{LM}\left(i_{1}\right)\right)\right)\left(F\right)_{n}\left(i\underset{\mathbb{K}\left[\mathbf{F}_{n}\right]}{\varotimes}v\right)=\left(id_{1}*i\right)\underset{\mathbb{K}\left[\mathbf{F}_{1+n}\right]}{\varotimes}F\left(id_{1}\natural\iota_{1}\natural id_{n}\right)\left(v\right)=\left(i_{1}\mathbf{LM}\right)\left(F\right)_{n}\left(i\underset{\mathbb{K}\left[\mathbf{F}_{n}\right]}{\varotimes}v\right).
\]
\end{proof}

\paragraph{Decomposition results:}

Lemma \ref{lem:relationLMi1andi1LM} leads to the following key results.
\begin{thm}
\label{thm:Splitting LM}\label{thm:commutationkappaLM}There is a
natural equivalence in the category $\mathbf{Fct}\left(\mathfrak{U}\boldsymbol{\beta},\mathbb{K}\textrm{-}\mathfrak{Mod}\right)$:
\[
\delta_{1}\circ\mathbf{LM}\cong\tau_{2}\oplus\left(\mathbf{LM}\circ\delta_{1}\right).
\]
Moreover, there is a natural isomorphism $\kappa_{1}\circ\mathbf{LM}\cong\mathbf{LM}\circ\kappa_{1}$.
\end{thm}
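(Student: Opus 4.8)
The plan is to compute directly both the kernel and the cokernel of the natural transformation $i_{1}\mathbf{LM}:\mathbf{LM}\rightarrow\tau_{1}\circ\mathbf{LM}$, since by Definition \ref{def:defix} applied objectwise to $\mathbf{LM}\left(F\right)$ we have $\kappa_{1}\circ\mathbf{LM}=\ker\left(i_{1}\mathbf{LM}\right)$ and $\delta_{1}\circ\mathbf{LM}=\textrm{coker}\left(i_{1}\mathbf{LM}\right)$ as endofunctors of $\mathbf{Fct}\left(\mathfrak{U}\boldsymbol{\beta},\mathbb{K}\textrm{-}\mathfrak{Mod}\right)$. The two central inputs are Proposition \ref{prop:splittingtranslation}, which identifies $\tau_{1}\circ\mathbf{LM}$ with $\tau_{2}\oplus\left(\mathbf{LM}\circ\tau_{1}\right)$ via the natural transformations $\upsilon$ (onto the $\tau_{2}$ summand, through the equivalence $\varUpsilon\cong\tau_{2}$ of Proposition \ref{prop:equivalencevarepsitau2}) and $\xi'$ (onto the $\mathbf{LM}\circ\tau_{1}$ summand), together with Lemma \ref{lem:relationLMi1andi1LM}, which asserts $i_{1}\mathbf{LM}=\xi'\circ\mathbf{LM}\left(i_{1}\right)$.

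First I would transport $i_{1}\mathbf{LM}$ through the splitting of Proposition \ref{prop:splittingtranslation}. Since $\xi'$ is precisely the inclusion of the summand $\mathbf{LM}\circ\tau_{1}$, the projection of $\tau_{1}\circ\mathbf{LM}$ onto the complementary summand $\tau_{2}$ annihilates the image of $\xi'$; combined with Lemma \ref{lem:relationLMi1andi1LM}, this shows that under the isomorphism $\tau_{1}\circ\mathbf{LM}\cong\tau_{2}\oplus\left(\mathbf{LM}\circ\tau_{1}\right)$ the transformation $i_{1}\mathbf{LM}$ is identified with $\left(0,\mathbf{LM}\left(i_{1}\right)\right):\mathbf{LM}\rightarrow\tau_{2}\oplus\left(\mathbf{LM}\circ\tau_{1}\right)$. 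For a map into a direct sum whose first component is zero, the kernel is $\ker\left(\mathbf{LM}\left(i_{1}\right)\right)$ and the cokernel is $\tau_{2}\oplus\textrm{coker}\left(\mathbf{LM}\left(i_{1}\right)\right)$, these decompositions being natural because the splitting of Proposition \ref{prop:splittingtranslation} and the transformation $\xi'$ are natural.

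It then remains to identify $\ker\left(\mathbf{LM}\left(i_{1}\right)\right)$ and $\textrm{coker}\left(\mathbf{LM}\left(i_{1}\right)\right)$. Here I would invoke the exactness of the Long-Moody functor (Proposition \ref{prop:exactnessLM}): applying $\mathbf{LM}$ to the exact sequence (\ref{eq:ESCaract}) yields the exact sequence (\ref{eq:LESLMi1}), whence $\ker\left(\mathbf{LM}\left(i_{1}\right)\right)\cong\mathbf{LM}\circ\kappa_{1}$ and $\textrm{coker}\left(\mathbf{LM}\left(i_{1}\right)\right)\cong\mathbf{LM}\circ\delta_{1}$. Substituting these into the previous paragraph gives $\delta_{1}\circ\mathbf{LM}\cong\tau_{2}\oplus\left(\mathbf{LM}\circ\delta_{1}\right)$ and $\kappa_{1}\circ\mathbf{LM}\cong\mathbf{LM}\circ\kappa_{1}$, as desired.

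I expect the only genuinely delicate point to be the verification that $i_{1}\mathbf{LM}$ really has vanishing $\tau_{2}$-component, that is, that the factorization of Lemma \ref{lem:relationLMi1andi1LM} is compatible with the particular splitting monomorphisms $\upsilon\left(F\right)_{n}$ and $\xi'\left(F\right)_{n}$ of Definition \ref{def:morphcaract1}; this is checked at the level of $\mathbb{K}$-modules using Remarks \ref{rem:evaluationi1lm} and \ref{rem:evaluationlmi1}. Once this compatibility is established, the remainder is the purely formal behaviour of kernels and cokernels with respect to a split direct sum in the abelian category $\mathbf{Fct}\left(\mathfrak{U}\boldsymbol{\beta},\mathbb{K}\textrm{-}\mathfrak{Mod}\right)$.
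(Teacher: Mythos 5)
Your proposal is correct and follows essentially the same route as the paper: both rest on Proposition \ref{prop:splittingtranslation}, Lemma \ref{lem:relationLMi1andi1LM}, and the exactness of $\mathbf{LM}$ (Proposition \ref{prop:exactnessLM}) applied to the sequence (\ref{eq:ESCaract}). The paper merely packages the same computation as a comparison of two exact rows via the isomorphism $\upsilon\oplus\xi'$, whereas you transport $i_{1}\mathbf{LM}$ through the splitting and read off the kernel and cokernel of $\left(0,\mathbf{LM}\left(i_{1}\right)\right)$ directly.
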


\begin{proof}
It follows from the definition of $i_{1}$ (see Proposition \ref{prop:lemmecaract})
and from Lemma \ref{lem:relationLMi1andi1LM} that the following diagram
is commutative and the row is an exact sequence:

\[
\xymatrix{0\ar@{->}[r] & \kappa_{1}\circ\mathbf{LM}\ar@{->}[rr]^{{\color{white}ooo}\Omega_{1}\mathbf{LM}} &  & \mathbf{LM}\ar@{->}[rr]^{i_{1}\mathbf{LM}}\ar@{=}[d] &  & \tau_{1}\circ\mathbf{LM}\ar@{->}[rr]^{\varDelta_{1}\mathbf{LM}} &  & \delta_{1}\circ\mathbf{LM}\ar@{->}[r] & 0\\
 &  &  & \mathbf{LM}\ar@{->}[rr]^{\mathbf{LM}\left(i_{1}\right)} &  & \mathbf{LM}\circ\tau_{1}.\ar@{^{(}->}[u]_{\textrm{by Lemma \ref{lem:relationLMi1andi1LM}}}^{\xi'}
}
\]
We denote by $i_{\mathbf{LM}\circ\tau_{1}}^{\oplus}$ the inclusion
morphism $\mathbf{LM}\circ\tau_{1}\hookrightarrow\tau_{2}\oplus\left(\mathbf{LM}\circ\tau_{1}\right)$.
The functor $\mathbf{LM}\circ\kappa_{1}$ is also the kernel of the
natural transformation $i_{\mathbf{LM}\circ\tau_{1}}^{\oplus}\circ\left(\mathbf{LM}\circ i_{1}\right)$,
as the inclusion morphism $i_{\mathbf{LM}\circ\tau_{1}}^{\oplus}:\mathbf{LM}\circ\tau_{1}\hookrightarrow\tau_{2}\oplus\left(\mathbf{LM}\circ\tau_{1}\right)$
is a monomorphism. Then, recalling the exact sequence (\ref{eq:LESLMi1}),
we obtain that the following diagram is commutative and that the two
rows are exact:

\[
\xymatrix{0\ar@{->}[r] & \kappa_{1}\circ\mathbf{LM}\ar@{->}[rr]^{{\color{white}ooo}\Omega_{1}\mathbf{LM}} &  & \mathbf{LM}\ar@{->}[rr]^{i_{1}\mathbf{LM}}\ar@{=}[d] &  & \tau_{1}\circ\mathbf{LM}\ar@{->}[rr]^{\varDelta_{1}\mathbf{LM}} &  & \delta_{1}\circ\mathbf{LM}\ar@{->}[r] & 0\\
0\ar@{->}[r] & \mathbf{LM}\circ\kappa_{1}\ar@{->}[rr]_{{\color{white}ooo}\mathbf{LM}\left(\Omega_{1}\right)} &  & \mathbf{LM}\ar@{->}[rr]_{i_{\mathbf{LM}\circ\tau_{1}}^{\oplus}\circ\left(\mathbf{LM}\left(i_{1}\right)\right)\,\,\,\,\,\,\,\,\,\,\,\,} &  & \tau_{2}\oplus\left(\mathbf{LM}\circ\tau_{1}\right)\ar@{->}[rr]_{id_{\tau_{2}}\oplus\left(\mathbf{LM}\left(\varDelta_{1}\right)\right)}\ar@{->}[u]_{\upsilon\oplus\xi'}^{\textrm{\ensuremath{\cong}\,\,by Proposition \ref{prop:splittingtranslation}}} &  & \tau_{2}\oplus\left(\mathbf{LM}\circ\delta_{1}\right)\ar@{->}[r] & 0
}
\]
A fortiori, by definition of $\delta_{1}$ (see Definition \ref{def:defix})
and the universal property of the cokernel, we deduce that:
\[
\tau_{2}\oplus\left(\mathbf{LM}\circ\delta_{1}\right)\cong\delta_{1}\circ\mathbf{LM}.
\]
Furthermore, by the unicity up to isomorphism of the kernel, we conclude
that $\kappa_{1}\circ\mathbf{LM}\cong\mathbf{LM}\circ\kappa_{1}$.
\end{proof}

\subsection{Increase of the polynomial degree}

The results formulated in Theorem \ref{thm:Splitting LM} allow us
to understand the effect of the Long-Moody functors on (very) strong
polynomial functors.
\begin{prop}
\label{prop:envoiedegnnn1+1}Let $F$ be a non-null object of $\mathbf{Fct}\left(\mathfrak{U}\boldsymbol{\beta},\mathbb{K}\textrm{-}\mathfrak{Mod}\right)$.
If the functor $F$ is strong polynomial of degree $d$, then:

\begin{enumerate}
\item the functor $\tau_{2}\left(F\right)$ belongs to $\mathcal{P}ol_{d}^{strong}\left(\mathfrak{U}\boldsymbol{\beta},\textrm{\ensuremath{\mathbb{K}}-}\mathfrak{Mod}\right)$;
\item the functor $\mathbf{LM}\left(F\right)$ belongs to $\mathcal{P}ol_{d+1}^{strong}\left(\mathfrak{U}\boldsymbol{\beta},\textrm{\ensuremath{\mathbb{K}}-}\mathfrak{Mod}\right)$.
\end{enumerate}
\end{prop}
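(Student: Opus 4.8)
The plan is to establish the two assertions together, deducing the second from the first by an induction on the strong degree $d$, the decisive ingredient being the functorial splitting of the difference functor recorded in Theorem \ref{thm:Splitting LM}. For the first assertion there is nothing to compute: by Proposition \ref{prop:proppoln} the category $\mathcal{P}ol_{d}^{strong}\left(\mathfrak{U}\boldsymbol{\beta},\mathbb{K}\textrm{-}\mathfrak{Mod}\right)$ is closed under the translation endofunctors $\tau_{x}$. Since the monoidal category is strict and $2=1\natural1$, one has $\tau_{2}=\tau_{1}\circ\tau_{1}$ (as in the proof of Proposition \ref{prop:extprebraided}), so applying closure under $\tau_{1}$ twice (equivalently, closure under $\tau_{2}$ directly) shows that $\tau_{2}\left(F\right)$ lies in $\mathcal{P}ol_{d}^{strong}\left(\mathfrak{U}\boldsymbol{\beta},\mathbb{K}\textrm{-}\mathfrak{Mod}\right)$.

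For the second assertion I would argue by induction on $d$. By Remark \ref{exa:suffit1}, every object of $\mathfrak{U}\boldsymbol{\beta}$ being a monoidal power of $1$, it suffices to control the functor $\delta_{1}\left(\mathbf{LM}\left(F\right)\right)$ and to show it belongs to $\mathcal{P}ol_{d}^{strong}\left(\mathfrak{U}\boldsymbol{\beta},\mathbb{K}\textrm{-}\mathfrak{Mod}\right)$, whence $\mathbf{LM}\left(F\right)$ will be of degree at most $d+1$. The key tool is the natural equivalence of Theorem \ref{thm:Splitting LM}, which evaluated at $F$ reads
\[
\delta_{1}\left(\mathbf{LM}\left(F\right)\right)\cong\tau_{2}\left(F\right)\oplus\mathbf{LM}\left(\delta_{1}\left(F\right)\right).
\]
In the base case $d=0$ the functor $\delta_{1}\left(F\right)$ is null, and since $\mathbf{LM}$ is reduced by Proposition \ref{prop:exactnessLM}, the equivalence collapses to $\delta_{1}\left(\mathbf{LM}\left(F\right)\right)\cong\tau_{2}\left(F\right)$, which is strong polynomial of degree at most $0$ by the first assertion; hence $\mathbf{LM}\left(F\right)$ is of degree at most $1$.

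For the inductive step, take $F$ strong polynomial of degree at most $d$, so that $\delta_{1}\left(F\right)$ has degree at most $d-1$. The inductive hypothesis then gives that $\mathbf{LM}\left(\delta_{1}F\right)$ has degree at most $d$, while $\tau_{2}\left(F\right)$ has degree at most $d$ by the first assertion. Using that $\mathcal{P}ol_{d}^{strong}\left(\mathfrak{U}\boldsymbol{\beta},\mathbb{K}\textrm{-}\mathfrak{Mod}\right)$ is closed under direct sums (a special case of closure under extension in Proposition \ref{prop:proppoln}), the displayed equivalence shows $\delta_{1}\left(\mathbf{LM}\left(F\right)\right)$ is of degree at most $d$, and therefore $\mathbf{LM}\left(F\right)$ is of degree at most $d+1$, completing the induction. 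The genuine difficulty of the argument has already been absorbed into Theorem \ref{thm:Splitting LM}, whose proof rested on the reliability conditions; granted that splitting, the present proposition is a purely formal induction driven by the closure properties of strong polynomial functors, and I do not anticipate any further obstacle beyond correctly organising the induction and invoking reducedness in the base case.
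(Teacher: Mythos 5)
Your proof is correct and follows essentially the same route as the paper: both argue by induction on $d$, using the splitting $\delta_{1}\circ\mathbf{LM}\cong\tau_{2}\oplus\left(\mathbf{LM}\circ\delta_{1}\right)$ of Theorem \ref{thm:Splitting LM}, with reducedness of $\mathbf{LM}$ handling the base case and the closure properties of $\mathcal{P}ol_{d}^{strong}$ (under translation and under direct sums/extensions) handling the inductive step. The only cosmetic difference is that you invoke the closure-under-$\tau_{x}$ statement of Proposition \ref{prop:proppoln} for assertion (1), whereas the paper rederives it from the commutation of $\tau_{2}$ with $\delta_{1}$; these are the same argument.
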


\begin{proof}
We prove these two results by induction on the degree of polynomiality.
For the first result, it follows from the commutation property $5$
of Proposition \ref{prop:lemmecaract} for $\tau_{2}$. For the second
result, let us first consider $F$ a strong polynomial functor of
degree $0$. By Theorem \ref{thm:Splitting LM}, we obtain that $\delta_{1}\mathbf{LM}\left(F\right)\cong\tau_{2}\left(F\right)$.
Therefore $\mathbf{LM}\left(F\right)$ is a strong polynomial functor
of degree less than or equal to $1$. Now, assume that $F$ is a strong
polynomial functor of degree $n\geq0$. By Theorem \ref{thm:Splitting LM}:\textit{
$\delta_{1}\mathbf{LM}\left(F\right)\cong\mathbf{LM}\left(\delta_{1}F\right)\oplus\tau_{2}\left(F\right)$}.
By the inductive hypothesis and the result on $\tau_{2}$, we deduce
that $\mathbf{LM}\left(F\right)$ is a strong polynomial functor of
degree less than or equal to $n+1$.
\end{proof}
\begin{cor}
\label{cor:LMpoly}For all natural numbers $d$, the endofunctor $\mathbf{LM}$
of $\mathbf{Fct}\left(\mathfrak{U}\boldsymbol{\beta},\mathbb{K}\textrm{-}\mathfrak{Mod}\right)$
restricts to a functor:
\[
\mathbf{LM}:\mathcal{P}ol_{d}^{strong}\left(\mathfrak{U}\boldsymbol{\beta},\textrm{\ensuremath{\mathbb{K}}-}\mathfrak{Mod}\right)\longrightarrow\mathcal{P}ol_{d+1}^{strong}\left(\mathfrak{U}\boldsymbol{\beta},\textrm{\textrm{\ensuremath{\mathbb{K}}-}\ensuremath{\mathfrak{Mod}}}\right).
\]
\end{cor}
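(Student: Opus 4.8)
The plan is to recognise this corollary as a formal consequence of Proposition \ref{prop:envoiedegnnn1+1}, exploiting the fact that both $\mathcal{P}ol_{d}^{strong}\left(\mathfrak{U}\boldsymbol{\beta},\mathbb{K}\textrm{-}\mathfrak{Mod}\right)$ and $\mathcal{P}ol_{d+1}^{strong}\left(\mathfrak{U}\boldsymbol{\beta},\mathbb{K}\textrm{-}\mathfrak{Mod}\right)$ are by definition \emph{full} subcategories of $\mathbf{Fct}\left(\mathfrak{U}\boldsymbol{\beta},\mathbb{K}\textrm{-}\mathfrak{Mod}\right)$. Since $\mathbf{LM}$ is already established to be an endofunctor of the ambient category in Theorem \ref{Thm:LMFunctor}, it suffices to verify that $\mathbf{LM}$ carries objects of $\mathcal{P}ol_{d}^{strong}$ into $\mathcal{P}ol_{d+1}^{strong}$; the behaviour on morphisms is then automatic.

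First I would treat the action on objects. For a non-null functor $F$ lying in $\mathcal{P}ol_{d}^{strong}\left(\mathfrak{U}\boldsymbol{\beta},\mathbb{K}\textrm{-}\mathfrak{Mod}\right)$, the second assertion of Proposition \ref{prop:envoiedegnnn1+1} gives directly that $\mathbf{LM}\left(F\right)$ belongs to $\mathcal{P}ol_{d+1}^{strong}\left(\mathfrak{U}\boldsymbol{\beta},\mathbb{K}\textrm{-}\mathfrak{Mod}\right)$. The only case left uncovered is the null functor $0$: here I would invoke Proposition \ref{prop:exactnessLM}, which asserts that $\mathbf{LM}$ is reduced, so that $\mathbf{LM}\left(0\right)=0$, and the zero functor lies in every $\mathcal{P}ol_{n}^{strong}$, in particular in degree $d+1$.

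For the action on morphisms, fullness of the source and target subcategories means that a morphism between two objects of $\mathcal{P}ol_{d}^{strong}$ is merely a natural transformation in $\mathbf{Fct}\left(\mathfrak{U}\boldsymbol{\beta},\mathbb{K}\textrm{-}\mathfrak{Mod}\right)$; applying the already-defined functor $\mathbf{LM}$ to it yields a natural transformation whose source and target land in $\mathcal{P}ol_{d+1}^{strong}$ by the object part. Compatibility with composition and with identities is inherited verbatim from $\mathbf{LM}$ on the ambient category. Consequently there is no genuine obstacle at this stage: the entire difficulty has already been absorbed into the splitting decomposition of Theorem \ref{thm:Splitting LM} and its consequence Proposition \ref{prop:envoiedegnnn1+1}, and the corollary records only the resulting restriction.
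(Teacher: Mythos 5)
Your proof is correct and takes essentially the same approach as the paper, which states this corollary without further argument as an immediate consequence of Proposition \ref{prop:envoiedegnnn1+1}. Your additional care with the null functor (via the reducedness of $\mathbf{LM}$ from Proposition \ref{prop:exactnessLM}) and with morphisms (via fullness of the subcategories $\mathcal{P}ol_{d}^{strong}$) simply makes explicit the details the paper leaves implicit.
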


\begin{cor}
\label{cor:Main result}Let $d$ be a natural number and $F$ be an
object of $\mathcal{P}ol_{d}^{strong}\left(\mathfrak{U}\boldsymbol{\beta},\textrm{\ensuremath{\mathbb{K}}-}\mathfrak{Mod}\right)$
such that the strong polynomial degree of $\tau_{2}\left(F\right)$
is equal to $d$. Then, the functor $\mathbf{LM}\left(F\right)$ is
a strong polynomial functor of degree equal to $d+1$.
\end{cor}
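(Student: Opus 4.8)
The plan is to bound the strong degree of $\mathbf{LM}(F)$ from above and below separately. The upper bound is immediate: by part 2 of Proposition \ref{prop:envoiedegnnn1+1}, $\mathbf{LM}(F)$ is an object of $\mathcal{P}ol_{d+1}^{strong}(\mathfrak{U}\boldsymbol{\beta},\textrm{\ensuremath{\mathbb{K}}-}\mathfrak{Mod})$, so its strong degree is at most $d+1$. The entire content of the statement therefore lies in the matching lower bound, namely that $\mathbf{LM}(F)$ is \emph{not} an object of $\mathcal{P}ol_{d}^{strong}(\mathfrak{U}\boldsymbol{\beta},\textrm{\ensuremath{\mathbb{K}}-}\mathfrak{Mod})$.

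The key leverage is the splitting of the difference functor from Theorem \ref{thm:Splitting LM}, which gives a natural equivalence $\delta_1\circ\mathbf{LM}\cong\tau_2\oplus(\mathbf{LM}\circ\delta_1)$. Evaluating at $F$, I would analyse the two summands of $\delta_1\mathbf{LM}(F)\cong\tau_2(F)\oplus\mathbf{LM}(\delta_1 F)$ by degree. Since $F$ is an object of $\mathcal{P}ol_{d}^{strong}$, its difference $\delta_1 F$ lies in $\mathcal{P}ol_{d-1}^{strong}$, so Corollary \ref{cor:LMpoly} bounds the strong degree of $\mathbf{LM}(\delta_1 F)$ by $d$. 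On the other hand, the hypothesis is precisely that $\tau_2(F)$ has strong degree exactly $d$; in particular $\tau_2(F)$ is not an object of $\mathcal{P}ol_{d-1}^{strong}$.

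I would then invoke the elementary fact that the strong degree of a direct sum equals the maximum of the strong degrees of its summands. This is a formal consequence of Proposition \ref{prop:proppoln}: closure of $\mathcal{P}ol_{d}^{strong}$ under extension shows that $\tau_2(F)\oplus\mathbf{LM}(\delta_1 F)$ lies in $\mathcal{P}ol_{d}^{strong}$, while closure under quotients applied to the projection onto the summand $\tau_2(F)$ shows that if the sum were an object of $\mathcal{P}ol_{d-1}^{strong}$ then $\tau_2(F)$ would be too, a contradiction. Hence $\delta_1\mathbf{LM}(F)$ has strong degree exactly $d$; in particular it is not an object of $\mathcal{P}ol_{d-1}^{strong}$, so $\mathbf{LM}(F)$ is not an object of $\mathcal{P}ol_{d}^{strong}$. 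Combined with the upper bound, this yields strong degree exactly $d+1$.

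The subtlety to handle carefully — rather than a genuine obstacle — is the bookkeeping of ``exact degree'' against ``degree at most'': one must ensure that the nonvanishing encoded by the hypothesis on $\tau_2(F)$ survives the single application of $\delta_1$ and the passage to the direct sum, and is not absorbed by the potentially smaller contribution $\mathbf{LM}(\delta_1 F)$. This is exactly why the hypothesis is imposed on $\tau_2(F)$ rather than on $F$ itself, and the direct-sum degree lemma is the tool that isolates the dominant summand.
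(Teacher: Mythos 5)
Your proof is correct and takes exactly the route the paper intends for this corollary (which it states without explicit proof as a consequence of the surrounding results): the upper bound comes from Proposition \ref{prop:envoiedegnnn1+1}, and the lower bound from evaluating the splitting $\delta_{1}\circ\mathbf{LM}\cong\tau_{2}\oplus\left(\mathbf{LM}\circ\delta_{1}\right)$ of Theorem \ref{thm:Splitting LM} at $F$ and using closure of $\mathcal{P}ol_{d-1}^{strong}\left(\mathfrak{U}\boldsymbol{\beta},\textrm{\ensuremath{\mathbb{K}}-}\mathfrak{Mod}\right)$ under direct summands (Corollary \ref{Corlem:directsummand}) to conclude that the hypothesis on $\tau_{2}\left(F\right)$ prevents $\mathbf{LM}\left(F\right)$ from lying in $\mathcal{P}ol_{d}^{strong}\left(\mathfrak{U}\boldsymbol{\beta},\textrm{\ensuremath{\mathbb{K}}-}\mathfrak{Mod}\right)$. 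Your handling of the degree bookkeeping, including the justification that a direct summand of a functor in $\mathcal{P}ol_{d-1}^{strong}$ stays in $\mathcal{P}ol_{d-1}^{strong}$, is precisely the missing detail the paper relies on.
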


\begin{thm}
\label{thm:Main result2}Let $d$ be a natural number and $F$ be
an object of $\mathcal{VP}ol_{d}\left(\mathfrak{U}\boldsymbol{\beta},\textrm{\ensuremath{\mathbb{K}}-}\mathfrak{Mod}\right)$
of degree equal to $d$. Then, the functor $\mathbf{LM}\left(F\right)$
is a very strong polynomial functor of degree equal to $d+1$.
\end{thm}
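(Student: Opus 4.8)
The plan is to argue by induction on $d$, the very strong degree of $F$, feeding the decomposition of Theorem \ref{thm:Splitting LM} into the recursive Definition \ref{def:defverystrong} and exploiting the closure properties of $\mathcal{VP}ol_d\left(\mathfrak{U}\boldsymbol{\beta},\textrm{\ensuremath{\mathbb{K}}-}\mathfrak{Mod}\right)$. By Remark \ref{exa:suffit1} and the last assertion of Proposition \ref{prop:prop verystrongpoly}, membership in $\mathcal{VP}ol_{d+1}$ and the value of the very strong degree can both be tested using only the endofunctors $\kappa_1$ and $\delta_1$; so throughout it suffices to control these two functors applied to $\mathbf{LM}\left(F\right)$.

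First I would dispose of the evanescence condition. Theorem \ref{thm:Splitting LM} supplies a natural isomorphism $\kappa_1\circ\mathbf{LM}\cong\mathbf{LM}\circ\kappa_1$. Since $F$ is very strong polynomial we have $\kappa_1\left(F\right)=0$, and since $\mathbf{LM}$ is reduced by Proposition \ref{prop:exactnessLM}, this yields $\kappa_1\mathbf{LM}\left(F\right)\cong\mathbf{LM}\left(\kappa_1 F\right)=\mathbf{LM}\left(0\right)=0$. This already establishes the $\kappa$-part of Definition \ref{def:defverystrong} for $\mathbf{LM}\left(F\right)$.

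Next comes the difference condition, which is where the induction operates. The other half of Theorem \ref{thm:Splitting LM} gives $\delta_1\mathbf{LM}\left(F\right)\cong\tau_2\left(F\right)\oplus\mathbf{LM}\left(\delta_1 F\right)$. In the base case $d=0$ we have $\delta_1 F=0$, so the right-hand side reduces to $\tau_2\left(F\right)$, which is very strong of degree $0$ by Proposition \ref{prop:degre0verystrong}; hence $\mathbf{LM}\left(F\right)\in\mathcal{VP}ol_1$. For the inductive step, Lemma \ref{lem:hypotheseverystrongok} makes $\tau_2\left(F\right)$ very strong of degree exactly $d$, while $\delta_1 F$ is very strong of degree $d-1$, so the inductive hypothesis renders $\mathbf{LM}\left(\delta_1 F\right)$ very strong of degree at most $d$. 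Because $\mathcal{VP}ol_d\left(\mathfrak{U}\boldsymbol{\beta},\textrm{\ensuremath{\mathbb{K}}-}\mathfrak{Mod}\right)$ is closed under extension (Proposition \ref{prop:prop verystrongpoly}), hence under direct sums, the functor $\delta_1\mathbf{LM}\left(F\right)$ lies in $\mathcal{VP}ol_d$. Combined with $\kappa_1\mathbf{LM}\left(F\right)=0$ and the reduction to $e=1$, this proves $\mathbf{LM}\left(F\right)\in\mathcal{VP}ol_{d+1}$, exactly mirroring Proposition \ref{prop:envoiedegnnn1+1}.

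Finally I would sharpen the degree from ``at most $d+1$'' to ``exactly $d+1$'', and this is the step I expect to be the main obstacle, since it is precisely the feature that distinguishes the very strong case. The key is that $\mathcal{VP}ol_{d-1}$ is closed under direct summands (Remark \ref{rem:notclosedundersubobjects'}). By Lemma \ref{lem:hypotheseverystrongok} the summand $\tau_2\left(F\right)$ has very strong degree exactly $d$, hence does not belong to $\mathcal{VP}ol_{d-1}$; being a direct summand of $\delta_1\mathbf{LM}\left(F\right)$, this forces $\delta_1\mathbf{LM}\left(F\right)\notin\mathcal{VP}ol_{d-1}$, so $\delta_1\mathbf{LM}\left(F\right)$ has very strong degree exactly $d$. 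Were $\mathbf{LM}\left(F\right)$ an object of $\mathcal{VP}ol_d$, applying $\delta_1$ would place $\delta_1\mathbf{LM}\left(F\right)$ in $\mathcal{VP}ol_{d-1}$, a contradiction; therefore $\mathbf{LM}\left(F\right)$ has very strong degree exactly $d+1$. The whole argument thus hinges on isolating the $\tau_2\left(F\right)$ summand in Theorem \ref{thm:Splitting LM} and invoking direct-summand closure, an input that is genuinely unavailable for merely strong polynomial functors.
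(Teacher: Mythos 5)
Your proof is correct and is essentially the paper's own argument: both hinge on Theorem \ref{thm:Splitting LM} (the splitting $\delta_{1}\circ\mathbf{LM}\cong\tau_{2}\oplus\left(\mathbf{LM}\circ\delta_{1}\right)$ together with the commutation $\kappa_{1}\circ\mathbf{LM}\cong\mathbf{LM}\circ\kappa_{1}$), on Lemma \ref{lem:hypotheseverystrongok}, and on a direct-summand argument to pin the degree down to exactly $d+1$. The only divergence is organizational: the paper first secures the exact strong polynomial degree through Corollary \ref{cor:Main result} and then verifies the evanescence conditions, whereas you run a single induction entirely inside $\mathcal{VP}ol$ using the criterion of Proposition \ref{prop:prop verystrongpoly} and the summand-closure of Remark \ref{rem:notclosedundersubobjects'}, which amounts to the same ingredients repackaged.
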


\begin{proof}
Using Lemma \ref{lem:hypotheseverystrongok}, it follows from Corollary
\ref{cor:Main result} that $\mathbf{LM}\left(F\right)$ is a strong
polynomial functor of degree equal to $n+1$.  Since the functor $\mathbf{LM}$
commutes with the evanescence functor $\kappa_{1}$ by Theorem \ref{thm:commutationkappaLM},
we deduce that $\left(\kappa_{1}\circ\mathbf{LM}\right)\left(F\right)\cong\left(\mathbf{LM}\circ\kappa_{1}\right)\left(F\right)=0$.
Moreover, using Theorem \ref{thm:Splitting LM}, we have:
\[
\left(\kappa_{1}\circ\left(\delta_{1}\circ\mathbf{LM}\right)\right)\left(F\right)\cong\left(\kappa_{1}\circ\tau_{2}\right)\left(F\right)\bigoplus\left(\kappa_{1}\circ\left(\mathbf{LM}\circ\delta_{1}\right)\right)\left(F\right).
\]
Therefore, the fact that $\tau_{2}$ commutes with the evanescence
functor $\kappa_{1}$ (see the commutation property $6$ of Proposition
\ref{prop:lemmecaract}) and Theorem \ref{thm:commutationkappaLM}
together imply that:
\[
\left(\kappa_{1}\circ\left(\delta_{1}\circ\mathbf{LM}\right)\right)\left(F\right)\cong\left(\tau_{2}\circ\kappa_{1}\right)\left(F\right)\bigoplus\left(\mathbf{LM}\circ\left(\kappa_{1}\circ\delta_{1}\right)\right)\left(F\right).
\]
The result then follows from the fact that $F$ is an object of $\mathcal{VP}ol_{n}\left(\mathfrak{U}\boldsymbol{\beta},\textrm{\ensuremath{\mathbb{K}}-}\mathfrak{Mod}\right)$
and $\tau_{2}$ is a reduced endofunctor of the category\textit{ }$\mathbf{Fct}\left(\mathfrak{U}\boldsymbol{\beta},\textrm{\ensuremath{\mathbb{K}}-}\mathfrak{Mod}\right)$.
\end{proof}
\begin{example}
By Proposition \ref{prop:degre0verystrong}, $\mathfrak{X}$ is a
very strong polynomial functor of degree $0$. Now applying the Long-Moody
functor $\mathbf{LM}_{1}$, we proved in Proposition \ref{prop:recoveringunredBurau}
that $t^{-1}\mathbf{LM}_{1}\left(t\mathfrak{X}\right)$ is naturally
equivalent to $\mathfrak{Bur}_{t^{2}}$, which is very strong polynomial
of degree $1$ by Proposition \ref{prop:BurTYMverystrong}.
\end{example}

\subsection{Other properties of the Long-Moody functors}

We have proven in the previous  section that a Long-Moody functor
sends (very) strong polynomial functors to (very) strong polynomial
functors. We can also prove that a (very) strong polynomial functor
in the essential image of a Long-Moody functor is necessarily the
image of another strong polynomial functor.
\begin{prop}
\label{prop:degncomesfromdegn+1}Let $d$ be a natural number. Let
$F$ be a strong polynomial functor of degree $d$ in the category
$\mathbf{Fct}\left(\mathfrak{U}\boldsymbol{\beta},\textrm{\ensuremath{\mathbb{K}}-}\mathfrak{Mod}\right)$.
Assume that there exists an object $G$ of the category $\mathbf{Fct}\left(\mathfrak{U}\boldsymbol{\beta},\mathbb{K}\textrm{-}\mathfrak{Mod}\right)$
such that $\mathbf{LM}\left(G\right)=F$. Then, the functor $G$ is
a strong polynomial functor of degree less than or equal to $d+1$
in the category $\mathbf{Fct}\left(\mathfrak{U}\boldsymbol{\beta},\textrm{\ensuremath{\mathbb{K}}-}\mathfrak{Mod}\right)$.
\end{prop}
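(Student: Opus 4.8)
The plan is to feed the hypothesis $\mathbf{LM}(G)=F$ into the splitting of the difference functor established in Theorem \ref{thm:Splitting LM} and then to transport degree information back from $\tau_{2}(G)$ to $G$ by means of Lemma \ref{lem:varepsiFdecreasedegree}. The substantive geometric and homological content has already been packaged into that splitting, so what remains is essentially bookkeeping with the difference and translation endofunctors.

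First I would apply Theorem \ref{thm:Splitting LM} to the functor $G$, which gives the natural equivalence
\[
\delta_{1}F\;=\;\delta_{1}\bigl(\mathbf{LM}(G)\bigr)\;\cong\;\tau_{2}(G)\,\oplus\,\bigl(\mathbf{LM}\circ\delta_{1}\bigr)(G).
\]
Since $F$ is strong polynomial of degree $d$, it is an object of $\mathcal{P}ol_{d}^{strong}\left(\mathfrak{U}\boldsymbol{\beta},\mathbb{K}\textrm{-}\mathfrak{Mod}\right)$, so by the very definition of this category (and using Remark \ref{exa:suffit1} to reduce to $\delta_{1}$) the functor $\delta_{1}F$ belongs to $\mathcal{P}ol_{d-1}^{strong}\left(\mathfrak{U}\boldsymbol{\beta},\mathbb{K}\textrm{-}\mathfrak{Mod}\right)$. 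Through the displayed isomorphism, $\tau_{2}(G)$ is a direct summand of $\delta_{1}F$; hence Corollary \ref{Corlem:directsummand} forces $\tau_{2}(G)$ to be an object of $\mathcal{P}ol_{d-1}^{strong}\left(\mathfrak{U}\boldsymbol{\beta},\mathbb{K}\textrm{-}\mathfrak{Mod}\right)$ as well.

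It then remains to pass from $\tau_{2}(G)$ to $G$. As $\tau_{2}(G)$ is strong polynomial of degree at most $d-1$, Lemma \ref{lem:varepsiFdecreasedegree} applied with $k=2$ yields that $G$ is an object of $\mathcal{P}ol_{(d-1)+2}^{strong}=\mathcal{P}ol_{d+1}^{strong}\left(\mathfrak{U}\boldsymbol{\beta},\mathbb{K}\textrm{-}\mathfrak{Mod}\right)$, which is exactly the assertion.

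The one step that I expect to cause trouble, and which I would single out as the genuine obstacle, is the degenerate base case $d=0$: there $\delta_{1}F$ vanishes, so $\tau_{2}(G)=0$, and Lemma \ref{lem:varepsiFdecreasedegree} as stated only concerns non-negative degrees and would merely give $G\in\mathcal{P}ol_{2}^{strong}$. In this case I would argue directly rather than invoke the lemma: $\tau_{2}(G)=0$ means $G(m)=0$ for every $m\geq 2$, whence $\delta_{1}G$ is supported on the initial object $0$ and satisfies $\delta_{1}\delta_{1}G=0$, so $\delta_{1}G$ is strong polynomial of degree $0$ and therefore $G$ is strong polynomial of degree at most $1=d+1$. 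Apart from this boundary adjustment, the argument is a formal consequence of Theorem \ref{thm:Splitting LM}, Corollary \ref{Corlem:directsummand} and Lemma \ref{lem:varepsiFdecreasedegree}.
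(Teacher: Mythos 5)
Your proof is correct and follows essentially the same route as the paper's: apply Theorem \ref{thm:Splitting LM} to get $\delta_{1}F\cong\tau_{2}\left(G\right)\oplus\left(\mathbf{LM}\circ\delta_{1}\right)\left(G\right)$, use Corollary \ref{Corlem:directsummand} to place the direct summand $\tau_{2}\left(G\right)$ in $\mathcal{P}ol_{d-1}^{strong}\left(\mathfrak{U}\boldsymbol{\beta},\mathbb{K}\textrm{-}\mathfrak{Mod}\right)$, and then invoke Lemma \ref{lem:varepsiFdecreasedegree} to descend to $G$. Your separate handling of $d=0$ is in fact a refinement of the paper's argument, which applies the lemma with degree $d-1$ uniformly: for $d=0$ this falls outside the lemma's stated hypotheses (natural, i.e.\ non-negative, degree) and would only yield $G\in\mathcal{P}ol_{2}^{strong}$, whereas your direct argument from $\tau_{2}\left(G\right)=0$ recovers the sharp bound $G\in\mathcal{P}ol_{1}^{strong}$.
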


\begin{proof}
It follows from Theorem \ref{thm:Splitting LM} that:
\[
\delta_{1}F\cong\tau_{2}\left(G\right)\oplus\left(\mathbf{LM}\circ\delta_{1}\right)\left(G\right).
\]
According to Corollary \ref{Corlem:directsummand}, the functor $\tau_{2}\left(G\right)$
is an object of the category $\mathcal{P}ol_{d-1}^{strong}\left(\mathfrak{U}\boldsymbol{\beta},\textrm{\ensuremath{\mathbb{K}}-}\mathfrak{Mod}\right)$,
and because of Lemma \ref{lem:varepsiFdecreasedegree} the functor
$G$ is an object of the category $\mathcal{P}ol_{d+1}^{strong}\left(\mathfrak{U}\boldsymbol{\beta},\textrm{\ensuremath{\mathbb{K}}-}\mathfrak{Mod}\right)$.
\end{proof}
\begin{prop}
\label{prop:nonessentialsurject}The Long-Moody functor $\mathbf{LM}:\mathbf{Fct}\left(\boldsymbol{\beta},\mathbb{K}\textrm{-}\mathfrak{Mod}\right)\longrightarrow\mathbf{Fct}\left(\boldsymbol{\beta},\mathbb{K}\textrm{-}\mathfrak{Mod}\right)$
is not essentially surjective.
\end{prop}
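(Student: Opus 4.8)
The plan is to show that every object lying in the essential image of $\mathbf{LM}$ is forced to vanish on the object $0$ of $\boldsymbol{\beta}$, whereas there plainly exist functors which do not; this immediately produces an object outside the essential image. The whole argument reduces to evaluating a Long-Moody functor at the monoidal unit.

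First I would compute $\mathbf{LM}\left(G\right)\left(0\right)$ for an arbitrary object $G$ of $\mathbf{Fct}\left(\boldsymbol{\beta},\mathbb{K}\textrm{-}\mathfrak{Mod}\right)$. According to the formula of Theorem \ref{Thm:LMFunctor},
\[
\mathbf{LM}\left(G\right)\left(0\right)=\mathcal{I}_{\mathbb{K}\left[\mathbf{F}_{0}\right]}\underset{\mathbb{K}\left[\mathbf{F}_{0}\right]}{\varotimes}G\left(1\right).
\]
Since $\mathbf{F}_{0}$ is the trivial group, the group ring $\mathbb{K}\left[\mathbf{F}_{0}\right]$ is canonically isomorphic to $\mathbb{K}$ and its augmentation ideal $\mathcal{I}_{\mathbb{K}\left[\mathbf{F}_{0}\right]}$ is the kernel of the identity augmentation $\mathbb{K}\rightarrow\mathbb{K}$, hence the null module. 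Equivalently, Proposition \ref{prop:-augmentationidealfreemodule} exhibits $\mathcal{I}_{\mathbb{K}\left[\mathbf{F}_{0}\right]}$ as the free $\mathbb{K}\left[\mathbf{F}_{0}\right]$-module on the empty set $\left\{ g_{i}-1\mid i\in\left\{ 1,\ldots,0\right\} \right\}$, which is zero. In either case $\mathbf{LM}\left(G\right)\left(0\right)=0$ for every object $G$.

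Next I would pick any functor whose value at $0$ is nonzero, the simplest choice being the constant functor at $\mathbb{K}$, that is the object $H$ of $\mathbf{Fct}\left(\boldsymbol{\beta},\mathbb{K}\textrm{-}\mathfrak{Mod}\right)$ with $H\left(n\right)=\mathbb{K}$ and $H\left(\sigma\right)=id_{\mathbb{K}}$ for all $n$ and all $\sigma$, so that $H\left(0\right)=\mathbb{K}\neq0$. Since a natural isomorphism of functors restricts to an isomorphism of $\mathbb{K}$-modules on each object, the existence of an object $G$ with $\mathbf{LM}\left(G\right)\cong H$ would force $0=\mathbf{LM}\left(G\right)\left(0\right)\cong H\left(0\right)=\mathbb{K}$, a contradiction. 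Hence $H$ does not belong to the essential image of $\mathbf{LM}$, and the functor is not essentially surjective.

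There is no genuine obstacle in this argument: the only point that must be verified carefully is the identification $\mathcal{I}_{\mathbb{K}\left[\mathbf{F}_{0}\right]}=0$, which trivializes the evaluation at the unit. Conceptually, the statement records that the Long-Moody construction always annihilates the degree-$0$ part of a functor, so no object supported at $0$ can be recovered.
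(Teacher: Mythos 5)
Your proof is correct, but it takes a genuinely different route from the paper's. You obtain the obstruction by evaluating at the monoidal unit: since $\mathbf{F}_{0}$ is the trivial group, $\mathcal{I}_{\mathbb{K}\left[\mathbf{F}_{0}\right]}=0$ (the empty-basis case of Proposition \ref{prop:-augmentationidealfreemodule}), hence $\mathbf{LM}\left(G\right)\left(0\right)=0$ for every object $G$, and the constant functor at $\mathbb{K}$ can therefore not be naturally isomorphic to any $\mathbf{LM}\left(G\right)$. The paper argues quite differently: it introduces the functors $E_{l}$ with $E_{l}\left(n\right)=\mathbb{K}^{\oplus n^{l}}$ and trivial braid actions, deduces from a hypothetical isomorphism $\mathbf{LM}\left(F\right)\cong E_{l}$ that the fixed action $a_{n}$ would have to be trivial, and then derives a contradiction from the requirement that a Burau-type functor also lie in the essential image. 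Your argument is shorter, works uniformly in the ground ring, is independent of the choice of the coherent families $\left\{ a_{n}\right\} $ and $\left\{ \varsigma_{n}\right\} $, and avoids the delicate step in the paper where triviality of $a_{n}$ is extracted from a natural isomorphism (a deduction needing care, since an isomorphism could a priori absorb part of the action into the tensor factor). What it buys less of: your obstruction is invisible on functors vanishing at $0$, and that is exactly the class the paper cares about afterwards --- $\mathfrak{Bur}_{t}$, $\mathfrak{TYM}_{t}$ and the $E_{l}$ all vanish at $0$ --- so only the paper's proof supports the subsequent remarks that $\mathbf{LM}$ fails to be essentially surjective even onto such functors, and the ensuing discussion of Birman--Brendle's Open Problem 7 on unitary representations. (Both arguments tacitly assume $\mathbb{K}\neq0$, which is harmless: for the zero ring the statement is false.)
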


\begin{proof}
Let $l$ be a natural number. Let $E_{l}:\mathfrak{U}\boldsymbol{\beta}\longrightarrow\mathbb{K}\textrm{-}\mathfrak{Mod}$
be the functor which factorizes through the category $\mathbb{N}$,
such that $E_{l}\left(n\right)=\mathbb{K}^{\oplus n^{l}}$ for all
natural numbers $n$ and for all $\left[n'-n,\sigma\right]\in Hom_{\mathfrak{U}\boldsymbol{\beta}}\left(n,n'\right)$
(with $n$, $n'$ natural numbers such that $n'\geq n$), $E_{l}\left(\left[n'-n,\sigma\right]\right)=\iota_{\mathbb{C}\left[t^{\pm1}\right]^{\oplus n'^{l}-n^{l}}}\oplus id_{\mathbb{C}\left[t^{\pm1}\right]^{\oplus n^{l}}}$.
In particular, for all natural numbers $n,$ for every Artin generator
$\sigma_{i}$ of $\mathbf{B}_{n}$, $E_{l}\left(\sigma_{i}\right)=id_{\mathbb{K}^{\oplus n^{l}}}$.
It inductively follows from this definition and direct computations
that $E_{l}$ is a very strong polynomial functor of degree $l$.

Let us assume that $\mathbf{LM}$ is essentially surjective. Hence,
there exists an object $F$ of \textit{$\mathbf{Fct}\left(\boldsymbol{\beta},\mathbb{K}\textrm{-}\mathfrak{Mod}\right)$
}such that $\mathbf{LM}\left(F\right)\cong E_{l}$. Because of the
definition of $\mathbf{LM}\left(F\right)$ on morphisms (see Theorem
\ref{Thm:LMFunctor}), this implies that for all natural numbers $n$
and for all $\sigma\in\mathbf{B}_{n}$, $a_{n}\left(\sigma\right)=id_{n}$.
Also, if $\mathbf{LM}$ is essentially surjective, there exists an
object $T$ of the category $\mathbf{Fct}\left(\boldsymbol{\beta},\mathbb{K}\textrm{-}\mathfrak{Mod}\right)$
such that we can recover the Burau functor from $\mathbf{LM}\left(T\right)$,
ie something like $\alpha\mathbf{LM}\left(T\right)$ (see Notation
\ref{nota:yX}) with $\alpha\in\mathbb{K}$. We deduce from the definition
of $\mathbf{LM}\left(T\right)$ on objects and morphisms that for
all $n\geq1$, $T\left(n\right)=\mathbb{K}$ and for all generator
$\sigma_{i}$ of $\mathbf{B}_{n}$:
\[
\mathbf{LM}\left(T\right)\left(\sigma_{i}\right)=T\left(\sigma_{i}\right)\cdot Id_{n}.
\]
Then necessarily, for all $i\in\left\{ 1,\ldots,n\right\} $, $T\left(\sigma_{i}\right)=\delta$
such that $\delta^{2}=t$ and we consider $\delta^{-1}\mathbf{LM}\left(T\right)$.
We deduce that there exists a natural transformation $\omega:\delta^{-1}\mathbf{LM}\left(T\right)\overset{\cong}{\rightarrow}\mathfrak{Bur}_{t}$.
This contradicts the fact that for all $\sigma\in\mathbf{B}_{n}$,
$a_{n}\left(\sigma\right)=id_{n}$.
\end{proof}
\begin{rem}
The proof of Proposition \ref{prop:nonessentialsurject} shows in
particular that a Long-Moody functor $\mathbf{LM}$ is not essentially
surjective on very strong polynomial functors in any degree.
\end{rem}

In \cite[Section 4.7, Open Problem 7]{BirmanBrendlesurvey}, Birman
and Brendle ask ``whether all finite dimensional unitary matrix representations
of $\mathbf{B}_{n}$ arise in a manner which is related to the construction''
recalled in Theorem \ref{Thm:LMFunctor}. Since the Tong-Yang-Ma and
unreduced Burau representations recalled in Theorem \ref{thm:resulttym}
are unitary representations, the proof of Proposition \ref{prop:nonessentialsurject}
shows that any Long-Moody functor (and especially the one based on
the version of the construction of Theorem \ref{Thm:LMFunctor}) cannot
provide all the functors encoding unitary representations. Therefore,
we refine the problem asking whether all functors encoding families
of finite dimensional unitary representations of braid groups lie
in the image of a Long-Moody functor.
\begin{rem}
Another question is to ask whether we can directly obtain the reduced
Burau functor $\overline{\mathfrak{Bur}}_{t}$ by a Long-Moody functor.
Recall that for all natural numbers $n$, $\overline{\mathfrak{Bur}}_{t}\left(n\right)=\mathbb{C}\left[t^{\pm1}\right]^{\oplus n-1}$
and $\mathbf{LM}\left(F\right)\left(n\right)\cong\left(F\left(n+1\right)\right)^{\oplus n}$
for any Long-Moody functor $\mathbf{LM}$ and any object $F$ of $\mathbf{Fct}\left(\mathfrak{U}\boldsymbol{\beta},\mathbb{K}\textrm{-}\mathfrak{Mod}\right)$
(see Remark \ref{rem:basisfunctor}). Therefore, for dimensional considerations
on the objects, it is clear that we have to consider a modified version
of the Long-Moody construction. This modification would be to take
the tensor product with $\mathcal{I}_{\mathbf{F}_{n-1}}$ on $\mathbf{F}_{n-1}$,
the $\mathbb{K}$-module $F\left(n+1\right)$ being a $\mathbb{K}\left[\mathbf{F}_{n-1}\right]$-module
using a morphism $\mathbf{F}_{n-1}\rightarrow\left(\mathbf{F}_{n-1}\underset{a_{n}'}{\rtimes}\mathbf{B}_{n+1}\right)\rightarrow\mathbf{B}_{n+1}$
for all natural numbers $n$, where $a_{n}':\mathbf{B}_{n+1}\rightarrow Aut\left(\mathbf{F}_{n-1}\right)$
is a group morphism.
\end{rem}

\bibliographystyle{plain}
\bibliography{bibliographiethese}

\lyxaddress{\textsc{IRMA, Université de Strasbourg, 7 rue René Descartes, 67084
Strasbourg Cedex, France}\\
\textit{E-mail address: }\texttt{soulie@math.unistra.fr}}
\end{document}